\numberwithin{equation}{section}
\DeclareSymbolFont{cyrletters}{OT2}{wncyr}{m}{n}
\DeclareMathSymbol{\Sha}{\mathalpha}{cyrletters}{"58}
\title[Multiplicity formula for disconnected tori]{On the multiplicity formula for discrete automorphic representations of disconnected tori}
\author{Yi Luo}
\address{21 Lower Kent Ridge Road, NUS, 119077 Singapore}
\email{yiluo@u.nus.edu}
\subjclass[2020]{11F70, 22E55}
\date{}
\newtheorem{thm}{Theorem}[section]
\newtheorem{lem}[thm]{Lemma}
\newtheorem{prop}[thm]{Proposition}
\newtheorem{defn}[thm]{Definition}
\newtheorem{cor}[thm]{Corollary}
\newtheorem{claim}[thm]{Claim}
\newtheorem{rmk}[thm]{Remark}
\newtheorem{fact}[thm]{Fact}
\newtheorem{obs}[thm]{Observation}
\newtheorem{noneg}[thm]{Non-example}
\newtheorem{assumption}[thm]{Assumption}
\newcommand{\C}{\mathbb{C}}
  \newenvironment{acknowledgements}{\section*{Acknowledgements}}{}%
\begin{document}

\begin{abstract}
Kaletha extended the local Langlands conjectures to disconnected groups that are inner forms of semidirect products $G\rtimes A$, where the finite group scheme $A$ preserves a pinning of the connected reductive group $G$, and proved the conjectures when $G$ is a torus. Our first main result is an intrinsic reinterpretation of the local Langlands correspondence for such disconnected tori. Our second main result, and the central objective of this paper, is to establish an automorphic multiplicity formula for them.
\end{abstract}
\maketitle

\setcounter{tocdepth}{2}
\tableofcontents

\section{Introduction}
\subsection{Background}
Let $F$ be a number field and let $\mathbb{A}$ be the adele ring of $F$. Let $G$ be a connected reductive group defined over $F$ with centre $Z_{G}$. Let $[G]:= A_{G}G(F)\backslash G(\mathbb{A})$ be its adelic quotient, where $A_{G}$ is the identity component of the $\mathbb{R}$-points of the largest $\mathbb{Q}$-split subtorus of $\mathrm{Res}_{F/\mathbb{Q}}Z_{G}$.  A fundamental problem in the study of automorphic representations is to understand the decomposition of the discrete part $L_{\mathrm{disc}}^{2}([G])$, regarded as a $G(\mathbb{A})$-representation via right translation, and the multiplicities of its irreducible constituents. The work of Labesse and Langlands~\cite{labesse1979indistinguishability,langlands1982debuts} and the work of Kottwitz~\cite{kottwitz1984stable} suggest a conjectural answer for tempered automorphic representations. Assume that $G$ is quasi-split with simply connected derived group. Conjecturally, an admissible tempered discrete $L$-parameter $\phi:L_{F}\to {}^{L}G$ gives rise to an adelic $L$-packet $\Pi_{\phi}$ of tempered representations of
$G(\mathbb{A})$, and each tempered discrete automorphic representation is expected to occur in some $\Pi_{\phi}$. Here, $L_{F}$ denotes the (conjectural) Langlands group of $F$~\cite[\S12]{kottwitz1984stable}. If $\eta$ is a tempered representation of $G(\mathbb{A})$, then the multiplicity of $\eta$ in $L_{\mathrm{disc}}^{2}([G])$ is expected to be the sum of $m_{\eta,\phi}$, where the sum ranges over all equivalence classes of $L$-parameters (see~\cite[\S10.4]{kottwitz1984stable}) such that $\eta\in \Pi_{\phi}$, and $m_{\eta,\phi}$ denotes the $[\phi]$-contribution towards the multiplicity of $\eta$. Moreover, it is conjectured that
\begin{align}\label{eq:intro_mult}
m_{\eta,\phi} = \frac{1}{|\mathcal{S}_{\phi}|}\sum_{x\in \mathcal{S}_{\phi}}\langle x, \eta\rangle,
\end{align}
where $\mathcal{S}_{\phi}$ is a finite group related to the centralizer of $\phi$ in $\widehat{G}$ (see~\cite[\S10.2]{kottwitz1984stable}), and  $\langle \cdot ,\cdot 
\rangle:\mathcal{S}_{\phi} \times \Pi_{\phi}\to\mathbb{C}$ is a pairing relying on the local Langlands conjectures~\cite{kaletha2016rigid}. In~\cite{kaletha2018global}, after introducing certain new global Galois gerbes that govern rigid inner forms, Kaletha makes the definitions of $\mathcal{S}_{\phi}$ and the pairing  $\langle \cdot ,\cdot
\rangle$ precise for general connected reductive groups $G$.

Consider the case when $G=T$ is a torus. For simplicity, we further assume $T$ is anisotropic. Then the space $L^{2}([T])$ is a multiplicity-free direct sum of all the Hecke characters of $T$. On the dual side, according to the axioms for the Langlands group $L_{F}$~\cite[\S12]{kottwitz1984stable}, any $L$-morphism $\phi: L_{F} \to {}^{L}T$ should factor through the quotient onto the Weil group $L_{F}\twoheadrightarrow W_{F}$, which allows us to view it as an $L$-morphism $\phi: W_{F} \to {}^{L}T$. Furthermore, in the case of tori, the group $\mathcal{S}_{\phi}$ is trivial, since it is defined as a certain subquotient of the trivial quotient $\widehat{T}/Z(\widehat{T})$ (see~\cite{kaletha2018global}). Hence the pairing $\langle \cdot ,\cdot
\rangle$ is identically $1$ and the multiplicity formula~\eqref{eq:intro_mult} holds trivially due to the multiplicity-one decomposition of $L^{2}([T])$. The dual-side parametrization of the Hecke characters recovers the global Langlands correspondence for tori~\cite{langlands1997representations}.

Turning to the local setting, let $F$ now denote a local field of characteristic $0$. In~\cite{kaletha2022local}, Kaletha initiated an extension of the local Langlands conjectures to disconnected groups.  Specifically, he considers inner forms of so-called \textit{quasi-split} disconnected reductive groups, which are of the form $G\rtimes A$, where $G$ is a connected quasi-split reductive group and $A$ is a finite group scheme whose action on $G$ preserves an $F$-pinning. Within this setting, Kaletha formulates conjectures on the (refined) local Langlands correspondence (LLC) and proves them in the case of disconnected tori, i.e. when $G = T$ is a torus. 

This paper takes an initial step toward extending the automorphic multiplicity formula to disconnected groups. Building on Kaletha's pioneering local results, we establish the formula for such disconnected tori in the case of pure inner forms. We expect that all the results can be naturally generalized to all inner forms (see Remark~\ref{rmk: rigid_inner_form}). 

Throughout, we work over fields of characteristic zero, both locally and globally, so that we stay within Kaletha's framework. However, this assumption is by no means essential. When $G = T$ is a torus acted on by a finite \'etale group scheme $A$, we expect that both the local Langlands correspondence for such disconnected tori and the global results in this paper generalize to the positive-characteristic setting. 

\subsection{Main results}
Let $F$ be a number field and let $\mathbb{A}$ be its adele ring. Let $T$ be a torus defined over $F$. Let $A$ be a finite group scheme defined over $F$ that acts on $T$, where the action is defined over $F$ as well. Based on this action, we define $\tilde{T} := T\rtimes A$. Given $z\in Z^{1}(F, T)$, we twist the rational structure of $\tilde{T}$ via $z$ through inner automorphisms, hence obtain a pure inner form $\tilde{T}_{z}$. In this introduction, we further assume that $T$ is anisotropic, in order to ease the exposition. Then the adelic quotient $[\tilde{T}_{z}] := \tilde{T}_{z}(F)\backslash \tilde{T}_{z}(\mathbb{A})$ is compact. The key subject of this paper is to decompose  the right regular representation of $\tilde{T}_{z}(\mathbb{A})$ on the space $L^{2}([\tilde{T}_{z}])$.

\subsubsection{Local aspects}
Our first main result is an intrinsic reinterpretation of Kaletha's LLC for disconnected tori. 

Let $v$ be a place of the number field $F$ and let $F_{v}$ be the completion of $F$ at $v$. Denote by $z_{v}$ the image of $z$ under the natural map $Z^{1}(F,T)\to Z^{1}(F_{v}, T)$, and write $[z_{v}]$ for its class in $H^{1}(F_{v},T)$. Consider the local pure inner form $\tilde{T}_{z_{v}}$. Its rational points fit into a short exact sequence:
$$ 1 \to T(F_{v}) \to \tilde{T}_{z_{v}}(F_{v}) \to A(F_{v})^{[z_{v}]}\to 1, $$
where $A(F_{v})^{[z_{v}]}$ denotes the stabilizer of $[z_{v}]$ in $A(F_{v})$.

Let  $\phi_{v}: W_{F_{v}} \to {}^{L}T$ be a local $L$-parameter for the torus $T$. Under the LLC for $T$, the equivalence class $[\phi_{v}]$ corresponds to a character of $T(F_{v})$, which we still denote as $[\phi_{v}]$. Define the $L$-packet for $\tilde{T}_{z_{v}}$ associated with $\phi_{v}$ to be $\Pi_{\phi_{v}}(\tilde{T}_{z_{v}}) :=\mathrm{Irr}(\tilde{T}_{z_{v}}(F_{v}),[\phi_{v}])$, namely the set of irreducible representations of $\tilde{T}_{z_{v}}(F_{v})$ whose restriction to $T(F_{v})$ contains the character $[\phi_{v}]$. 

Recall that, in the case of a connected torus $T$, the group of self-equivalences for $\phi_{v}$ is $\mathrm{Cent}(\phi_{v},\widehat{T}) = \widehat{T}^{\,\Gamma_{v}}$, the points in $\widehat{T}$ that are fixed by the local Galois group $\Gamma_{v}$. It is noteworthy that $[z_{v}]$ may be viewed as a character of the component group $\pi_{0}(\widehat{T}^{\,\Gamma_{v}})$, via Kottwitz's reformulation $H^{1}(F_{v},T)\cong\pi_{0}(\widehat{T}^{\,\Gamma_{v}})^{*}$ of the classical local Tate--Nakayama duality (see \S\ref{TNdualityforhyper}). In the disconnected setting, we consider an enlarged group of self-equivalences: $\tilde{S}_{\phi_{v}}^{[z_{v}]} := \mathrm{Cent}(\phi_{v}, \widehat{T}\rtimes A(F_{v})^{[z_{v}]})$. There is a short exact sequence:
\begin{equation*}
    1 \to \pi_{0}(\widehat{T}^{\,\Gamma_{v}}) \to \pi_{0}(\tilde{S}_{\phi_{v}}^{[z_{v}]}) \to A(F_{v})^{[\phi_{v}],[z_{v}]} \to 1,
\end{equation*}
where $A(F_{v})^{[\phi_{v}],[z_{v}]}$ consists of elements in $A(F_{v})$ that fix both $[\phi_{v}]$ and $[z_{v}]$. We denote by $\mathrm{Irr}(\pi_{0}(\tilde{S}_{\phi_{v}}^{[z_{v}]}), [z_{v}])$ the set of irreducible representations of $\pi_{0}(\tilde{S}_{\phi_{v}}^{[z_{v}]})$ whose restriction to $\pi_{0}(\widehat{T}^{\,\Gamma_{v}})$ contains $[z_{v}]$.

The local Langlands correspondence for disconnected tori asserts a natural bijection 
$$\Pi_{\phi_{v}}(\tilde{T}_{z_{v}}) \longleftrightarrow \mathrm{Irr}(\pi_{0}(\tilde{S}_{\phi_{v}}^{[z_{v}]}), [z_{v}]),$$ 
which Kaletha has constructed and for which the endoscopic character identities have been verified. We denote by $\tilde{T}_{z_{v}}(F_{v})^{[\phi_{v}]}$ the stabilizer of $[\phi_v]$ under the conjugation of $\tilde{T}_{z_{v}}(F_{v})$. In practice, it is more convenient to express the LLC via $\mathrm{Irr}(\tilde{T}_{z_{v}}(F_{v})^{[\phi_{v}]},[\phi_{v}])$, which is canonically in one-to-one correspondence with $\Pi_{\phi_{v}}(\tilde{T}_{z_{v}})$. Kaletha's construction of the LLC relies on establishing an   isomorphism between two extensions of $A(F_{v})^{[\phi_{v}],[z_{v}]}$ by $\mathbb{C}^{\times}$, where he used the Tate--Nakayama pairing for hypercohomology introduced in~\cite{kottwitz1999foundations} as a key ingredient. A minor drawback of this approach is a lack of transparency in the aforementioned isomorphism, due to the need to choose sections for the extensions.

In this paper, we follow Kaletha's idea and exploit the Tate--Nakayama mechanism. However, we are able to recast the LLC for disconnected tori in a more intrinsic manner using the notion of \textit{relatedness} (Definition~\ref{defn: relatedness}). This notion ties together a pair of representations based on their fibrewise values. To be precise, we say that
$\eta_{v}\in\mathrm{Irr}(\tilde{T}_{z_{v}}(F_{v})^{[\phi_{v}]}, [\phi_{v}])$ and $\rho_{v} \in \mathrm{Irr}(\pi_{0}(\tilde{S}_{\phi_{v}}^{[z_{v}]}),[z_{v}])$ are \textit{related} if they can be realised on the same vector space and, for any fixed $a \in A(F_{v})^{[\phi_{v}],[z_{v}]}$, satisfy
$$\rho_{v}(s,a)\,\eta_{v}(t,a^{-1}) = \langle (z_{v}^{-1},t), (\phi_{v}^{-1},s)\rangle_{\mathrm{TN}}^{-1},$$
where $(s,a) \in \pi_{0}(\tilde{S}_{\phi_{v}}^{[z_{v}]})$ and $(t,a^{-1}) \in \tilde{T}_{z_{v}}(F_{v})^{[\phi_{v}]}$ are arbitrary lifts of $a$ and $a^{-1}$, respectively. The resulting reinterpretation can be stated as follows.

\begin{thm}[($=$ Theorem~\ref{thm:llc_reinterthm} + \S \ref{section:comparison_with_kal})]
\textit{Relatedness} gives a bijection between $\mathrm{Irr}(\tilde{T}_{z_{v}}(F_{v})^{[\phi_{v}]},[\phi_{v}])$ and $\mathrm{Irr}(\pi_{0}(\tilde{S}_{\phi_{v}}^{[z_{v}]}),[z_{v}])$. This in turn provides a bijection $\Pi_{\phi_{v}}(\tilde{T}_{z_{v}}) \leftrightarrow \mathrm{Irr}(\pi_{0}(\tilde{S}_{\phi_{v}}^{[z_{v}]}),[z_{v}])$, which coincides with the local Langlands correspondence for $\tilde{T}_{z_{v}}$ constructed by Kaletha.
\end{thm}

\subsubsection{Global aspects}
We now return to the global setting over a number field $F$. Recall that a pure inner form $\tilde{T}_{z}:=(T\rtimes A)_{z}$ is fixed and $T$ is assumed to be anisotropic.

Denote by $H(T)$ the set of Hecke characters of $T$. The global Langlands correspondence for the (connected) torus $T$ gives a bijection between $H(T)$ and near-equivalence classes of global $L$-parameters for $T$. In the disconnected setting, we need to work with a weaker notion of equivalence called \textit{near-}$A(F)^{[z]}$\textit{-equivalence}, where we take into consideration the action of $A(F)^{[z]}\cong \tilde{T}_{z}(F)/T(F)$.

Let $\phi: W_{F} \to {}^{L}T$ be a global $L$-parameter for $T$ and let $\chi \in H(T)$ be the Hecke character  associated with $\phi$. Denote their local components at a place $v$ by $\phi_{v}$ and $\chi_{v}$, respectively.  We define the adelic $L$-packet of $\tilde{T}_{z}$ associated with $\phi$ as
$$\Pi_{\phi}(\tilde{T}_{z}) = \{\otimes'_{v}\;\eta_{v} \;\bigm|\; \eta_{v} \in \Pi_{\phi_{v}}(\tilde{T}_{z_{v}}), \;\iota_{v}(\eta_{v}) = \mathbb{1} \text{ for almost all\ } v\},$$
where $\iota_{v}: \Pi_{\phi_{v}}(\tilde{T}_{z_{v}}) \to \mathrm{Irr}(\pi_{0}(\tilde{S}_{\phi_{v}}^{[z_{v}]}), [z_{v}])$ is the bijection given by the LLC for $\tilde{T}_{z_{v}}$. We show that each irreducible constituent of $L^{2}([\tilde{T}_{z}])$ must lie in some $\Pi_{\phi}(\tilde{T}_{z})$. Let $\Phi(\eta)$ be the set of global $L$-parameters $\phi$ for which $\eta \in \Pi_{\phi}(\tilde{T}_{z})$. Then we finally prove that the total multiplicity of $\eta$ in $L^{2}([\tilde{T}_{z}])$ decomposes as a sum of contributions attached to $[[\phi]]$ with $\phi \in \Phi(\eta)$. Here, $[[\phi]]$ denotes the near-$A(F)^{[z]}$-equivalence class of $\phi$, and the contribution by $[[\phi]]$ is $m_{\eta,\phi}$ made precise in the theorem below. We remark that the notion of near-$A(F)^{[z]}$-equivalence (see Definition~\ref{defn: global_l_par_near_AFz_equiv}) is the analogue of the equivalence of global $L$-parameters defined in~\cite[\S10.4]{kottwitz1984stable}. See \S\ref{section:kottwitz_analogue} for the comparison.

Let $\tilde T_z(F)$ and $A(F)^{[z]}$ act on $H(T)$ by conjugation, and denote the stabilizer of $\chi$ in $\tilde T_z(F)$ and $A(F)^{[z]}$ by $\tilde T_z(F)^{\chi}$ and $A(F)^{[z],\chi}$, respectively. As pointed out in \S\ref{section:kottwitz_analogue}, the finite group $A(F)^{[z],\chi}$ is the analogue of Kottwitz's $\mathcal{S}_{\phi}$ in the conjectural formula~\eqref{eq:intro_mult}. We define a pairing $\langle\cdot,\cdot\rangle: A(F)^{[z],\chi} \times \Pi_{\phi}(\tilde{T}_{z}) \longrightarrow \mathbb{C}$ by
$$(a,\eta) \mapsto \prod_{v}\langle (z_{v}^{-1}, t), (\phi_{v}^{-1}, a^{-1}(s_{v}^{-1}))\rangle^{-1}_{\mathrm{TN}}\cdot \mathrm{tr}\left[\iota_{v}(\eta_{v})(s_{v},a)\right],
$$
where, given $a\in A(F)^{[z],\chi}$, $(s_{v},a) \in \tilde{S}_{\phi_{v}}^{[z_{v}]}$ and $(t,a) \in \tilde{T}_{z}(F)^{\chi}$ are arbitrarily chosen. When the torus $T$ satisfies the Hasse principle, the pairing admits a simpler expression that does not involve the Tate--Nakayama pairing (see \S\ref{subsec: hasse_simp}). 

\begin{thm}[($=$ Proposition~\ref{prop:pairing_welldef})]
The expression above does not depend on the auxiliary choices of $(s_{v},a)$ and $(t,a)$. Consequently, $(a,\eta)\mapsto\langle a,\eta\rangle$ is a well-defined pairing $\langle\cdot,\cdot\rangle\colon A(F)^{[z],\chi} \times \Pi_{\phi}(\tilde{T}_{z}) \to \mathbb{C}$.
\end{thm} 

Our intrinsic reinterpretation of the LLC (Theorem~\ref{thm:llc_reinterthm}) then identifies $\langle\cdot,\eta\rangle$ with the character of $\bar\eta|_{A(F)^{[z],\chi}}$ (Proposition~\ref{prop: pairing_ch_eta}), which combined with the Mackey-theoretic decomposition yields the main theorem:

\begin{thm}[(= Theorem~\ref{thm:finalthm})]
For $\eta \in \Pi_{\phi}(\tilde{T}_{z})$, the $[[\phi]]$-contribution to the multiplicity of $\eta$ is
\begin{align*}
     m_{\eta,\phi} = \frac{1}{|A(F)^{[z],\chi}|}\sum_{a\in A(F)^{[z],\chi}}\langle a,\eta\rangle.
\end{align*}
Let $\eta$ be a smooth irreducible admissible representation of $\tilde{T}_{z}(\mathbb{A})$. Then the multiplicity of $\eta$ in $L^{2}([\tilde{T}_{z}])$ is the sum of all the $[[\phi]]$-contributions:
\begin{align*}
    m_{\eta} = \sum_{[[\phi]]:\phi\in \Phi(\eta)}m_{\eta,\phi}, 
\end{align*}
where $[[\phi]]$ runs over the near-$A(F)^{[z]}$-equivalence classes in $\Phi(\eta)$. The sum is finite and $m_{\eta}<\infty$. Consequently, we have the Hilbert direct sum decomposition:
\begin{align*}
L^{2}([\tilde{T}_{z}]) = \widehat{\bigoplus}_{\eta}\ \eta^{\oplus m_{\eta}}.
\end{align*}
\end{thm}

\subsection{Structure of the paper}
Chapter~2 establishes the notation used throughout the paper. Chapter~3 reviews the (local and global) Tate--Nakayama duality for hypercohomology developed in~\cite[Appendices~A.3~\&~C]{kottwitz1999foundations}. In \S3.3 we prove a lemma that will play an important role in our reinterpretation of the LLC for disconnected tori. Chapter~4 reviews the setup for the disconnected groups from~\cite{kaletha2022local} and presents examples of disconnected tori.

Chapter~5 and Chapter~6 are devoted to the local aspects. Chapter~5 reviews the statement of the LLC for disconnected tori in terms of pure inner forms. In Chapter~6, we present our intrinsic construction of the LLC for disconnected tori, which we prove coincides with Kaletha's construction.

Chapter~7, Chapter~8 and Chapter~9 turn to the global aspects. After laying the necessary foundation for the global setting in Chapter~7, we obtain a preliminary spectral decomposition in \S8.2. Using Mackey theory, we calculate the multiplicities of each irreducible constituent on the automorphic side in \S8.4. In \S9.2 and \S9.3, we define adelic $L$-packets $\Pi_{\phi}(\tilde{T}_{z})$ and the pairing $A(F)^{[z],\chi} \times \Pi_{\phi}(\tilde{T}_{z}) \to \mathbb{C}$. Finally, in \S9.4, we establish the desired multiplicity formula. We conclude Chapter~9 with a comparison to the connected case, simplifications for tori satisfying the Hasse principle, and concrete examples. 

Appendix~A collects the necessary definitions and basic facts regarding group hypercohomology and hyperhomology.

\section{Notation}
In this paper, all fields considered are local or global fields of characteristic $0$. We denote the (absolute) Weil group of a (local or global) field $F$ by $W_{F}$, and write $\Gamma_{F}:=\mathrm{Gal}(\bar{F}/F)$ for its absolute Galois group. We abbreviate $\Gamma_{F}$ to $\Gamma$ whenever $F$ is clear from the context. If $v$ is a place of a global field $F$, we write $F_{v}$ for the completion of $F$ at $v$. 
We implicitly fix an algebraic closure $\overline{F}_{v}$ of $F_{v}$ and an embedding $\overline{F} \hookrightarrow \overline{F}_{v}$, which identifies the local Galois group $\Gamma_{v}:=\mathrm{Gal}(\bar{F}_{v}/F_{v})$ with a decomposition group of $\Gamma_{F}$. More generally, the local component at $v$ of a global object (e.g. a global $L$-parameter or a global Galois cocycle) is indicated by adding the subscript $v$. 

For an algebraic group $M$ defined over a field $F$, we abbreviate the continuous Galois cohomology $H^{i}(\Gamma_{F}, M(\overline{F}))$ to $H^{i}(F, M)$, and write $Z^{i}(F, M)$ for the corresponding group of cocycles. The same convention applies to each completion $F_{v}$.

Given a torus $T$ defined over a (local or global) field $F$, its cocharacter group $X_{*}(T) := \operatorname{Hom}_{\overline{F}}(\mathbb{G}_{\mathrm{m}}, T)$ is equipped with the natural Galois action. We denote the complex dual torus of $T$ by $\widehat{T}:= \operatorname{Hom}_{\mathbb{Z}}(X_{*}(T), \C^{\times})$, which carries the induced Galois action as well.

\section{The Tate--Nakayama duality for hypercohomology} \label{Chapter:TN-duality}
In this chapter, we review the local and global Tate--Nakayama Duality for hypercohomology. These results are due to Kottwitz--Shelstad~\cite{kottwitz1999foundations}. For the basics on group hyper(co)homology, including the notions of hyper(co)chains, hyper(co)cycles and hyper(co)boundaries used throughout this chapter, see Appendix~\ref{appendix: hyper}.

\subsection{Conventions}
Throughout this paper we adopt the following convention:
\begin{itemize}[label=\textbullet]
\item \textit{Cohomology:} (Hyper-)cohomology, (hyper-)cochains and (hyper-)cocycles of profinite groups and (absolute or relative) Weil groups should be understood in the \textbf{continuous} sense by default. When the abstract version is intended, we add the subscript \texttt{abs}.
\item \textit{Homology:} (Hyper-)homology, (hyper-)chains, and (hyper-)cycles for any group are always considered in the \textbf{abstract} sense.
\end{itemize}

\subsection{Local Tate--Nakayama duality for hypercohomology} \label{TNdualityforhyper}
Throughout this subsection, $F$ is a local field of characteristic zero, with absolute Galois group $\Gamma=\Gamma_{F}$. In~\cite{kottwitz1984stable}, Kottwitz reformulates the classical local Tate--Nakayama duality for a torus $T$ as an isomorphism
$$H^{1}(F,T) \cong \pi_{0}(\widehat{T}^{\,\Gamma})^{*},$$ 
which induces a pairing 
\begin{align}\label{eq: classical_TN_pairing}
\widehat{T}^{\,\Gamma} \times H^{1}(F,T) \to \mathbb{C}^{\times}.    
\end{align}
Similarly, the local Langlands correspondence for a torus $T$ 
\begin{align}\label{eq: classical_LLC_pairing}
H^{1}(W_{F}, \widehat{T})\cong \mathrm{Hom}_{\mathrm{cts}}(T(F),\mathbb{C}^{\times})    
\end{align}
gives rise to a pairing
$$T(F)\times H^{1}(W_{F}, \widehat{T}) \to \mathbb{C}^{\times}.$$
The above two pairings can be unified into the Tate–Nakayama pairing for hypercohomology. This duality for hypercohomology encompasses simultaneously the LLC for tori and the classical local Tate--Nakayama duality. We only sketch the outline of the duality construction here. For a detailed account, we refer the reader to pp. 122--139 of~\cite{kottwitz1999foundations}. 

Let $T$ and $U$ be $F$-tori with cocharacter groups $X$ and $Y$. Let $f:T \to U$ be a morphism defined over $F$. Let $f_{*}:X\to Y$ and $\hat{f}:\widehat{U}\to \widehat{T}$ be the maps induced by $f$. We aim to define a pairing between $H^{1}(W_{F},\widehat{U}\xrightarrow{\hat{f}}\widehat{T})$ and $H^{1}(F,T \xrightarrow{f} U)$. Throughout this section,  we fix a finite Galois extension $K$ of $F$ such that both $T$ and $U$ split over $K$. We denote the relative Weil group by $W_{K/F}$. 

\subsubsection{The isomorphism $\mathcal{H}$}
First, we recall the definition of the modified hyperhomology group $$H_{0}(W_{K/F}, X \to Y)_{0}.$$
Throughout, the chain groups $C_{\bullet}$ and the associated hyper(co)cycles and hyper(co)boundaries are understood as defined in Appendix~\ref{appendix: hyper}.
We start by noting that the image of the differential
\begin{align*}
C_{1}(W_{K/F},X)\xrightarrow{\partial}C_{0}(W_{K/F},X)
\end{align*}
lies in $C_{0}(W_{K/F},X)_{0}$, the subgroup of norm-$0$ elements in $C_{0}(W_{K/F},X) = X$. 

Given $X \xrightarrow{f_{*}} Y$, the following complex is used in defining  hyperhomology:
\begin{align}
    \cdots \to C_{1}(W_{K/F},X)\oplus C_{2}(W_{K/F},Y) \xrightarrow{\alpha} C_{0}(W_{K/F},X)\oplus C_{1}(W_{K/F},Y) \xrightarrow{\beta} C_{0}(W_{K/F},Y), 
\end{align}
where $\alpha(x,y) = (\partial x, f_{*}(x)-\partial y)$ and $\beta(x,y) = f_{*}(x) - \partial y$. The group $H_{0}(W_{K/F}, X \to Y)$ is defined as the quotient $\mathrm{ker} \,\beta/\mathrm{im}\,\alpha$. We define $(\mathrm{ker}\,\beta)_{0}$ to be the subgroup consisting of pairs $(x,y)$ in $\mathrm{ker}\,\beta$ with $x \in C_{0}(W_{K/F},X)_{0}$. According to the previous paragraph, we have $\mathrm{im}\,\alpha \subseteq (\mathrm{ker}\,\beta)_{0}$. Now we define the modified hyperhomology group
\begin{align}
    H_{0}(W_{K/F}, X \to Y)_{0} := (\mathrm{ker}\,\beta)_{0}/\,
\mathrm{im}\,\alpha\,,
\end{align}
and the usual long exact sequence associated with hyperhomology can be modified as:
\begin{align*}
H_{1}(W_{K/F},X) \to H_{1}(W_{K/F},Y)  \to H_{0}(W_{K/F}, X \to Y)_{0} \to H_{0}(W_{K/F},X)_{0} \to H_{0}(W_{K/F},Y)_{0}\,.
\end{align*}
Moreover, the above long exact sequence can be related to the one associated with hypercohomology of the Galois group $\mathrm{Gal}(K/F)$ (abbreviated as $K/F$ below) such that the following diagram commutes:
\begin{equation}\label{diagram: isom_H}
\adjustbox{scale=0.8,center}{%
\begin{tikzcd}
    H_{1}(W_{K/F},X) \ar[r] \ar["\mathcal{D}",d] & H_{1}(W_{K/F},Y) \ar[r]\ar["\mathcal{D}",d] & H_{0}(W_{K/F}, X \to Y)_{0} \ar[r] \ar["\mathcal{H}",d] & H_{0}(W_{K/F},X)_{0} \ar[r]\ar["TN",d] & H_{0}(W_{K/F},Y)_{0} \ar["TN",d]\\
    H^{0}(K/F,T(K)) \ar[r] & H^{0}(K/F,U(K)) \ar[r] &
    H^{1}(K/F,T(K) \to U(K)) \ar[r] &  H^{1}(K/F,T(K)) \ar[r] &  H^{1}(K/F,U(K)). 
\end{tikzcd}
}
\end{equation}
All the vertical arrows above are isomorphisms. $\mathcal{D}$ is a key isomorphism used in the construction of the LLC for $T$ (following Deligne's convention, see Remark~\ref{rmk: Deligne's Convention}), while $TN$ is the classical Tate--Nakayama isomorphism (note $H_{0}(W_{K/F},X)_{0} \cong \widehat{H}^{-1}(K/F,X)$, the Tate cohomology group at degree $-1$). Using the (co)chain-level maps that induce $\mathcal{D}$ and $TN$, one can define a map on the level of hyperchains and hypercochains, which induces the central isomorphism 
\begin{align} \label{isom: H}
\mathcal{H}:H_{0}(W_{K/F}, X \to Y)_{0} \xrightarrow{\sim} H^{1}(\mathrm{Gal}(K/F), T(K) \to U(K)). 
\end{align}

\begin{rmk}\label{rmk: Deligne's Convention}
We remark that there are two normalisations of LLC for the torus $T$. Indeed, if $[\phi]\mapsto \chi_{[\phi]}$ is the LLC for tori under Langlands' convention~\cite{langlands1997representations}, then the map $[\phi]\mapsto \chi_{[\phi]}^{-1}$ is clearly an isomorphism from $H^{1}(W_{K/F},\widehat{T})$ to $ \mathrm{Hom}_{\mathrm{cts}}(T(F),\mathbb{C}^{\times})$ as well, which we call ``the LLC under Deligne's convention''. 

The discrepancy between these two conventions usually lies in different ways of identifying $H_{1}(W_{K/F}, X)$ with $T(F)$. In Deligne's convention, the key isomorphism $\mathcal{D}:H_{1}(W_{K/F},X) \cong T(F)$ is constructed as follows:
\begin{align*}
\mathcal{D}:H_{1}(W_{K/F},X) \xrightarrow{\mathrm{Res}} H_{1}(K^{\times}, X) &\xrightarrow{\sim_{D}} T(K)\\ 
[x] &\mapsto \prod_{a\in K^{\times}}x_{a}(a)^{-1}.
\end{align*}
In contrast, in Langlands' convention, the key isomorphism $\mathcal{L}:H_{1}(W_{K/F},X) \xrightarrow{\sim} T(F)$ equals to $\sim_{L} \circ \, \mathrm{Res}$, where $\displaystyle \sim_{L}([x]) = \prod_{a\in K^{\times}}x_{a}(a)$.

In this work, unless specified otherwise, whenever we say ``LLC for tori'', we mean the LLC under Langlands' convention. However, we warn the reader that Deligne's convention is the one adopted in~\cite[Appendix~A.3]{kottwitz1999foundations}. In particular, following~\cite{kottwitz1999foundations}, the duality construction that we are reviewing involves $\mathcal{D}$ rather than $\mathcal{L}$.
\end{rmk}

\subsubsection{The Tate-Nakayama pairing for hypercohomology}
Let $\mathcal{P}_{\bullet}$ be the defining complex of $H_{0}(W_{K/F}, X\to Y)$. Due to injectivity of $\mathbb{C}^{\times}$, applying the functor $\mathrm{Hom}(-,\mathbb{C}^{\times})$ commutes with taking (co)homology. Thus, we have $\displaystyle \mathrm{Hom}(H_{\bullet}(\mathcal{P}_{\bullet}),\mathbb{C}^{\times}) \cong
H^{\bullet}(\mathrm{Hom}(\mathcal{P}_{\bullet}, \mathbb{C}^{\times}))$. In particular, we obtain
\begin{align*}
\mathrm{Hom}(H_{0}(W_{K/F},X \to Y),\mathbb{C}^{\times}) \cong H^{1}_{\mathrm{abs}}(W_{K/F}, \widehat{U}\to \widehat{T}),
\end{align*}
where the right-hand side is the abstract hypercohomology, defined in terms of abstract cochains (ignoring the topology on $W_{K/F}$). We can explicate the isomorphism as a pairing between $H_{0}(W_{K/F},X\to Y)$ and $H^{1}_{\mathrm{abs}}(W_{K/F}, \widehat{U}\to \widehat{T})$. Let $(x, y_{w})$ in $Z_{0}(W_{K/F},X\to Y)$ be a 0-hypercycle, i.e. $f_{*}(x) = \partial y_{w} = \sum (w^{-1}y_{w}-y_{w})$. Let $(u_{w},t)$ be an abstract 1-hypercocycle, where $u_{w} \in Z^{1}_{\mathrm{abs}}(W_{K/F},\widehat{U})$, $t \in \widehat{T}$ and $\hat{f}(u_{w})= \partial t = t^{-1}w(t)$. The pairing between $(x,y_{w})$ and $(u_{w},t)$ is given by
\begin{align}\label{formula:def_of_TNhyperpairing}
    \langle(x,y_{w}), (u_{w},t)\rangle = \langle x, t \rangle \prod_{w \in W_{K/F}} \langle y(w), u(w) \rangle^{-1}.
\end{align}
This pairing descends to hyper(co)homology. Moreover, we can restrict the pairing to the subgroups
\begin{align*}
H_{0}(W_{K/F},X \to Y)_{0} &\subseteq H_{0}(W_{K/F},X \to Y)\\
\intertext{and}
H^{1}(W_{K/F}, \widehat{U}\to \widehat{T}) &\subseteq H^{1}_{\mathrm{abs}}(W_{K/F}, \widehat{U}\to \widehat{T}),
\end{align*}
where $H^{1}(W_{K/F}, \widehat{U}\to \widehat{T})$ denotes the usual continuous hypercohomology. In view of the isomorphism $\mathcal{H}$~\eqref{isom: H}, we obtain a pairing
\begin{align*}
H^{1}(K/F, T \to U) \times H^{1}(W_{K/F}, \widehat{U}\to \widehat{T}) \to \mathbb{C}^{\times}.
\end{align*}
One can show that this pairing is compatible with inflation, hence after passing to colimits, we have actually obtained the pairing 
\begin{align}\label{TN-hyper-pairing}
H^{1}(F, T \to U) \times H^{1}(W_{F}, \widehat{U}\to \widehat{T}) \to \mathbb{C}^{\times}. 
\end{align}

\begin{prop}[\cite{kottwitz1999foundations}] \label{TN-LLCcompatibility}
The pairing~\eqref{TN-hyper-pairing} is compatible with the classical Tate--Nakayama pairing~\eqref{eq: classical_TN_pairing} and the LLC pairing (in Langlands' convention)~\eqref{eq: classical_LLC_pairing}. Precisely speaking, if we pair the long exact sequence on the group side with its counterpart on the dual side:
\begin{center}
\begin{tikzcd}
\ar[r] & H^{0}(F, U) \ar[r, "i"] \ar[drr,dotted, leftrightarrow] & H^{1}(F, T\to U) \ar[r, "p"] \ar[d, dotted, leftrightarrow] & H^{1}(F, T) \ar[r] \ar[dll, dotted, leftrightarrow] & {}\\
\ar[r] & H^{0}(W_{F},\widehat{T}) \ar[r, "\widehat{i}"]  & H^{1}(W_{F},\widehat{U}\to \widehat{T}) \ar[r, "\widehat{p}"]  & H^{1}(W_{F},\widehat{U}) \ar[r] & {},
\end{tikzcd}
\end{center}
then we have
\begin{align*}
\langle i(u), \hat{c} \rangle &= \langle u, \hat{p}(\hat{c}) \rangle\\
\intertext{and}
\langle c, \hat{i}(s) \rangle &= \langle p(c), s \rangle,
\end{align*}
for any $u\in H^{0}(F,U)$, $\hat{c}\in H^{1}(W_{F},\widehat{U}\to \widehat{T})$, $c\in H^{1}(F, T \to U)$ and $s\in H^{0}(W_{F},\widehat{T})$.
\end{prop}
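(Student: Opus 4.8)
The plan is to deduce both identities by an explicit chain-level computation using the pairing formula (\ref{co-hopairing}), after first transporting the group-side long exact sequence to the more convenient long exact sequence of $W_{K/F}$-hyperhomology. By the commutativity of diagram (\ref{fivelemma}) (already established), the isomorphisms $\mathcal{D}$, $\mathcal{H}$, $TN$ intertwine the group-side sequence $H^0(F,U)\xrightarrow{i}H^1(F,T\to U)\xrightarrow{p}H^1(F,T)$ with the hyperhomology sequence $H_1(W_{K/F},Y)\to H_0(W_{K/F},X\to Y)_0\to H_0(W_{K/F},X)_0$, in which the first map is induced by the chain-level inclusion $y_w\mapsto(0,y_w)$ of $C_1(Y)$ into $C_0(X)_0\oplus C_1(Y)$ and the second by the chain-level projection $(x,y_w)\mapsto x$. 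On the dual side, $\widehat{i}$ and $\widehat{p}$ are likewise induced by the chain-level inclusion $t\mapsto(1,t)$ and projection $(u_w,t)\mapsto u_w$ at the level of hypercochains of $W_{F}$. Finally, I will invoke that, by the explicit construction of the Langlands correspondence recalled in Appendix \ref{appLanglands}, the Langlands pairing (\ref{Langlands-pairing}) is characterised by $\langle\mathcal{D}([y_w]),[u_w]\rangle=\prod_{w}\langle y_w(w),u_w(w)\rangle^{\varepsilon}$ for a fixed sign $\varepsilon\in\{\pm1\}$ reflecting Langlands' normalisation, and that, by Appendix \ref{appTN}, the Tate--Nakayama pairing (\ref{TN-pairing}) is characterised by $\langle TN([x]),\hat t\rangle=\langle x,\hat t\rangle$.

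Granting these, each identity reduces to a one-line computation with (\ref{co-hopairing}). For the first, choose $y_w\in Z_1(W_{F},Y)$ with $\mathcal{D}([y_w])=u$; then $i(u)=\mathcal{H}([(0,y_w)])$, so by definition of the pairing (\ref{pairingv2}) and formula (\ref{co-hopairing}) we get, for any representative $(u_w,t)$ of $\widehat{z}$, $\langle i(u),\widehat{z}\rangle=\langle 0,t\rangle\prod_{w}\langle y_w(w),u_w(w)\rangle^{-1}=\prod_{w}\langle y_w(w),u_w(w)\rangle^{-1}$, since $\langle 0,t\rangle=1$; on the other hand $\widehat{p}(\widehat{z})=[u_w]$, and the characterisation of the Langlands pairing recalled above gives the same quantity (this is exactly the step at which one must check that $\varepsilon=-1$, i.e. that the inverse in (\ref{co-hopairing}) matches Langlands' convention). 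For the second, write $z=\mathcal{H}([(x,y_w)])$ with $(x,y_w)$ a norm-$0$ $0$-hypercycle; then $\widehat{i}(\hat t)$ is represented by $(1,\hat t)$, and (\ref{co-hopairing}) gives $\langle z,\widehat{i}(\hat t)\rangle=\langle x,\hat t\rangle\prod_{w}\langle y_w(w),1\rangle^{-1}=\langle x,\hat t\rangle$; on the other hand $p(z)=TN([x])$, so $\langle p(z),\hat t\rangle=\langle x,\hat t\rangle$ by the characterisation of the Tate--Nakayama pairing. In both cases the mechanism is the same: the extra factor coming from the second coordinate of the relevant hyperchain/hypercochain is trivial because one of its two arguments is the identity.

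The homological-algebra content here is minimal and completely formal --- it is the statement that applying the exact functor $\mathrm{Hom}(-,\mathbb{C}^\times)$ to the short exact sequence of complexes underlying the group-side long exact sequence yields the short exact sequence of complexes underlying the dual-side one, and that the tautological evaluation pairings are compatible with the resulting connecting maps. The real work, and the step I expect to be the main obstacle, is the reconciliation of normalisations: one must track simultaneously (i) the inverse built into the second factor of (\ref{co-hopairing}), which was forced by the placement of $\hat U\to\hat T$ in degrees $0$ and $1$, (ii) the discrepancy between Deligne's convention used for $\mathcal{D}$ and Langlands' convention used for the Langlands pairing, and (iii) the normalisation of the Tate--Nakayama isomorphism induced by $\psi$. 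I expect all of these to cancel so that the identities hold on the nose as stated, but verifying this requires care; I would model the bookkeeping on the treatment on pp. 136--137 of \cite{kottwitz1999foundations}, where the analogous reconciliation is carried out for compatibility with inflation.
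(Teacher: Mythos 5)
Your proposal is correct and follows essentially the same route as the paper: both rest on the commutativity of diagram (\ref{fivelemma}), the explicit chain-level formula (\ref{co-hopairing}), and the observation that the sign built into (\ref{co-hopairing}) cancels against the sign by which Deligne's convention $\mathcal{D}$ differs from Langlands' $\mathcal{L}$. The paper's proof is terser (it simply asserts the two sign discrepancies and notes they cancel), whereas you spell out the chain-level verification; the content is the same.
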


\subsubsection{Duality}
In this part we explain the duality induced by the pairing~\eqref{TN-hyper-pairing}. Recall the long exact sequence
\begin{align*}
    \cdots \to T(F) \to U(F) \xrightarrow{i} H^{1}(F,T\xrightarrow{f} U) \to H^{1}(F,T) \to H^{1}(F, U) \to \cdots.
\end{align*}
The hypercohomology group $H^{1}(F,T\xrightarrow{f} U)$ can be topologized in such a way that any character of $H^{1}(F,T\xrightarrow{f} U)$ is continuous if and only if it is continuous on $i(U(F))$. By Proposition~\ref{TN-LLCcompatibility}, the pairing~\eqref{TN-hyper-pairing} is continuous on $i(U(F))$ indeed, and hence induces the map
\begin{align*}
H^{1}(W_{F}, \widehat{U}\to \widehat{T}) \to   \mathrm{Hom}_{\mathrm{cts}}(H^{1}(F,T\xrightarrow{f} U),\mathbb{C}^{\times}).
\end{align*}

By virtue of Proposition~\ref{TN-LLCcompatibility}, the following diagram commutes:
\begin{center}
\begin{tikzcd}
H^{1}(F,U)^{'} \arrow[r]          & H^{1}(F,T)^{'} \arrow[r]          & {H^{1}(F,T\xrightarrow{f} U)'} \arrow[r]     & U(F)' \arrow[r]                    & T(F)'                    \\
\widehat{U}^{W_{F}} \arrow[u] \arrow[r] & \widehat{T}^{W_{F}} \arrow[u] \arrow["\hat{i}",r] & H^{1}(W_{F}, \widehat{U}\xrightarrow{\hat{f}} \widehat{T}) \arrow[u] \arrow[r] & H^{1}(W_{F},\widehat{U}) \arrow[u] \arrow[r] & H^{1}(W_{F},\widehat{T}) \arrow[u].
\end{tikzcd}
\end{center}
In the above diagram, we denoted $\mathrm{Hom}_{\mathrm{cts}}(-,\mathbb{C}^{\times})$ by $'$ for brevity. The two vertical maps to the right are the LLC for tori, while the two vertical maps to the left come from the Tate--Nakayama pairing and are surjective with kernels $(\widehat{U}^{W_{F}})^{\circ}$ and $(\widehat{T}^{W_{F}})^{\circ}$, the identity components of Galois-invariant subgroup of the dual tori $\widehat{U}$ and $\widehat{T}$, respectively.

Define the quotient
\begin{align*}
H^{1}(W_{F}, \widehat{U}\xrightarrow{\hat{f}} \widehat{T})_{\mathrm{red}}:= \frac{H^{1}(W_{F}, \widehat{U}\xrightarrow{\hat{f}} \widehat{T})}{\hat{i}((\widehat{T}^{W_{F}})^{\circ})},
\end{align*}
and modify the above commutative diagram into
\begin{center}
\begin{tikzcd}
H^{1}(F,U)^{'} \arrow[r]          & H^{1}(F,T)^{'} \arrow[r]          & {H^{1}(F,T\xrightarrow{f} U)'} \arrow[r]     & U(F)' \arrow[r]                    & T(F)'                    \\
\pi_{0}(\widehat{U}^{W_{F}}) \arrow[u] \arrow[r] & \pi_{0}(\widehat{T}^{W_{F}}) \arrow[u] \arrow["\hat{i}",r] & H^{1}(W_{F}, \widehat{U}\xrightarrow{\hat{f}} \widehat{T})_{\mathrm{red}} \arrow[u] \arrow[r] & H^{1}(W_{F},\widehat{U}) \arrow[u] \arrow[r] & H^{1}(W_{F},\widehat{T}) \arrow[u].
\end{tikzcd}
\end{center}
By the five lemma, the vertical map in the middle must be an isomorphism, because all the other four are so. This concludes the construction of Tate--Nakayama duality for hypercohomology:
\begin{prop}[\cite{kottwitz1999foundations}] \label{prop:TN-main-thm}
The pairing~\eqref{TN-hyper-pairing} induces a functorial isomorphism
\begin{align*}
H^{1}(W_{F}, \widehat{U}\xrightarrow{\hat{f}} \widehat{T})_{\mathrm{red}} \cong   \mathrm{Hom}_{\mathrm{cts}}(H^{1}(F,T\xrightarrow{f} U),\mathbb{C}^{\times}).
\end{align*}
Here, functoriality means the following: Let $\alpha \colon (T \xrightarrow{f} U) \to (V \xrightarrow{g} W)$ be a morphism of complexes of $F$-tori, and let its dual morphism be $\hat{\alpha} \colon (\widehat{W} \xrightarrow{\hat{g}} \widehat{V}) \to (\widehat{U} \xrightarrow{\hat{f}} \widehat{T})$. Let $\alpha_{*}$ and $\hat{\alpha}_{*}$ denote the induced maps on hypercohomology. For any $c \in H^{1}(F,T\xrightarrow{f} U)$ and $\hat{c} \in H^{1}(W_{F}, \widehat{W}\xrightarrow{\hat{g}} \widehat{V})_{\mathrm{red}}$, we have
$$ \langle c, \hat{\alpha}_*(\hat{c}) \rangle_{T \to U} = \langle \alpha_*(c), \hat{c} \rangle_{V \to W}. $$
\end{prop}

\subsection{A cohomological lemma}
For later convenience, we record an interesting result, which will play an important role in our reinterpretation of the local Langlands correspondence for disconnected tori (Theorem~\ref{thm:llc_reinterthm}) and its proof.

\begin{thm} \label{thm:appoffunctor}
Let  $T$, $U$ and $V$ be $F$-tori, and let $f$ and $g$ be $F$-morphisms:
\begin{center}
\begin{tikzcd}
    T \arrow{r}{f} &U \arrow{r}{g} &V.
\end{tikzcd}
\end{center}
Let $\hat{f}$ and $\hat{g}$ be the induced morphisms between the dual tori:
\begin{center}
\begin{tikzcd}
    \widehat{T}& \arrow[l,"\hat{f}"']\widehat{U}& \arrow[l,"\hat{g}"'] \widehat{V}.
\end{tikzcd}
\end{center}
We consider the natural map induced by $g$
\begin{center}
\begin{tikzcd}
H^{1}(F, T\xrightarrow{f} U) \arrow[r,"g_{*}"] &H^{1}(F, T\xrightarrow{g f} V),
\end{tikzcd}
\end{center}
sending the class of $(z,t)$ to that of $(z, g(t))$, and the natural map induced by $\hat{f}$
\begin{center}
\begin{tikzcd}
H^{1}(W_{F}, \widehat{V}\xrightarrow{\hat{g}} \widehat{U}) \arrow[r,"\hat{f}_{*}"] &H^{1}(W_{F}, \widehat{V}\xrightarrow{\hat{f}\hat{g}} \widehat{T}),
\end{tikzcd}
\end{center}
sending the class of $(\phi, s)$ to that of $(\phi, \hat{f}(s))$. Then the Tate--Nakayama pairing between the image of $g_{*}$ and the image of $\hat{f}_{*}$ must vanish:
\begin{align*}
\langle g_{*}(z,t), \hat{f}_{*}(\phi,s) \rangle = 1,
\end{align*}
for any $(z,t)$ and any $(\phi,s)$. 
\end{thm}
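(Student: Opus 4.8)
The plan is to unwind the definitions until the desired vanishing becomes a transparent cancellation in the explicit pairing formula~(\ref{co-hopairing}). First I would translate everything to the chain level. Pick a representative $(z,t)$ of a class in $H^{1}(F,T\xrightarrow{f}U)$, so $z\in Z^{1}(F,T)$ and $t\in U$ with $f(z)=\partial t$; its image under $g_{*}$ is $(z,g(t))\in H^{1}(F,T\xrightarrow{gf}V)$, represented at the level of the diagram~(\ref{diagram 2})-type complex by a pair with the $V$-coordinate equal to $g(t)$. On the dual side, pick $(\phi,s)$ with $\phi\in Z^{1}_{\mathrm{abs}}(W_{F},\hat V)$ and $s\in\hat U$ satisfying $\hat g(\phi)=\partial s$; its image under $\hat f_{*}$ is $(\phi,\hat f(s))$. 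Next I would rewrite both classes through the isomorphism $H_{0}(W_{K/F},X\to Y)_{0}\cong H^{1}(K/F,T\to U)$ of~(\ref{basichyperisomH}), so that $g_{*}(z,t)$ corresponds to a $0$-hypercycle $(x,y_{w})$ for the complex $X\xrightarrow{(gf)_{*}}\mathfrak{X}_{V}$ (writing $\mathfrak{X}_{V}$ for the cocharacter lattice of $V$) and $\hat f_{*}(\phi,s)$ corresponds to the $1$-hypercocycle $(\phi_{w},\hat f(s))$ for $\hat V\xrightarrow{\widehat{gf}}\hat T$.

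The key point is the following structural observation: $g_{*}(z,t)$ lies in the image of the natural map $H^{1}(F,T\xrightarrow{f}U)\to H^{1}(F,T\xrightarrow{gf}V)$, equivalently (using the long exact sequence and Proposition~\ref{TN-LLCcompatibility}) the corresponding hypercycle can be chosen with trivial $\mathfrak{X}_{V}$-component up to the image of the differential, i.e.\ its class comes from $H_{0}(W_{K/F},X\to\mathfrak{X}_{U})_{0}$ via $f_{*}$ on the target. Dually, $\hat f_{*}(\phi,s)$ has $\hat T$-component $\hat f(s)$ which is in the image of $\hat f$, so $\widehat p(\hat f_{*}(\phi,s))=\phi$ as a class in $H^{1}(W_{F},\hat V)$, but more importantly $\hat f_{*}(\phi,s)=\hat i(\text{something})$ would be false in general — instead the right statement is that pairing against $\hat f_{*}(\phi,s)$ only sees data through $\hat f$. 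Concretely I would compute, using~(\ref{co-hopairing}),
\begin{align*}
\langle g_{*}(z,t),\hat f_{*}(\phi,s)\rangle
= \langle x, \hat f(s)\rangle\prod_{w\in W_{K/F}}\langle y(w),\phi(w)\rangle^{-1},
\end{align*}
and then use adjunction $\langle x,\hat f(s)\rangle = \langle f_{*}(x),s\rangle$ together with the hypercycle relation $f_{*}(x)=\partial y=\sum_{w}(w^{-1}y_{w}-y_{w})$ coming from the fact that $(x,y_{w})$ is the image of a class living on $X\to\mathfrak{X}_{U}$, so that $\langle f_{*}(x),s\rangle=\langle\partial y,s\rangle=\prod_{w}\langle w^{-1}y_{w}-y_{w},s\rangle$. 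On the other hand, the $\hat V$-valued cocycle $\phi_{w}$ that occurs as the first coordinate of $\hat f_{*}(\phi,s)$ is exactly $\hat g$ applied to... no: rather one uses $\hat g(\phi)=\partial s$, i.e.\ $\hat g(\phi_{w})=s^{-1}w(s)$, to rewrite $\prod_{w}\langle y(w),\phi(w)\rangle^{-1}$ after applying $g_{*}$: since $y$ lives in $\mathfrak{X}_{U}$ and pairs with $\phi$ through $\hat g$, we get $\langle y(w),\phi(w)\rangle = \langle g_{*}(y(w)),\text{(image in }\hat V)\rangle$ matching up with the $\partial s$ relation, and the two contributions cancel.

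I expect the main obstacle to be bookkeeping: keeping straight which lattice/torus each chain lives in after applying $g_{*}$ and $\hat f_{*}$, and correctly tracking the adjunctions $\langle f_{*}(-),-\rangle = \langle -,\hat f(-)\rangle$ and $\langle g_{*}(-),-\rangle = \langle -,\hat g(-)\rangle$ through the modified hyper(co)homology complexes. The conceptual content is just that $g_{*}$ pushes the ``$U$-part'' $t$ into $V$ via $g$ while $\hat f_{*}$ pulls the ``$\hat U$-part'' $s$ into $\hat T$ via $\hat f$, and these two operations are adjoint to inclusions going in opposite directions along $U\xrightarrow{g}V$ and $\hat V\xrightarrow{\hat g}\hat U$; the pairing~(\ref{co-hopairing}) then pairs the image of the differential against an element, which vanishes by the (hyper)cocycle/(hyper)cycle conditions exactly as the norm-$0$ argument was used at the end of the proof of Proposition~\ref{welldef-keyisom}. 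An alternative, possibly cleaner, route is to avoid chains entirely: observe that $\operatorname{im}(g_{*})$ is annihilated by everything in $\operatorname{im}(\hat f_{*})$ iff the composite pairing factors appropriately, which by Proposition~\ref{TN-LLCcompatibility} and the commutativity of~(\ref{fivelemma}) reduces to the statement that the composition $\hat V\xrightarrow{\hat g}\hat U\xrightarrow{\hat f}\hat T$ is dual to $T\xrightarrow{f}U\xrightarrow{g}V$ and that pairing a class induced from the left term of one complex with a class induced from the right term of the dual complex along a two-step composite always vanishes — a purely formal consequence of the zero composite $0\to 0$ in the relevant long exact sequences. I would present the chain-level computation as the primary proof and remark on the formal argument.
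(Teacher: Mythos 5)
Your proposal follows the same chain-level strategy as the paper's proof: pass to explicit hyper(co)cycles, unwind the pairing formula~(\ref{co-hopairing}), move $f_{*}$ and $g_{*}$ across the angle brackets by adjunction, substitute the hypercycle relation $f_{*}(\lambda)=\partial\mu$ and hypercocycle relation $\hat g(\phi)=\partial s$, and observe the telescoping cancellation.

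The one real wrinkle in your write-up is exactly the lattice bookkeeping you flagged as the main obstacle. You first declare $(x,y_{w})$ to be a $0$-hypercycle for $X\xrightarrow{(gf)_{*}}\mathfrak{X}_{V}$, so that $y$ is valued in $\mathfrak{X}_{V}$ and satisfies $(gf)_{*}(x)=\partial y$; but you then write $f_{*}(x)=\partial y$ and say ``$y$ lives in $\mathfrak{X}_{U}$,'' which conflates the intermediate chain $\mu\in C_{1}(W_{K/F},\mathfrak{X}_{U})$ (satisfying $f_{*}(x)=\partial\mu$) with its pushforward $g_{*}(\mu)$. The clean version, as in the paper, keeps these separate: lift $(z,t)$ to $(\lambda,\mu)\in Z_{0}(W_{K/F},X\to\mathfrak{X}_{U})_{0}$ so $g_{*}(z,t)$ is represented by $(\lambda,g_{*}(\mu))$, and the pairing is
\begin{align*}
\langle\lambda,\hat f(s)\rangle\prod_{w}\langle g_{*}(\mu(w)),\phi(w)\rangle^{-1}
=\langle f_{*}(\lambda),s\rangle\prod_{w}\langle\mu(w),\hat g(\phi(w))\rangle^{-1},
\end{align*}
after which the two defining relations give the cancellation. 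Your concluding ``formal'' alternative is too vague to stand alone (the pairing does not literally pair ``image of $i$'' against ``image of $\hat p$'' here — both factors pass through the middle term), so the chain-level computation is the right thing to present, as you anticipated.
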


\begin{proof}
    Let $\alpha: (T \xrightarrow{f} U) \to (T \xrightarrow{g f} V)$ be the morphism of complexes induced by $U\xrightarrow{g}V$. Then the dual morphism of $\alpha$ is $\hat{\alpha}: (\widehat{V} \xrightarrow{\hat{f}\hat{g}} \widehat{T}) \to (\widehat{U} \xrightarrow{\hat{f}} \widehat{T})$, induced by $\widehat{V}\xrightarrow{\hat{g}}\widehat{U}$. Let $\alpha_{*}$ and $\hat{\alpha}_{*}$ denote the induced maps on hypercohomology. At the level of hypercocycles, we have $\alpha_{*}(z, t) = (z, g(t)) = g_{*}(z,t)$ and $\hat{\alpha}_{*}(\phi, \hat{f}(s)) = (\hat{g}\circ \phi, \hat{f}(s))$.
    
    By the functoriality of the Tate-Nakayama pairing for hypercohomology (see Proposition~\ref{prop:TN-main-thm}), we have 
    $$ \langle g_{*}(z,t), \hat{f}_{*}(\phi,s) \rangle = \langle \alpha_{*}(z, t), (\phi, \hat{f}(s))\rangle = \langle (z,t) , \hat{\alpha}_{*}(\phi, \hat{f}(s))  \rangle = \langle (z,t) , (\hat{g}\circ \phi, \hat{f}(s)) \rangle.
    $$
Now, we note that $(\phi, s) \in Z^{1}(W_{F}, \widehat{V}\xrightarrow{\hat{g}} \widehat{U})$ implies $\hat{g}\circ \phi = \partial s$ by the definition of $1$-hypercocycles. Therefore, $(\hat{g}\circ \phi, \hat{f}(s)) = (\partial s, \hat{f}(s))$ lies in $B^{1}(W_{F}, \widehat{U}\xrightarrow{\hat{f}} \widehat{T})$, the group of $1$-hypercoboundaries. Because the Tate--Nakayama pairing is evaluated on cohomology classes, pairing any class with a hypercoboundary yields the trivial value $1$. This implies 
$$\langle (z,t) , (\hat{g}\circ \phi, \hat{f}(s)) \rangle = 1.$$
\end{proof}

\subsection{Global Tate--Nakayama duality for hypercohomology}
Like its local counterpart, the global Tate--Nakayama Duality for hypercohomology combines the global Langlands correspondence for tori with the classical global Tate--Nakayama duality. The constructions can be carried out in the same way as the local case (the idele class group plays the same role as the multiplicative group plays in the local setting).

Let $F$ be a number field. Let $T$ and $U$ be $F$-tori. Let $f:T\to U$ be an $F$-morphism. In the global setting, we consider the following Galois hypercohomology:
\begin{align*}
H^{i}(\mathbb{A}/F,T \xrightarrow{f} U) &:= H^{i}(F,T(\bar{\mathbb{A}})/T(\bar{F})\xrightarrow{f}U(\bar{\mathbb{A}})/U(\bar{F})),
\end{align*}
which can be topologized in a similar manner as the local case (see~\cite[Appendix~C]{kottwitz1999foundations} for details).

On the dual side, one can define the continuous hypercohomology $H^{1}(W_{F},\widehat{U}\xrightarrow{\hat{f}}\widehat{T})$ and its reduced version $H^{1}(W_{F},\widehat{U}\xrightarrow{\hat{f}}\widehat{T})_{\mathrm{red}}$ similarly as in the local setting. Now, we state the global Tate--Nakayama duality for hypercohomology: 
\begin{thm}[\cite{kottwitz1999foundations}]\label{thm: TN-duality-Global}
There is a natural functorial isomorphism
\begin{align*}
H^{1}(W_{F},\widehat{U}\xrightarrow{\hat{f}}\widehat{T})_{\mathrm{red}} \cong \mathrm{Hom}_{\mathrm{cts}}(H^{1}(\mathbb{A}/F,T \xrightarrow{f} U), \mathbb{C}^{\times}) 
\end{align*}
that is compatible with the global Langlands correspondence and the classical global Tate--Nakayama duality. Moreover, this isomorphism is compatible with the local Tate--Nakayama duality for hypercohomology, in the sense that the following diagram commutes:
\begin{center}
\begin{tikzcd}
H^{1}(W_{F}, \widehat{U}\to\widehat{T})_{\mathrm{red}} \ar[r,"\sim"]\ar[d] & \mathrm{Hom}_{\mathrm{cts}}(H^{1}(\mathbb{A}/F,T\to U),\mathbb{C}^{\times}) \ar[d]\\ 
\prod_{v} H^{1}(W_{F_{v}}, \widehat{U}\to \widehat{T})_{\mathrm{red}}
\ar[r, "\sim"] & \prod_{v} \mathrm{Hom}_{\mathrm{cts}}(H^{1}(F_{v}, T\to U),\mathbb{C}^{\times}).
    \end{tikzcd}
\end{center}
Suppose $c \in H^{1}(\mathbb{A}/F,T\to U)$ is represented by an adelic hypercocycle with local components $c_{v}\in H^{1}(F_{v},T\to U)$, unramified for almost all $v$. Then, for any $\hat{c}\in H^{1}(W_{F},\widehat{U}\to\widehat{T})_{\mathrm{red}}$ with localizations $\hat{c}_{v}$, almost all factors below equal $1$ and
\begin{align*}
\langle c, \hat{c}\rangle = \prod_{v}\langle c_{v}, \hat{c}_{v}\rangle\,.
\end{align*}
\end{thm}

\section{Disconnected reductive groups}
We will closely follow~\cite{kaletha2022local} and introduce a certain class of disconnected groups, for which we reserve the term ``disconnected reductive groups''. Due to our emphasis in this paper, we will present some examples of rank-$1$ disconnected tori.

\subsection{Convention}\label{convention}
Let $F$ be a field of characteristic $0$ with absolute Galois group $\Gamma := \mathrm{Gal}(\bar{F}/F)$. In this paper, we say an affine algebraic group $\tilde{G}$ defined over $F$ is a disconnected reductive group if $\tilde{G}$ satisfies the following conditions: 
\begin{itemize}[label=\textbullet]
    \item There is an isomorphism defined over $\bar{F}$:
    \begin{align*}
    \tilde{G} \xrightarrow{\sim} G \rtimes A,
    \end{align*} 
    where $G$ is a connected reductive group and $A$ is a finite group scheme.
    \item  The action of $A$ on $G$ preserves some fixed $\bar{F}$-pinning of $G$.
\end{itemize}

Here, we allow $A = \{1\}$ so that connected groups are naturally incorporated in this framework.

\begin{noneg}
Let $T \subset SL_2$ be a maximal torus and let $N(T)$ be the normalizer of $T$ in ${SL}_{2}$. Then $N(T)$ is  disconnected with identity component $T$. However, the non-identity component of $N(T)(\bar{F})$ contains no order-$2$ element. Hence, $N(T)$ does not split as a semi-direct product, even over $\bar{F}$. It does not fall into the class of disconnected reductive groups (or disconnected tori) treated in this paper.
\end{noneg}

\subsection{Classification}
Following~\cite{kaletha2022local}, we recall the classification of disconnected reductive groups. It is well-known that each connected reductive group has a unique split form and moreover, a unique quasi-split inner form. These notions can be extended to the disconnected setting. There turns out to be an additional type called \textit{translation form}, in the term of~\cite{kaletha2022local}. 

First, we extend the notions of \textit{quasi-splitness} and \textit{splitness}:
\begin{defn}
We call $\tilde{G}$ a split disconnected reductive group if there is an $F$-isomorphism
\begin{align*}
    \tilde{G} \to G\rtimes A,
\end{align*}
where $G$ is a split connected reductive group, and $A$ is a constant group scheme that acts on $G$ by preserving some $F$-pinning of it.
\end{defn}

\begin{defn}
We call $\tilde{G}$ a quasi-split disconnected reductive group if there is an $F$-isomorphism
\begin{align*}
    \tilde{G} \to G\rtimes A,
\end{align*}
where $G$ is a quasi-split connected reductive group, and $A$ is a (not necessarily constant) finite group scheme that acts on $G$ (where the action is defined over $F$) and preserves some $F$-pinning of it.
\end{defn}

We note that split disconnected reductive groups can be classified by the root data of $G$ along with the $A$-action on it. By definition, each disconnected reductive group has a unique split form. To classify all disconnected reductive groups, it suffices to classify the forms of a given split disconnected reductive group $G\rtimes A$. Equivalently, we hope to understand the Galois cohomology ${H}^{1}(\Gamma, \mathrm{Aut}_{\bar{F}}(G \rtimes A))$. We omit the subscript $\bar{F}$ and simply write $\mathrm{Aut}(G \rtimes A)$ for brevity.

There are three distinguished subgroups of $\mathrm{Aut}(G \rtimes A)$:
\begin{itemize}[label = \textbullet]
    \item $G/{Z(G)}^{A}$, the group of inner automorphisms. 
    \item $Z^{1}(A, Z(G))$, the group of translation automorphisms. Each $1$-cocycle $z \in Z^{1}(A, Z(G))$ induces an automorphism $(g,a) \mapsto (z(a)g,a)$.
    \item $\mathrm{Aut}_{\mathrm{pin}}(G \rtimes A)$, the group of pinned automorphisms over $\bar{F}$, by which we mean automorphisms that preserve $1\rtimes A$ as well as the pinning on $G$.
\end{itemize}

Note that the intersection of $G/{Z(G)}^{A}$ with $Z^{1}(A, Z(G))$ can be expressed as $Z(G)/{Z(G)}^{A}$ (as a subgroup of $G/{Z(G)}^{A}$) or $B^{1}(A, Z(G))$ (as a subgroup of $Z^{1}(A, Z(G))$). 

\begin{fact}[{\cite[\S3.1]{kaletha2022local}}]\label{structureofAut}
There is a semi-direct product decomposition:
\begin{align*}
\mathrm{Aut}(G \rtimes A) = (G/{Z(G)}^{A} \cdot Z^{1}(A, Z(G))) \rtimes \mathrm{Aut}_{\mathrm{pin}}(G \rtimes A).
\end{align*}
\end{fact}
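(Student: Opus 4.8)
The plan is to bootstrap from the classical structure theorem for connected reductive groups, namely Chevalley's isomorphism theorem $\mathrm{Aut}(G)=\mathrm{Inn}(G)\rtimes\mathrm{Aut}_{\mathrm{pin}}(G)$ with $\mathrm{Inn}(G)\cong G/Z(G)$, \emph{together with the uniqueness of the corresponding factorisation of an automorphism of $G$}, and to peel off an automorphism of $\tilde G:=G\rtimes A$ one layer at a time. Since $G=\tilde G^{\circ}$ is characteristic, any $\varphi\in\mathrm{Aut}(\tilde G)$ restricts to $\varphi|_{G}\in\mathrm{Aut}(G)$ and induces $\sigma\in\mathrm{Aut}(\pi_{0}(\tilde G))=\mathrm{Aut}(A)$. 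Writing $\varphi|_{G}=\mathrm{Int}(g)\circ\theta_{G}$ with $\theta_{G}$ pinned and replacing $\varphi$ by $\mathrm{Int}(g^{-1})\circ\varphi$ (an inner automorphism of $\tilde G$ coming from $G$), I may assume from the outset that $\varphi|_{G}=\theta_{G}$ is a pinned automorphism of $G$.

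The heart of the argument is to control $\varphi$ on the non-identity components. Write $\varphi(1\rtimes a)=(g_{a},\sigma(a))$. Applying $\varphi$ to the conjugation identity $(1\rtimes a)(h\rtimes 1)(1\rtimes a)^{-1}=a(h)\rtimes 1$ and expanding the right-hand side yields the relation $\theta_{G}\circ a\circ\theta_{G}^{-1}=\mathrm{Int}(g_{a})\circ\sigma(a)$ in $\mathrm{Aut}(G)$. The left-hand side is a composite of pinned automorphisms of $G$ (here one uses that $A$ acts by pinned automorphisms), hence pinned; the uniqueness half of the Chevalley factorisation applied to the right-hand side then forces $g_{a}\in Z(G)$ \emph{and} $\sigma(a)=\theta_{G}\circ a\circ\theta_{G}^{-1}$. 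This last identity is precisely the compatibility condition making $(\theta_{G},\sigma)$ glue to a well-defined element $\theta\in\mathrm{Aut}_{\mathrm{pin}}(G\rtimes A)$ (it preserves the pinning of $G$ and the subgroup $1\rtimes A$). Replacing $\varphi$ by $\theta^{-1}\circ\varphi$ reduces to the case $\varphi|_{G}=\mathrm{id}$ and $\sigma=\mathrm{id}$; then $\varphi(h\rtimes a)=(z(a)h,a)$ where $z(a)$ is the $G$-component of $\varphi(1\rtimes a)$, the same conjugation computation shows $z(a)\in Z(G)$, and multiplicativity of $\varphi$ gives $z\in Z^{1}(A,Z(G))$. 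Thus $\varphi$ is a translation automorphism, and undoing the reductions shows the original $\varphi$ lies in $(G/Z(G)^{A})\cdot Z^{1}(A,Z(G))\cdot\mathrm{Aut}_{\mathrm{pin}}(G\rtimes A)$.

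It remains to package this into the claimed semidirect product $N\rtimes\mathrm{Aut}_{\mathrm{pin}}(G\rtimes A)$ with $N:=(G/Z(G)^{A})\cdot Z^{1}(A,Z(G))$. I would verify: (a) $N$ is a subgroup --- inner and translation automorphisms commute (the $z(a)$ are central), and conjugation by $c\in Z(G)$ equals the translation along the coboundary $a\mapsto c\,a(c)^{-1}$, identifying the overlap $(G/Z(G)^{A})\cap Z^{1}(A,Z(G))$ with $B^{1}(A,Z(G))$; (b) $\mathrm{Aut}_{\mathrm{pin}}(G\rtimes A)$ is a subgroup normalising $N$ --- conjugating an inner automorphism by a pinned $\theta$ stays inner, and a direct check gives $\theta\circ t_{z}\circ\theta^{-1}=t_{z'}$ with $z'(a)=\theta_{G}(z(\sigma^{-1}a))$, again a cocycle; combined with the surjectivity of the previous paragraph this yields $\mathrm{Aut}(\tilde G)=N\cdot\mathrm{Aut}_{\mathrm{pin}}(G\rtimes A)$ and $N\trianglelefteq\mathrm{Aut}(\tilde G)$; (c) $N\cap\mathrm{Aut}_{\mathrm{pin}}(G\rtimes A)=\{1\}$ --- restricting an element of $N$ to $G$ gives an inner automorphism of $G$ (translations act trivially on $G$), which if pinned must be trivial, leaving a pure translation $t_{z}$, and $t_{z}$ fixes $1\rtimes A$ only if $z=1$.

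I expect the main obstacle to be the second paragraph, and within it the deduction that the $G$-component $g_{a}$ is forced to be central: this is exactly where the pinning does its work and where one must invoke the \emph{uniqueness}, not merely the existence, of the Chevalley factorisation. Once that step is secured, everything else is routine cocycle and semidirect-product bookkeeping.
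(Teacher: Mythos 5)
Your proof is correct, and it fills a genuine gap: the paper does not prove Fact~\ref{structureofAut} at all, it simply cites \cite[Section 3.1]{kaletha2022local}, so there is no internal argument to compare against. Your route is the natural one for this statement: restrict to the characteristic subgroup $G=\tilde G^{\circ}$, use the \emph{uniqueness} half of Chevalley's decomposition $\mathrm{Aut}(G)=\mathrm{Inn}(G)\rtimes\mathrm{Aut}_{\mathrm{pin}}(G)$ on the conjugation relation $\theta_{G}\circ a\circ\theta_{G}^{-1}=\mathrm{Int}(g_{a})\circ\sigma(a)$ to force $\mathrm{Int}(g_{a})=\mathrm{id}$ and $\sigma(a)=\theta_{G}\circ a\circ\theta_{G}^{-1}$ (this is exactly the compatibility needed to glue $(\theta_{G},\sigma)$ to an element of $\mathrm{Aut}_{\mathrm{pin}}(G\rtimes A)$), and then read off the remainder as a cocycle $z\in Z^{1}(A,Z(G))$. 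You correctly identify that the essential content is the pinned-ness of both $\theta_{G}\circ a\circ\theta_{G}^{-1}$ and $\sigma(a)$, which is where the hypothesis that $A$ preserves the pinning enters. The bookkeeping in (a)--(c) is also sound: in (a), conjugation by $c\in Z(G)$ is translation by the coboundary $a\mapsto c\,a(c)^{-1}$, giving the overlap $B^{1}(A,Z(G))\cong Z(G)/Z(G)^{A}$ as in the paper; in (b), the identity $z'(a)=\theta_{G}(z(\sigma^{-1}a))$ is a cocycle precisely because of the compatibility $\alpha_{\sigma(a')}=\theta_{G}\alpha_{a'}\theta_{G}^{-1}$ built into $\mathrm{Aut}_{\mathrm{pin}}(G\rtimes A)$ (worth recording explicitly, since the verification is not completely immediate); and in (c), after restriction to $G$ forces centrality, the residual pure translation preserving $1\rtimes A$ must be trivial. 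One small stylistic remark: you write $\varphi(h\rtimes a)=(z(a)h,a)$ before establishing $z(a)\in Z(G)$; a priori the product decomposition gives $(hz(a),a)$, so it is cleaner to state the formula in that order and then invoke centrality to rewrite it. This does not affect the argument.
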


Based on this fact, as explained in \cite[\S3.1]{kaletha2022local}, any general disconnected reductive group arises by twisting the rational structure of its split form in the following manner. First, any quasi-split disconnected reductive group $\tilde{G}$ is a twist of its split form $G\rtimes A$ via some $1$-cocycle in $Z^{1}(\Gamma, \mathrm{Aut}_{\mathrm{pin}}(G\rtimes A))$. Second, the quasi-split group $\tilde{G}$ can be twisted into an inner form $\tilde{G}_{\bar{z}}$, via a $1$-cocycle $\bar{z} \in Z^{1}(\Gamma, G/Z(G)^{A})$. Third, any general disconnected reductive group is a translation form of some inner form $\tilde{G}_{\bar{z}}$, which is obtained by twisting $\tilde{G}_{\bar{z}}$ via some $z'\in Z^{1}(\Gamma, Z^{1}(A,Z(G)))$.

\subsection{Examples of rank-1 disconnected tori}
In this paper, a disconnected torus is a group that satisfies Convention~\ref{convention} with $G=T$ being a torus.

To provide a glimpse into disconnected tori and particularly, their rational points, we work out instances of the simplest nontrivial case,  where the identity component is a rank-$1$ torus and the component group is simply $\mathbb{Z}/2\mathbb{Z}$. In this case, there are only two split forms: the direct product $\mathbb{G}_{m} \times \mathbb{Z}/2\mathbb{Z}$, and the semi-direct product $\mathbb{G}_{m} \rtimes \mathbb{Z}/2\mathbb{Z}$ where $\mathbb{Z}/2\mathbb{Z}$ acts on $\mathbb{G}_{m}$ by inverting. We classify their forms and describe their $F$-rational points. Write $\mathbb{Z}/2\mathbb{Z} = \{1,-1\}$.

\subsubsection{Forms of $\mathbb{G}_{m} \times \mathbb{Z}/2\mathbb{Z}$}
Let $\tilde{T} = \mathbb{G}_{m} \times \mathbb{Z}/2\mathbb{Z}$. The automorphism group of $\mathbb{G}_{m} \times \mathbb{Z}/2\mathbb{Z}$ is
\begin{align*}
\mathrm{Aut}(\tilde{T}) = Z^{1}(\mathbb{Z}/2\mathbb{Z},\bar{F}^{\times}) \rtimes \mathrm{Aut}_{\mathrm{pin}}(\mathbb{G}_{m} \times \mathbb{Z}/2\mathbb{Z}) \cong \mathbb{Z}/2\mathbb{Z} \times \mathbb{Z}/2\mathbb{Z}.
\end{align*}
We write the nontrivial element in the first (resp. second) $\mathbb{Z}/2\mathbb{Z}$ as $\mu$ (resp. $\omega$). As automorphisms of $\tilde{T}$, $\mu$ fixes the identity component point-wise and sends any point $(x,-1)$ from the non-identity component to $(-x,-1)$, while $\omega$ sends $(x,\pm1)$ to $(x^{-1},\pm1)$. Clearly, the Galois action on $\mathrm{Aut}(\tilde{T})$ is trivial. Thus, the forms of $\mathbb{G}_{m} \times \mathbb{Z}/2\mathbb{Z}$ are classified by the Galois cohomology
\begin{align*}
H^{1}(\Gamma,\mathbb{Z}/2\mathbb{Z} \times \mathbb{Z}/2\mathbb{Z}) = Z^{1}(\Gamma,\mathbb{Z}/2\mathbb{Z} \times \mathbb{Z}/2\mathbb{Z}) = \mathrm{Hom}_{\mathrm{cts}}(\Gamma,\mathbb{Z}/2\mathbb{Z} \times \mathbb{Z}/2\mathbb{Z}).
\end{align*}
Any nontrivial cocycle $z \in Z^{1}(\Gamma,\mathbb{Z}/2\mathbb{Z} \times \mathbb{Z}/2\mathbb{Z})$ falls into one of the four categories described below, and we denote by $\tilde{T}_{z}$ the group obtained from twisting $\tilde{T}$ by $z$.
\begin{itemize}[label=\textbullet]
\item \textbf{Case 1}: $z$ factors through a quadratic extension $E/F$ and maps onto the first $\mathbb{Z}/2\mathbb{Z}$-factor: $\Gamma \twoheadrightarrow \mathrm{Gal}(E/F) \xrightarrow{\sim} \{1,\mu\}$. Let $\mathrm{Gal}(E/F) = \{1,\sigma\}$. Then after passing to $E$, the Galois action  twisted by $z$ (which we denote by adding a subscript $z$) is given by $\sigma_{z}(x,1) = (\sigma(x),1)$ and $\sigma_{z}(x,-1) = (-\sigma(x),-1)$. The group of rational points is 
\begin{align*}
\tilde{T}_{z}(F) = \{(x,1)|x \in F^{\times}\} \cup \{(x,-1)|x \in E^{\times}, \sigma(x) = -x\}.
\end{align*}

\item \textbf{Case 2}: $z$ factors through a quadratic extension $E/F$ and maps onto the second $\mathbb{Z}/2\mathbb{Z}$-factor: $\Gamma \twoheadrightarrow \mathrm{Gal}(E/F) \xrightarrow{\sim} \{1,\omega\}$. Let $\mathrm{Gal}(E/F) = \{1,\sigma\}$. Then after passing to $E$, the Galois action  twisted by $z$ is given by $\sigma_{z}(x,\pm1) = (\sigma(x)^{-1},\pm1)$. Thus we have
\begin{align*}
\tilde{T}_{z}(F) = \{(x,\pm1)|x\in E^{\times}, \mathrm{Nm}_{E/F}(x) = 1\}.
\end{align*}

\item \textbf{Case 3}: $z$ factors through a quadratic extension $E/F$ and maps into $\mathbb{Z}/2\mathbb{Z} \times \mathbb{Z}/2\mathbb{Z}$ diagonally: $\Gamma \twoheadrightarrow \mathrm{Gal}(E/F) \xrightarrow{\sim} \{1,\mu\omega\}$. Let $\mathrm{Gal}(E/F) = \{1,\sigma\}$. Then after passing to $E$, the Galois action  twisted by $z$  is given by $\sigma_{z}(x,1) = (\sigma(x)^{-1}, 1)$ and $\sigma_{z}(x,-1) = (-\sigma(x)^{-1}, -1)$. So we have
\begin{align*}
\tilde{T}_{z}(F) = \{(x,1)|x\in E^{\times}, \mathrm{Nm}_{E/F}(x) = 1\}\cup \{(x,-1)|x\in E^{\times}, \mathrm{Nm}_{E/F}(x) = -1 \}.
\end{align*}

\item \textbf{Case 4}: $z$ factors through a biquadratic extension $K/F$ and maps isomorphically to $\mathbb{Z}/2\mathbb{Z} \times \mathbb{Z}/2\mathbb{Z}$: $\Gamma \twoheadrightarrow \mathrm{Gal}(K/F) \xrightarrow{\sim} \langle \mu \rangle \times \langle \omega \rangle$. We write $\mathrm{Gal}(K/F) = \langle \sigma \rangle \times \langle \tau \rangle$ and assume $\sigma \mapsto \mu$ and $\tau \mapsto \omega$. After passing to $K$, the twisted Galois action thus obtained is $\sigma_{z}(x,1) = (\sigma(x),1)$, $\sigma_{z}(x,-1) = (-\sigma(x),-1)$, and $\tau_{z} (x,\pm1) = (\tau(x)^{-1},\pm1)$. Then we have 
\begin{align*}
\tilde{T}_{z}(K^{\sigma}) &= \{(x,1)|x \in {K^{\sigma}}^{\times}\} \cup \{(x,-1)|x \in K^{\times}, \sigma(x) = -x\},\\
\tilde{T}_{z}(K^{\tau}) &= \{(x,\pm1)|x\in K^{\times}, \mathrm{Nm}_{K/K^{\tau}}(x) = 1\},\\
\tilde{T}_{z}(F) &= \{(x,1)|x \in {K^{\sigma}}^{\times}, \mathrm{Nm}_{K^{\sigma}/F}(x) = 1\}\\ 
    &\cup \{(x,-1)|x \in K^{\times}, \sigma(x) = -x,\mathrm{Nm}_{K/K^{\tau}}(x) = 1 \}.
\end{align*}
\end{itemize}

\subsubsection{Forms of $\mathbb{G}_{m} \rtimes \mathbb{Z}/2\mathbb{Z}$}

Let $\tilde{T} = \mathbb{G}_{m} \rtimes \mathbb{Z}/2\mathbb{Z}$, where the semi-direct product is given by the inverting action of $\mathbb{Z}/2\mathbb{Z}$ on $\mathbb{G}_{m}$. An elementary calculation shows that the inner automorphisms and translation automorphisms coincide over $\bar{F}$. For convenience, we consider them as translation automorphisms, and one can check
\begin{align*}
\mathrm{Aut}(\tilde{T}) = Z^{1}(\mathbb{Z}/2\mathbb{Z},\bar{F}^{\times}) \rtimes \mathrm{Aut}_{\mathrm{pin}}(\mathbb{G}_{m} \rtimes \mathbb{Z}/2\mathbb{Z}) \cong \bar{F}^{\times} \rtimes \mathbb{Z}/2\mathbb{Z}, 
\end{align*}
in which $\mathbb{Z}/2\mathbb{Z}$ acts on $\bar{F}^{\times}$ by inverting. For $y \in \bar{F}^{\times}$, we denote by $\mu_{y}$ the automorphism that fixes $\mathbb{G}_{m}\rtimes {1}$ and sends $(x,-1)\mapsto (xy,-1)$. The nontrivial element in $\mathbb{Z}/2\mathbb{Z}$ is again denoted by $\omega$, which sends $(x,\pm1)\mapsto (x^{-1},\pm1)$. 

One can further check that the Galois group $\Gamma$ acts on $\mathrm{Aut}(\tilde{T}) = \bar{F}^{\times}\rtimes \mathbb{Z}/2\mathbb{Z}$ by $\sigma(y, \pm1) = ({\sigma(y)}, \pm1)$. The forms of $\mathbb{G}_{m}\rtimes \mathbb{Z}/2\mathbb{Z}$ are classified by the Galois cohomology
\begin{align*}
H^{1}(\Gamma, \mathrm{Aut}(\tilde{T})) = H^{1}(\Gamma, \bar{F}^{\times} \rtimes \mathbb{Z}/2\mathbb{Z}).
\end{align*}

To compute the Galois cohomology $H^{1}(\Gamma,\bar{F}^{\times} \rtimes \mathbb{Z}/2\mathbb{Z})$, we consider the following long exact sequence
\begin{align*}
\cdots \to \mathbb{Z}/2\mathbb{Z} \to H^{1}(\Gamma, \bar{F}^{\times}) \to H^{1}(\Gamma,\bar{F}^{\times} \rtimes \mathbb{Z}/2\mathbb{Z}) \to H^{1}(\Gamma,\mathbb{Z}/2\mathbb{Z}),
\end{align*}
where the surjectivity of the last map follows from the existence of a splitting due to the semi-direct product. In view of Hilbert 90, we have $H^{1}(\Gamma, \bar{F}^{\times}) = 0 $, and hence we conclude that the fibre over the trivial element in $H^{1}(F,\mathbb{Z}/2\mathbb{Z})$ is a singleton, which corresponds to nothing but the split form. It remains to understand the fibre $\mathcal{F}_{E}$ over any nontrivial element $[E] \in H^{1}(\Gamma, \mathbb{Z}/2\mathbb{Z})$ (which corresponds to a quadratic extension $E/F$). To this end, we consider the cocycle $z^{[E]}: \Gamma \twoheadrightarrow \mathrm{Gal}(E/F) = \{1,\sigma\} \hookrightarrow \bar{F}^{\times} \rtimes \mathbb{Z}/2\mathbb{Z}$ sending $\sigma$ to $\omega$. Clearly, we have $z^{[E]} \in \mathcal{F}_{E}$.

It remains to investigate whether the fibre $\mathcal{F}_{E}$ contains any other element than $[z^{[E]}]$. We refer the reader to \cite[Chapter~I]{serre1997galois} for the facts about nonabelian Galois cohomology used here. From now on, we fix the quadratic extension $E/F$. We use $z^{[E]}$ (abbreviated as $z$ below) to twist the $\Gamma$-action on $1\to \bar{F}^{\times}\to\bar{F}^{\times} \rtimes \mathbb{Z}/2\mathbb{Z}\to \mathbb{Z}/2\mathbb{Z}\to 1$ and take the long exact sequence:
\begin{align*}
\cdots \to (\bar{F}^{\times}\rtimes \mathbb{Z}/2\mathbb{Z})_{z}^{\Gamma} \to \mathbb{Z}/2\mathbb{Z} \to H^{1}(\Gamma, \bar{F}^{\times}_{z}) \to H^{1}(\Gamma,(\bar{F}^{\times} \rtimes \mathbb{Z}/2\mathbb{Z})_{z}) \to H^{1}(\Gamma,\mathbb{Z}/2\mathbb{Z}).
\end{align*}
We note that there are identifications
\begin{align*}
\mathcal{F}_{E} &\cong \mathrm{ker}\left(H^{1}(\Gamma,(\bar{F}^{\times} \rtimes \mathbb{Z}/2\mathbb{Z})_{z}) \to H^{1}(\Gamma,\mathbb{Z}/2\mathbb{Z})\right)\\
&\cong H^{1}(\Gamma,\bar{F}_{z}^{\times})/ (\mathbb{Z}/2\mathbb{Z}).
\end{align*}
Here, $H^{1}(\Gamma,\bar{F}_{z}^{\times})/ (\mathbb{Z}/2\mathbb{Z})$ 
denotes the orbit space of $H^{1}(\Gamma,\bar{F}_{z}^{\times})$ under the right action of $\mathbb{Z}/2\mathbb{Z}$, defined in \cite[Chapter~I, \S5.5]{serre1997galois}. We lift $\omega = -1 \in \mathbb{Z}/2\mathbb{Z}$ to $(1, -1) \in (\bar{F}^{\times} \rtimes \mathbb{Z}/2\mathbb{Z})_{z}$. Then the right action of $\omega$ on $[\alpha] \in H^{1}(\Gamma,\bar{F}_{z}^{\times})$ is given by: 
\begin{equation}\label{eq:serre-action}
(\alpha\cdot \omega)(\sigma) := (1, -1)^{-1}\alpha(\sigma)\sigma_{z}(1, -1) = \alpha(\sigma)^{-1},
\end{equation}
where $\sigma_{z}$ is the action of $\sigma$ twisted by $z$, which we can show fixes $(1, -1)$. 

By construction, the $\Gamma$-action on $\bar{F}_{z}^{\times}$ coincides with that on $\bar{F}$-points of the norm torus $\mathrm{Res}_{E/F}^{1}\mathbb{G}_{m}$ associated with $E/F$. We have $H^{1}(\Gamma, \bar{F}_{z}^{\times}) \cong H^{1}(E/F, E^{\times}_{z}) \cong F^{\times}/\mathrm{Nm}E^{\times}$. Explicitly, the isomorphism is given by sending $\alpha \in Z^{1}(E/F, E_{z}^{\times})$ to $\alpha(\sigma)$, where $\sigma \in \mathrm{Gal}(E/F)$ is the nontrivial element. In particular, every element in $H^{1}(\Gamma, \bar{F}_{z}^{\times})$ is of order $1$ or $2$, which implies that $\mathbb{Z}/2\mathbb{Z}$ acts on $H^{1}(\Gamma,\bar{F}_{z}^{\times})$ trivially by \eqref{eq:serre-action}.

Therefore, $\mathcal{F}_{E}$ is in bijection with $H^{1}(\Gamma,\bar{F}_{z}^{\times})$. To summarize, each form in the fibre $\mathcal{F}_{E}$ arises in the following way. Given $[\alpha] \in H^{1}(\Gamma, \bar{F}_{z}^{\times})$ represented by a coset $[y] \in F^{\times}/ \mathrm{Nm}E^{\times}$, we write the image of $\alpha$ in $Z^{1}(\Gamma, (\bar{F}^{\times} \rtimes \mathbb{Z}/2\mathbb{Z})_{z})$ as $\alpha'$. Composing $\alpha'$ with $z$, we obtain a $1$-cocycle $\alpha''\in Z^{1}(\Gamma,\bar{F}^{\times} \rtimes \mathbb{Z}/2\mathbb{Z})$ in the original (untwisted) sense: $\alpha''(\sigma) = \alpha'(\sigma)z(\sigma) = (y,\omega)$ (after passing to $\mathrm{Gal}(E/F)$). The $F$-rational points can be easily computed:
\begin{itemize}[label=\textbullet]
\item After passing to $E$, the Galois action twisted by $\alpha''$ is given by $\sigma_{\alpha''}(x, 1) = (\sigma(x)^{-1}, 1)$ and $\sigma_{\alpha''}(x, -1) = (y\sigma(x)^{-1}, -1)$. The $F$-points are given by
\begin{align*}
\tilde{T}_{\alpha''}(F) = \{(x,1)|x\in E^{\times}, \mathrm{Nm}_{E/F}(x) = 1\} \cup \{(x,-1)|x\in E^{\times}, \mathrm{Nm}_{E/F}(x) = y\}.
\end{align*}
We note that, whenever $y \notin \mathrm{Nm}_{E/F}E^{\times}$, there are no rational points on the non-identity component.
\end{itemize}

\subsection{Rational points on inner forms of quasi-split disconnected groups}\label{sec: innerform}
In this work, we focus on inner forms of quasi-split groups and do not treat translation forms in general (that do not fall into the former category). Let $\tilde{G} = G\rtimes A$ be a quasi-split disconnected reductive group. 

Let $\bar{z} \in Z^{1}(F, G/Z(G)^{A})$. We obtain $\tilde{G}_{\bar{z}}$ by twisting the rational structure of $\tilde{G}$ via $\bar{z}$. After this twisting process, there is still a short exact sequence of $\Gamma$-groups:
\begin{align*}
1 \to G_{\bar{z}}(\bar{F}) \to \tilde{G}_{\bar{z}}(\bar{F}) \to A(\bar{F}) \to 1.
\end{align*}
Explicitly, the twisted Galois action on $\tilde{G}_{\bar{z}}(\bar{F})$ is given by 
\begin{align*}
\sigma_{\bar{z}}(g,a) = \bar{z}(\sigma)(\sigma(g),\sigma(a)) \bar{z}(\sigma)^{-1} = \left(\bar{z}(\sigma)\sigma(g)\sigma(a)[\bar{z}(\sigma)^{-1}], \sigma(a)\right),
\end{align*}
for $\sigma \in \Gamma$. In the formula above and in~\eqref{defnofA(F)[barz]} below, $\bar{z}(\sigma)\in (G/Z(G)^{A})(\bar{F})$ is understood through an arbitrary lift to $G(\bar{F})$. As $Z(G)^{A}$ is central and pointwise fixed by $A$, the conjugation by $\bar{z}(\sigma)$ does not depend on this choice.

Taking $\Gamma$-fixed points, we obtain
\begin{align*}
1 \to G_{\bar{z}}(F) \to \tilde{G}_{\bar{z}}(F) \to A(F).
\end{align*}
The last projection is not always surjective. In fact, one can see that $(g,a) \in \tilde{G}_{\bar{z}}(F)$ if and only if $a \in A(F)$ and
\begin{align}\label{defnofA(F)[barz]}
\bar{z}(\sigma)\sigma(g)a(\bar{z}(\sigma)^{-1}) = g  
\end{align}
for all $\sigma \in \Gamma$. We define $A(F)^{[\bar{z}]}$ as the subgroup of $A(F)$ consisting of elements $a$ for which there exists some $g \in G(\bar{F})$ such that~\eqref{defnofA(F)[barz]} is satisfied for all $\sigma \in \Gamma$. In other words, $A(F)^{[\bar{z}]}$ is exactly the image of the projection of $\tilde{G}_{\bar{z}}(F)$ onto $A(F)$. Therefore, there is a short exact sequence: 
\begin{align*}
1 \to G_{\bar{z}}(F) \to \tilde{G}_{\bar{z}}(F) \to A(F)^{[\bar{z}]} \to 1.
\end{align*}

\section{The LLC for disconnected tori}

Let $F$ be a local field of characteristic zero with absolute Galois group $\Gamma$. The goal of this chapter is to state the local Langlands correspondence (LLC) for disconnected tori in terms of pure inner forms.

\subsection{Pure inner forms}\label{section: local_pure_inner}
We start with a quasi-split disconnected torus $\tilde{T} = T\rtimes A$ defined over $F$. To be precise, $T$ is a (not necessarily split) torus over $F$, $A$ is a (not necessarily constant) finite group scheme defined over $F$ acting on $T$, and the action of $A$ on $T$ in the semi-direct product is defined over $F$ as well. 

Let $z \in Z^{1}(F, T)$. Its image under the natural map $Z^{1}(F, T) \to Z^{1}(F, T/T^{A})$ is a $1$-cocycle $\bar{z}$ valued in the group $T/T^{A}$ of inner automorphisms. As in \S\ref{sec: innerform}, twisting the rational structure of $\tilde{T}$ by $\bar{z}$ yields an inner form $\tilde{T}_{\bar{z}}$. Since $\bar{z}$ is the image of $z$, we denote this form simply by $\tilde{T}_{z}$ and call it a pure inner form.

The discussion in \S\ref{sec: innerform} implies that $(t,a)\in \tilde{T}_{z}(F)$ if and only if $a \in A(F)$ and 
\begin{align}
z(\sigma)\sigma(t)a(z(\sigma)^{-1}) = t \label{defnofA(F)^[z]},
\end{align}
for all $\sigma \in \Gamma$. We write the group $A(F)^{[\bar{z}]}$  defined in \S\ref{sec: innerform} as $A(F)^{[z]}$. Then there is a short exact sequence:
\begin{align*}
1 \to T(F) \to \tilde{T}_{z}(F) \to A(F)^{[z]} \to 1.
\end{align*}
Moreover, we can rewrite~\eqref{defnofA(F)^[z]} as $t^{-1}z(\sigma)\sigma(t) = a(z(\sigma))$. Thus we see that, for a given $a\in A(F)$, there exists some $(t,a)\in \tilde{T}_{z}(F)$ lying above $a$ if and only if the $1$-cocycle $a\cdot z: \sigma \mapsto a(z(\sigma))$ is cohomologous to $z$. Therefore, $A(F)^{[z]}$ is exactly the stabilizer of the cohomology class $[z] \in H^{1}(F, T)$ in $A(F)$.

\begin{rmk}\label{rmk: rigid_inner_form}
Although we are restricted to pure inner forms in this paper, our results can be adapted to treat all inner forms. On the local side, Kaletha~\cite{kaletha2022local} already established the generalized Tate--Nakayama duality for the hypercohomology of Galois gerbes and applied it to establish the LLC for all inner forms of disconnected tori. Our reinterpretation of the LLC carries over to all inner forms verbatim. On the global side, the global Galois gerbes in~\cite{kaletha2018global} are needed to extend our global results to general inner forms of disconnected tori.
\end{rmk}

\subsection{The local Langlands correspondence}
We fix a pure inner form $\tilde{T}_{z}$ once and for all, and set out to precisely state the local Langlands correspondence for $\tilde{T}_{z}$.

\subsubsection{Dual side}
We recall the notions we need on the dual side. The $L$-group of the connected torus $T$ is defined to be the semi-direct product ${}^{L}T = \widehat{T}\rtimes W_{F}$. An $L$-parameter $\phi: W_{F}\to {}^{L}T$ is a continuous morphism such that the projection of $\phi(w)$ to $W_{F}$ is $w$. Two $L$-parameters are said to be equivalent if they are conjugate under $\widehat{T}$. Moreover, we identify the set of $L$-parameters with the set of continuous $1$-cocycles $Z^{1}(W_{F}, \widehat{T})$, and the set of equivalence classes of $L$-parameters with the continuous cohomology group $H^{1}(W_{F},\widehat{T})$. Accordingly, we allow ourselves the following slight abuse of notation:
\begin{itemize}[label=\textbullet]
\item $\phi$ may denote either an $L$‑parameter or its associated $1$‑cocycle.
\item $[\phi]$ may denote, depending on the context,
\begin{enumerate}[label   = (\roman*),   
        widest  = (iii),       
        align   = left,        
        leftmargin = *        
      ]
\item the equivalence class of the $L$‑parameter $\phi$;
\item the cohomology class of $\phi$ in $H^{1}\bigl(W_F,\widehat{T}\bigr)$; or
\item the character of $T(F)$ determined by $\phi$ via the local Langlands correspondence.
\end{enumerate}
\end{itemize}

As we proceed to studying the disconnected torus $\tilde{T}_{z}$, we keep using the same set of $L$-parameters, i.e. those for the connected torus $T$. Moreover, the notations $\phi$ and $[\phi]$ will always be understood as explained in the last paragraph.

The action of $A$ on $T$ gives rise to a natural action of $A(\bar{F})$ on $X_{*}(T)$. Furthermore, in view of the identification $X_{*}(T) = X^{*}(\widehat{T})$, $A(\bar{F})$ acts on $\widehat{T} = \mathrm{Hom}(X^{*}(\widehat{T}),\mathbb{C}^{\times}) = \mathrm{Hom}(X_{*}(T),\mathbb{C}^{\times})$ by $(a\cdot t)(x) = t(a^{-1}\cdot x)$ for $t\in \widehat{T}$ and $x\in X_{*}(T)$. This action of $A(\bar{F})$ on $\widehat{T}$ is Galois-equivariant: $\sigma(a\cdot t) = \sigma(a)\cdot\sigma(t)$ for all $\sigma\in\Gamma$, $a\in A(\bar{F})$ and $t\in\widehat{T}$, where $\Gamma$ acts on $\widehat{T}$ via its action on $X_{*}(T)$. Using this action, we may consider the semi-direct product $\widehat{T}\rtimes A(\bar{F})$. In fact, it turns out to be more convenient to work with a smaller group $\widehat{T}\rtimes A(F)^{[z]}$. Finally, we take the Weil group $W_{F}$ into consideration and introduce the ``enlarged $L$-group''   $(\widehat{T}\rtimes A(F)^{[z]})\rtimes W_{F}$. Composing with the natural embedding ${}^{L}T \hookrightarrow (\widehat{T}\rtimes A(F)^{[z]})\rtimes W_{F}$, any $L$-parameter $\phi$ can be seen as taking values in $(\widehat{T}\rtimes A(F)^{[z]})\rtimes W_{F}$.

A weaker equivalence relation between the $L$-parameters is needed in the disconnected setting:

\begin{defn}\label{l-parameterequiv}
Two $L$-parameters $\phi_{1}, \phi_{2}: W_{F} \to  {}^{L}{T}$ are said to be $A(F)^{[z]}$-equivalent, if there exists some $(t,a)\in \widehat{T}\rtimes A(F)^{[z]}$ such that $(t,a)\phi_{1}(w)(t,a)^{-1} = \phi_{2}(w)$ for all $w\in W_{F}$. 
\end{defn}

We see that there is a one-to-one correspondence between the set of $A(F)^{[z]}$-equivalence classes of $L$-parameters for $T$ and the orbit space $H^{1}(W_{F},\widehat{T})/A(F)^{[z]}$.

From now on, we fix an $L$-parameter $\phi$. It is well-known that the centralizer of $\phi$ in $\widehat{T}$ is nothing but the  Galois-fixed point of $\widehat{T}$, which we denote by $S_{\phi}$:
\begin{align*}
    S_{\phi} := \mathrm{Cent}(\phi, \widehat{T}) = \widehat{T}^{\,W_{F}} = \widehat{T}^{\,\Gamma}.
\end{align*}
Denote the centralizer of $\phi$ in $\widehat{T}\rtimes A(F)^{[z]}$ by 
$$\tilde{S}_{\phi}^{[z]} := \mathrm{Cent}(\phi, \widehat{T}\rtimes A(F)^{[z]}).$$
There is a short exact sequence:
\begin{align}
    1 \to S_{\phi} \to \tilde{S}_{\phi}^{[z]} \to A(F)^{[\phi],[z]} \to 1, \label{SES: dual1}
\end{align}
where $A(F)^{[\phi],[z]}$ is defined to be the subgroup of $A(F)$ comprising elements that fix both $[\phi]$ and $[z]$. Indeed, any $(s,a) \in \tilde{S}^{[z]}_{\phi} \subset \widehat{T}\rtimes A(F)^{[z]}$ should satisfy $(s,a)\phi(w) = \phi(w)(s,a)$ for any $w \in W_{F}$. If  we write $\phi(w) = (\phi_{0}(w),w)$, then we have $s\cdot a(\phi_{0}(w)) = \phi_{0}(w)w(s)$ for any $w$, which implies that $a\cdot \phi_{0}$ is cohomologous to $\phi_{0}$ and hence $\tilde{S}^{[z]}_{\phi}$ maps onto $A(F)^{[\phi],[z]}$ with kernel $\widehat{T}^{W_{F}}$. Taking component groups in~\eqref{SES: dual1}, we further obtain
\begin{align}
      1 \to \pi_{0}(S_{\phi}) \to \pi_{0}(\tilde{S}^{[z]}_{\phi}) \to A(F)^{[\phi],[z]} \to 1. \label{SES: dual2}
\end{align}

Recall that $[z] \in H^{1}(F,T)$ can be viewed as a character of $ \pi_{0}(S_{\phi}) = \pi_{0}(\widehat{T}^{\,\Gamma})$ due to Kottwitz's interpretation of the classical Tate--Nakayama pairing~\cite{kottwitz1984stable} (see also \S\ref{TNdualityforhyper}).  In~\cite[\S4.6]{kaletha2022local}, Kaletha pointed out the following observation, which follows from standard Clifford theory~\cite[Ch.~6]{isaacs1994character}, since $\pi_{0}(\tilde{S}_{\phi}^{[z]})$ is precisely the stabilizer of the character $[z]$ under the conjugation action.
\begin{claim}
\label{claimdual1}
Suppose $\rho$ is an irreducible representation of $\pi_{0}(\tilde{S}_{\phi}^{[z]})$. Then the restriction of $\rho$ to $\pi_{0}(S_{\phi})$ either does not contain $[z]$, or is $[z]$-isotypic.
\end{claim}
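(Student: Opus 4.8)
The plan is to exploit the short exact sequence $1 \to \pi_0(S_\phi) \to \pi_0(\tilde S_\phi^{[z]}) \to A(F)^{[\phi],[z]} \to 1$ together with the normality of $\pi_0(S_\phi)$ in $\pi_0(\tilde S_\phi^{[z]})$, and then apply a standard fact from Clifford theory: if $N \trianglelefteq H$ and $\rho$ is an irreducible representation of $H$, then $\rho|_N$ decomposes as a direct sum of $H$-conjugates of a single $N$-irreducible, each appearing with the same multiplicity. Since $\pi_0(S_\phi) = \pi_0(\hat T^\Gamma)$ is abelian, its irreducibles are characters, so $\rho|_{\pi_0(S_\phi)}$ is a sum of characters forming a single $\pi_0(\tilde S_\phi^{[z]})$-orbit (with uniform multiplicity). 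Thus the content of the claim reduces to showing that the $\pi_0(\tilde S_\phi^{[z]})$-action on the character group of $\pi_0(S_\phi)$ fixes $[z]$.

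First I would recall that $\pi_0(S_\phi)$ is central in... no — it need not be central, but it is \emph{abelian}, and the conjugation action of $\pi_0(\tilde S_\phi^{[z]})$ on it factors through the quotient $A(F)^{[\phi],[z]}$, since $\pi_0(S_\phi)$ being abelian acts trivially on itself by conjugation. So I must show that every $a \in A(F)^{[\phi],[z]}$, acting on $\pi_0(\hat T^\Gamma)$ via the natural $A$-action on $\hat T$, fixes the character $[z] \in \pi_0(\hat T^\Gamma)^*$ coming from Kottwitz's isomorphism $H^1(\Gamma,T) \cong \pi_0(\hat T^\Gamma)^*$. The key point is functoriality of Kottwitz's isomorphism: the action of $a$ on $\hat T$ is dual to the action of $a^{-1}$ on $X_*(T)$, hence dual to the action of $a$ on $T$ (as an $F$-automorphism, up to the usual inverse conventions), which in turn induces the natural action on $H^1(\Gamma, T)$. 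Therefore $a \cdot [z]$ as a character of $\pi_0(\hat T^\Gamma)$ corresponds to $a \cdot [z] \in H^1(\Gamma,T)$. But $a \in A(F)^{[z]}$ by hypothesis — indeed $A(F)^{[\phi],[z]} \subseteq A(F)^{[z]}$ — which means precisely that $a$ stabilizes the cohomology class $[z]$. Hence $a \cdot [z] = [z]$ as characters, as desired.

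Concretely, the argument runs: (i) invoke Clifford theory for $\pi_0(S_\phi) \trianglelefteq \pi_0(\tilde S_\phi^{[z]})$ to write $\rho|_{\pi_0(S_\phi)} = m \bigoplus_{i} \chi_i$ where the $\chi_i$ form a single orbit under $\pi_0(\tilde S_\phi^{[z]})$; (ii) observe this orbit is a single orbit under $A(F)^{[\phi],[z]}$ via the identification of $\pi_0(S_\phi)^*$ with $H^1(\Gamma,T)$; (iii) if $[z]$ appears among the $\chi_i$, then by (ii) the whole orbit is $\{a \cdot [z] : a \in A(F)^{[\phi],[z]}\}$; (iv) by the defining property of $A(F)^{[\phi],[z]} \subseteq A(F)^{[z]}$, we have $a \cdot [z] = [z]$ for all such $a$, so the orbit is the singleton $\{[z]\}$ and $\rho|_{\pi_0(S_\phi)}$ is $[z]$-isotypic.

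The main obstacle is step (ii): verifying the compatibility between the $A$-action on $\hat T$ defined in the paper (via $(a \cdot t)(x) = t(a^{-1}\cdot x)$ on $X_*(T)$) and the natural $A$-action on $H^1(\Gamma, T)$, through Kottwitz's isomorphism. This is a functoriality statement for Kottwitz's duality $H^1(\Gamma, T) \cong \pi_0(\hat T^\Gamma)^*$ applied to the $F$-automorphism of $T$ induced by $a$ (which is legitimate since $a \in A(F)$, so the automorphism is Galois-equivariant and the isomorphism is functorial in $T$). Once one is careful about which automorphism of $T$ corresponds to the $X_*$-action — and about the placement of inverses in the conventions — the identification is immediate, but it is exactly the point where the hypothesis $a \in A(F)^{[z]}$ gets used, so it deserves to be spelled out rather than asserted.
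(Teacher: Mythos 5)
Your proof is correct and follows essentially the same route as the paper: the paper invokes Frobenius reciprocity plus Mackey's theorem, which is the same Clifford-theoretic fact you use, and both proofs reduce to the observation that the conjugation action of $\pi_{0}(\tilde{S}_{\phi}^{[z]})$ on the character $[z]$ of $\pi_{0}(S_{\phi})$ is trivial. The one difference is that the paper asserts this last point without elaboration, whereas you correctly trace it through the functoriality of Kottwitz's isomorphism and the defining property of $A(F)^{[z]}$ — a welcome unpacking of the step where the hypothesis actually enters.
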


We define
\begin{align*}
\mathrm{Irr}(\pi_{0}(\tilde{S}_{\phi}^{[z]}),[z]):=  \{ \rho \in \mathrm{Irr}(\pi_{0}(\tilde{S}_{\phi}^{[z]}))\bigm | \mathrm{Res}_{\pi_{0}(S_{\phi})}^{\pi_{0}(\tilde{S}_{\phi}^{[z]})}\rho \hookleftarrow [z]\}.
\end{align*}

Rather than $\tilde{S}_{\phi}^{[z]}$, one could also consider the centralizer of $\phi$ in $\widehat{T}\rtimes A(\bar{F})$: 
$$\tilde{S}_{\phi}:= \mathrm{Cent}(\phi, \widehat{T}\rtimes A(\bar{F})).$$ 
For this larger centralizer, there are short exact sequences similar to~\eqref{SES: dual1},~\eqref{SES: dual2}, and one can define $\mathrm{Irr}(\pi_{0}(\tilde{S}_{\phi}),[z])$ analogously. In fact, $\pi_{0}(\tilde{S}_{\phi}^{[z]})$ is the stabilizer of $[z]$ when $\pi_{0}(\tilde{S}_{\phi})$ acts on $\mathrm{Irr}(\pi_{0}(S_{\phi}))$ by conjugation. Indeed, this conjugation action factors through $A(F)^{[\phi]}$, and an element lying over $a\in A(F)^{[\phi]}$ fixes $[z]$ precisely when $a\in A(F)^{[\phi],[z]}$, that is, when it lies in $\pi_{0}(\tilde{S}_{\phi}^{[z]})$. As Kaletha pointed out in~\cite[\S4.6]{kaletha2022local}, there is no essential difference in considering $\mathrm{Irr}(\pi_{0}(\tilde{S}_{\phi}),[z])$ or $\mathrm{Irr}(\pi_{0}(\tilde{S}^{[z]}_{\phi}),[z])$, due to the canonical bijection between them which follows from Clifford theory~\cite[Ch.~6]{isaacs1994character}:
\begin{claim}\label{claimdual2}
Induction gives a bijection from $\mathrm{Irr}(\pi_{0}(\tilde{S}_{\phi}^{[z]}),[z])$ to $\mathrm{Irr}(\pi_{0}(\tilde{S}_{\phi}),[z])$.
\end{claim}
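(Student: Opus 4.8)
The plan is to recognise this assertion as an instance of the Clifford correspondence. Write $N := \pi_{0}(S_{\phi})$, $\tilde{G} := \pi_{0}(\tilde{S}_{\phi})$, $G := \pi_{0}(\tilde{S}_{\phi}^{[z]})$, and $\lambda := [z] \in \mathrm{Irr}(N)$ (a one-dimensional character, since $N = \pi_{0}(\hat{T}^{\Gamma})$ is abelian, though we will not need this). By the analogue of (\ref{SESdual4}) for $\tilde{S}_{\phi}$, $N$ is a normal subgroup of $\tilde{G}$; and, as recalled in the paragraph preceding the claim, $G$ is precisely the stabiliser $\mathrm{Stab}_{\tilde{G}}(\lambda)$ of $\lambda$ under the conjugation action of $\tilde{G}$ on $\mathrm{Irr}(N)$, with $N \subseteq G \subseteq \tilde{G}$. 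Under this dictionary, $\mathrm{Irr}(\pi_{0}(\tilde{S}_{\phi}^{[z]}),[z])$ and $\mathrm{Irr}(\pi_{0}(\tilde{S}_{\phi}),[z])$ are exactly the sets of irreducible representations of $G$, respectively $\tilde{G}$, lying over $\lambda$, and the claim is that $\mathrm{Ind}_{G}^{\tilde{G}}$ induces a bijection between them. I would carry this out in the three steps below; alternatively one may simply invoke the Clifford correspondence from the representation theory of finite groups and verify that the hypotheses are met.

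First I would show induction preserves irreducibility. Let $\rho \in \mathrm{Irr}(G,[z])$. By Claim \ref{claimdual1} (whose proof only uses that $\tilde{S}_{\phi}^{[z]}$ fixes $[z]$ by construction), $\mathrm{Res}_{N}^{G}\rho$ is $\lambda$-isotypic. For any $g \in \tilde{G}\setminus G$ we have ${}^{g}\lambda \neq \lambda$ while $N \subseteq G \cap gGg^{-1}$; hence $\mathrm{Res}_{N}({}^{g}\rho)$ is ${}^{g}\lambda$-isotypic and therefore $\rho$ and ${}^{g}\rho$ share no irreducible constituent upon restriction to $G \cap gGg^{-1}$. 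Mackey's irreducibility criterion then yields that $\mathrm{Ind}_{G}^{\tilde{G}}\rho$ is irreducible. Moreover, $\mathrm{Res}_{G}\mathrm{Ind}_{G}^{\tilde{G}}\rho$ contains $\rho$ by Frobenius reciprocity, so $\mathrm{Res}_{N}\mathrm{Ind}_{G}^{\tilde{G}}\rho$ contains $\lambda$; thus $\mathrm{Ind}_{G}^{\tilde{G}}$ maps $\mathrm{Irr}(G,[z])$ into $\mathrm{Irr}(\tilde{G},[z])$.

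Next, injectivity: if $\mathrm{Ind}_{G}^{\tilde{G}}\rho_{1} \cong \mathrm{Ind}_{G}^{\tilde{G}}\rho_{2}$ with $\rho_{i} \in \mathrm{Irr}(G,[z])$, decompose $\mathrm{Res}_{G}\mathrm{Ind}_{G}^{\tilde{G}}\rho_{1}$ by Mackey's formula: the double coset of $e$ contributes $\rho_{1}$, while every other double coset contributes constituents whose restriction to $N$ is a sum of conjugates ${}^{g}\lambda$ with $g \notin G$, none equal to $\lambda$. Since $\rho_{2}$ lies over $\lambda$ and, by Frobenius reciprocity, is a constituent of $\mathrm{Res}_{G}\mathrm{Ind}_{G}^{\tilde{G}}\rho_{1}$, it must coincide with $\rho_{1}$. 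For surjectivity, let $\sigma \in \mathrm{Irr}(\tilde{G},[z])$. Since $\mathrm{Res}_{N}\sigma$ contains $\lambda$, some irreducible constituent $\rho$ of $\mathrm{Res}_{G}\sigma$ has $\mathrm{Res}_{N}\rho$ containing $\lambda$, i.e. $\rho \in \mathrm{Irr}(G,[z])$; by Frobenius reciprocity $\sigma$ is a constituent of $\mathrm{Ind}_{G}^{\tilde{G}}\rho$, which is irreducible by the first step, so $\sigma \cong \mathrm{Ind}_{G}^{\tilde{G}}\rho$.

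I expect the first step to be the real content: everything rests on the elementary but essential observation that any element of $\pi_{0}(\tilde{S}_{\phi})$ not in $\pi_{0}(\tilde{S}_{\phi}^{[z]})$ genuinely moves the character $[z]$, which is exactly what makes the relevant subgroup intersections contain $\pi_{0}(S_{\phi})$ and forces the disjointness required by Mackey's criterion. The injectivity and surjectivity are then formal consequences of Frobenius reciprocity and Mackey's formula; identifying the group-theoretic data with the hypotheses of the Clifford correspondence is the only thing that needs care.
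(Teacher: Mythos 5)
Your proof is correct and follows essentially the same route as the paper's: both establish irreducibility of the induced representation via Mackey's criterion using the key observation that for $g$ outside $\pi_{0}(\tilde{S}_{\phi}^{[z]})$ the conjugate ${}^{g}[z]$ differs from $[z]$ while the relevant intersections still contain $\pi_{0}(S_{\phi})$, then deduce injectivity and surjectivity from Mackey's formula and Frobenius reciprocity. The only cosmetic difference is that you name the result as an instance of the Clifford correspondence up front, which the paper uses implicitly.
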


\subsubsection{Group side}
We continue to work in the former setting, keeping a pure inner form $\tilde{T}_{z}$ and an $L$-parameter $\phi: W_{F}\to {}^{L}T$ fixed.
\begin{defn}\label{L-packet}
We define the $L$-packet for $\tilde{T}_{z}$ associated with $\phi$ to be 
\begin{align*}
\Pi_{\phi}(\tilde{T}_{z}) := \mathrm{Irr}(\tilde{T}_{z}(F),[\phi]):=  \{ \eta \in \mathrm{Irr}(\tilde{T}_{z})\bigm | \mathrm{Res}_{T(F)}^{\tilde{T}_{z}(F)} \eta \hookleftarrow [\phi]\}.
\end{align*}
\end{defn}

It is clear that $\Pi_{\phi}(\tilde{T}_{z})$ is a finite set. Moreover, we note that two $L$-packets $\Pi_{\phi_{1}}(\tilde{T}_{z})$ and $\Pi_{\phi_{2}}(\tilde{T}_{z})$ intersect non-trivially if and only if $\phi_1$ and $\phi_2$ are $A(F)^{[z]}$-equivalent (see Definition~\ref{l-parameterequiv}), in which case we actually have $\Pi_{\phi_{1}}(\tilde{T}_{z}) = \Pi_{\phi_{2}}(\tilde{T}_{z})$.

We let $\tilde{T}_{z}(F)^{[\phi]}$ be the subgroup of $\tilde{T}_{z}(F)$ fixing $[\phi]: T(F)\to \mathbb{C}^{\times}$ when acting on $T(F)$ by conjugation. Then $\tilde{T}_{z}(F)^{[\phi]}$ sits in the following short exact sequence
\begin{align}
    1 \to T(F) \to \tilde{T}_{z}(F)^{[\phi]} \to A(F)^{[\phi],[z]} \to 1. \label{SESgppullback}
\end{align}
The quotient $A(F)^{[\phi],[z]}$ here is the same as the one on the dual side in~\eqref{SES: dual1}, since the LLC $H^{1}(W_{F},\widehat{T})\cong\mathrm{Hom}_{\mathrm{cts}}(T(F),\mathbb{C}^{\times})$ is $A(F)$-equivariant. We can also define $\mathrm{Irr}(\tilde{T}_{z}(F)^{[\phi]},[\phi])$ in a self-evident way. Since any smooth irreducible representation of $\tilde{T}_{z}(F)$ or $\tilde{T}_{z}(F)^{[\phi]}$ must be finite-dimensional, analogues of Claims~\ref{claimdual1} and~\ref{claimdual2} hold for the group side:
\begin{claim}
Suppose $\eta$ is an irreducible representation of $\tilde{T}_{z}(F)^{[\phi]}$. Then the restriction of $\eta$ to $T(F)$ either does not contain $[\phi]$, or is $[\phi]$-isotypic. \label{claim:groupside1}
\end{claim}

\begin{claim}
Induction gives a bijection from $\mathrm{Irr}(\tilde{T}_{z}(F)^{[\phi]}, [\phi])$ to $\mathrm{Irr}(\tilde{T}_{z}(F), [\phi])$.\label{claim:groupside2}
\end{claim}

\subsubsection{The local Langlands correspondence for disconnected tori}

The LLC for disconnected tori is established by Kaletha:
\begin{thm}[{\cite[\S8]{kaletha2022local}}]
There is a natural bijection 
\begin{align*}
\Pi_{\phi}(\tilde{T}_{z}) \longleftrightarrow \mathrm{Irr}(\pi_{0}(\tilde{S}_{\phi}^{[z]}), [z])
\end{align*}
such that the endoscopic character identities (\cite[Conjecture~4.7]{kaletha2022local}) hold.
\end{thm}

Although the endoscopic character identities pin down the bijection, we do not introduce them here, since they play no role in our arguments. Instead, we take Kaletha's results as given and recover the bijection via the intrinsic construction in Theorem~\ref{thm:llc_reinterthm}.

In view of the observations following Definition~\ref{L-packet}, the local Langlands correspondence for $\tilde{T}_{z}$ actually gives a bijection between the disjoint unions:
\begin{align*}
\mathrm{Irr}(\tilde{T}_{z}) = \coprod_{[\phi] \in H^{1}(W_{F},\widehat{T})/A(F)^{[z]}} \Pi_{\phi}(\tilde{T}_{z})
\longleftrightarrow  \coprod_{[\phi] \in H^{1}(W_{F},\widehat{T})/A(F)^{[z]}}  \mathrm{Irr}(\pi_{0}(\tilde{S}_{\phi}^{[z]}), [z]).
\end{align*}

\section{Construction of the LLC for disconnected tori}

Rather than replicating Kaletha's construction of the LLC for disconnected tori~\cite{kaletha2022local}, we develop a more conceptual approach here. The key idea is to introduce a notion of \textit{relatedness} that ties a representation on the group side to one on the dual side through the Tate–Nakayama pairing in hypercohomology.

We retain the notations from the previous chapter. Throughout this chapter, we fix an $L$-parameter $\phi: W_{F} \to {}^{L}T$ and a 1-cocycle $z \in Z^{1}(F, T)$. First, we transfer elements of $\tilde{T}_{z}(F)^{[\phi]}$ and $\tilde{S}_{\phi}^{[z]}$ to 1-hypercocycles of the Galois group and the Weil group, thereby bringing them within the scope of the Tate--Nakayama machinery.

\subsection{Relation to hypercohomology: group side}\label{section: comparisongp}
As a subgroup of $T(\bar{F})\rtimes A(\bar{F})$, $\tilde{T}_{z}(F)^{[\phi]}$ consists of elements $(t,a)$ such that
\begin{itemize}[label = \textbullet]
    \item $a \in A(F)^{[\phi],[z]}$, and
    \item $z(\sigma)\sigma(t)a(z(\sigma)^{-1}) = t$ for any $\sigma \in \Gamma$. 
\end{itemize}
Given $a \in A(F)^{[\phi],[z]}$, we consider the complex of length two:
\begin{align*}
T(\bar{F}) \xrightarrow{1-a} T(\bar{F})
\end{align*}
which sends $t$ to $ta(t)^{-1}$. 

\begin{obs}\label{obs:relation-groupside} For a fixed $a \in A(F)^{[\phi],[z]}$, we have
$$(t,a) \in \tilde{T}_{z}(F)^{[\phi]}$$
if and only if
$$(z^{-1},t) \in Z^{1}(F, T\xrightarrow{1-a}T).$$ 
\end{obs}

In fact, the correspondence between rational points and hypercohomology can be sharpened, culminating in an isomorphism as $T(F)/(1-a)T(F)$-torsors between the relevant fibres over $a$ and $[z]^{-1}$. What follows in this section is not used elsewhere in the paper, but we present it for its intrinsic interest.

We resume our discussion with a fixed $a \in A(F)^{[\phi],[z]}$. We recall from~\eqref{SESgppullback} that we have the following short exact sequence
\begin{align*}
    1 \to T(F) \to \tilde{T}_{z}(F)^{[\phi]} \to A(F)^{[\phi],[z]} \to 1.
\end{align*}
In other words, the projection of $\tilde{T}_{z}(F)^{[\phi]}$ onto $A(F)^{[\phi],[z]}$ is trivial on $T(F)$, and in particular on the subgroup $(1-a)T(F) := \{ta(t)^{-1}: t\in T(F)\}$. The projection induces the natural map from the set of right $(1-a)T(F)$-cosets to $A(F)^{[\phi],[z]}$, which we denote by $q_{z}$:
\begin{align*}
q_{z}: (1-a)T(F)\backslash \tilde{T}_{z}(F)^{[\phi]} \to A(F)^{[\phi],[z]}.
\end{align*}
Next, we endow the fiber $q_{z}^{-1}(a)$ with the structure of a $T(F)/(1-a)T(F)$-torsor. Consider the natural left action of $[x] \in T(F)/(1-a)T(F)$ on $q_{z}^{-1}(a)$ by group multiplication:
\begin{align*}
    [x] \star [(t, a)] = [(x, 1)(t, a)] = [(xt, a)].
\end{align*}

On the other hand, the hypercohomology group $H^{1}(F, T\xrightarrow{1-a}T)$ lies in the following long exact sequence:
\begin{align*}
    \cdots \to T(F) \xrightarrow{1-a} T(F) \to H^{1}(F, T\xrightarrow{1-a}T) \to  H^{1}(F,T) \xrightarrow{1-a} H^{1}(F,T) \to \cdots.
\end{align*}
After truncations, the above sequence can be modified into
\begin{align*}
    1 \to \frac{T(F)}{(1-a)T(F)} \to H^{1}(F, T\xrightarrow{1-a}T) \xrightarrow{p_{a}} H^{1}(F,T)[1-a]  \to 1,
\end{align*}
where we denote the kernel of the map $1-a: [z]\mapsto [z]a([z])^{-1}$ by $H^{1}(F,T)[1-a]$ and denote the projection onto it by $p_{a}$. Since $T(F)/(1-a)T(F)$ can be seen as a subgroup of the abelian group $H^{1}(F, T\xrightarrow{1-a}T)$, we can naturally endow $p_{a}^{-1}([z]^{-1})$ with the structure of a $T(F)/(1-a)T(F)$-torsor. Let $[x] \in T(F)/(1-a)T(F)$, then
\begin{align*}
[x]\star [(z^{-1}, t)] = [(z^{-1}, xt)].
\end{align*}

Now, we define
\begin{align*}
    \Phi: q_{z}^{-1}(a) &\to p_{a}^{-1}([z]^{-1})\\
    [(t,a)] &\mapsto [(z^{-1},t)]. 
\end{align*}
Indeed, $\Phi$ is well-defined and bijective because of Observation~\ref{obs:relation-groupside} and the fact that,
for any $t \in T(F)$, $(1, t) \in B^1(F, T\xrightarrow{1-a}T)$ if and only if $t \in (1-a)T(F)$. It is clear that $\Phi$ respects the action of $T(F)/(1-a)T(F)$. We obtain the following result:
\begin{prop}
The natural map $\Phi$ is an isomorphism of $T(F)/{(1-a)T(F)}$-torsors.
\end{prop}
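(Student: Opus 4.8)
The plan is to reduce the Proposition to the equivariance of $\Phi$, since its well-definedness and bijectivity have already been established above (via the correspondence $(t,a)\leftrightarrow(z^{-1},t)$ together with the characterisation $(0,(1-a)t_0)\in B^1(F, T\xrightarrow{1-a}T)\Leftrightarrow t_0\in T(F)$), and both fibres $q_z^{-1}(a)$ and $p_a^{-1}([z]^{-1})$ have already been endowed with $T(F)/(1-a)T(F)$-twisted-space structures. Concretely, it suffices to show that $\Phi$ intertwines the left $T(F)/(1-a)T(F)$-actions and intertwines the right $T(F)/(1-a)T(F)$-actions; a bijection that is equivariant for both structures is then automatically an isomorphism of twisted spaces, as its inverse inherits both equivariances. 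Note that $\tilde T_z(F)^{[\phi]}$ is in general non-abelian, so the left and right checks are genuinely separate.

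For the left action, fix $t_1\in T(F)$ with class $\bar t_1$ in $T(F)/(1-a)T(F)$. In $\tilde T_z(F)^{[\phi]}$ we have $t_1\cdot(t,a)=(t_1 t,a)$, so $\Phi(\bar t_1\cdot[(t,a)])=[(z^{-1},t_1 t)]$; on the hypercohomology side $\bar t_1$ corresponds to the class of the hypercocycle with trivial $Z^1(F,T)$-component and $T$-component $t_1$, and the group law on hypercocycles of $T\xrightarrow{1-a}T$ is componentwise, so $\bar t_1\cdot[(z^{-1},t)]=[(z^{-1},t_1 t)]$. The two agree, so $\Phi$ is left-equivariant. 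For the right action, the semidirect-product structure introduces a twist: $(t,a)\cdot t_1=(t\,a(t_1),a)$ in $\tilde T_z(F)^{[\phi]}$, hence $\Phi([(t,a)]\cdot\bar t_1)=[(z^{-1},t\,a(t_1))]$, whereas $[(z^{-1},t)]\cdot\bar t_1=[(z^{-1},t\,t_1)]$. These two hypercocycles differ by $(0,\,t_1^{-1}a(t_1))$; since $1-a$ denotes the map $s\mapsto s\,a(s)^{-1}$, one has $t_1^{-1}a(t_1)=(1-a)(t_1^{-1})$, so the difference is the $1$-hypercoboundary $(0,(1-a)t_1^{-1})$ attached to $t_1^{-1}\in T(F)$. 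Hence the two classes coincide and $\Phi$ is right-equivariant.

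The step I expect to require the most care — though not a genuine obstacle — is precisely this right-equivariance computation: one must verify that the $A$-action twist baked into multiplication in $\tilde T_z(F)^{[\phi]}$ is exactly cancelled by the coboundary subgroup $(0,(1-a)T(F))$, which is the subgroup by which one quotients both in the definition of $q_z^{-1}(a)$ and in the short exact sequence computing $H^1(F, T\xrightarrow{1-a}T)$; that this cancellation happens on the nose is itself a consistency check on the construction. The only other points worth tracking are the inversion $z\mapsto z^{-1}$ appearing in the correspondence and the fact that $\Phi$ maps into the fibre $p_a^{-1}([z]^{-1})$ rather than $p_a^{-1}([z])$, but these are purely notational. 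With both equivariances in hand and $\Phi$ already known to be bijective, the Proposition follows.
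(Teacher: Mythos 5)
Your proof is correct and follows exactly the route the paper intends: the paper establishes well-definedness and bijectivity in the preceding discussion and then explicitly leaves the equivariance check to the reader, which is precisely what you supply. Your right-equivariance computation, in particular the identification $t_1^{-1}a(t_1)=(1-a)(t_1^{-1})$ matching the coboundary $(0,(1-a)t_0)$ with $t_0\in T(F)$, is exactly the cancellation the construction is set up to produce.
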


\subsection{Relation to hypercohomology: dual side}\label{section: comparisondual}
As in the previous section, we fix $z \in Z^{1}(F,T)$, $a \in A(F)^{[\phi],[z]}$ and an $L$-parameter $\phi$. The following observation is parallel to Observation~\ref{obs:relation-groupside}.
 
\begin{obs}\label{obs:relation-dualside} For a fixed $a \in A(F)^{[\phi],[z]}$, we have 
$$(s,a) \in \tilde{S}_{\phi}^{[z]}$$
if and only if
$$(\phi^{-1}, s) \in Z^{1}(W_{F},\widehat{T}\xrightarrow{1-a}\widehat{T}).$$
\end{obs}

Furthermore, as on the group side, we may update the comparison to an isomorphism between certain fibres as $\pi_{0}(\widehat{T}^{\,\Gamma})/(1-a)\pi_{0}(\widehat{T}^{\,\Gamma})$-torsors. The projection of $\pi_{0}(\tilde{S}_{\phi}^{[z]})$ onto $A(F)^{[\phi],[z]}$ gives rise to the map
\begin{align*}
q'_{\phi}: (1-a)\pi_{0}(\widehat{T}^{\,\Gamma})\backslash \pi_{0}(\tilde{S}_{\phi}^{[z]}) \to A(F)^{[\phi],[z]}.
\end{align*}
On the other hand, there is a short exact sequence obtained by truncating the long exact sequence involving the hypercohomology group $H^{1}(W_{F}, \widehat{T}\xrightarrow{1-a}\widehat{T})_{\mathrm{red}}$:
\begin{align*}
    1 \to \frac{\pi_{0}(\widehat{T}^{\,\Gamma})}{(1-a)\pi_{0}(\widehat{T}^{\,\Gamma})} \to H^{1}(W_{F}, \widehat{T}\xrightarrow{1-a}\widehat{T})_{\mathrm{red}} \xrightarrow{p'_{a}} H^{1}(W_{F},\widehat{T})[1-a] \to 1.
\end{align*}
As on the group side, we let $\pi_{0}(\widehat{T}^{\,\Gamma})/(1-a)\pi_{0}(\widehat{T}^{\,\Gamma})$ act on ${q'_{\phi}}^{-1}(a)$ and ${p'_{a}}^{-1}([\phi]^{-1})$ via left multiplication within $\pi_{0}(\tilde{S}_{\phi}^{[z]})$ and $H^{1}(W_{F}, \widehat{T}\xrightarrow{1-a}\widehat{T})_{\mathrm{red}}$, respectively.

Finally, we define
\begin{align*}
    \Psi: {q'_{\phi}}^{-1}(a) &\to {p'_{a}}^{-1}([\phi]^{-1}) \\
    [(s,a)] &\mapsto [(\phi^{-1},s)] 
\end{align*}
and obtain:
\begin{prop}
The natural map $\Psi$ is an isomorphism of $\pi_{0}(\widehat{T}^{\,\Gamma})/(1-a)\pi_{0}(\widehat{T}^{\,\Gamma})$-torsors. 
\end{prop}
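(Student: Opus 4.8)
The plan is to mirror the proof of the previous Proposition (that $\Phi$ is an isomorphism of $T(F)/(1-a)T(F)$-twisted spaces), transporting it through the dual-side identifications. Recall from the start of this subsection that $(s,a)$ lies in $\tilde{S}_{\phi}^{[z]}$ exactly when $(\phi^{-1},s)$ is a continuous $1$-hypercocycle in $Z^{1}(W_{F},\hat{T}\xrightarrow{1-a}\hat{T})$, and that under the projection $p'_{a}$ this hypercocycle maps to $[\phi^{-1}]=[\phi]^{-1}$; thus, at the level of representatives, $\Psi$ carries ${q'_{\phi}}^{-1}(a)$ into ${p'_{a}}^{-1}([\phi]^{-1})$. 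The first task is well-definedness. Two representatives of the same class in $(1-a)\pi_{0}(\hat{T}^{\Gamma})\backslash\pi_{0}(\tilde{S}_{\phi}^{[z]})$ differ, after adjusting by the identity component $(\hat{T}^{\Gamma})^{\circ}$ and by $(1-a)\hat{T}^{\Gamma}$, by left multiplication by some $c\in\hat{T}^{\Gamma}$; correspondingly $(\phi^{-1},cs)$ and $(\phi^{-1},s)$ differ by $\hat{i}(c)$, which is a $1$-hypercoboundary when $c\in(1-a)\hat{T}^{\Gamma}$ and becomes trivial in $H^{1}(W_{F},\hat{T}\xrightarrow{1-a}\hat{T})_{\mathrm{red}}$ when $c\in(\hat{T}^{\Gamma})^{\circ}$. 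This is precisely the content of the short exact sequence $1\to\pi_{0}(\hat{T}^{\Gamma})/(1-a)\pi_{0}(\hat{T}^{\Gamma})\to H^{1}(W_{F},\hat{T}\xrightarrow{1-a}\hat{T})_{\mathrm{red}}\xrightarrow{p'_{a}}H^{1}(W_{F},\hat{T})[1-a]\to1$ recalled above, so $\Psi$ descends to the stated domain and codomain.

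Next I would record that both ${q'_{\phi}}^{-1}(a)$ and ${p'_{a}}^{-1}([\phi]^{-1})$ are torsors under $\pi_{0}(\hat{T}^{\Gamma})/(1-a)\pi_{0}(\hat{T}^{\Gamma})$ — the former via left and right multiplication inside $\pi_{0}(\tilde{S}_{\phi}^{[z]})$, which agree modulo $(1-a)\pi_{0}(\hat{T}^{\Gamma})$ since $\hat{T}$ is abelian and $s\,a(c)=c\cdot((1-a)c)^{-1}\cdot s$ for $c\in\hat{T}^{\Gamma}$; the latter via $\hat{i}$, with left and right actions coinciding because $H^{1}(W_{F},\hat{T}\xrightarrow{1-a}\hat{T})_{\mathrm{red}}$ is abelian — and then check that $\Psi$ intertwines them: for $c\in\hat{T}^{\Gamma}$ with image $\bar{c}$, one has $\Psi([(cs,a)])=[(\phi^{-1},cs)]=[(\phi^{-1},s)]\cdot\hat{i}(c)=\bar{c}\cdot\Psi([(s,a)])$, and the same computation (through the twist by $a$) handles the right action. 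Since $a\in A(F)^{[\phi],[z]}$ the fiber ${q'_{\phi}}^{-1}(a)$ is nonempty (it is a fiber of the surjection $q'_{\phi}$), and an equivariant map between torsors under a common group is automatically bijective; combined with equivariance this shows $\Psi$ is an isomorphism of $\pi_{0}(\hat{T}^{\Gamma})/(1-a)\pi_{0}(\hat{T}^{\Gamma})$-twisted spaces.

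The one genuinely new point relative to the group-side argument — and the step I expect to require the most care — is the bookkeeping in the well-definedness step: one must verify that passing to component groups and then quotienting by $(1-a)\pi_{0}(\hat{T}^{\Gamma})$ on the centraliser side corresponds exactly to imposing the $1$-hypercoboundary relation and then passing to the reduced hypercohomology on the dual side, i.e. that $(\hat{T}^{\Gamma})^{\circ}$ is precisely what the reduced quotient absorbs while $(1-a)\hat{T}^{\Gamma}$ consists of hypercoboundaries. On the group side these extra layers were absent. The compatibility of the left and right twisted-space actions through the $a$-twist is a minor additional check.
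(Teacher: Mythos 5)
Your proof is correct, and since the paper states this proposition without an argument (only the analogous group-side statement about $\Phi$ is given a sketch, with equivariance left to the reader), your write-up fills in what is omitted. You correctly isolate the one genuinely new point over the group-side case: the equivalence relation on $\{(s,a)\}$ defined by first passing to $\pi_{0}(\tilde{S}_{\phi}^{[z]})$ and then quotienting by $(1-a)\pi_{0}(\hat{T}^{\Gamma})$ coincides, via the comparison $(s,a)\leftrightarrow(\phi^{-1},s)$, with the relation obtained by taking hypercohomology classes and then passing to the reduced group $H^{1}(W_{F},\hat{T}\to\hat{T})_{\mathrm{red}}$ — with $(\hat{T}^{\Gamma})^{\circ}$ accounting for the reduction and $(1-a)\hat{T}^{\Gamma}$ for the $1$-hypercoboundaries, so both quotients identify $s$ with $s'$ exactly when $s's^{-1}\in(\hat{T}^{\Gamma})^{\circ}\cdot(1-a)\hat{T}^{\Gamma}$. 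Your route to bijectivity, via equivariance of a map between torsors under the common group $\pi_{0}(\hat{T}^{\Gamma})/(1-a)\pi_{0}(\hat{T}^{\Gamma})$ (together with nonemptiness of both fibers, which follows from $a\in A(F)^{[\phi],[z]}$), is a minor and slightly cleaner variant of the direct cocycle/coboundary matching the paper uses for $\Phi$; both rest on the same comparison of equivalence relations, but the torsor formulation handles the extra $\pi_{0}$/reduction bookkeeping more transparently.
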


\subsection{The intrinsic construction}
In this section, we present our intrinsic construction of the LLC for disconnected tori. By virtue of the canonical bijection in Claim~\ref{claim:groupside2} between $\Pi_{\phi}(\tilde{T}_{z})$ and $\mathrm{Irr}(\tilde{T}_{z}(F)^{[\phi]},[\phi])$, it suffices to give a one-to-one correspondence:
\begin{align*}
\mathrm{Irr}(\tilde{T}_{z}(F)^{[\phi]},[\phi]) \longleftrightarrow \mathrm{Irr}(\pi_{0}(\tilde{S}_{\phi}^{[z]}),[z]).
\end{align*}

\begin{defn}\label{defn: relatedness}
The representations $\eta\in \mathrm{Irr}(\tilde{T}_{z}(F)^{[\phi]},[\phi])$ and $\rho \in \mathrm{Irr}(\pi_{0}(\tilde{S}_{\phi}^{[z]}),[z])$ are said to be \textit{related} if the following conditions hold:
\begin{itemize}[label=\textbullet]
    \item $\mathrm{dim}\,\eta = \mathrm{dim}\,\rho$, so that we can write $(\eta, V)$ and $(\rho,V)$ for some common underlying space $V$, and moreover
    \item After possibly replacing $\eta$ by $g\,\eta(\cdot)\,g^{-1}$ and/or $\rho$ by $g'\rho(\cdot)\,g'^{-1}$ for some $g, g'\in GL(V)$ (which leaves their isomorphism classes unchanged), the following identity holds in $GL(V)$ for every $a\in A(F)^{[\phi],[z]}$ and any choice of $(s,a) \in \pi_{0}(\tilde{S}_{\phi}^{[z]})$ and $(t,a^{-1}) \in \tilde{T}_{z}(F)^{[\phi]}$:
\begin{align}
\rho(s,a)\,\eta(t,a^{-1}) = \langle (z^{-1},t), (\phi^{-1},s)\rangle_{\mathrm{TN}}^{-1}, \label{reinterrelation}
\end{align}
where the right-hand side is understood as a scalar in $GL(V)$, and $\langle\, \cdot\,  ,\,\cdot\,\rangle_{\mathrm{TN}}$ is the Tate--Nakayama pairing between the hypercohomology $H^{1}(F, T\xrightarrow{1-a^{-1}}T)$ and $H^{1}(W_{F}, \widehat{T}\xrightarrow{1-a}\widehat{T})_{\mathrm{red}}$. 
\end{itemize}
\end{defn}
\begin{rmk}\label{rmk:relatedness}
Regarding the replacement in the second condition above, since the right-hand side of the relation~\eqref{reinterrelation} is a scalar, the need to replace both $\eta$ and $\rho$ can always be reduced to merely replacing any one of them. Moreover, we note that if the relation~\eqref{reinterrelation} holds for some choice of $(s,a) \in \pi_{0}(\tilde{S}_{\phi}^{[z]})$ and $(t,a^{-1}) \in \tilde{T}_{z}(F)^{[\phi]}$, then it holds for every such choice, by Proposition~\ref{TN-LLCcompatibility}.
\end{rmk}

\begin{lem} \label{lem:llcconstr1}
For each $\eta\in\mathrm{Irr}(\tilde{T}_{z}(F)^{[\phi]},[\phi])$, there is at most one $\rho \in \mathrm{Irr}(\pi_{0}(\tilde{S}_{\phi}^{[z]}),[z])$ related to it. Conversely, for each $\rho \in \mathrm{Irr}(\pi_{0}(\tilde{S}_{\phi}^{[z]}),[z])$, there is at most one $\eta\in\mathrm{Irr}(\tilde{T}_{z}(F)^{[\phi]},[\phi])$ related to it.
\end{lem}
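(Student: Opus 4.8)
The plan is to prove the two "at most one" statements separately, though they are dual to each other and have the same structure. Consider the first one: fix $\eta\in\mathrm{Irr}(\tilde T_z(F)^{[\phi]})$ and suppose $\rho_1,\rho_2\in\mathrm{Irr}(\pi_0(\tilde S_\phi^{[z]}))$ are both related to $\eta$. After passing to equivalent representations I may assume all three act on a common space $V$, with
\begin{align*}
\rho_i(s,a)\,\eta(t,a^{-1}) = \langle(\phi^{-1},s),(z^{-1},t)\rangle_{\mathrm{TN}}^{-1}\cdot\mathrm{id}_V
\end{align*}
for $i=1,2$ and all $(s,a)\in\pi_0(\tilde S_\phi^{[z]})$, $(t,a^{-1})\in\tilde T_z(F)^{[\phi]}$. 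The key point is that the right-hand side is independent of $i$, so for any fixed $(s,a)$ for which there exists at least one $(t,a^{-1})\in\tilde T_z(F)^{[\phi]}$, we get $\rho_1(s,a)\eta(t,a^{-1}) = \rho_2(s,a)\eta(t,a^{-1})$, hence $\rho_1(s,a)=\rho_2(s,a)$ (cancelling the invertible operator $\eta(t,a^{-1})$). So the first thing I would check is that for every $(s,a)\in\pi_0(\tilde S_\phi^{[z]})$ the element $a\in A(F)^{[\phi],[z]}$ does admit some $(t,a^{-1})\in\tilde T_z(F)^{[\phi]}$: this is exactly the surjectivity in the short exact sequence (\ref{SESgppullback}) applied to $a^{-1}$ (note $A(F)^{[\phi],[z]}$ is a group, so $a^{-1}$ lies in it too). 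Granting that, $\rho_1$ and $\rho_2$ agree on all of $\pi_0(\tilde S_\phi^{[z]})$, so they are literally equal as representations on $V$, a fortiori equivalent.

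For the converse, fix $\rho\in\mathrm{Irr}(\pi_0(\tilde S_\phi^{[z]}))$ and suppose $\eta_1,\eta_2\in\mathrm{Irr}(\tilde T_z(F)^{[\phi]})$ are both related to it. The same cancellation argument, now cancelling $\rho(s,a)$ on the left, shows $\eta_1(t,a^{-1})=\eta_2(t,a^{-1})$ whenever $(t,a^{-1})\in\tilde T_z(F)^{[\phi]}$ and there is some $(s,a)\in\pi_0(\tilde S_\phi^{[z]})$ lying over $a$; by the surjectivity in (\ref{SESdual4}) applied to $a^{-1}$, such an $(s,a)$ always exists. Hence $\eta_1=\eta_2$ on $\tilde T_z(F)^{[\phi]}$ after the identifications, so $\eta_1\cong\eta_2$. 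One subtlety to address carefully: Remark \ref{rmk:relatedness} lets me arrange a common space $V$ and realize the relatedness identity with the scalar on the right, but when comparing $\rho_1$ against $\rho_2$ (both related to the \emph{same} $\eta$) I must make sure I can simultaneously put $\eta$ in a fixed form and both $\rho_i$ in forms compatible with that same $\eta$; since the relation determines $\rho_i(s,a)$ as $\langle\cdots\rangle^{-1}\eta(t,a^{-1})^{-1}$ once $\eta$ is fixed, this is automatic — the conjugating automorphism used to bring $\rho_i$ into position is forced, and in particular $\rho_1$ and $\rho_2$ end up in the same position.

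The main obstacle — really the only nonroutine point — is verifying that the right-hand side of (\ref{reinterrelation}) genuinely depends only on $a$ and on the pair of rational/dual elements modulo the coboundaries that get killed in passing to $\tilde T_z(F)^{[\phi]}$ and $\pi_0(\tilde S_\phi^{[z]})$, i.e.\ that the expression $\langle(\phi^{-1},s),(z^{-1},t)\rangle_{\mathrm{TN}}^{-1}$ is well-defined as written and does not secretly depend on a choice of representative. This is where the comparison isomorphisms $\Phi$ (Section \ref{comparisongp}) and $\Psi$ (Section \ref{comparisondual}) as twisted spaces, together with the reduced hypercohomology $H^1(W_F,\hat T\xrightarrow{1-a}\hat T)_{\mathrm{red}}$, enter: the pairing is defined on (co)homology classes, and $\Phi,\Psi$ identify the relevant fibres $q_z^{-1}(a)$, ${q'_\phi}^{-1}(a)$ with the fibres $p_{a^{-1}}^{-1}([z]^{-1})$, ${p'_a}^{-1}([\phi]^{-1})$; passing $(t,a^{-1})$ and $(s,a)$ through these identifications is what makes the scalar on the right-hand side well-posed, and this is precisely the content that will be invoked repeatedly in the remaining two lemmas. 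For the present lemma, however, once well-definedness of the scalar is granted, the proof is the short cancellation argument above, using only the surjectivity of the projections in (\ref{SESgppullback}) and (\ref{SESdual4}) and the invertibility of $\eta(t,a^{-1})$ and $\rho(s,a)$.
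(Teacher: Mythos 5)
Your proof is correct and follows essentially the same line as the paper's: fix $\eta$ (using Remark \ref{rmk:relatedness} to conjugate only $\rho$), observe that the relation (\ref{reinterrelation}) determines $\rho$ pointwise, and deduce uniqueness; the converse is symmetric. You spell out more explicitly the surjectivity of the projections in (\ref{SESgppullback}) and (\ref{SESdual4}) that the paper leaves implicit, but this is the same argument.
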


\begin{proof}
Fix a homomorphism $\eta: \tilde{T}_{z}(F)^{[\phi]} \to GL(V)$ in $\mathrm{Irr}(\tilde{T}_{z}(F)^{[\phi]},[\phi])$, the relation~\eqref{reinterrelation} singles out a unique map $\rho_{\eta}: \pi_{0}(\tilde{S}_{\phi}^{[z]}) \to GL(V)$ (a priori, $\rho_{\eta}$ is merely set-theoretic). To be precise, the relation~\eqref{reinterrelation} forces that $\rho_{\eta}(s,a) = \langle (z^{-1},t), (\phi^{-1},s)\rangle_{\mathrm{TN}}^{-1}\;\eta(t,a^{-1})^{-1}$, which does not depend on the choice of $(t, a^{-1}) \in \tilde{T}_{z}(F)^{[\phi]}$ by Proposition~\ref{TN-LLCcompatibility}. By the second condition in Definition \ref{defn: relatedness}, all $\rho$ that are related to $\eta$ are equivalent with $\rho_{\eta}$. 

The converse can be shown in a similar manner.
\end{proof}
    
The one-to-one correspondence in Theorem~\ref{thm:llc_reinterthm} below constitutes our intrinsic construction of the LLC for disconnected tori.

\begin{thm}\label{thm:llc_reinterthm}
Relatedness defined in Definition~\ref{defn: relatedness} provides a 1--1 correspondence between $\mathrm{Irr}(\tilde{T}_{z}(F)^{[\phi]},[\phi])$ and $\mathrm{Irr}(\pi_{0}(\tilde{S}_{\phi}^{[z]}),[z])$.
\end{thm} 

\begin{proof}
As shown in the proof of Lemma~\ref{lem:llcconstr1}, given $\eta\in\mathrm{Irr}(\tilde{T}_{z}(F)^{[\phi]},[\phi])$, there is a unique map $\rho_{\eta}: \pi_{0}(\tilde{S}_{\phi}^{[z]}) \to GL(V)$ such that they satisfy the relation~\eqref{reinterrelation}. Similarly, there is a unique map $\eta_{\rho}: \tilde{T}_{z}(F)^{[\phi]} \to GL(V)$ attached to each $\rho \in \mathrm{Irr}(\pi_{0}(\tilde{S}_{\phi}^{[z]}),[z])$ such that the relation~\eqref{reinterrelation} is satisfied. It remains to show that $\rho_{\eta}$ and $\eta_{\rho}$ do lie in $\mathrm{Irr}(\pi_{0}(\tilde{S}_{\phi}^{[z]}),[z])$ and $\mathrm{Irr}(\tilde{T}_{z}(F)^{[\phi]},[\phi])$, respectively. 

We can rephrase and decompose the above goal into three subgoals as follows. Let $\eta:\tilde{T}_{z}(F)^{[\phi]} \to GL(V)$ and
$\rho: \pi_{0}(\tilde{S}_{\phi}^{[z]})\to GL(V)$ be maps satisfying relation~\eqref{reinterrelation} for all $(s,a) \in
\pi_{0}(\tilde{S}_{\phi}^{[z]})$ and $(t,a^{-1}) \in \tilde{T}_{z}(F)^{[\phi]}$. It then suffices to show that (1)~$\eta$ is a
homomorphism if and only if $\rho$ is, (2)~$\eta$ is irreducible if and only if $\rho$ is, and (3)~$\eta$ restricts to $[\phi]$
on $T(F)$ if and only if $\rho$ restricts to $[z]$ on $\pi_{0}(S_{\phi})$.

We start by showing the first point. The proof we present here is modeled after the proof of~\cite[Proposition~8.1]{kaletha2022local}. However, in our setting, we have larger flexibility so that the calculation therein can be significantly simplified.

For arbitrarily fixed $(s_{1},a_{1}),(s_{2},a_{2}) \in \pi_{0}(\tilde{S}_{\phi}^{[z]})$ and $(t_{1},a_{1}^{-1}),(t_{2},a_{2}^{-1}) \in \tilde{T}_{z}(F)^{[\phi]}$, we have  $(s_{1}a_{1}(s_{2}),a_{1}a_{2}) \in \pi_{0}(\tilde{S}_{\phi}^{[z]})$ and $(t_{2}a_{2}^{-1}(t_{1}), a_{2}^{-1}a_{1}^{-1}) \in \tilde{T}_{z}(F)^{[\phi]}$. Applying the relation~\eqref{reinterrelation} to these elements, we obtain
\begin{align*}
\rho(s_{1},a_{1})\eta(t_{1},a_{1}^{-1}) &= \langle (z^{-1},t_{1}), (\phi^{-1},s_{1})\rangle^{-1}, \\
\rho(s_{2},a_{2})\eta(t_{2},a_{2}^{-1}) &= \langle (z^{-1},t_{2}), (\phi^{-1},s_{2})\rangle^{-1},\\
\rho(s_{1}a_{1}(s_{2}),a_{1}a_{2})\eta(t_{2}a_{2}^{-1}(t_{1}),a_{2}^{-1}a_{1}^{-1}) &= \langle (z^{-1},t_{2}a_{2}^{-1}(t_{1})), (\phi^{-1},s_{1}a_{1}(s_{2}))\rangle^{-1}.
\end{align*}
After regrouping the terms, we have
\begin{align*}
&\rho(s_{1},a_{1})\rho(s_{2},a_{2})\rho(s_{1}a_{1}(s_{2}),a_{1}a_{2})^{-1}\\
=&\left(\eta(t_{2}a_{2}^{-1}(t_{1}),a_{2}^{-1}a_{1}^{-1})^{-1}\eta(t_{2},a_{2}^{-1})\eta(t_{1},a_{1}^{-1})\right)^{-1}\\
\cdot& \langle (z^{-1},t_{1}), (\phi^{-1},s_{1})\rangle^{-1}\langle (z^{-1},t_{2}), (\phi^{-1},s_{2})\rangle^{-1}\langle (z^{-1},t_{2}a_{2}^{-1}(t_{1})), (\phi^{-1},s_{1}a_{1}(s_{2}))\rangle.
\end{align*}
Therefore, in order to show that $\eta$ is a homomorphism if and only if $\rho$ is a homomorphism, it suffices to show
\begin{alignat}{6}
\langle &(z^{-1},t_{1}),&&(\phi^{-1},s_{1})\rangle\cdot \langle &&(z^{-1},t_{2}),&&(\phi^{-1},s_{2})\rangle = 
\langle &&(z^{-1},t_{2}a_{2}^{-1}(t_{1})), &&(\phi^{-1},s_{1}a_{1}(s_{2}))\rangle \nonumber\\
&\xrightarrow{1-a_{1}^{-1}}
&&\xrightarrow{1-a_{1}} &&\xrightarrow{1-a_{2}^{-1}} &&\xrightarrow{1-a_{2}} &&\xrightarrow{1-(a_{1}a_{2})^{-1}} &&\xrightarrow{1-a_{1}a_{2}} \label{formula: three pairings}
\end{alignat}
for all $(s_{1},a_{1}),(s_{2},a_{2}) \in \pi_{0}(\tilde{S}_{\phi}^{[z]})$ and $(t_{1},a_{1}^{-1}),(t_{2},a_{2}^{-1}) \in \tilde{T}_{z}(F)^{[\phi]}$.  The arrows on the second row indicate the hypercohomology complexes to which the elements correspond. 

Let $K$ be a finite Galois extension of $F$ over which $T$ splits, $z$ is inflated from $\mathrm{Gal}(K/F)$, and $t_1,t_2\in T(K)$. In view of the isomorphism $\mathcal{H}$~\eqref{isom: H}, there exists $[(\lambda_{i}, \mu_{i})] \in H_{0}(W_{K/F}, X\xrightarrow{1-a_{i}^{-1}} X)_{0}$ corresponding to $[(z^{-1}, t_{i})]\in H^{1}(K/F, T\xrightarrow{1-a_{i}^{-1}}T)$, for $i = 1,2$. In view of Diagram~\eqref{diagram: isom_H}, since both $[\lambda_{1}]$ and $[\lambda_{2}]$ correspond to $[z^{-1}]$ under the isomorphism $TN$, we may assume $\lambda_{1} = \lambda_{2} = \lambda$ without loss of generality. 

As explained on p.129 of~\cite{kottwitz1999foundations}, the isomorphism $\mathcal{H}$~\eqref{isom: H} is induced by the chain map $\psi\oplus \phi$ constructed there\footnote{The symbol $\phi$ in~\cite{kottwitz1999foundations} denotes a chain map $C_{1}(W_{K/F}, X) \to C^{0}(K/F, T)$, which has nothing to do with the fixed $L$-parameter $\phi$ in the present chapter.}. One can check that the map $\phi$ is $A(F)^{[\phi],[z]}$-equivariant. The pair $(\lambda, \mu_{2}+a_{2}^{-1}(\mu_{1}))$ is a $0$-hypercycle for the complex $X\xrightarrow{1-(a_{1}a_{2})^{-1}} X$ (as follows from the hypercycle property of $(\lambda,\mu_{1})$ and $(\lambda,\mu_{2})$, the action of $A(F)$ on $X$ commuting with that of $W_{K/F}$), and the equivariance of $\phi$ yields, on the level of representatives,
\begin{align*}
\mathcal{H}(\lambda, \mu_{2}+a_{2}^{-1}(\mu_{1})) = \big(\psi(\lambda),\, \phi(\mu_{2})\cdot a_{2}^{-1}(\phi(\mu_{1}))\big).
\end{align*}
Note that, although the hypercocycle $\mathcal{H}(\lambda,\mu_{i})$ need not equal $(z^{-1},t_{i})$, they must be cohomologous, say
\begin{align*}
\mathcal{H}(\lambda,\mu_{i}) = (z^{-1},t_{i})\cdot\big(\partial c_{i},\, (1-a_{i}^{-1})(c_{i})\big) \quad \text{for some } c_{i}\in T(K), \quad i = 1,2.
\end{align*}
Comparing the first components gives $\partial c_{1} = \partial c_{2}$. Therefore, we have
\begin{align*}
u := c_{1}c_{2}^{-1} \in T(F), \qquad v := a_{2}^{-1}\big((1-a_{1}^{-1})(u)\big) = a_{2}^{-1}(u)\cdot\big((a_{1}a_{2})^{-1}(u)\big)^{-1} \in T(F).
\end{align*}
A direct computation shows
\begin{align*}
\mathcal{H}(\lambda, \mu_{2}+a_{2}^{-1}(\mu_{1})) = \big(z^{-1},\, t_{2}a_{2}^{-1}(t_{1})\cdot v\big)\cdot \big(\partial c_{2},\, (1-(a_{1}a_{2})^{-1})(c_{2})\big),
\end{align*}
where the rightmost factor is a hypercoboundary for the complex $T\xrightarrow{1-(a_{1}a_{2})^{-1}}T$. In other words, the isomorphism $\mathcal{H}$ sends
\begin{align*}
[(\lambda, \mu_{2}+a_{2}^{-1}(\mu_{1}))]\in H_{0}(W_{K/F}, X\xrightarrow{1-(a_{1}a_{2})^{-1}} X)_{0}
\end{align*}
to 
\begin{align*}
[(z^{-1}, t_{2}a_{2}^{-1}(t_{1})\cdot v)] \in H^{1}(K/F, T\xrightarrow{1-(a_{1}a_{2})^{-1}}T).
\end{align*}  

It turns out that the discrepancy term $v$ does not contribute to the pairings under consideration. Indeed, since $v \in T(F)$, $(1,v)$ is a hypercocycle, whose class is $i(v)$ in the notation of Proposition~\ref{TN-LLCcompatibility}. Then we may split
\begin{align*}
[(z^{-1},\, t_{2}a_{2}^{-1}(t_{1})\cdot v)] = [(z^{-1},\, t_{2}a_{2}^{-1}(t_{1}))]\cdot i(v).
\end{align*}
By Proposition~\ref{TN-LLCcompatibility}, the contribution of $i(v)$ is
\begin{align*}
\langle i(v), (\phi^{-1}, s_{1}a_{1}(s_{2}))\rangle = [\phi]^{-1}(v) = [\phi]^{-1}\big(a_{2}^{-1}(u)\big)\cdot[\phi]\big((a_{1}a_{2})^{-1}(u)\big) = 1.
\end{align*}
The last equality uses the fact that both $a_{2}$ and $a_{1}a_{2}$ fix the character $[\phi]$ of $T(F)$. Therefore, we can simply drop the discrepancy term $v$ in our calculation below.

By the definition of Tate--Nakayama pairing~\eqref{formula:def_of_TNhyperpairing}, the pairings in~\eqref{formula: three pairings} can be expanded into
\begin{alignat}{2}
\langle(z^{-1},t_{1}),(\phi^{-1},s_{1})\rangle &= \langle \lambda, s_{1}\rangle &&\cdot \prod_{w}\langle\mu_{1}(w), \phi(w)\rangle,   \\
\langle (z^{-1},t_{2}),(\phi^{-1},s_{2})\rangle &=  \langle \lambda, s_{2}\rangle &&\cdot \prod_{w}\langle\mu_{2}(w), \phi(w)\rangle,\\
\langle(z^{-1},t_{2}a_{2}^{-1}(t_{1})),(\phi^{-1},s_{1}a_{1}(s_{2}))\rangle &= \langle \lambda, s_{1}a_{1}(s_{2})\rangle &&\cdot \prod_{w}\langle[\mu_{2}+a_{2}^{-1}(\mu_{1})](w), \phi(w)\rangle,
\end{alignat}
where the $w$'s run over $W_{K/F}$. Multiplying the first two formulae together and then cancelling with the third reduces the task to showing
\begin{align*}
\langle \lambda,(1-a_{1})s_{2} \rangle \cdot \prod_{w}\langle(1-a_{2}^{-1})\mu_{1}(w),\phi(w)\rangle = 1.
\end{align*}
The important observation is that the left-hand side equals to the pairing between 
\begin{align*}
[(z^{-1},(1-a_{2}^{-1})t_{1})] \in H^{1}(K/F, T\xrightarrow{(1-a_{2}^{-1})(1-a_{1}^{-1})}T)
\end{align*}
and 
\begin{align*}
[(\phi^{-1}, (1-a_{1})s_{2})]\in H^{1}(W_{K/F}, \widehat{T}\xrightarrow{(1-a_{1})(1-a_{2})}\widehat{T}).
\end{align*} 
To see this, note that $(\lambda, (1-a_{2}^{-1})(\mu_{1}))$ is a $0$-hypercycle for the complex $X\xrightarrow{(1-a_{2}^{-1})(1-a_{1}^{-1})}X$ whose image $\mathcal{H}(\lambda, (1-a_{2}^{-1})(\mu_{1}))$ differs from $(z^{-1},(1-a_{2}^{-1})(t_{1}))$ exactly by $\big(\partial c_{1},\, (1-a_{2}^{-1})(1-a_{1}^{-1})(c_{1})\big)$, the hypercoboundary of $c_{1}$ for this complex (no discrepancy term arises this time), so expanding by~\eqref{formula:def_of_TNhyperpairing} as before yields precisely the left-hand side. It is now clear that Theorem~\ref{thm:appoffunctor} applies. Indeed, if we let $T=U=V$, $f = 1-a_{1}^{-1}$ and $g = 1-a_{2}^{-1}$ in Theorem~\ref{thm:appoffunctor}, it follows that the pairing between them must vanish. So far, we have proved the claim that $\eta$ is a homomorphism if and only if $\rho$ is a homomorphism. 

Next, we note that $\rho$ and $\eta$ have the same image in $PGL(V)$ due to the relation~\eqref{reinterrelation}. Therefore, $\eta$ is irreducible if and only if $\rho$ is irreducible.

Finally, we must show that $\eta$ restricts to $[\phi]$ on $T(F)$ if and only if $\rho$ restricts to $[z]$ on $\pi_{0}(S_{\phi})$. For this purpose, we simply let $a = 1$. Then the hypercohomology groups involved in~\eqref{reinterrelation} are $H^{1}(W_{F}, \widehat{T}\xrightarrow{0}\widehat{T})_{\mathrm{red}}$ and  $H^{1}(F, T\xrightarrow{0}T)$, which are actually direct products $H^{1}(W_{F},\widehat{T})\times\pi_{0}(\widehat{T}^{\,\Gamma})$ and $H^{1}(F,T)\times T(F)$, respectively. In this degenerate case, the pairing between them is merely the product of the Tate--Nakayama pairing and the Langlands pairing:
\begin{align*}
\langle (z^{-1}, t), (\phi^{-1}, s) \rangle = [z]^{-1}(s)[\phi]^{-1}(t).
\end{align*}
The defining relation~\eqref{reinterrelation} becomes  
\begin{align*}
\rho(s,1)\eta(t,1) = [z](s)[\phi](t)
\end{align*}
for any $s \in \pi_{0}(S_{\phi})$ and any $t\in T(F)$. It follows that $\rho(s,1) = [z](s)$ if and only if $\eta(t,1) = [\phi](t)$.
\end{proof}

The following proposition will be used later in the global context. Suppose $z \in Z^{1}(F,T)$ is cohomologous to the trivial element, then there exists some $d \in T(\bar{F})$ such that $z(\sigma) = d^{-1}\sigma(d)$ for any $\sigma \in \Gamma$. There is an isomorphism of topological groups $\gamma_{d}$ (depending on the choice of $d$):
\begin{align*}
\gamma_{d}:\tilde{T}(F)^{[\phi]} = T(F)\rtimes A(F)^{[\phi]} &\xrightarrow{\sim} \tilde{T}_{z}(F)^{[\phi]}\\
(t,a) &\mapsto (td^{-1}a(d), a).
\end{align*}

\begin{prop}\label{prop:genericlem}
Suppose $[z]$ is trivial. Let $d$ be an element in $T(\bar{F})$ such that $z(\sigma) = d^{-1}\sigma(d)$, and let $\gamma_{d}$ be the isomorphism defined as above. Under the LLC, the trivial character $\mathbb{1} \in \mathrm{Irr}(\pi_{0}(\tilde{S}^{[z]}_{\phi}), 1)$  corresponds to $[\phi]^{*} \in \mathrm{Irr}(\tilde{T}_{z}(F)^{[\phi]}, [\phi])$, the trivial extension of $[\phi]$ across $\gamma_{d}$. To be precise, we have
\begin{align*}
[\phi]^{*} (\gamma_{d} (t, a)) = [\phi](t),
\end{align*}
for all $(t, a) \in T(F)\rtimes A(F)^{[\phi]}$.
\end{prop} 
\begin{proof} 
One can immediately check that $(z^{-1}, td^{-1}a(d))$ and $(1, t)$ are cohomologous, as elements in  $Z^{1}(F, T\xrightarrow{1-a}T)$. Therefore, according to the relation~\eqref{reinterrelation}, we have
\begin{alignat}{2}
[\phi]^{*} (\gamma_{d} (t, a)) = \langle &(z^{-1}, td^{-1}a(d)) ,&& (\phi^{-1}, s)  \rangle^{-1} = \langle (1, t), (\phi^{-1},s) \rangle^{-1} =  [\phi](t),\nonumber\\
&\xrightarrow{1-a} &&\xrightarrow{1-a^{-1}}\nonumber
\end{alignat}
for any $(t,a)\in T(F)\rtimes A(F)^{[\phi]}$. The last equality above uses Proposition \ref{TN-LLCcompatibility}.
\end{proof}

\subsection{Comparison with Kaletha's construction}\label{section:comparison_with_kal}
Suppose $\eta\in\mathrm{Irr}(\tilde{T}_{z}(F)^{[\phi]}, [\phi])$ and $\rho_{\eta} \in \mathrm{Irr}(\pi_{0}(\tilde{S}_{\phi}^{[z]}), [z])$ are related. In other words, they correspond to each other under our intrinsic construction of the LLC from the previous section, see Theorem~\ref{thm:llc_reinterthm}. On the other hand, we denote by $\rho_{\eta}^{\mathrm{Kal}} \in \mathrm{Irr}(\pi_{0}(\tilde{S}_{\phi}^{[z]}), [z])$ the element corresponding to $\eta$ under Kaletha's construction of the LLC in~\cite{kaletha2022local}. This section aims to show the two constructions coincide:   $\rho_{\eta} = \rho_{\eta}^{\mathrm{Kal}}$.

\subsubsection{Construction of $\rho_{\eta}^{\mathrm{Kal}}$}
We start by recalling Kaletha's construction. Kaletha constructs an isomorphism between certain push-outs of  $\tilde{T}_{z}(F)^{[\phi]}$ and  $\pi_{0}(\tilde{S}_{\phi}^{[z]})$, which induces a bijection between $\mathrm{Irr}(\tilde{T}_{z}(F)^{[\phi]}, [\phi])$ and $\mathrm{Irr}(\pi_{0}(\tilde{S}_{\phi}^{[z]}),[z])$. On the group side, we take the push-out of the top extension along $[\phi]: T(F) \to \mathbb{C}^\times$:
\begin{equation} \label{diagram:pushout_1}
\begin{tikzcd}
1 \arrow[r] & T(F) \arrow[d, "{[\phi]}"] \arrow[r] & {\tilde{T}_{z}(F)^{[\phi]}} \arrow[r] \arrow[d] & {A(F)^{[\phi],[z]}} \arrow[r] \ar[d,-,double equal sign distance,double] & 1 \\
1 \arrow[r] & \mathbb{C}^{\times} \arrow[r]        & {\mathcal{E}^{z}_{[\phi]}} \arrow[r]            & {A(F)^{[\phi],[z]}} \arrow[r]           & 1.
\end{tikzcd}
\end{equation}
Let $\mathrm{Irr}(\mathcal{E}^{z}_{[\phi]}, \mathrm{id})$ be the set of irreducible representations of $\mathcal{E}^{z}_{[\phi]}$ whose restriction to $\mathbb{C}^{\times}$ is the identity map $x \mapsto x$. It is straightforward to see there is a canonical bijection 
\begin{align*}
\mathrm{Irr}(\mathcal{E}^{z}_{[\phi]}, \mathrm{id}) \xrightarrow{\sim} \mathrm{Irr}(\tilde{T}_{z}(F)^{[\phi]},[\phi]).
\end{align*}

On the dual side, we have a similar push-out diagram:
\begin{equation}\label{diagram:pushout_2}
\begin{tikzcd}
1 \arrow[r] & \pi_{0}(\widehat{T}^{\,\Gamma}) \arrow[d, "{[z]}"] \arrow[r] & \pi_{0}(\tilde{S}_{\phi}^{[z]}) \arrow[r] \arrow[d] & {A(F)^{[\phi],[z]}} \arrow[r] \ar[d,-,double equal sign distance,double] & 1 \\
1 \arrow[r] & \mathbb{C}^{\times} \arrow[r]        & {\mathcal{E}^{\phi}_{[z]}} \arrow[r]            & {A(F)^{[\phi],[z]}} \arrow[r]           & 1.
\end{tikzcd}
\end{equation}
Let $\mathrm{Irr}(\mathcal{E}^{\phi}_{[z]},\mathrm{id})$ be the set of irreducible representations of $\mathcal{E}^{\phi}_{[z]}$ whose restriction to $\mathbb{C}^{\times}$ is the identity map. Then there is a canonical bijection 
\begin{align*}
\mathrm{Irr}(\mathcal{E}^{\phi}_{[z]},\mathrm{id}) \xrightarrow{\sim} \mathrm{Irr}(\pi_{0}(\tilde{S}_{\phi}^{[z]}),[z]).
\end{align*}

With a slight abuse of notation, we will again write $\eta$ and $\rho$ for their respective preimages under the above bijections.

Now, in order to construct a bijection $\mathrm{Irr}(\tilde{T}_{z}(F)^{[\phi]},[\phi]) 	\leftrightarrow \mathrm{Irr}(\pi_{0}(\tilde{S}_{\phi}^{[z]}),[z])$, it suffices to establish an isomorphism between the extensions in the second row of Diagram~\eqref{diagram:pushout_1} and Diagram~\eqref{diagram:pushout_2}. 

As usual, we regard $\tilde{T}_{z}(F)^{[\phi]}$ as a subgroup of $T(\bar{F}) \rtimes A(F)^{[\phi],[z]}$ and $\pi_{0}(\tilde{S}_{\phi}^{[z]})$ a subquotient of $\widehat{T}\rtimes A(F)^{[\phi],[z]}$. For each $a \neq \mathbb{1} \in A(F)^{[\phi],[z]}$, we fix, once and for all, some $(t_{a}, a) \in \tilde{T}_{z}(F)^{[\phi]}$ and some $(s_{a}, a) \in \pi_{0}(\tilde{S}_{\phi}^{[z]})$. When $a = \mathbb{1}$, we let $t_{\mathbb{1}} = \mathbb{1}$ and $s_{\mathbb{1}} = \mathbb{1}$. Now the extensions 
\begin{center}
    \begin{tikzcd}
        1 \arrow[r] & T(F)  \arrow[r] & {\tilde{T}_{z}(F)^{[\phi]}} \arrow[r]  & {A(F)^{[\phi],[z]}} \arrow[r]  & 1
    \end{tikzcd}
\end{center}
and
\begin{center}
    \begin{tikzcd}
        1 \arrow[r] & \pi_{0}(\widehat{T}^{\,\Gamma})  \arrow[r] & \pi_{0}(\tilde{S}_{\phi}^{[z]}) \arrow[r]  & {A(F)^{[\phi],[z]}} \arrow[r]  & 1
    \end{tikzcd}
\end{center}
can be characterized by 2-cocycles (factor sets determined by the sections $(t_{a}, a)$ and $(s_{a}, a)$):
\begin{align*}
    \alpha(a,b) := t_{a}a(t_{b})t_{ab}^{-1}
\end{align*}
and
\begin{align*}
    \beta(a,b) := s_{a}a(s_{b})s_{ab}^{-1}.
\end{align*}
Let $\bar{\alpha} = [\phi]\circ \alpha$ and $\Bar{\beta} = [z]\circ \beta$. Then the extensions $\mathcal{E}_{[\phi]}^{z}$ and $\mathcal{E}_{[z]}^{\phi}$ can be realised as twisted products via $\bar{\alpha}$ and $\bar{\beta}$, respectively:
\begin{align*}
    \mathcal{E}_{[\phi]}^{z} &\cong \mathbb{C}^{\times} \boxtimes_{\bar{\alpha}} A(F)^{[\phi],[z]}
\end{align*}
and
\begin{align*}
\mathcal{E}_{[z]}^{\phi} &\cong \mathbb{C}^{\times} \boxtimes_{\bar{\beta}} A(F)^{[\phi],[z]}.
\end{align*}
These identifications are not canonical since they depend on the auxiliary choices of sections $(t_{a}, a)$ and $(s_{a}, a)$.  However, as established by Kaletha and recalled below, this choice of identification does not affect the final correspondence between representations.

Kaletha constructs an explicit map between these twisted products:
\begin{align*}
\mathcal{I}:\mathbb{C}^{\times} \boxtimes_{\bar{\beta}} A(F)^{[\phi],[z]} &\to \mathbb{C}^{\times} \boxtimes_{\bar{\alpha}} A(F)^{[\phi],[z]}\\
x \boxtimes a &\mapsto xh(a)^{-1} \boxtimes a,
\end{align*}
where $h(a)$ is defined by means of the Tate--Nakayama pairing:
\begin{align*}
    h(a):=\bar{\alpha}\left(a^{-1}, a\right) \cdot\left\langle\left(z^{-1}, t_{a^{-1}}\right),\left(\phi^{-1}, s_{a}\right)\right\rangle.
\end{align*}
Kaletha shows that $\mathcal{I}$ induces an isomorphism of extensions. More precisely, it gives an isomorphism between the short exact sequences on the second rows of Diagrams~\eqref{diagram:pushout_2} and~\eqref{diagram:pushout_1} and it does not depend on the choices of the sections $(t_{a}, a)$ and $(s_{a}, a)$ (see~\cite[Proposition~8.1]{kaletha2022local}). Since $\mathcal{I}$ restricts to the identity on the kernel $\mathbb{C}^{\times}$, the induced correspondence between their representations is canonical.

Gathering the bijections obtained so far, for each $\eta \in \mathrm{Irr}(\tilde{T}_{z}(F)^{[\phi]}, [\phi])$, one can associate to it an element $\rho_{\eta}^{\mathrm{Kal}} \in {\mathrm{Irr}(\pi_{0}(\tilde{S}_{\phi}^{[z]}),[z])} $ via the following composition:
\begin{center}
\begin{tikzcd}
{\mathrm{Irr}(\tilde{T}_{z}(F)^{[\phi]},[\phi])} \arrow[r, "\sim"] & {\mathrm{Irr}(\mathcal{E}_{[\phi]}^{z},\mathrm{id})} \arrow[r, "\mathcal{I}^{*}"] & {\mathrm{Irr}(\mathcal{E}_{[z]}^{\phi},\mathrm{id})} \arrow[r, "\sim"] & {\mathrm{Irr}(\pi_{0}(\tilde{S}_{\phi}^{[z]}),[z])} \\
\eta \arrow[rrr, dotted, maps to] & &  & \rho_{\eta}^{\mathrm{Kal}}.     
\end{tikzcd}
\end{center}

\subsubsection{Comparison}
Recall that we have fixed sections $(t_{a}, a)$ and $(s_{a}, a)$ for each $a \in A(F)^{[\phi],[z]}$. 
\begin{prop}
$\rho_{\eta}$ coincides with $\rho_{\eta}^{\mathrm{Kal}}$.
\end{prop}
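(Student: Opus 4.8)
The plan is to unwind both constructions to the level of explicit representatives and check that the associated twisted cocycles agree. Both $\rho_\eta$ and $\rho_\eta^{\mathrm{Kal}}$ lie in $\mathrm{Irr}(\pi_0(\tilde S_\phi^{[z]}),[z])$, so by Lemma \ref{lem:llcconstr1} it suffices to show that $\rho_\eta^{\mathrm{Kal}}$ is related to $\eta$ in the sense of Definition \ref{defn: relatedness}, i.e. that it satisfies the relation \eqref{reinterrelation}. So I would fix $\eta \in \mathrm{Irr}(\tilde T_z(F)^{[\phi]},[\phi])$, realised on a space $V$, and fix the sections $t_a \rtimes a$ and $s_a \rtimes a$. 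First I would write down $\rho_\eta^{\mathrm{Kal}}$ explicitly: tracing through the two canonical bijections and the isomorphism $\mathcal{I}$, one finds that if $\bar\eta$ denotes the representation of $\mathcal{E}^z_{[\phi]} = \mathbb{C}^\times \boxtimes_{\bar\alpha} A^{[\phi],[z]}$ pulled back from $\eta$, then $\rho_\eta^{\mathrm{Kal}}$ is the representation of $\mathcal{E}^\phi_{[z]}$ given by $\bar\eta \circ \mathcal{I}$, so that on the section element $s_a \rtimes a$ we have $\rho_\eta^{\mathrm{Kal}}(s_a,a) = h(a)^{-1}\cdot \eta(t_a, a)$, using $\mathcal{I}(1 \boxtimes a) = h(a)^{-1}\boxtimes a$ and the identification of $1\boxtimes a$ with the image of $s_a\rtimes a$ (resp. $t_a\rtimes a$).

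Next I would verify \eqref{reinterrelation} for the pair $(\rho_\eta^{\mathrm{Kal}},\eta)$. Since both sides of \eqref{reinterrelation} are "multiplicative in a twisted sense", it is enough to check it on the chosen section representatives, namely to show
\begin{align*}
\rho_\eta^{\mathrm{Kal}}(s_a,a)\,\eta(t_{a^{-1}},a^{-1}) = \langle(\phi^{-1},s_a),(z^{-1},t_{a^{-1}})\rangle_{\mathrm{TN}}^{-1}
\end{align*}
for each $a \in A^{[\phi],[z]}$, together with the already-known relation $\rho_\eta^{\mathrm{Kal}}(s,1) = [z](s)$, $\eta(t,1) = [\phi](t)$ on the identity components (this is exactly the content of Lemma \ref{lem:llcconstr3} with $a=1$, and it is built into the push-out construction). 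Plugging in the formula $\rho_\eta^{\mathrm{Kal}}(s_a,a) = h(a)^{-1}\eta(t_a,a)$ and the definition $h(a) = \bar\alpha(a^{-1},a)\cdot\langle(z^{-1},t_{a^{-1}}),(\phi^{-1},s_a)\rangle$, the left-hand side becomes
\begin{align*}
\bar\alpha(a^{-1},a)^{-1}\langle(z^{-1},t_{a^{-1}}),(\phi^{-1},s_a)\rangle^{-1}\,\eta(t_a,a)\,\eta(t_{a^{-1}},a^{-1}).
\end{align*}
Since $\eta$ is a genuine homomorphism on $\tilde T_z(F)^{[\phi]}$ and $(t_a\rtimes a)(t_{a^{-1}}\rtimes a^{-1}) = \bigl(t_a\, a(t_{a^{-1}}),1\bigr)$, and this element equals $\alpha(a,a^{-1})\in T(F)$, we get $\eta(t_a,a)\eta(t_{a^{-1}},a^{-1}) = [\phi](\alpha(a,a^{-1}))$. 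So the identity reduces to the scalar equation
\begin{align*}
[\phi]\bigl(\alpha(a,a^{-1})\bigr)\cdot\bar\alpha(a^{-1},a)^{-1} = \langle(z^{-1},t_{a^{-1}}),(\phi^{-1},s_a)\rangle\cdot\langle(\phi^{-1},s_a),(z^{-1},t_{a^{-1}})\rangle^{-1},
\end{align*}
and the two remaining things to pin down are: the cocycle identity $[\phi](\alpha(a,a^{-1})) = \bar\alpha(a,a^{-1})$ and the relation $\bar\alpha(a,a^{-1}) = \bar\alpha(a^{-1},a)$ (both elementary from the definition of $\bar\alpha = [\phi]\circ\alpha$ and normalisation $t_{\mathrm{id}}=1$), together with the symmetry/inverse behaviour of the Tate-Nakayama pairing under swapping the two hypercohomology arguments — i.e. how $\langle(z^{-1},t_{a^{-1}}),(\phi^{-1},s_a)\rangle$ (a pairing for the arrows $T\xrightarrow{1-a^{-1}}T$ against $\hat T\xrightarrow{1-a}\hat T$ in the order used by Kaletha) relates to $\langle(\phi^{-1},s_a),(z^{-1},t_{a^{-1}})\rangle$ (our order). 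Once the conventions are matched, these two pairings are inverse to each other, and the right-hand side collapses to $1$; comparing with the left-hand side then forces exactly the cocycle identity above, which holds.

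The main obstacle I expect is bookkeeping rather than anything deep: reconciling the orientation and ordering conventions for the hypercohomology groups (which factor sits at degree $0$ versus degree $1$, the placement of the inverse in \eqref{co-hopairing}, and Kaletha's convention for $\langle\cdot,\cdot\rangle$ versus ours in \eqref{reinterrelation}), and making sure the normalising choices $t_{\mathrm{id}}=s_{\mathrm{id}}=1$ propagate correctly so that no stray scalar survives. A secondary subtlety is that \eqref{reinterrelation} must be checked against \emph{all} $(t,a^{-1})\in\tilde T_z(F)^{[\phi]}$, not just the section elements; but since any such element is $t_0\cdot(t_{a^{-1}}\rtimes a^{-1})$ with $t_0\in T(F)$, and both sides are visibly $[\phi]$-equivariant in $t_0$ (the pairing by Proposition \ref{TN-LLCcompatibility}, and $\eta$ by the identity-component relation), reducing to the section representatives is legitimate. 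With these conventions fixed, the verification is a short computation, and by the uniqueness in Lemma \ref{lem:llcconstr1} we conclude $\rho_\eta = \rho_\eta^{\mathrm{Kal}}$.
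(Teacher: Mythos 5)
Your overall strategy is essentially the same calculation the paper carries out, with a mild repackaging: instead of evaluating $\rho_{\eta}$ and $\rho_{\eta}^{\mathrm{Kal}}$ at an arbitrary $s\in\pi_{0}(\tilde{S}_{\phi}^{[z]})$ and comparing, you appeal to the uniqueness in Lemma~\ref{lem:llcconstr1} and check directly that $\rho_{\eta}^{\mathrm{Kal}}$ is related to $\eta$, reducing to section representatives by $[\phi]$- and $[z]$-equivariance. That reorganisation is legitimate; the $s$-side reduction (arbitrary $s=s_{0}s_{a}$ with $s_{0}\in\pi_{0}(\hat{T}^{\Gamma})$) should be spelled out symmetrically to the $t$-side one you mention, but that works by the same token via Proposition~\ref{TN-LLCcompatibility}. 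The byproduct of your reduction is that you now need the identity $\bar{\alpha}(a,a^{-1})=\bar{\alpha}(a^{-1},a)$, which is true but is not quite as free as you indicate: one has $\alpha(a,a^{-1})=a\bigl(\alpha(a^{-1},a)\bigr)$, so the equality of $\bar{\alpha}$-values uses the $A^{[\phi],[z]}$-invariance of $[\phi]$, not just $t_{\mathrm{id}}=1$. The paper sidesteps this by substituting $\eta(t_{a^{-1}}\rtimes a^{-1})^{-1}=\bar{\alpha}(a^{-1},a)^{-1}\eta(t_{a}\rtimes a)$ and comparing the two expressions directly, which is marginally cleaner.

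The genuine problem is the last step. You reduce to the scalar identity
\begin{align*}
\bar{\alpha}(a,a^{-1})\,\bar{\alpha}(a^{-1},a)^{-1} \;=\; \langle(z^{-1},t_{a^{-1}}),(\phi^{-1},s_{a})\rangle\cdot\langle(\phi^{-1},s_{a}),(z^{-1},t_{a^{-1}})\rangle^{-1}
\end{align*}
and assert that the right-hand side collapses to $1$ because ``these two pairings are inverse to each other.'' That is the wrong relation, and if it were true your argument would fail: if $Q=P^{-1}$ then $P\,Q^{-1}=P^{2}$, which is not $1$ in general. What is actually true --- and what you need --- is that the two expressions are \emph{equal}. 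The pairing~\eqref{pairingv2} is a bilinear form between two distinct groups, $H^{1}(F,T\to T)$ and $H^{1}(W_{F},\hat{T}\to\hat{T})_{\mathrm{red}}$; writing the arguments in the order $(\text{group},\text{dual})$ or $(\text{dual},\text{group})$ is a purely notational choice, and throughout the paper (including in~\eqref{reinterrelation} versus the original~\eqref{pairingv2}, and inside Kaletha's $h(a)$) the two orders denote the same complex number. With $P=Q$ the right-hand side is $P\,P^{-1}=1$ and the reduction goes through. You should replace ``inverse to each other'' by ``equal,'' and then your argument matches the paper's conclusion.
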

\begin{proof}
Let $s = (s_{0}, a) \in \pi_{0}(\tilde{S}_{\phi}^{[z]})$ be an element lying above $a$. We rewrite it as 
\begin{align*}
(s_{0}, a) = (s_{0}s_{a}^{-1}, 1)(s_{a}, a).
\end{align*}
Thus, $(s_{0}, a)$ will be pushed to $[z](s_{0}s_{a}^{-1})\boxtimes a$ in $\mathcal{E}_{[z]}^{\phi} \cong \mathbb{C}^{\times} \boxtimes_{\bar{\beta}} A(F)^{[\phi],[z]}$ and further brought to $[z](s_{0}s_{a}^{-1})h(a)^{-1}\boxtimes a$ under $\mathcal{I}$. As a result, we have
\begin{align}\label{equation: rho_eta_Kal}
\rho_{\eta}^{\mathrm{Kal}}(s) &= \eta\left([z](s_{0}s_{a}^{-1})h(a)^{-1}\boxtimes a\right) \nonumber\\
&= [z](s_{0}s_{a}^{-1})h(a)^{-1} \eta(t_{a}, a) \nonumber \\
&= [z](s_{0}s_{a}^{-1})\bar{\alpha}\left(a^{-1}, a\right)^{-1} \cdot\left\langle\left(z^{-1}, t_{a^{-1}}\right),\left(\phi^{-1}, s_{a}\right)\right\rangle^{-1}\cdot \eta(t_{a}, a).
\end{align}
On the other hand, our intrinsic construction, characterized by relation~\eqref{reinterrelation}, yields
\begin{align}\label{equation: rho_eta}
\rho_{\eta}(s) &= \left \langle (z^{-1},t_{a^{-1}}), (\phi^{-1}, s_{0}) \right \rangle^{-1} \eta(t_{a^{-1}}, a^{-1})^{-1} \nonumber \\
    &= \left \langle (z^{-1},t_{a^{-1}}), (\phi^{-1}, s_{a}) \right \rangle^{-1} \left \langle (z^{-1},t_{a^{-1}}), (1, s_{0}s_{a}^{-1}) \right \rangle^{-1} \eta(t_{a^{-1}}, a^{-1})^{-1} \nonumber \\
    &= \left \langle (z^{-1},t_{a^{-1}}), (\phi^{-1}, s_{a}) \right \rangle^{-1}\cdot [z](s_{0}s_{a}^{-1}) \cdot \eta(t_{a^{-1}}, a^{-1})^{-1},
\end{align}
where the last equality uses Proposition \ref{TN-LLCcompatibility}. Finally, we recall that $\bar{\alpha} = [\phi]\circ \alpha$ and hence obtain
\begin{align} \label{equation: bar_alpha}
\bar{\alpha}(a^{-1},a) := [\phi](t_{a^{-1}}a^{-1}(t_{a})) = \eta(t_{a^{-1}}, a^{-1})\eta(t_{a}, a).
\end{align}

Combining~\eqref{equation: rho_eta_Kal},~\eqref{equation: rho_eta} and~\eqref{equation: bar_alpha}, we conclude $\rho_{\eta}(s) = \rho_{\eta}^{\mathrm{Kal}}(s)$ for any $s \in \pi_{0}(\tilde{S}_{\phi}^{[z]})$.
\end{proof}

\section{Discrete automorphic representations}

Let $F$ be a number field with absolute Galois group $\Gamma = \mathrm{Gal}(\bar{F}/F)$. Let $\mathcal{O}_{F}$ (or simply $\mathcal{O}$) be the ring of integers in $F$. Let $\tilde{T} = T\rtimes A$ be a quasi-split disconnected torus defined over $F$. We recall this means that $A$ is a (not necessarily constant) finite group scheme acting on the (not necessarily split) torus $T$, and the action is defined over $F$.

\subsection{Global pure inner forms}
Once and for all, we fix some $z \in Z^{1}(F,T)$. As in \S\ref{section: local_pure_inner}, we twist the rational structure on $\tilde{T} = T\rtimes A$ by the image of $z$ in $Z^{1}(F, T/T^{A})$ and obtain the pure inner form $\tilde{T}_{z}$.

\subsubsection{Rational points}
As in the local case, we have a short exact sequence 
\begin{align*}
1 \to T(F) \to \tilde{T}_{z}(F) \to A(F)^{[z]} \to 1,
\end{align*}
where $A(F)^{[z]}$ is the subgroup of $A(F)$ stabilising the cohomology class $[z]$. 

In forthcoming sections, we will consider the action of $\tilde{T}_{z}(F)$ on the set of Hecke characters of $T$ by conjugation: $t \cdot \chi(x) = \chi(t^{-1}xt),\forall x\in T(\mathbb{A})$. Due to commutativity of $T(\mathbb{A})$,  this actually descends to an action of $A(F)^{[z]}$ on the set of Hecke characters. Let $\chi: T(F)\backslash T(\mathbb{A}) \to \mathbb{C}^{\times}$ be a Hecke character. We denote the stabilizers of $\chi$ in $\tilde{T}_{z}(F)$ and $A(F)^{[z]}$ by $\tilde{T}_{z}(F)^{\chi}$ and $A(F)^{[z],\chi}$, respectively. They sit in the following short exact sequence:
\begin{align}
1 \to T(F) \to \tilde{T}_{z}(F)^{\chi} \to A(F)^{[z],\chi} \to 1.\label{SESglobalchi}
\end{align}

\subsubsection{Adelic points} \label{sec: adelic_points}
We first lay the necessary background for the discussion of adelic points. Let $K$ be a finite Galois extension of $F$ such that $z$ factors through $\mathrm{Gal}(K/F)$ and $\tilde{T}_{z}$ splits over $K$. We still denote by the same symbol the image of $z$ under the natural map 
\begin{align*}
Z^{1}(\mathrm{Gal}(K/F), T(K)) \to Z^{1}(\mathrm{Gal}(K/F), T(\mathbb{A}_{K})).
\end{align*}
In view of the decomposition into restricted direct product $T(\mathbb{A}_{K}) \cong \sideset{}{'}\prod_{v} \big(\prod_{w|v} T(K_w)\big)$, we may project to the chosen factor to write $z = (z_{v})_{v}$, where $z_{v}$ is the image of $z$ under the natural map 
\begin{align*}
Z^{1}(\mathrm{Gal}(K/F), T(K)) \to Z^{1}(\mathrm{Gal}(K_{v}/F_{v}), T(K_{v})).
\end{align*}
Here, for each place $v$ of $F$ we fix, once and for all, a place of $K$ lying above $v$ and write $K_{v}$ for the corresponding completion, so that $\mathrm{Gal}(K_{v}/F_{v})$ is the associated decomposition group.

We may view $\tilde{T}$ as a $F_{v}$-scheme via a base change and hence consider the pure inner form $\tilde{T}_{z_{v}}$. We quickly observe $\tilde{T}_{z}(F_{v}) = \tilde{T}_{z_{v}}(F_{v})$. From now on, we will normally write $\tilde T_{z}(F_v)$, which we should understand as exactly the set of rational points of the local pure inner form.

Next, we can discuss the adelic points. Taking $\mathrm{Gal}(K/F)$-fixed points of $\tilde{T}_{z}(\mathbb{A}_{K})$, we obtain the set of adelic points:
\begin{align*}
\tilde{T}_{z}(\mathbb{A}_{F}) = \{(t, a)\vert \ t \in T(\mathbb{A}_{K}), a\in A(\mathbb{A}_{F}),\mathrm{and}\ a(z(\sigma)) = z(\sigma)t^{-1}\sigma(t) \text{ for } \forall \sigma \in \mathrm{Gal}(K/F)\},
\end{align*} 
which sits in the following short exact sequence:
\begin{align}
    1 \to T(\mathbb{A}_{F}) \to \tilde{T}_{z}(\mathbb{A}_{F}) \to A(\mathbb{A}_{F})^{[z]} \to 1, \label{adelic SES}
\end{align}
where $A(\mathbb{A}_{F})^{[z]}$ is the stabilizer of $[z]\in H^{1}(\mathrm{Gal}(K/F), T(\mathbb{A}_{K}))$ in $A(\mathbb{A}_{F})$. 

\subsubsection{Integral models} \label{section:model}
Fix an $\mathcal{O}_{F}$-model $\tilde{\mathcal{T}}_{z}$ of $\tilde{T}_{z}$, which further gives rise to an $\mathcal{O}_{F}$-model of $T$, denoted by $\mathcal{T}$. According to~\cite[Appendix~IV, \S1, Lemma]{moeglin_waldspurger_1995}, for almost all places $v$, we have 
$$\tilde{T}_{z}(F_{v}) = T(F_{v})\tilde{\mathcal{T}}_{z}(\mathcal{O}_{v}).$$ 
In other words, for almost all $v$, each connected component of $\tilde{T}_{z}(F_{v})$ contains integral points. We will write $\tilde{T}_{z}(\mathcal{O}_{v})$ in place of $\tilde{\mathcal{T}}_{z}(\mathcal{O}_{v})$ for brevity.

We fix $S$, a finite set of places, which is large enough in the sense of containing all infinite places, every place that ramifies in the extension $K/F$, every place where $z$ does not take values in $\mathcal{T}(\mathcal{O}_{K_v})$, every place where $\mathcal{T}\otimes \mathcal{O}_{K_v}$ is not a split torus scheme, every place $v$ where the equality $\tilde{T}_{z}(F_{v}) = T(F_{v})\tilde{T}_{z}(\mathcal{O}_{v})$ fails, every place $v$ where $\mathcal{T}(\mathcal{O}_{v})$ is \textit{not} the unique maximal compact subgroup of $T(F_{v})$, and every place $v$ where $\tilde{\mathcal{T}}_{z}(\mathcal{O}_{v})$ is not the fixed-point set of the $z_{v}$-twisted Galois action on $\mathcal{T}(\mathcal{O}_{K_{v}})\rtimes A(K_{v})$ (only finitely many such places, by spreading out the identification $\tilde{T}_{z}\otimes K = (T\rtimes A)\otimes K$ to the integral models).

In general, if we let $G$ be an affine algebraic group over $F$ and let $\mathcal{G}$ be an $\mathcal{O}_{F}$-model of $G$, then $G(\mathbb{A}_{F})$ can be identified with the restricted direct product of $G(F_{v})$ defined with restriction in the subgroups $\mathcal{G}(\mathcal{O}_{v})$ for finite places $v$. Therefore, the short exact sequence~\eqref{adelic SES} can be rewritten as 
\begin{align*}
    1 \to \sideset{}{'}\prod_{v} T(F_{v}) \to \sideset{}{'}\prod_{v}\tilde{T}_{z}(F_{v}) \to \prod_{v}A(F_{v})^{[z_{v}]} \to 1.
\end{align*}
Indeed, when writing $A(\mathbb{A}_{F}) = \prod'_{v} A(F_{v})$, one notices that the restricted direct product $\prod'_{v} A(F_{v})$ is actually a genuine direct product, according to~\cite[Appendix~IV, \S1, Lemma]{moeglin_waldspurger_1995}. Moreover, it is straightforward to check that $(a_{v})_{v}\in \prod A(F_v)$ fixes $[z] = ([z_{v}])_{v} \in H^{1}(\mathrm{Gal}(K/F), T(\mathbb{A}_{K}))$ if and only if $a_{v}$ fixes $[z_{v}]$ for each place $v$. Therefore, $A(\mathbb{A}_{F})^{[z]}$ coincides with the direct product $\prod_{v}A(F_{v})^{[z_{v}]}$. We endow $A(\mathbb{A}_{F})^{[z]}$ with the product topology, under which it becomes a compact topological group.

To ensure that~\eqref{adelic SES} is moreover a short exact sequence of topological groups, we must show that the product topology coincides with the natural quotient topology on $A(\mathbb{A}_{F})^{[z]}$. For this purpose, we note that $\tilde{T}_{z}(\mathbb{A}_{F})$ is $\sigma$-compact and $\tilde{T}_{z}(\mathbb{A}_{F}) \to A(\mathbb{A}_{F})^{[z]} $ is continuous with respect to the product topology on the target. Then the coincidence of the product and quotient topologies follows from~\cite[Prop.~2.46]{folland2016course}.

\subsection{The automorphic quotient}
From now on, we drop the subscript and simply write $\mathbb{A}$ for the adele ring of $F$. 

\subsubsection{Compactness of the automorphic quotient}
We embed $\tilde{T}_{z}(F)$ diagonally as a discrete closed subgroup of $\tilde{T}_{z}(\mathbb{A})$.

Let $T_{0}$ be the largest $\mathbb{Q}$-split torus of $\mathrm{Res}_{F/\mathbb{Q}}T$. Define $A_{T} := T_{0}(\mathbb{R})^{\circ}$, the identity component of the $\mathbb{R}$-points, and then embed $A_{T}$ into $T(\mathbb{A})$ and 
$\tilde{T}_{z}(\mathbb{A})$. It is noteworthy that $[T] := A_{T}T(F)\backslash T(\mathbb{A})$ is compact. 

By virtue of~\cite[Appendix~IV, \S1, Lemma]{moeglin_waldspurger_1995}, the projection $\tilde{T}_z(\mathcal{O}_v) \to A(F_v)^{[z_v]}$ is surjective for almost all $v$. This allows us to construct a factorizable section $s = \prod_v s_v: A(\mathbb{A})^{[z]} \to \tilde{T}_{z}(\mathbb{A})$ taking values in $\tilde{T}_z(\mathcal{O}_v)$ almost everywhere, which is automatically continuous. Consider the composition
\begin{align*}
[T] \times A(\mathbb{A})^{[z]} \xrightarrow{(id,s)} [T] \times \tilde{T}_{z}(\mathbb{A}) \twoheadrightarrow A_{T}\tilde{T}_{z}(F)\backslash\tilde{T}_{z}(\mathbb{A}),
\end{align*}
where the second map is induced by multiplication in $\tilde{T}_{z}(\mathbb{A})$. The composition is continuous and surjective. In view of compactness of $[T]\times A(\mathbb{A})^{[z]}$, it follows that $A_{T}\tilde{T}_{z}(F)\backslash\tilde{T}_{z}(\mathbb{A})$ is compact as well. We caution that $A_{T}\tilde{T}_{z}(F)\backslash\tilde{T}_{z}(\mathbb{A})$ is only a coset space rather than a group since $A_{T}\tilde{T}_{z}(F)$ is not a normal subgroup of $\tilde{T}_{z}(\mathbb{A})$ in general.

\subsubsection{Normalization of measures} \label{section: measures}
For the remainder of this paper, we adopt the following measure normalizations:
\begin{itemize}[label=\textbullet]
    \item Discrete groups, such as $T(F)$, $\tilde{T}_{z}(F)$, $\tilde{T}_{z}(F)^{\chi}$, and $A(F)^{[z]}$, are equipped with the counting measure.
    \item We normalize the Haar measure on the compact topological group $A(\mathbb{A})^{[z]}$ such that it has total volume $1$. We fix an arbitrary Haar measure on $A_{T}$ and normalize the Haar measure on $T(\mathbb{A})$ so that the induced measure (see the last bullet point) on the compact quotient $[T] = A_{T}T(F)\backslash T(\mathbb{A})$ has total volume $1$.
    \item Given a short exact sequence of unimodular groups $1 \to N \to G \to Q \to 1$ with specified Haar measures on $N$ and $Q$, we equip $G$ with the unique Haar measure satisfying the quotient integral formula $\int_{G} f(g)\,dg = \int_{Q} \int_{N} f(nq)\,dn\,dq$. For example, the Haar measure on $\tilde{T}_{z}(\mathbb{A})$ is normalized in this manner from the measures on $T(\mathbb{A})$ and $A(\mathbb{A})^{[z]}$. Another example is the Haar measure on $A_{T}\tilde{T}_{z}(F)$, which can be normalized as the product of the fixed Haar measure on $A_T$ and the counting measure on $\tilde{T}_z(F)$ since $A_{T} \cap \tilde{T}_{z}(F) = \{1\}$. 

    \item For any unimodular group $G$ and closed unimodular subgroup $H$ with specified Haar measures, we equip the quotient space $H \backslash G$ with the unique right $G$-invariant Radon measure $d\dot{g}$ satisfying the quotient integral formula $\int_{G} f(g)\,dg = \int_{H \backslash G} \int_{H} f(hg)\,dh\,d\dot{g}$.
\end{itemize}

\begin{prop} \label{prop: invariant-Radon}
There is a unique right $\tilde{T}_{z}(\mathbb{A})$-invariant Radon measure (see~\cite[p.~xii]{folland2016course} for the definition) on the coset space $A_{T}\tilde{T}_{z}(F)\backslash \tilde{T}_{z}(\mathbb{A})$ that satisfies the quotient integral formula under the conventions above.
\end{prop}
\begin{proof}
    It suffices to show that both $\tilde{T}_{z}(\mathbb{A})$ and the closed subgroup $A_{T}\tilde{T}_{z}(F)$ are unimodular.
    
    The group $\tilde{T}_{z}(\mathbb{A})$ is an extension of the compact group $A(\mathbb{A})^{[z]}$ by the abelian group $T(\mathbb{A})$. According to~\cite[Thm.~2.51]{folland2016course}, because the Haar measure on $A(\mathbb{A})^{[z]}$ is a $\tilde{T}_{z}(\mathbb{A})$-invariant Radon measure, we must have $\Delta_{\tilde{T}_{z}(\mathbb{A})}|_{T(\mathbb{A})} = \Delta_{T(\mathbb{A})} \equiv 1$. Therefore, the modular function $\Delta_{\tilde{T}_{z}(\mathbb{A})}$ factors through the compact quotient $A(\mathbb{A})^{[z]}$, hence $\Delta_{\tilde{T}_{z}(\mathbb{A})} \equiv 1$. 
    
    The closed subgroup $A_{T}\tilde{T}_{z}(F)$ requires a distinct argument. We start by noting that conjugation by any element in $\tilde{T}_{z}(F)$ induces an $F$-rational algebraic automorphism of $T$ and further induces a $\mathbb{Q}$-rational algebraic automorphism of $\mathrm{Res}_{F/\mathbb{Q}}T$ by the functoriality of the Weil restriction. It follows that both $T_{0}$ and $A_{T} = T_{0}(\mathbb{R})^{\circ}$ are stabilized under the conjugation. Moreover, we note that $A_{T}$ is open in $A_{T}T(F)$ while $A_{T}T(F)$ is of finite index in $A_{T}\tilde{T}_{z}(F)$, hence it follows that $A_T$ is an open normal subgroup of $A_{T}\tilde{T}_{z}(F)$. 
    
    By~\cite[Ch.~VII, \S1, No.~4, p.~13]{bourbaki2004integration}, we have $\Delta_{A_{T}\tilde{T}_{z}(F)}(g) = \operatorname{mod}(i_{g})$, where $i_{g}$ is the inner automorphism of $A_{T}\tilde{T}_{z}(F)$ induced by $g$ and the modulus $\operatorname{mod}(i_{g})$ is the factor by which $i_{g}$ scales the Haar measure. Since $A_{T}$ is an open normal subgroup of $A_{T}\tilde{T}_{z}(F)$ and the conjugation of $A_{T}$ on itself is trivial, the modulus factor can be fully determined by the conjugation action of $\tilde{T}_{z}(F)$ on $A_{T}$. Now, $g \mapsto \operatorname{mod}(i_{g})$ defines a group homomorphism $\tilde{T}_z(F) \to \mathbb{R}_{>0}$~\cite[Ch.~VII, \S1, No.~4, p.~13, Prop.~4]{bourbaki2004integration}. Because $T(F)$ commutes with $A_T$ (as they both lie in $T(\mathbb{A})$), the homomorphism $g \mapsto \operatorname{mod}(i_{g})$ factors through the finite quotient $\tilde{T}_z(F)/T(F) \cong A(F)^{[z]}$. Thus, the homomorphism must be trivial because the multiplicative group $\mathbb{R}_{>0}$ has no non-trivial finite subgroups. This proves that $A_{T}\tilde{T}_{z}(F)$ is also unimodular.
    
    Now that both groups are unimodular, the compatibility $\Delta_{\tilde{T}_{z}(\mathbb{A})}|_{A_{T}\tilde{T}_{z}(F)} = \Delta_{A_{T}\tilde{T}_{z}(F)}$ holds trivially. The desired statement follows from~\cite[Thm.~2.51]{folland2016course}. 
    \end{proof}

\subsubsection{Discrete automorphic representations}
By the definition of Radon measure, compactness of $A_{T}\tilde{T}_{z}(F)\backslash \tilde{T}_{z}(\mathbb{A})$ implies that it has finite volume.  Moreover, since $A_{T}\tilde{T}_{z}(F)\backslash \tilde{T}_{z}(\mathbb{A})$ is equipped with a right $\tilde{T}_{z}(\mathbb{A})$-invariant Radon measure, according to~\cite[Prop.~2.42 and \S3.1]{folland2016course}, the Hilbert space 
$$L^{2}(A_{T}\tilde{T}_{z}(F)\backslash \tilde{T}_{z}(\mathbb{A}))$$ 
becomes a unitary representation when equipped with the action of $\tilde{T}_{z}(\mathbb{A})$ by right translation. In \S\ref{sec: pre-decomp}, we will show $L^{2}(A_{T}\tilde{T}_{z}(F)\backslash \tilde{T}_{z}(\mathbb{A}))$ decomposes as a Hilbert direct sum of irreducible constituents. We call the irreducible constituents \textit{discrete automorphic representations} (or simply \textit{automorphic representations}). 

Our goal is to identify which representations are automorphic and determine their multiplicities. Chapter~8 approaches this question purely on the automorphic side by decomposing the $L^2$-space directly. Chapter~9 brings in the dual side and presents the answer in the form of a multiplicity formula in the spirit of~\eqref{eq:intro_mult}.

\begin{rmk}
    Slightly more generally, let $\omega: A_{T} \to S^{1}$ be a unitary character and consider the Hilbert space $L^{2}(\tilde{T}_{z}(F)\backslash\tilde{T}_{z}(\mathbb{A}), \omega)$ of $\tilde{T}_{z}(F)$-left-invariant functions $f$ with $f(tx) = \omega(t)f(x)$ for $t\in A_{T}$ and $x\in \tilde{T}_{z}(\mathbb{A})$. One may ask how this space decomposes as well. For $f\in L^{2}(\tilde{T}_{z}(F)\backslash\tilde{T}_{z}(\mathbb{A}), \omega)$, $\gamma \in \tilde{T}_{z}(F)$, and $t \in A_{T}$, we have $f(\gamma t x) = f\big((\gamma t \gamma^{-1})\gamma x\big) = \omega(\gamma t \gamma^{-1})f(\gamma x) = \omega(\gamma t \gamma^{-1})f(x)$, using $\gamma t\gamma^{-1}\in A_{T}$ together with the left $\tilde{T}_{z}(F)$-invariance of $f$. On the other hand, $f(\gamma t x) = f(t x) = \omega(t)f(x)$. Comparing the two, we see $\omega(\gamma t \gamma^{-1}) = \omega(t)$ unless $f \equiv 0$. Hence $L^{2}(\tilde{T}_{z}(F)\backslash\tilde{T}_{z}(\mathbb{A}), \omega)$ vanishes unless $\omega$ is invariant under the conjugation action of $\tilde{T}_{z}(F)$ on $A_{T}$, which factors through the finite quotient $A(F)^{[z]}$.

    Assume $\omega$ is indeed $\tilde{T}_{z}(F)$-invariant. Then all of our results in the following chapters carry over to this twisted case verbatim. Indeed, what we find in the untwisted setting is that only Hecke characters in $H(T)$ (which are trivial on $A_T$) account for the irreducible constituents of $L^{2}(A_{T}\tilde{T}_{z}(F)\backslash \tilde{T}_{z}(\mathbb{A}))$. As for the twisted case, the irreducible constituents of $L^{2}(\tilde{T}_{z}(F)\backslash\tilde{T}_{z}(\mathbb{A}), \omega)$ are parametrized by $H(T, \omega)$, comprised of Hecke characters of $T$ that restrict to $\omega$ on $A_{T}$. Because $\omega$ is $\tilde{T}_z(F)$-invariant, the set $H(T, \omega)$ is stable under the conjugation action of $\tilde{T}_{z}(F)$. This stability is the exact prerequisite needed to perform the orbit decomposition in \S\ref{sec: pre-decomp}, and all of the subsequent arguments remain valid after replacing $H(T)$ by $H(T, \omega)$.
\end{rmk}

\section{Multiplicity on the automorphic side}

\subsection{Preparation: smooth induction}
We write $\mathbb{A} = F_{\infty} \times \mathbb{A}^{\infty}$, where $F_{\infty}$ is the product of $F_{v}$ over all the infinite places $v$ and $\mathbb{A}^{\infty}$ is the restricted direct product of the $F_{v}$ with $v$ ranging in all the finite places. 

Let $G$ be a unimodular locally compact group that factors as $G = G_{\infty} \times G^{\infty}$, where $G_{\infty}$ is a real Lie group and $G^{\infty}$ is a first-countable totally disconnected locally compact group. For instance, when $\mathbb{G}$ is an affine algebraic group defined over $F$, its adelic points admit such a factorization: $\mathbb{G}(\mathbb{A}) = \mathbb{G}(F_{\infty})\times \mathbb{G}(\mathbb{A}^{\infty})$. 

First, we recall the notion of the smooth part of a representation.

\begin{defn}
Let $\pi$ be a continuous representation of $G$ on a complete locally convex topological vector space $V$. A vector $v \in V$ is termed smooth if $v$ is a smooth vector with respect to $G_{\infty}$ (that is, the iterated derivatives $\pi(X_1)\cdots\pi(X_n)v$ exist for all $n \geq 1$ and all elements $X_1, \dots, X_n$ in the Lie algebra of $G_{\infty}$) and is $K^{\infty}$-invariant for some open compact subgroup $K^{\infty} \leq G^{\infty}$. We call $V_{\mathrm{sm}}$, which consists of all smooth vectors in $V$, the smooth part of $V$. We say that $\pi$ is admissible if, for every compact open subgroup $K^{\infty}\leq G^{\infty}$, the space $V^{K^{\infty}}$ of $K^{\infty}$-fixed vectors is finite-dimensional.
\end{defn}

$V_{\mathrm{sm}}$ is stable under the action of $G$ and dense in $V$. When $V$ is a Fr\'{e}chet space, $V_{\mathrm{sm}}$ can be endowed with a natural LF-topology as follows. First, we denote by $V_{\mathrm{sm},\infty}\subseteq V$ the subspace of smooth vectors under $G_{\infty}$, which is a Fr\'{e}chet space. Then, we let $K_{n}$ be a decreasing cofinal sequence of compact open subgroups of $G^{\infty}$, and we rewrite $V_{\mathrm{sm}}$ as an inductive limit (union): $\displaystyle V_{\mathrm{sm}} = \bigcup_{n} (V_{\mathrm{sm},\infty})^{K_{n}}$. Thus, $V_{\mathrm{sm}}$ naturally becomes an LF-space equipped with the inductive topology. (Recall that an LF-space is a strict inductive limit of an increasing sequence of Fr\'{e}chet spaces~\cite[\S13]{treves2016topological}. Here, the limit is strict because the invariant subspaces $(V_{\mathrm{sm},\infty})^{K_{n}}$ are closed in the Fr\'{e}chet space $V_{\mathrm{sm},\infty}$.) 

From now on, whenever we speak of a \textit{smooth representation}, we implicitly assume its underlying topological vector space is an LF-space arising in the above manner. 

Next, we recall the notion of unitary induced representation~\cite[\S6.1]{folland2016course} and define the smooth induction as its smooth part:

\begin{defn} \label{defn:smooth_ind}
Let $H \leq G$ be a closed unimodular subgroup and let $(\rho, W)$ be a unitary representation of $H$. We define the \textbf{unitary induced representation} $L^2\text{-}\mathrm{Ind}_{H}^{G}\rho$ as the Hilbert space completion of the space of continuous functions $f: G \to W$ that are compactly supported modulo $H$ and satisfy $f(hg) = \rho(h)f(g)$, under the inner product $\int_{H \backslash G} \langle f_{1}(g), f_{2}(g) \rangle_W \, dg$. 

We then define the \textbf{smooth induction} of $(\rho, W)$ to $G$, denoted by $\mathrm{Ind}_{H}^{G}\rho$, as the dense subspace of smooth vectors of the Hilbert space $L^2\text{-}\mathrm{Ind}_{H}^{G}\rho$, equipped with its natural LF-topology.
\end{defn}

In our context, we will apply these inductions not only to the full adelic points of an algebraic group, but also to certain intermediate subgroups. We are particularly interested in smooth inductions $\mathrm{Ind}_{H}^{G}\,\rho$ when $G$, $H$, and $\rho$ satisfy the following assumptions:

\begin{assumption} \label{assump:GH}
    \leavevmode
    \begin{itemize}[label=\textbullet]
        \item $G = \prod'_{v} G_{v}$ is a restricted direct product (with respect to the compact open subgroups $G_{v}\cap \tilde{T}_{z}(\mathcal{O}_{v})$ for finite $v$), where $T(F_{v})\subseteq G_{v}\subseteq \tilde{T}_{z}(F_{v})$ for each place $v$.
        \item $H$ satisfies one of the following:
    \begin{itemize}
        \item $H$ is a restricted direct product $\prod'_{v}H_{v}$ with $T(F_{v}) \subseteq H_{v} \subseteq G_{v}$;
        \item $H \subseteq G$ contains $T(\mathbb{A})$ as a subgroup of finite index. For example, $H = T(\mathbb{A})\tilde{T}_{z}(F)$. 
    \end{itemize}
    \item The $H$-representation $(\rho, W)$ is unitary and finite-dimensional.
    \end{itemize}
\end{assumption}

\begin{rmk}
    The arguments in the proof of Proposition~\ref{prop: invariant-Radon} show that any groups $G$ and $H$ satisfying Assumption~\ref{assump:GH} are necessarily unimodular. Thus, for such $G$ and $H$, we can consider the unitary and smooth induced representations in Definition~\ref{defn:smooth_ind}.
\end{rmk}

\begin{rmk}\label{rmk: findim_smooth}
Since $W$ is finite-dimensional, the restriction $\rho|_{T(\mathbb{A})}$ is automatically smooth.
\end{rmk}

For the rest of this section, we assume that $G$, $H$ and $\rho$ satisfy Assumption~\ref{assump:GH} unless specified otherwise. For clarity of exposition, we might as well fix $G = \tilde{T}_{z}(\mathbb{A})$. However, the arguments actually hold verbatim for any $G$ satisfying the assumption above.

Instead of requiring smoothness with respect to the entire $\tilde{T}_{z}(\mathbb{A})$, we may disregard the smoothness with respect to the archimedean places. Denote by $\displaystyle (L^2\text{-}\mathrm{Ind}_{H}^{\tilde{T}_{z}(\mathbb{A})}\rho)^{\infty}_{\mathrm{sm}}$ the subspace of vectors in $\displaystyle L^2\text{-}\mathrm{Ind}_{H}^{\tilde{T}_{z}(\mathbb{A})}\rho$ that are smooth under the action of $\tilde{T}_{z}(\mathbb{A}^{\infty})$.

\begin{prop} \label{prop: automatic_smoothness}
Maintain the notation and assumptions above. A $\tilde{T}_{z}(\mathbb{A}^{\infty})$-smooth vector in $L^2\text{-}\mathrm{Ind}_{H}^{\tilde{T}_{z}(\mathbb{A})}\rho$ is automatically $\tilde{T}_{z}(F_{\infty})$-smooth. Hence we have $\displaystyle \mathrm{Ind}_{H}^{\tilde{T}_{z}(\mathbb{A})}\rho = (L^2\text{-}\mathrm{Ind}_{H}^{\tilde{T}_{z}(\mathbb{A})}\rho)^{\infty}_{\mathrm{sm}}$.
\end{prop}
\begin{proof}
Suppose $v \in L^2\text{-}\mathrm{Ind}_{H}^{\tilde{T}_{z}(\mathbb{A})}\rho$ is fixed by a compact open subgroup $K^{\infty}\leq \tilde{T}_{z}(\mathbb{A}^{\infty})$ under right translation $(R_{x}v)(g) = v(gx)$. We first show that $v$ is smooth under the Lie group $T(F_{\infty})$.

By Assumption~\ref{assump:GH}, $H$ contains $T(\mathbb{A})$, and $T(\mathbb{A})$ is normal in $\tilde{T}_{z}(\mathbb{A})$. Hence, for $x \in T(F_{\infty})$ and $g \in \tilde{T}_{z}(\mathbb{A})$, we have $gxg^{-1} \in T(\mathbb{A}) \subseteq H$, so that right translation by $T(F_{\infty})$ acts pointwise:
\begin{align}\label{equation: pointwise_translation}
(R_{x}v)(g) = v\big((gxg^{-1})g\big) = \rho(gxg^{-1})\,v(g).
\end{align}
Moreover, every double coset $HgK^{\infty}$ is open in $\tilde{T}_{z}(\mathbb{A})$. Indeed, it is stable under right multiplication by the open subgroup $T(F_{\infty})\times K^{\infty}$: for $x\in T(F_{\infty})$ and $k,k'\in K^{\infty}$ we have $hgk\cdot xk' = h\,(gxg^{-1})\,g\,kk' \in HgK^{\infty}$. Since $H\backslash\tilde{T}_{z}(\mathbb{A})$ is compact (as $H$ contains $T(\mathbb{A})$) and is partitioned by the images of the open double cosets, there are only finitely many of them, say $Hg_{1}K^{\infty}, \dots, Hg_{n}K^{\infty}$.

Let $w_{i} := v(g_{i})\in W$. Then we have $v(hg_{i}k) = \rho(h)w_{i}$ for all $h\in H$ and $k\in K^{\infty}$. By \eqref{equation: pointwise_translation}, for $x \in T(F_{\infty})$, we have
\begin{align*}
(R_{x}v)(hg_{i}k) = v\big(h(g_{i}xg_{i}^{-1})g_{i}k\big) = \rho(h)\,\rho(g_{i}xg_{i}^{-1})\,w_{i}.
\end{align*}
We note that $x\mapsto g_{i}xg_{i}^{-1}$ is a smooth automorphism of $T(F_{\infty})$ and $w_{i}$ is a smooth vector of $\rho|_{T(\mathbb{A})}$ by Remark~\ref{rmk: findim_smooth}. Therefore, each of the $W$-valued maps $x \mapsto \rho(g_{i}xg_{i}^{-1})\,w_{i}$ is smooth, which implies that the orbit map $x\mapsto R_{x}v$ is infinitely differentiable. Indeed, $R_{x}v$ is still right $K^{\infty}$-invariant ($x$ commutes with $K^{\infty}$). For right $K^{\infty}$-invariant $v'$, we have $\lVert v'\rVert^{2} = \sum_{i=1}^{n}\mathrm{vol}(H\backslash Hg_{i}K^{\infty})\,\lVert v'(g_{i})\rVert_{W}^{2}$ by unitarity of $\rho$. So convergence of the finitely many values in $W$ implies convergence under the $L^{2}$-norm.

So far, we have shown that $v$ is smooth under $T(F_{\infty})$. Because $T(F_\infty)$ is an open normal subgroup of finite index in $\tilde{T}_z(F_\infty)$, their Lie algebras coincide. Therefore, infinite differentiability carries over.
\end{proof}

The following consequence of Proposition~\ref{prop: automatic_smoothness} allows us to pass between smooth inductions and their $L^{2}$-completions.

\begin{cor} \label{cor: admissibility}
In the setting of Proposition~\ref{prop: automatic_smoothness}, write $V := L^2\text{-}\mathrm{Ind}_{H}^{\tilde{T}_{z}(\mathbb{A})}\rho$, so that $V_{\mathrm{sm}} = \mathrm{Ind}_{H}^{\tilde{T}_{z}(\mathbb{A})}\rho$.
\begin{enumerate}[label=(\arabic*)]
    \item For every compact open subgroup $K^{\infty}\leq \tilde{T}_{z}(\mathbb{A}^{\infty})$, the space $V^{K^{\infty}}$ of $K^{\infty}$-fixed vectors is finite-dimensional and consists of smooth vectors. Moreover, $V_{\mathrm{sm}} = \bigcup_{K^{\infty}}V^{K^{\infty}}$. In particular, $V$ and $V_{\mathrm{sm}}$ are admissible.
    \item If $V = \widehat{\bigoplus_{j\in J}}V_{j}$ is a Hilbert direct sum of closed invariant subspaces, then $V_{\mathrm{sm}} = \bigoplus_{j\in J}(V_{j})_{\mathrm{sm}}$, an algebraic direct sum.
    \item Suppose $V_{\mathrm{sm}} = \bigoplus_{j\in J}\pi_{j}$ with each $\pi_{j}$ an irreducible smooth subrepresentation, and write $\overline{\pi_{j}}$ for the closure of $\pi_{j}$ in $V$. Then each $\overline{\pi_{j}}$ is an irreducible unitary representation whose smooth part is exactly $\pi_{j}$. For $j,j'\in J$, the closures $\overline{\pi_{j}}$ and $\overline{\pi_{j'}}$ are unitarily isomorphic if and only if $\pi_{j}\cong\pi_{j'}$ as smooth representations. Moreover, $V$ is unitarily isomorphic to $\widehat{\bigoplus_{j\in J}}\,\overline{\pi_{j}}$. In particular, $V$ decomposes as a Hilbert direct sum of irreducible unitary subrepresentations, and $V$ and $V_{\mathrm{sm}}$ have the same irreducible constituents with the same (automatically finite) multiplicities.
\end{enumerate}
\end{cor}
\begin{proof}
(1) In the proof of Proposition~\ref{prop: automatic_smoothness}, we saw that an element of $V^{K^{\infty}}$ is determined by its values at the finitely many representatives $g_{1},\dots,g_{n}$, so $\dim V^{K^{\infty}}\leq n\dim W$. By Proposition~\ref{prop: automatic_smoothness} and the definition of smoothness, $V^{K^{\infty}}$ consists of smooth vectors and $V_{\mathrm{sm}} = \bigcup_{K^{\infty}}V^{K^{\infty}}$ holds.

(2) We consider $P_{K^{\infty}}: v\mapsto \int_{K^{\infty}}R_{k}v\,dk$ (Haar measure of total volume $1$), which is the orthogonal projection onto $V^{K^{\infty}}$ and preserves every closed invariant subspace. Hence, $V^{K^{\infty}} = \widehat{\bigoplus_{j}}V_{j}^{K^{\infty}}$, which is finite-dimensional by (1), so only finitely many summands are nonzero and the sum is algebraic. Taking the union over all $K^{\infty}$ proves the claim.

(3) Fix $j$ and write $\pi := \pi_{j}$. Since $\pi^{K^{\infty}}$ is finite-dimensional, hence closed, continuity of $P_{K^{\infty}}$ gives $\overline{\pi}^{K^{\infty}} = P_{K^{\infty}}(\overline{\pi}) \subseteq \overline{P_{K^{\infty}}(\pi)} = \pi^{K^{\infty}}$, so $\overline{\pi}^{K^{\infty}} = \pi^{K^{\infty}}$. According to (1), the smooth part of $\overline{\pi}$ equals $\bigcup_{K^{\infty}}\overline{\pi}^{K^{\infty}} = \pi$. For irreducibility, let $M\subseteq\overline{\pi}$ be a nonzero closed invariant subspace. We note $M^{K^{\infty}}\neq 0$ for some $K^{\infty}$, due to strong continuity of right translation. As $M^{K^{\infty}}\subseteq\overline{\pi}^{K^{\infty}} = \pi^{K^{\infty}}$, the subspace $M\cap\pi$ is nonzero and invariant in the irreducible $\pi$, whence $M\cap\pi = \pi$ and $M = \overline{\pi}$.

Next, let $\varphi:\pi_{j}\xrightarrow{\sim}\pi_{j'}$ be an isomorphism of smooth representations. Then $B(v,w) := \langle\varphi(v),\varphi(w)\rangle$ is a second invariant inner product on $\pi := \pi_{j}$, and we claim that $B = c\,\langle\cdot,\cdot\rangle$ for some scalar $c>0$. Indeed, on each finite-dimensional space $\pi^{K^{\infty}}$, let $T_{K^{\infty}}$ be the unique linear operator such that, for all $v,w \in \pi^{K^{\infty}}$, $B(v,w) = \langle T_{K^{\infty}}\,v,\, w\rangle$. We note that $T_{K^{\infty}}$ is positive-definite. Since both forms are invariant, each $P_{K^{\infty}}$ is self-adjoint for both. Hence, for $K'\leq K^{\infty}$, the operator $T_{K'}$ commutes with $P_{K^{\infty}}$ and restricts to $T_{K^{\infty}}$ on $\pi^{K^{\infty}}$. Therefore, the $T_{K^{\infty}}$ glue to a linear endomorphism $T$ of $\pi = \bigcup_{K^{\infty}}\pi^{K^{\infty}}$ with $B = \langle T\,\cdot\,,\,\cdot\,\rangle$, and the invariance of both forms makes $T$ equivariant. Taking an eigenvalue $c$ of $T$ on some nonzero $\pi^{K^{\infty}}$, the kernel of $T - c$ is a nonzero invariant subspace of the irreducible $\pi$, so $T = c$, and $c > 0$ by positivity. Thus, we conclude that $c^{-1/2}\varphi$ is an isometry, which extends to a unitary isomorphism $\overline{\pi_{j}}\cong\overline{\pi_{j'}}$. Conversely, a unitary isomorphism $\overline{\pi_{j}}\cong\overline{\pi_{j'}}$ preserves $K^{\infty}$-invariance and smoothness, hence restricts to an isomorphism $\pi_{j}\cong\pi_{j'}$.

Finally, we group the $\pi_{j}$ into isomorphism classes. By the preceding two paragraphs and Schur's lemma for unitary representations (see~\cite[\S3.1]{folland2016course}), closures within one class are unitarily isomorphic, while closures in different classes are orthogonal. Each class $c$ is finite, since $\#c\cdot\dim\pi_{j_{1}}^{K^{\infty}} \leq \dim V^{K^{\infty}}$ for $j_{1}\in c$ and any $K^{\infty}$ with $\pi_{j_{1}}^{K^{\infty}}\neq 0$. As $V_{\mathrm{sm}}$ is dense in $V$, the subspaces $I_{c} := \overline{\bigoplus_{j\in c}\pi_{j}}$ yield an orthogonal decomposition $V = \widehat{\bigoplus_{c}}I_{c}$. It therefore remains to prove the following claim, applied to $N = I_{c}$ and $D = \bigoplus_{j\in c}\pi_{j}$: if a closed invariant subspace $N\subseteq V$ contains a dense invariant subspace $D\subseteq V_{\mathrm{sm}}$ isomorphic to $\pi^{\oplus r}$, with $\pi$ irreducible smooth and $r<\infty$, then $N\cong\overline{\pi}^{\,\oplus r}$ unitarily. This can be proved by induction on $r$. The case $r=1$ is clear by the previous paragraph. For the inductive step, pick a summand $\pi'\subseteq D$. Consider the orthogonal projection of $N$ onto $N\ominus\overline{\pi'}$, where $N \ominus \overline{\pi'}$ is the orthogonal complement of $\overline{\pi'}$ inside $N$.  This orthogonal projection preserves smoothness and maps $D$ onto a dense invariant subspace of $N\ominus\overline{\pi'}$. The image of the projection is a quotient of $D\cong\pi^{\oplus r}$, hence isomorphic to $\pi^{\oplus(r-1)}$ by a count of $K^{\infty}$-fixed vectors. Now the inductive hypothesis applies and the claim follows.
\end{proof}

\begin{prop}[(Frobenius reciprocity)] \label{prop: frobenius_rec}
Maintain the standing assumptions on $H$ and its representation $(\rho, W)$, and let $(\pi, V)$ be a smooth representation of $\tilde{T}_{z}(\mathbb{A})$. There is a natural isomorphism
\begin{align*}
\Phi:\mathrm{Hom}_{\tilde{T}_{z}(\mathbb{A})}(V, \mathrm{Ind}_{H}^{\tilde{T}_{z}(\mathbb{A})}W) &\xrightarrow{\sim} \mathrm{Hom}_{H}(\mathrm{Res}\,V, W)\\
\alpha &\mapsto \Phi(\alpha): v\mapsto \alpha(v)(1).
\end{align*}
\end{prop}
\begin{proof}
It is straightforward to check that $\Phi(\alpha)$ thus defined is continuous and is an $H$-morphism indeed. We define
$$\Psi:\mathrm{Hom}_{H}(\mathrm{Res}\,V, W) \to   \mathrm{Hom}_{\tilde{T}_{z}(\mathbb{A})}(V, \mathrm{Ind}_{H}^{\tilde{T}_{z}(\mathbb{A})}W)$$ 
as follows: for an $H$-morphism $\beta: V\to W$, we let $\Psi(\beta)(v)$ be the map that sends any $t \in \tilde{T}_{z}(\mathbb{A})$ to $\beta(\pi(t)v) \in W$.

Let $h \in H$ and $t \in \tilde{T}_{z}(\mathbb{A})$. $\Psi(\beta)(v)$ sends $ht$ to $\beta(\pi(ht)v) = \rho(h)\beta(\pi(t)v)$, since $\beta$ intertwines $\mathrm{Res}\,V$ with $\rho$. Since $H \backslash \tilde{T}_{z}(\mathbb{A})$ is compact (as $H$ contains $T(\mathbb{A})$), $\Psi(\beta)(v)$ is square-integrable. According to Proposition~\ref{prop: automatic_smoothness}, it remains to show that $\Psi(\beta)(v)$ is smooth under the action of $\tilde{T}_{z} (\mathbb{A}^{\infty})$. Since $(\pi,V)$ is a smooth representation of $\tilde{T}_{z}(\mathbb{A})$,  there exists a compact open subgroup $K \leq \tilde{T}_{z}(\mathbb{A}^{\infty})$ such that $\pi(K)v = v$. Then we can show that $K$ also fixes $\Psi(\beta)(v)$ when acting by right translation. Indeed, for any $k \in K$ and $t \in \tilde{T}_z(\mathbb{A})$:
$$ [\Psi(\beta)(v)](tk) = \beta(\pi(tk)v) = \beta(\pi(t)\pi(k)v) = \beta(\pi(t)v) = [\Psi(\beta)(v)](t). $$
Thus, $\Psi(\beta)(v)$ is right $K$-invariant.

Routine checks show that $\Psi(\beta)$ is $\tilde{T}_{z}(\mathbb{A})$-intertwining. Furthermore, we must verify that the linear map $\Psi(\beta) \colon V \to \mathrm{Ind}_{H}^{\tilde{T}_z(\mathbb{A})} W$ is continuous. Because $V$ is the strict inductive limit of $V^{K_{n}}$, which are Fr\'{e}chet spaces, it suffices to show that the restriction of $\Psi(\beta)$ to each $V^{K_{n}}$ is continuous. By the preceding paragraph, $\Psi(\beta)$ maps $V^{K_{n}}$ into the Fr\'{e}chet space $(\mathrm{Ind}_{H}^{\tilde{T}_z(\mathbb{A})} W)^{K_{n}}$. Suppose a sequence $v_n \to v$ in $V^{K_{n}}$ and $\Psi(\beta)(v_n) \to f$ in $(\mathrm{Ind}_{H}^{\tilde{T}_z(\mathbb{A})} W)^{K_{n}}$. Because point-evaluation $\varphi \mapsto \varphi(t)$ is continuous in the Fr\'{e}chet topology, evaluating at any $t \in \tilde{T}_z(\mathbb{A})$ gives
$$ f(t) = \lim \Psi(\beta)(v_n)(t) = \lim \beta(\pi(t)v_n) = \beta(\pi(t)v) = \Psi(\beta)(v)(t). $$ 
Thus, $f = \Psi(\beta)(v)$, which implies that the restriction $\Psi(\beta)|_{V^{K_{n}}}$ has a closed graph. By the Closed Graph Theorem for Fr\'{e}chet spaces (see~\cite[\S17]{treves2016topological}), the restriction $\Psi(\beta)|_{V^{K_{n}}}$ is continuous. Thus we have shown that $\Psi(\beta)$ is continuous on $V$, hence lies in $\mathrm{Hom}_{\tilde{T}_{z}(\mathbb{A})}(V, \mathrm{Ind}_{H}^{\tilde{T}_{z}(\mathbb{A})}W)$. 

Finally, it is easy to check that $\Phi$ and $\Psi$ are inverses of each other.
\end{proof}

\begin{prop}[(Induction in steps)]
    \label{prop: induction_in_steps} 
    Let $H \leq H'$ be closed unimodular subgroups of $\tilde{T}_z(\mathbb{A})$ (here we need not assume they contain $T(\mathbb{A})$). Let $(\rho, W)$ be a unitary representation of $H$. Let $V_{\mathrm{in}} := L^{2}\text{-}\mathrm{Ind}_{H}^{H'}W$. Then there is a natural $\tilde{T}_{z}(\mathbb{A})$-isomorphism of LF-spaces:
    \begin{align}\label{equation: induction_in_steps}
        \mathrm{Ind}_{H'}^{\tilde{T}_{z}(\mathbb{A})}V_{\mathrm{in}} = \left(L^2\text{-}\mathrm{Ind}_{H'}^{\tilde{T}_{z}(\mathbb{A})}V_{\mathrm{in}}\right)_{\mathrm{sm}} \xrightarrow{\sim} \mathrm{Ind}_{H}^{\tilde{T}_{z}(\mathbb{A})} W.    
    \end{align}
\end{prop}
\begin{proof}
    By the induction-in-steps theorem for unitary representations~\cite[Thm.~6.14]{folland2016course}, there is a unitary equivalence:
    \begin{align}\label{equation: L2_induction_in_steps}
        L^2\text{-}\mathrm{Ind}_{H'}^{\tilde{T}_{z}(\mathbb{A})}\left(L^{2}\text{-}\mathrm{Ind}_{H}^{H'}W\right) \xrightarrow{\sim} L^2\text{-}\mathrm{Ind}_{H}^{\tilde{T}_{z}(\mathbb{A})} W,
    \end{align}
    which induces a natural topological isomorphism between the smooth parts:
    $$ \left(L^2\text{-}\mathrm{Ind}_{H'}^{\tilde{T}_{z}(\mathbb{A})}V_{\mathrm{in}}\right)_{\mathrm{sm}} \xrightarrow{\sim} \mathrm{Ind}_{H}^{\tilde{T}_{z}(\mathbb{A})} W.$$
\end{proof}

\begin{rmk}
The isomorphism~\eqref{equation: induction_in_steps} can be explicitly given as:
$$\Phi(f)(t) = f(t)(1_{H'}),$$
for $f \in \mathrm{Ind}_{H'}^{\tilde{T}_{z}(\mathbb{A})}V_{\mathrm{in}}$ and $t \in \tilde{T}_{z}(\mathbb{A})$, with inverse
$$\Psi(\phi)(t)(h') = \phi(h't),$$
for $\phi \in \mathrm{Ind}_{H}^{\tilde{T}_{z}(\mathbb{A})} W$, $t \in \tilde{T}_{z}(\mathbb{A})$ and $h' \in H'$. We omit the proof of this since it will not be used later. 
 
Moreover, we can show that $f \in \mathrm{Ind}_{H'}^{\tilde{T}_{z}(\mathbb{A})}V_{\mathrm{in}}$ must take values in  $(V_{\mathrm{in}})_{\mathrm{sm}} = \mathrm{Ind}_{H}^{H'}W$. For this reason, by abuse of notation, we simply denote the left-hand side of~\eqref{equation: induction_in_steps} by $\mathrm{Ind}_{H'}^{\tilde{T}_{z}(\mathbb{A})}\left(\mathrm{Ind}_{H}^{H'}W\right)$.     
\end{rmk}

\subsection{A preliminary decomposition}\label{sec: pre-decomp}
As the first step towards decomposing $L^{2}(A_{T}\tilde{T}_{z}(F) \backslash \tilde{T}_{z}(\mathbb{A}))$, we extract from it a dense subspace $\mathcal{A}(\tilde{T}_{z})$, on which $\tilde{T}_{z}(\mathbb{A})$ acts smoothly. 

We consider an intermediate closed subgroup $T(\mathbb{A})\tilde{T}_{z}(F)$ of $\tilde{T}_{z}(\mathbb{A})$, which contains $T(\mathbb{A})$ as a subgroup of finite index:
\begin{align*}
    1 \to T(\mathbb{A}) \to T(\mathbb{A})\tilde{T}_{z}(F) \to A(F)^{[z]} \to 1.
\end{align*}
We endow the coset space $A_{T}\tilde{T}_{z}(F)\backslash T(\mathbb{A})\tilde{T}_{z}(F)$  with the quotient topology and normalize the measure on it following the conventions in \S\ref{section: measures}. There is a natural homeomorphism $p$:
\begin{align*}
A_{T}T(F)\backslash T(\mathbb{A}) \xrightarrow[\sim]{p}
A_{T}\tilde{T}_{z}(F)\backslash T(\mathbb{A})\tilde{T}_{z}(F),
\end{align*}
given by sending the $A_{T}T(F)$-coset of $x$ to the $A_{T}\tilde{T}_{z}(F)$-coset of $x$, for any $x \in T(\mathbb{A})$. Indeed, one can see its inverse $p^{-1}$  is given by sending the $A_{T}\tilde{T}_{z}(F)$-coset of $xt$ to the $A_{T}T(F)$-coset of $t^{-1}xt$, for any $x \in T(\mathbb{A})$ and $t \in \tilde{T}_{z}(F)$. Using the fact that $T(\mathbb{A})$ is of finite index in $T(\mathbb{A})\tilde{T}_{z}(F)$, continuity of $p$ and $p^{-1}$ can be checked immediately. 

Furthermore, one can check that $p$ is measure-preserving and $T(\mathbb{A})$-equivariant by construction. However, we warn the reader that $p$ is not an isomorphism between quotient groups since $A_{T}\tilde{T}_{z}(F)$ need not be normal in $T(\mathbb{A})\tilde{T}_{z}(F)$ at all. Nonetheless, $p$ induces the identification
\begin{align}
L^{2}\left(A_{T}T(F)\backslash T(\mathbb{A})\right)\xleftarrow[\sim]{p^{*}}L^{2}(A_{T}\tilde{T}_{z}(F)\backslash T(\mathbb{A})\tilde{T}_{z}(F) ), \label{ident}
\end{align}
where $p^{*}$ is the pullback along $p$. With $T(\mathbb{A})$ acting on both sides by right translation, $p^{*}$ is an isomorphism of unitary $T(\mathbb{A})$-representations. The larger group $T(\mathbb{A})\tilde{T}_{z}(F)$ also acts on the right-hand side of~\eqref{ident} by right translation. We will describe this action in Fact~\ref{fact: Hecke_ch} below.

Let $H(T)$ be the set of all Hecke characters of $T$ that are trivial on $A_{T}$. These are precisely the characters of the compact group $A_{T}T(F)\backslash T(\mathbb{A})$, and hence are unitary. We let $\tilde{T}_{z}(F)$ act on $H(T)$ by conjugation,
\begin{align*}
    (t_{0}\cdot\chi)(x) := \chi(t_{0}^{-1}xt_{0}), \qquad x \in T(\mathbb{A}),\ t_{0} \in \tilde{T}_{z}(F).
\end{align*} 

Given $\chi \in H(T)$, let $\tilde{\chi} := (p^{*})^{-1}(\chi)$ be its lift to $T(\mathbb{A})\tilde{T}_{z}(F)$. Explicitly, $\tilde{\chi}$ is the element of $L^{2}(A_{T}\tilde{T}_{z}(F)\backslash T(\mathbb{A})\tilde{T}_{z}(F))$ characterized by $\tilde{\chi}(xt) = \chi(t^{-1}xt)$ for $x\in T(\mathbb{A})$ and $t\in\tilde{T}_{z}(F)$. In particular, $\tilde{\chi}$ restricts to $\chi$ on $T(\mathbb{A})$. The following fact states that, under the right-translation action of $T(\mathbb{A})\tilde{T}_{z}(F)$ on $L^{2}(A_{T}\tilde{T}_{z}(F)\backslash T(\mathbb{A})\tilde{T}_{z}(F))$, the subgroup $T(\mathbb{A})$ scales each $\tilde{\chi}$, while $\tilde{T}_{z}(F)$ permutes them.
\begin{fact} \label{fact: Hecke_ch}
With respect to right translation,
\begin{align*}
    x_{0}\cdot \tilde{\chi} = \chi(x_{0})\tilde{\chi},   \qquad x_{0} \in T(\mathbb{A}),
    \end{align*}
    and 
\begin{align*}
    t_{0}\cdot\tilde{\chi} = \widetilde{t_{0}\cdot \chi}, \qquad t_{0} \in \tilde{T}_{z}(F).
\end{align*}    
\end{fact}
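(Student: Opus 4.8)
The plan is to prove both identities by simply unwinding definitions: the right regular representation acts by $(g\cdot f)(y) = f(yg)$; the lift is $\tilde{\chi}(xt) = \chi(t^{-1}xt)$ for $x\in T(\mathbb{A})$ and $t\in\tilde{T}_{z}(F)$; and the conjugation action is $(t_{0}\cdot\chi)(x) = \chi(t_{0}^{-1}xt_{0})$. The only structural inputs are that $T$ is normal in $\tilde{T}_{z}$ (so $\tilde{T}_{z}(F)$ normalises $T(\mathbb{A})$) and that $\tilde{\chi}$ is well-defined; both are already available from the homeomorphism $p$ of (\ref{ident}).

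First I would treat $x_{0}\in T(\mathbb{A})$. Evaluating on a coset representative $xt$ with $x\in T(\mathbb{A})$, $t\in\tilde{T}_{z}(F)$, one has $(x_{0}\cdot\tilde{\chi})(xt) = \tilde{\chi}(xtx_{0})$; rewriting $tx_{0} = (tx_{0}t^{-1})t$ with $tx_{0}t^{-1}\in T(\mathbb{A})$ puts $xtx_{0}$ into the factored form $\bigl(x\,(tx_{0}t^{-1})\bigr)t$. Applying the definition of $\tilde{\chi}$ and then multiplicativity of $\chi$ gives
\[
\tilde{\chi}(xtx_{0}) = \chi\bigl(t^{-1}x(tx_{0}t^{-1})t\bigr) = \chi\bigl((t^{-1}xt)x_{0}\bigr) = \chi(x_{0})\,\chi(t^{-1}xt) = \chi(x_{0})\,\tilde{\chi}(xt).
\]

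Next I would treat $t_{0}\in\tilde{T}_{z}(F)$. Here $(t_{0}\cdot\tilde{\chi})(xt) = \tilde{\chi}(xtt_{0})$, and since $tt_{0}\in\tilde{T}_{z}(F)$ the element $xtt_{0}$ is already in the factored form $x\,(tt_{0})$, so
\[
\tilde{\chi}(xtt_{0}) = \chi\bigl((tt_{0})^{-1}x(tt_{0})\bigr) = \chi\bigl(t_{0}^{-1}(t^{-1}xt)t_{0}\bigr) = (t_{0}\cdot\chi)(t^{-1}xt) = \widetilde{t_{0}\cdot\chi}(xt),
\]
using the definition of the conjugation action and then the definition of the lift $\widetilde{t_{0}\cdot\chi}$.

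There is no genuine obstacle: this is a bookkeeping computation. The only subtlety is keeping apart the role of right translation (which produces the outer multiplication $xt\mapsto xtg$) and the twisted description of $\tilde{\chi}$, and checking at each step that the relevant products genuinely lie in $T(\mathbb{A})\cdot\tilde{T}_{z}(F)$ in the factored form one needs — which is immediate from the normality of $T$ in $\tilde{T}_{z}$. If one prefers, the first identity can alternatively be read off from the identification (\ref{ident}) together with the evident equivariance of $p$, but the direct computation above is the shortest route.
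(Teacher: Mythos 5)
Your proof is correct and is essentially the same definition-unwinding computation that the paper uses; the only difference is one of economy. The paper first notes that every left $A_{T}\tilde{T}_{z}(F)$-coset in $T(\mathbb{A})\tilde{T}_{z}(F)$ is represented by an element of $T(\mathbb{A})$ (by normality of $T$), so it suffices to verify both identities on arguments $x\in T(\mathbb{A})$, which shortens the algebra. You instead verify equality on an arbitrary representative $xt$, using normality of $T(\mathbb{A})$ inline to move $x_{0}$ (resp.\ $t_{0}$) across $t$; this buys you a slightly more self-contained check at the cost of a couple of extra rewriting steps, but the substance is identical.
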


\begin{proof}
The first identity follows from the $T(\mathbb{A})$-equivariance of $p^{*}$: right translation by $x_{0}\in T(\mathbb{A})$ multiplies $\chi$ by $\chi(x_{0})$, and hence multiplies $\tilde{\chi} = (p^{*})^{-1}(\chi)$ by the same scalar.

For the second, fix $t_{0}\in \tilde{T}_{z}(F)$. For every $x \in T(\mathbb{A})$,
\begin{align*}
(t_{0}\cdot\tilde{\chi})(x) = \tilde{\chi}(xt_{0}) = \chi(t_{0}^{-1}xt_{0}) = (t_{0}\cdot\chi)(x) = \widetilde{t_{0}\cdot\chi}(x),
\end{align*}
where the second equality is the defining property of $\tilde{\chi}$ and the last uses that $\widetilde{t_{0}\cdot\chi}$ restricts to $t_{0}\cdot\chi$ on $T(\mathbb{A})$. Since every coset in $A_{T}\tilde{T}_{z}(F)\backslash T(\mathbb{A})\tilde{T}_{z}(F)$ has a representative in $T(\mathbb{A})$, we conclude $t_{0}\cdot\tilde{\chi} = \widetilde{t_{0}\cdot\chi}$.
\end{proof}

The Peter--Weyl Theorem ensures:
\begin{align*}
    \bigoplus_{\chi \in H(T)} \mathbb{C}\cdot\chi\;  \text{ is a dense subspace of } \; L^{2}\left(A_{T}T(F)\backslash T(\mathbb{A})\right).
\end{align*}
Lifting the above subspace using $(p^{*})^{-1}$, we define:
\begin{align*}
    W := \bigoplus_{\chi\in H(T)} \mathbb{C}\cdot\tilde{\chi}\,,  \text{ which is a dense subspace of } \;  L^{2}(A_{T}\tilde{T}_{z}(F)\backslash T(\mathbb{A})\tilde{T}_{z}(F) ).
\end{align*}
Moreover, $W$ is stable under the right translation by $T(\mathbb{A})\tilde{T}_{z}(F)$, by virtue of Fact~\ref{fact: Hecke_ch}.

In Claim~\ref{claim: isom_for_each_block}, we show that $W$ decomposes into finite-dimensional blocks indexed by the $\tilde{T}_{z}(F)$-orbits in $H(T)$, each an irreducible unitary representation of $T(\mathbb{A})\tilde{T}_{z}(F)$. Given $\chi_{0} \in H(T)$, denote its $\tilde{T}_{z}(F)$-orbit by
\begin{align*}
\mathcal{O}_{\chi_{0}} := \{t\cdot \chi_{0} \mid t\in \tilde{T}_{z}(F)\}.
\end{align*}
Because $\tilde{T}_{z}(F)/T(F)$ is finite, the orbit $\mathcal{O}_{\chi_{0}}$ is finite. Thus, the subspace
\begin{align*}
W_{\chi_{0}} := \bigoplus_{\chi \in \mathcal{O}_{\chi_{0}}} \mathbb{C}\cdot \tilde{\chi} \subset W
\end{align*}
is a finite-dimensional unitary representation of $T(\mathbb{A})\tilde{T}_{z}(F)$. Moreover, $W_{\chi_{0}}$ is an irreducible $T(\mathbb{A})\tilde{T}_{z}(F)$-representation. Indeed, $W_{\chi_{0}}$ is the direct sum of the one-dimensional lines $\mathbb{C}\cdot\tilde{\chi}$, on which $T(\mathbb{A})$ acts through the character $\chi$ (by Fact~\ref{fact: Hecke_ch}), and these characters are pairwise distinct. Therefore, every $T(\mathbb{A})$-invariant subspace of $W_{\chi_{0}}$ is a sum of these lines. By Fact~\ref{fact: Hecke_ch} again, $\tilde{T}_{z}(F)$ permutes these lines transitively since $\mathcal{O}_{\chi_{0}}$ is a single orbit. It follows that the only nonzero subspace invariant under both $T(\mathbb{A})$ and $\tilde{T}_{z}(F)$ is $W_{\chi_{0}}$ itself.

Let $\chi_{0}^{*}$ be the character on $T(\mathbb{A})\tilde{T}_{z}(F)^{\chi_{0}}$ that extends $\chi_{0}$ trivially: 
$$\chi_{0}^{*}(xt) = \chi_{0}(x),\qquad x \in T(\mathbb{A}), t \in \tilde{T}_{z}(F)^{\chi_{0}}.$$ 

\begin{claim}\label{claim: isom_for_each_block}
There is a natural isomorphism of irreducible $T(\mathbb{A})\tilde{T}_{z}(F)$-representations:
\begin{align*}
    W_{\chi_{0}} := \bigoplus_{\chi \in \mathcal{O}_{\chi_{0}} } \mathbb{C}\cdot \tilde{\chi} \cong \mathrm{Ind}_{T(\mathbb{A})\tilde{T}_{z}(F)^{\chi_{0}}}^{T(\mathbb{A})\tilde{T}_{z}(F)} \chi_{0}^{*}\,.
\end{align*}
\end{claim}

\begin{proof}
Since $\tilde{T}_{z}(F)^{\chi_{0}}$ is of finite index in $\tilde{T}_{z}(F)$, we may fix a set of left (resp. right) coset representatives $t_{i}$'s (resp. $t_{i}^{-1}$'s). Then the $T(\mathbb{A})\tilde{T}_{z}(F)$-isomorphism above can be written explicitly as
\begin{align*}
    \sum c_{j}\widetilde{t_{j}\cdot \chi_{0}} \mapsto f:T(\mathbb{A})\tilde{T}_{z}(F) &\to \mathbb{C},\\
     xt_{j}^{-1} &\mapsto \chi_{0}^{*}(x)c_{j}, \quad x \in T(\mathbb{A})\tilde{T}_{z}(F)^{\chi_{0}},
\end{align*} 
with inverse
\begin{align*}
     \sum f(t_{j}^{-1})\widetilde{t_{j}\cdot \chi_{0}} \mapsfrom f \in \mathrm{Ind}_{T(\mathbb{A})\tilde{T}_{z}(F)^{\chi_{0}}}^{T(\mathbb{A})\tilde{T}_{z}(F)} \chi_{0}^{*}\,.\qquad \qquad \qquad \qquad \;\;
\end{align*}
Finally, $T(\mathbb{A})\tilde{T}_{z}(F)$-equivariance can be checked straightforwardly.
\end{proof}

Since $\chi_{0}^{*}$ is a character and $T(\mathbb{A})\tilde{T}_{z}(F)^{\chi_{0}}$ is of finite index in $T(\mathbb{A})\tilde{T}_{z}(F)$, the smooth induction in Claim~\ref{claim: isom_for_each_block} is the same as the unitary induction: $\mathrm{Ind}_{T(\mathbb{A})\tilde{T}_{z}(F)^{\chi_{0}}}^{T(\mathbb{A})\tilde{T}_{z}(F)} \chi_{0}^{*} = L^2\text{-}\mathrm{Ind}_{T(\mathbb{A})\tilde{T}_{z}(F)^{\chi_{0}}}^{T(\mathbb{A})\tilde{T}_{z}(F)} \chi_{0}^{*}$.

By applying Claim~\ref{claim: isom_for_each_block} to each orbit, we obtain:
\begin{align*}
W = \bigoplus_{\chi_{0}\in H(T)/\tilde{T}_{z}(F)} W_{\chi_{0}}
=\bigoplus_{\chi_{0}\in H(T)/\tilde{T}_{z}(F)} \mathrm{Ind}_{T(\mathbb{A})\tilde{T}_{z}(F)^{\chi_{0}}}^{T(\mathbb{A})\tilde{T}_{z}(F)} \chi_{0}^{*}\,.
\end{align*}
Because $W$ is dense in $L^{2}(A_{T}\tilde{T}_{z}(F)\backslash T(\mathbb{A})\tilde{T}_{z}(F))$, we have actually shown:
\begin{align}\label{equation: L2-decomp-of-W}
L^{2}(A_{T}\tilde{T}_{z}(F)\backslash T(\mathbb{A})\tilde{T}_{z}(F) ) = \widehat{\bigoplus_{\chi_{0}\in H(T)/\tilde{T}_{z}(F)}} \mathrm{Ind}_{T(\mathbb{A})\tilde{T}_{z}(F)^{\chi_{0}}}^{T(\mathbb{A})\tilde{T}_{z}(F)} \chi_{0}^{*}\,.
\end{align}
Here, the wide hat over the algebraic direct sum denotes its $L^{2}$-completion.

Next, we consider the smooth and unitary induced representations of each block $W_{\chi_{0}}$ from $T(\mathbb{A})\tilde{T}_{z}(F)$ to $\tilde{T}_{z}(\mathbb{A})$. In view of induction in steps (see~\eqref{equation: induction_in_steps} and~\eqref{equation: L2_induction_in_steps}), we have:
\begin{align*}
U_{\chi_{0}} := \mathrm{Ind}_{T(\mathbb{A})\tilde{T}_{z}(F)}^{\tilde{T}_{z}(\mathbb{A})} W_{\chi_{0}} = \mathrm{Ind}_{T(\mathbb{A})\tilde{T}_{z}(F)}^{\tilde{T}_{z}(\mathbb{A})}\left(\mathrm{Ind}_{T(\mathbb{A})\tilde{T}_{z}(F)^{\chi_{0}}}^{T(\mathbb{A})\tilde{T}_{z}(F)}\chi_{0}^{*} \right) = \mathrm{Ind}_{T(\mathbb{A})\tilde{T}_{z}(F)^{\chi_{0}}}^{\tilde{T}_{z}(\mathbb{A})} \chi_{0}^{*}\,
\end{align*}
and 
\begin{align*}
\overline{U}_{\chi_{0}} := L^2\text{-}\mathrm{Ind}_{T(\mathbb{A})\tilde{T}_{z}(F)}^{\tilde{T}_{z}(\mathbb{A})} W_{\chi_{0}} = L^2\text{-}\mathrm{Ind}_{T(\mathbb{A})\tilde{T}_{z}(F)}^{\tilde{T}_{z}(\mathbb{A})}\left(L^2\text{-}\mathrm{Ind}_{T(\mathbb{A})\tilde{T}_{z}(F)^{\chi_{0}}}^{T(\mathbb{A})\tilde{T}_{z}(F)}\chi_{0}^{*} \right) = L^2\text{-}\mathrm{Ind}_{T(\mathbb{A})\tilde{T}_{z}(F)^{\chi_{0}}}^{\tilde{T}_{z}(\mathbb{A})} \chi_{0}^{*}\,.
\end{align*}

Now, we can piece together the decompositions obtained so far. Based on~\eqref{equation: L2-decomp-of-W} and induction in steps~\eqref{equation: L2_induction_in_steps}, we have
\begin{align*}
L^{2}(A_{T}\tilde{T}_{z}(F)\backslash \tilde{T}_{z}(\mathbb{A})) &= L^2\text{-}\mathrm{Ind}_{T(\mathbb{A})\tilde{T}_{z}(F)}^{\tilde{T}_{z}(\mathbb{A})} \left[L^2\text{-}\mathrm{Ind}_{A_{T}\tilde{T}_{z}(F)}^{T(\mathbb{A})\tilde{T}_{z}(F)} \mathbb{1}\right]\\
&\overset{\eqref{equation: L2-decomp-of-W}}{=} L^2\text{-}\mathrm{Ind}_{T(\mathbb{A})\tilde{T}_{z}(F)}^{\tilde{T}_{z}(\mathbb{A})} \left(\widehat{\bigoplus_{\chi_{0}\in H(T)/\tilde{T}_{z}(F)}} L^2\text{-}\mathrm{Ind}_{T(\mathbb{A})\tilde{T}_{z}(F)^{\chi_{0}}}^{T(\mathbb{A})\tilde{T}_{z}(F)} \chi_{0}^{*}\right)\\
&= \widehat{\bigoplus_{\chi_{0}\in H(T)/\tilde{T}_{z}(F)}} L^2\text{-}\mathrm{Ind}_{T(\mathbb{A})\tilde{T}_{z}(F)}^{\tilde{T}_{z}(\mathbb{A})} \left(L^2\text{-}\mathrm{Ind}_{T(\mathbb{A})\tilde{T}_{z}(F)^{\chi_{0}}}^{T(\mathbb{A})\tilde{T}_{z}(F)} \chi_{0}^{*}\right)\\
&= \widehat{\bigoplus_{\chi_{0}\in H(T)/\tilde{T}_{z}(F)}} L^2\text{-}\mathrm{Ind}_{T(\mathbb{A})\tilde{T}_{z}(F)^{\chi_{0}}}^{\tilde{T}_{z}(\mathbb{A})} \chi_{0}^{*}\,\\
&= \widehat{\bigoplus_{\chi_{0}\in H(T)/\tilde{T}_{z}(F)}} \overline{U}_{\chi_{0}}.
\end{align*}
As before, each wide hat over an algebraic direct sum denotes its $L^{2}$-completion.

We define $\mathcal{A}(\tilde{T}_{z})$ as the algebraic direct sum of the blocks $U_{\chi_{0}}$:
\begin{align}\label{equation: preliminary_decomp}
    \mathcal{A}(\tilde{T}_{z}) := \bigoplus_{\chi_{0}\in H(T)/\tilde{T}_{z}(F)} U_{\chi_{0}} = \bigoplus_{\chi_{0}\in H(T)/\tilde{T}_{z}(F)} \mathrm{Ind}_{T(\mathbb{A})\tilde{T}_{z}(F)^{\chi_{0}}}^{\tilde{T}_{z}(\mathbb{A})} \chi_{0}^{*}\,.
\end{align}
This is a dense subspace of $L^{2}(A_{T}\tilde{T}_{z}(F)\backslash \tilde{T}_{z}(\mathbb{A}))$. Indeed, each block $\overline{U}_{\chi_{0}}$ satisfies the hypotheses of Corollary~\ref{cor: admissibility}, so its smooth part $U_{\chi_{0}}$ is dense in $\overline{U}_{\chi_{0}}$. Moreover, $U_{\chi_{0}}$ is semisimple, as will be shown in Corollary~\ref{corollary: semisimplicity&decomp_2}. Therefore, Corollary~\ref{cor: admissibility}(3), applied to each block of the orthogonal decomposition displayed above, shows that $L^{2}(A_{T}\tilde{T}_{z}(F)\backslash \tilde{T}_{z}(\mathbb{A}))$ decomposes as a Hilbert direct sum of irreducible unitary representations, and that $\mathcal{A}(\tilde{T}_{z})$ has the same irreducible constituents as the $L^{2}$-space with the same multiplicities.

\subsection{Preparation for multiplicity calculation}\label{section: prepformult}

In this section, we lay the foundation for the calculation of multiplicity on the automorphic side by investigating the interaction between restricted tensor products and induced representations. As a consequence, we show the semisimplicity of $\mathcal{A}(\tilde{T}_{z})$ as a $\tilde{T}_{z}(\mathbb{A})$-module. Throughout this part, we fix $\chi \in H(T)$. In other words, $\chi$ is a fixed Hecke character of $T(\mathbb{A})$ that is trivial on $A_{T}$.

To start with, we factorize the induced representation $\mathrm{Ind}_{T(\mathbb{A})}^{\tilde{T}_{z}(\mathbb{A})}\chi$ into a restricted tensor product of local induced representations. Let $S'$ denote the union of $S$ (the set of places fixed in \S\ref{section:model}) with the set of places where $\chi$ ramifies. We understand $\chi \cong \otimes'_{v}\chi_{v}$ as the restricted tensor product of $\chi_{v}$, where the restriction is taken with respect to $1 \in V_{\chi_{v}}$ for every place $v \notin S'$.

A $\tilde{T}_{z}(F_{v})$-representation is said to be $\tilde{T}_{z}(\mathcal{O}_{v})$-unramified if it contains a nonzero vector fixed by $\tilde{T}_{z}(\mathcal{O}_{v})$.

\begin{prop}\label{prop:local-unram}
Suppose $v \notin S'$, then there exists a unique irreducible $\tilde{T}_{z}(\mathcal{O}_{v})$-unramified constituent $\eta^{0}_{v}$ of $\mathrm{Ind}_{T(F_{v})}^{\tilde{T}_{z}(F_{v})} \chi_{v}$. Moreover, the subspace of $\tilde{T}_{z}(\mathcal{O}_{v})$-fixed vectors of $\eta_{v}^{0}$ is spanned by
the distinguished element $f^{0}_{v}\in \mathrm{Ind}_{T(F_{v})}^{\tilde{T}_{z}(F_{v})}\chi_{v}$, characterized by taking value $1$ on $\tilde{T}_{z}(\mathcal{O}_{v})$. 
\end{prop}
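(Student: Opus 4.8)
The plan is to work with the explicit model of $\tilde{T}_z(F_v)$ as an extension $1 \to T(F_v) \to \tilde{T}_z(F_v) \to A(F_v)^{[z_v]} \to 1$ and to exploit the unramified hypothesis $v \notin S'$, which guarantees both that $\chi_v$ is unramified (trivial on $T(\mathcal{O}_v)$) and that $\tilde{T}_z(\mathcal{O}_v)$ is the unique maximal compact subgroup of $\tilde{T}_z(F_v)$, surjecting onto $A(F_v)^{[z_v]}$ with kernel $T(\mathcal{O}_v)$. First I would observe that the induced representation $\mathrm{Ind}_{T(F_v)}^{\tilde{T}_z(F_v)}\chi_v$ is just the smooth induction of a character through a finite-index (hence finite quotient $A(F_v)^{[z_v]}$) extension, so it is a finite-dimensional semisimple representation, and by the usual Clifford/Mackey analysis its irreducible constituents are parametrized by $\mathrm{Irr}(\tilde{T}_z(F_v)^{\chi_v}, \chi_v)$, i.e.\ essentially by characters of (a possibly twisted form of) the stabilizer $A(F_v)^{[z_v],\chi_v}$.

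The key step is to identify the $\tilde{T}_z(\mathcal{O}_v)$-fixed vectors. A vector $f \in \mathrm{Ind}_{T(F_v)}^{\tilde{T}_z(F_v)}\chi_v$ is a function $f: \tilde{T}_z(F_v) \to \mathbb{C}$ with $f(xg) = \chi_v(x)f(g)$ for $x \in T(F_v)$; it is $\tilde{T}_z(\mathcal{O}_v)$-invariant under right translation iff $f$ is constant on each double coset $T(F_v)\backslash \tilde{T}_z(F_v)/\tilde{T}_z(\mathcal{O}_v)$. Since $\tilde{T}_z(\mathcal{O}_v)$ surjects onto $A(F_v)^{[z_v]}$ and $T(F_v)\tilde{T}_z(\mathcal{O}_v) = \tilde{T}_z(F_v)$ (this is exactly the defining property of $S$ that fails only on $S$, hence holds for $v \notin S' \supseteq S$), there is a single such double coset. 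Therefore the space of $\tilde{T}_z(\mathcal{O}_v)$-invariants is one-dimensional, spanned by the function $f_v^0$ taking the value $1$ on $\tilde{T}_z(\mathcal{O}_v)$ — one must check this is well-defined, i.e.\ that $\chi_v$ is trivial on $T(F_v) \cap \tilde{T}_z(\mathcal{O}_v) = T(\mathcal{O}_v)$, which holds precisely because $v$ is unramified for $\chi$.

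Having a one-dimensional invariant space, I would conclude: since $\mathrm{Ind}_{T(F_v)}^{\tilde{T}_z(F_v)}\chi_v$ is semisimple, exactly one irreducible constituent $\eta_v^0$ contains a nonzero $\tilde{T}_z(\mathcal{O}_v)$-fixed vector, and its space of such vectors is all of $\mathbb{C} f_v^0$ (since the ambient invariant space is already one-dimensional). All other constituents have no $\tilde{T}_z(\mathcal{O}_v)$-fixed vector, so $\eta_v^0$ is the unique $\tilde{T}_z(\mathcal{O}_v)$-unramified constituent. This gives both assertions. The main obstacle — really the only non-formal point — is verifying the single-double-coset claim $T(F_v)\backslash\tilde{T}_z(F_v)/\tilde{T}_z(\mathcal{O}_v) = \{*\}$ together with the compatibility $T(\mathcal{O}_v) = T(F_v)\cap \tilde{T}_z(\mathcal{O}_v)$; both are encoded in the choice of $S$ in Section~\ref{section:model} and the cited lemma from \cite{moeglin_waldspurger_1995}, so it is a matter of unwinding those definitions carefully rather than proving anything genuinely new.
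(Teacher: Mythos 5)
Your proof is correct and follows essentially the same route as the paper's: both hinge on the identity $\tilde{T}_z(F_v)=T(F_v)\tilde{T}_z(\mathcal{O}_v)$ (valid for $v\notin S'$) together with $\chi_v$ being unramified, to show that the spherical subspace of $\mathrm{Ind}_{T(F_v)}^{\tilde{T}_z(F_v)}\chi_v$ is one-dimensional and spanned by $f_v^0$, from which the uniqueness of the unramified constituent follows by semisimplicity. The paper phrases the one-dimensionality as "a spherical $f_v$ is determined by $f_v(1)$" rather than via a single-double-coset count, but these are the same observation.
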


\begin{proof}
Recall we have $\tilde{T}_{z}(F_{v}) = T(F_{v})\tilde{T}_{z}(\mathcal{O}_{v})$ for $v \notin S'$. On the one hand, when $v \notin S'$, since $\chi_{v}$ is unramified, the element $f^{0}_{v}$ is well-defined and $\tilde{T}_{z}(\mathcal{O}_{v})$-fixed. On the other hand, whenever $f_{v} \in \mathrm{Ind}_{T(F_{v})}^{\tilde{T}_{z}(F_{v})} \chi_{v}$ is $\tilde{T}_{z}(\mathcal{O}_{v})$-fixed, it will be fully determined by $f_{v}(1)$ and we actually have $f_{v} = f_{v}(1)f^{0}_{v}$.

It follows that the unique irreducible $\tilde{T}_{z}(\mathcal{O}_{v})$-unramified constituent $\eta^{0}_{v}$ is generated by $f_{v}^{0}$. 
\end{proof}

We can now form a restricted tensor product of the local induced representations
\begin{align*}
\sideset{}{'}\bigotimes_{v}  \mathrm{Ind}_{T(F_{v})}^{\tilde{T}_{z}(F_{v})}\chi_{v},
\end{align*}
where the restriction is taken with respect to the distinguished vectors $f^{0}_{v}$ introduced in the previous proposition. There is a natural embedding
\begin{align*}
J: \sideset{}{'}\bigotimes_{v}  \mathrm{Ind}_{T(F_{v})}^{\tilde{T}_{z}(F_{v})} \chi_{v} \hookrightarrow \mathrm{Ind}_{T(\mathbb{A})}^{\tilde{T}_{z}(\mathbb{A})} \chi
\end{align*}
given by $\displaystyle J(\otimes_{v} f_{v})(t) = \prod_{v} f_{v}(t_{v})$. The image consists of smooth vectors. Indeed, since each $f_{v}$ is smooth and $f_{v}=f^{0}_{v}$ takes value $1$ on $\tilde{T}_{z}(\mathcal{O}_{v})$ for almost all $v$, the product $\prod_{v}f_{v}(t_{v})$ is smooth at the archimedean places and $K^{\infty}$-invariant for some open compact subgroup $K^{\infty}\leq \tilde{T}_{z}(\mathbb{A}^{\infty})$.

\begin{claim}
The natural embedding $J$ is also surjective. \label{claim: inductionsurjective}
\end{claim}

\begin{proof}
We recall the natural projection map $\displaystyle p: \tilde{T}_{z}(\mathbb{A}) \to A(\mathbb{A})^{[z]} = \prod A(F_{v})^{[z_{v}]}$. We pick a set-theoretic section $s$ of $p$ such that $s_{v}$ takes values in $\tilde{T}_{z}(\mathcal{O}_{v})$ for $v \notin S'$. Such a section exists since $S'$ contains all the exceptional places where  $\tilde{T}_{z}(F_{v}) = T(F_{v})\tilde{T}_{z}(\mathcal{O}_{v})$ fails. We note that $s$ thus constructed is continuous.

Let $f \in \mathrm{Ind}_{T(\mathbb{A})}^{\tilde{T}_{z}(\mathbb{A})} \chi$. By definition, $f$ is smooth on $\tilde{T}_{z}(\mathbb{A})$. In particular, $f$ is $K^{\infty}$-invariant for some open compact subgroup $K^{\infty}\leq \tilde{T}_{z}(\mathbb{A}^{\infty})$. We claim that the composition $f\circ s: A(\mathbb{A})^{[z]} \to \mathbb{C}$ is again smooth (locally constant). Indeed, since $K^{\infty}$ is open in the restricted direct product $\tilde{T}_{z}(\mathbb{A}^{\infty})$, it contains $\prod_{v \notin \Sigma_{0}} \tilde{T}_{z}(\mathcal{O}_{v})$ for some finite set of places $\Sigma_{0}$, which we may take to contain $S'$ and all the infinite places. Suppose that $a, a' \in A(\mathbb{A})^{[z]}$ agree at every place in $\Sigma_{0}$. Then the element $\kappa := s(a)^{-1}s(a')$ has trivial component at each place in $\Sigma_{0}$, while its component at each place $v \notin \Sigma_{0}$ is $s_{v}(a_{v})^{-1}s_{v}(a'_{v}) \in \tilde{T}_{z}(\mathcal{O}_{v})$, thanks to our choice of the section $s$. Therefore, $\kappa$ lies in $\prod_{v \notin \Sigma_{0}} \tilde{T}_{z}(\mathcal{O}_{v}) \subseteq K^{\infty}$, and the right $K^{\infty}$-invariance of $f$ yields
$$f(s(a')) = f(s(a)\kappa) = f(s(a)).$$
In other words, $f \circ s$ is constant on each coset of the open subgroup $\textstyle\prod_{v \in \Sigma_{0}} \{1\} \times \prod_{v \notin \Sigma_{0}} A(F_{v})^{[z_{v}]}$, and the claim follows. Here we recall that $A(\mathbb{A})^{[z]}$ is endowed with product topology, hence compact. Therefore, the image of $f\circ s$ is finite, which we denote by $\{c_{1},\dots, c_{n}\}$. Now we note that $A(\mathbb{A})^{[z]}$ becomes a finite disjoint union of the preimages $(f\circ s)^{-1}(c_{i})$, which are both open and closed. In particular, each preimage $(f\circ s)^{-1}(c_{i})$ is compact open, hence a union of finitely many cylinder sets, each of the form~\eqref{cylinder}. We note that a finite union of cylinder sets is again a cylinder set. Therefore, each preimage is itself in the form
\begin{align}\label{cylinder}
(f\circ s)^{-1}(c_{i}) = U_{i} \times \prod_{v\notin S_{i}}A(F_{v})^{[z_{v}]},
\end{align}
where $S_i$ is a finite set of places containing $S'$, and $U_i \subseteq \prod_{v \in S_i} A(F_v)^{[z_v]}$.

By the finiteness of the image, the union $\displaystyle \Sigma := \bigcup_{i} S_{i}$ is a finite set. Therefore, we can factorize $f\circ s$: 
\begin{align*}
f\circ s = g_{1}\otimes g_{2},
\end{align*}
where  $\displaystyle g_{2} \equiv 1: \prod_{v\notin \Sigma}A(F_{v})^{[z_{v}]} \to \mathbb{C}$ is the constant function with value $1$ and $\displaystyle g_{1}: \prod_{v\in \Sigma}A(F_{v})^{[z_{v}]} \to \mathbb{C}$ is a function on the finite product. This further induces a factorization of $f$:
\begin{align*}
    f = f_{\Sigma} \otimes \left(\bigotimes_{v\notin \Sigma} f^{0}_{v} \right),
\end{align*}
where $f^{0}_{v}$ is the distinguished element defined in Proposition~\ref{prop:local-unram}, and 
$$f_{\Sigma}\left((x_{v})_{v\in \Sigma}\right) := \left( \prod_{v\in \Sigma} \chi_{v}(t_{v}) \right) g_{1}\left((a_{v})_{v \in \Sigma}\right),$$ 
where $x_{v} = t_{v}s_{v}(a_{v})$, lies in 
\begin{align*}
\mathrm{Ind}_{\prod_{v\in \Sigma}T(F_{v})}^{\prod_{v\in \Sigma}\tilde{T}_{z}(F_{v})} \bigotimes_{v\in \Sigma} \chi_{v}.
\end{align*}
Due to the fact that $\Sigma$ is finite and $T(F_{v})$ is of finite index in $\tilde{T}_{z}(F_{v})$ for each $v$, the induction commutes with tensor product, hence the above space is naturally isomorphic to $ \bigotimes_{v \in \Sigma}\mathrm{Ind}_{T(F_{v})}^{\tilde{T}_{z}(F_{v})} \chi_{v}$. Therefore, we conclude that $f$ lies in  
\begin{align*}
\bigotimes_{v \in \Sigma}\mathrm{Ind}_{T(F_{v})}^{\tilde{T}_{z}(F_{v})} \chi_{v} \otimes \left(\bigotimes_{v\notin \Sigma} f^{0}_{v} \right) \subset \sideset{}{'}\bigotimes_{v}  \mathrm{Ind}_{T(F_{v})}^{\tilde{T}_{z}(F_{v})} \chi_{v}.
\end{align*}
\end{proof}

\begin{cor} \label{corollary: semisimplicity&decomp_1}
$\mathrm{Ind}_{T(\mathbb{A})}^{\tilde{T}_{z}(\mathbb{A})}\chi$ is semisimple as a $\tilde{T}_{z}(\mathbb{A})$-module. 
\end{cor}
\begin{proof} 
Locally, $\mathrm{Ind}_{T(F_{v})}^{\tilde{T}_{z}(F_{v})} \chi_{v}$ is semisimple for each place $v$. We consider the following collection of irreducible subrepresentations of $\mathrm{Ind}_{T(\mathbb{A})}^{\tilde{T}_{z}(\mathbb{A})}\chi$:
\begin{align} \label{defn: S_chi}
\mathcal{S}_{\chi} = \{\otimes_{v}'\eta_{v}: \eta_{v} \text{ is an irreducible constituent of }\mathrm{Ind}_{T(F_{v})}^{\tilde{T}_{z}(F_{v})}\chi_{v}, \text{ with }\eta_{v} = \eta^{0}_{v} \text{ for almost all }v \},
\end{align}
where the restriction in $\otimes_{v}'\eta_{v}$ is taken with respect to the distinguished vectors $f^{0}_{v}$ defined in Proposition~\ref{prop:local-unram}. By Claim~\ref{claim: inductionsurjective} and the local semisimplicity, writing $\mathrm{Ind}_{T(F_{v})}^{\tilde{T}_{z}(F_{v})}\chi_{v} = \bigoplus_{i=0}^{n_{v}}\eta_{v}^{i}$ with the indexing chosen so that the $i=0$ term $\eta_{v}^{0}$ is the unramified constituent of Proposition~\ref{prop:local-unram}, we obtain
\begin{align*}
\mathrm{Ind}_{T(\mathbb{A})}^{\tilde{T}_{z}(\mathbb{A})}\chi = \sideset{}{'}\bigotimes_{v}  \mathrm{Ind}_{T(F_{v})}^{\tilde{T}_{z}(F_{v})} \chi_{v} = \sideset{}{'}\bigotimes_{v} \left(\bigoplus_{i=0}^{n_{v}} \eta_{v}^{i}\right).
\end{align*}
Here the restriction is taken with respect to the distinguished vectors $f^{0}_{v}\in\eta_{v}^{0}$. Distributing the restricted tensor product over the direct sums expresses this as $\bigoplus_{(i_{v})_{v}}\otimes'_{v}\eta_{v}^{i_{v}}$, where for almost all $v$ one has $i_{v}=0$ (so that $\eta_{v}^{i_{v}}=\eta_{v}^{0}$). Each such summand therefore lies in $\mathcal{S}_{\chi}$ (with certain multiplicities).
\end{proof}

\begin{cor}\label{corollary: semisimplicity&decomp_2}
$U_{\chi}$ and $\mathcal{A}(\tilde{T}_{z})$ are semisimple $\tilde{T}_{z}(\mathbb{A})$-modules.
\end{cor}
\begin{proof}
In view of decomposition~\eqref{equation: preliminary_decomp}, it suffices to show $U_{\chi}$ is semisimple for any $\chi$. We notice that $U_{\chi} = \mathrm{Ind}_{T(\mathbb{A})\tilde{T}_{z}(F)^{\chi}}^{\tilde{T}_{z}(\mathbb{A})} \chi^{*}$ is a submodule of $\mathrm{Ind}_{T(\mathbb{A})}^{\tilde{T}_{z}(\mathbb{A})}\chi$, hence semisimplicity follows. 
\end{proof}

\begin{rmk}\label{remark:addchi}
In the next section, we will need to work with $\tilde{T}_{z}(\mathbb{A})^{\chi} = \prod'_{v}\tilde{T}_{z}(F_{v})^{\chi_{v}}$, the stabilizer of $\chi$ in $\tilde{T}_{z}(\mathbb{A})$ under the conjugation action. 

After replacing $\tilde{T}_{z}(\mathbb{A})$ with $\tilde{T}_{z}(\mathbb{A})^{\chi}$, analogues of Claim~\ref{claim: inductionsurjective}, Corollary~\ref{corollary: semisimplicity&decomp_1}, and Corollary~\ref{corollary: semisimplicity&decomp_2} still hold, with the same proofs. For almost all $v$ ($v\notin S'$), we define $\bar{f}^{0}_{v}\in \mathrm{Ind}_{T(F_{v})}^{\tilde{T}_{z}(F_{v})^{\chi_{v}}}\chi_{v}$ characterized by taking value $1$ on $\tilde{T}_{z}(\mathcal{O}_{v})^{\chi_{v}}$. The same reasoning from the proof of Proposition~\ref{prop:local-unram} implies the unique irreducible unramified constituent of $\mathrm{Ind}_{T(F_{v})}^{\tilde{T}_{z}(F_{v})^{\chi_{v}}}\chi_{v}$, which we denote by $\bar{\eta}^{0}_{v}$, is spanned by $\bar{f}^{0}_{v}$. Similar to the collection $\mathcal{S}_{\chi}$~\eqref{defn: S_chi}, we may define the following collection of irreducible subrepresentations of $\mathrm{Ind}_{T(\mathbb{A})}^{\tilde{T}_{z}(\mathbb{A})^{\chi}} \chi$:
\begin{align}\label{defn: S_chi_bar}
\bar{\mathcal{S}}_{\chi} = \{\otimes_{v}'\bar{\eta}_{v}: \bar{\eta}_{v} \text{ is an irreducible constituent of }\mathrm{Ind}_{T(F_{v})}^{\tilde{T}_{z}(F_{v})^{\chi_{v}}}\chi_{v} \text{ and }\bar{\eta}_{v} = \bar{\eta}^{0}_{v} \text{ for almost all }v \},
\end{align}
where the restriction in $\otimes_{v}'\bar{\eta}_{v}$ is taken with respect to the $\tilde{T}_{z}(\mathcal{O}_{v})^{\chi_{v}}$-fixed vectors $\bar{f}^{0}_{v}$. It is clear that $\mathrm{Ind}_{T(\mathbb{A})}^{\tilde{T}_{z}(\mathbb{A})^{\chi}}\chi$ and $\mathrm{Ind}_{T(\mathbb{A})\tilde{T}_{z}(F)^{\chi}}^{\tilde{T}_{z}(\mathbb{A})^{\chi}}\chi^{*}$ are semisimple with constituents in $\bar{\mathcal{S}}_{\chi}$.
\end{rmk}

\begin{fact}\label{finitedimbar}
Each element $\bar{\eta} = \otimes_{v}'\bar{\eta}_{v} \in \bar{\mathcal{S}}_{\chi}$ is finite-dimensional.
\end{fact}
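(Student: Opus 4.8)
The plan is to analyze an arbitrary element $\bar{\eta} = \otimes_v' \bar{\eta}_v \in \bar{\mathcal{S}}_{\chi}$ and show that as a representation of $\tilde{T}_z(\mathbb{A})^{\chi}$ it is finite-dimensional. The key structural input is that almost all local factors are one-dimensional: by the construction of $\bar{\mathcal{S}}_{\chi}$, for all but finitely many places $v$ we have $\bar{\eta}_v = \bar{\eta}_v^0$, the unique irreducible unramified constituent of $\mathrm{Ind}_{T(F_v)}^{\tilde{T}_z(F_v)^{\chi_v}}\chi_v$. First I would observe that $\bar{\eta}_v^0$ is one-dimensional: its space of $\tilde{T}_z(\mathcal{O}_v)$-spherical vectors is one-dimensional (by the argument of Proposition \ref{prop:local-unram}, adapted to $\tilde{T}_z(F_v)^{\chi_v}$ as in Remark \ref{remark:addchi}), and since for $v \notin S'$ the group $\tilde{T}_z(\mathcal{O}_v)$ is the maximal compact and surjects onto $A(F_v)^{[z_v]}$ modulo $T(\mathcal{O}_v)$, an unramified irreducible representation of $\tilde{T}_z(F_v)^{\chi_v}$ on which $T(F_v)$ acts by the unramified character $\chi_v$ is forced to be the whole spherical line, hence one-dimensional.

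The second point is that each of the finitely many remaining local factors $\bar{\eta}_v$ is finite-dimensional. This is immediate because $\bar{\eta}_v$ is an irreducible constituent of $\mathrm{Ind}_{T(F_v)}^{\tilde{T}_z(F_v)^{\chi_v}}\chi_v$, and since $T(F_v)$ has finite index in $\tilde{T}_z(F_v)^{\chi_v}$ (the quotient being a subgroup of the finite group $A(F_v)^{[z_v]}$), this induced representation is finite-dimensional, being induced from a character along a finite-index subgroup. Any subrepresentation is then finite-dimensional as well. Then I would conclude by noting that the restricted tensor product $\otimes_v' \bar{\eta}_v$, with almost all factors one-dimensional (and equal to the chosen spherical vectors used to form the restricted product), is isomorphic as a vector space to the finite tensor product $\bigotimes_{v \in \Sigma} \bar{\eta}_v$ over the finite set $\Sigma$ of places where $\bar{\eta}_v \neq \bar{\eta}_v^0$; a finite tensor product of finite-dimensional spaces is finite-dimensional.

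I do not expect any serious obstacle here; the statement is genuinely elementary once the local picture is understood. The only point requiring a small amount of care is confirming that $\bar{\eta}_v^0$ is one-dimensional rather than merely having a one-dimensional spherical subspace. I would spell this out: for $v \notin S'$, since $\tilde{T}_z(F_v) = T(F_v)\tilde{T}_z(\mathcal{O}_v)$, the group $\tilde{T}_z(F_v)^{\chi_v}$ also equals $T(F_v)\bigl(\tilde{T}_z(\mathcal{O}_v) \cap \tilde{T}_z(F_v)^{\chi_v}\bigr)$; an irreducible representation unramified for the relevant maximal compact and with central character $\chi_v$ on $T(F_v)$ is thus generated by its spherical line, which is one-dimensional, forcing the whole representation to be one-dimensional. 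With this in hand the finite-dimensionality of every element of $\bar{\mathcal{S}}_{\chi}$ follows formally.
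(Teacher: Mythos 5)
Your proof is correct and takes essentially the same route as the paper: reduce to the observation that the local factors $\bar{\eta}_v^0$ are one-dimensional for almost all $v$, note that the finitely many remaining local factors are finite-dimensional as constituents of a finite-index induction, and conclude from the definition of the restricted tensor product. The only point of divergence is the mechanism for one-dimensionality of $\bar{\eta}_v^0$: the paper first uses that $[z_v]$ is trivial for almost all $v$, so that $\tilde{T}_z(F_v)^{\chi_v}$ is a genuine semidirect product $T(F_v) \rtimes A^{[z_v],\chi_v}$, and then identifies $\bar{\eta}_v^0$ explicitly with the trivial extension $\chi_v^*$; you instead argue abstractly that the spherical line is one-dimensional and stable under $\tilde{T}_z(F_v)^{\chi_v} = T(F_v)\bigl(\tilde{T}_z(\mathcal{O}_v)\cap\tilde{T}_z(F_v)^{\chi_v}\bigr)$, so that irreducibility forces $\bar{\eta}_v^0$ to collapse onto that line. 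Your version is marginally more economical in that it avoids invoking the almost-everywhere triviality of $[z_v]$, relying only on the model condition $\tilde{T}_z(F_v) = T(F_v)\tilde{T}_z(\mathcal{O}_v)$; the paper's version is shorter to state and also yields the explicit description $\bar{\eta}_v^0 = \chi_v^*$, which is reused elsewhere (e.g.\ in the proof of Lemma \ref{adelicL-pktwelldef}).
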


\begin{proof}
    We first note that every $\bar{\eta}_{v}$ is finite-dimensional. Since $\bar{\eta}_{v} = \bar{\eta}^{0}_{v}$ for almost all $v$, it then suffices to show that $\bar{\eta}^{0}_{v}$ is $1$-dimensional for $v\notin S'$. 
    
    Recall that for $v \notin S'$, the extension $K_{v}/F_{v}$ is unramified, $T$ splits over $K_{v}$, the $1$-cocycle $z_{v}$ takes values in $T(\mathcal{O}_{K_{v}})$, and $\tilde{T}_{z}(\mathcal{O}_{v})$ consists of the fixed points of the $z_{v}$-twisted Galois action on $(T \rtimes A)(\mathcal{O}_{K_{v}})$ (see \S\ref{section:model}). Because $T(\mathcal{O}_{K_{v}})$ is cohomologically trivial as a $\mathrm{Gal}(K_{v}/F_{v})$-module, there exists some $d \in T(\mathcal{O}_{K_{v}})$ such that $z_{v}(\sigma) = d^{-1}\sigma(d)$ for all $\sigma \in \mathrm{Gal}(K_{v}/F_{v})$. As introduced in Proposition~\ref{prop:genericlem}, this $d$ induces an isomorphism:
    \begin{align*}
    \gamma_{d}: T(F_{v})\rtimes A(F_{v})^{\chi_{v}} &\xrightarrow{\sim} \tilde{T}_{z}(F_{v})^{\chi_{v}}\\
    (t,a) &\mapsto (td^{-1}a(d), a).
    \end{align*}
    We define $\chi_{v}^{*}$ as the one-dimensional character on $\tilde{T}_{z}(F_{v})^{\chi_{v}}$ obtained by extending $\chi_{v}$ trivially across $\gamma_{d}$, meaning $\chi_{v}^{*}(\gamma_{d}(t,a)) = \chi_{v}(t)$.
    
    We now verify that $\chi_{v}^{*}$ is unramified. Notice that the map $\gamma_{d}$ is given by conjugation by $(d^{-1}, 1)$ within  $(T \rtimes A)(K_{v})$. Because $d \in T(\mathcal{O}_{K_{v}})$, this conjugation preserves the integral points $(T \rtimes A)(\mathcal{O}_{K_{v}})$. Moreover, because $z_{v}(\sigma) = d^{-1}\sigma(d)$, $\gamma_{d}$ intertwines the standard Galois action on $T \rtimes A$ with the $z_{v}$-twisted Galois action on $\tilde{T}_{z}$. After taking Galois invariants, $\gamma_{d}$ descends to an isomorphism of the $\mathcal{O}_{v}$-points:
    \begin{align*}
    \gamma_{d}: T(\mathcal{O}_{v}) \rtimes A(F_{v})^{\chi_{v}} \xrightarrow{\sim} \tilde{T}_{z}(\mathcal{O}_{v})^{\chi_{v}}.
    \end{align*}
    Consequently, any element in the compact open subgroup $\tilde{T}_{z}(\mathcal{O}_{v})^{\chi_{v}}$ can be written as $\gamma_{d}(t, a)$ for some $t \in T(\mathcal{O}_{v})$ and $a \in A(F_{v})^{\chi_{v}}$. Since $\chi_{v}$ is unramified, we have $\chi_{v}^{*}(\gamma_{d}(t, a)) = \chi_{v}(t) = 1$ for all $t \in T(\mathcal{O}_{v})$. This proves that $\chi_{v}^{*}$ is the unique unramified irreducible constituent of $\mathrm{Ind}_{T(F_{v})}^{\tilde{T}_{z}(F_{v})^{\chi_{v}}}\chi_{v}$. Therefore, $\bar{\eta}^{0}_{v}$ must coincide with $\chi_{v}^{*}$, and hence is $1$-dimensional.
\end{proof}

As in the local setting, transferring from $\tilde{T}_{z}(\mathbb{A})$ to $\tilde{T}_{z}(\mathbb{A})^{\chi}$ will facilitate the analysis. The following propositions are the global analogues of Claim~\ref{claim:groupside1} and Claim~\ref{claim:groupside2}.

\begin{prop} \label{globalinductionisotypic}
Let $\bar{\eta} = \otimes'_{v}\bar{\eta}_{v} \in \bar{\mathcal{S}}_{\chi}$. Then the restriction of $\bar{\eta}$ to $T(\mathbb{A})$ is $\chi$-isotypic. 
\end{prop}
\begin{proof}
According to Claim~\ref{claim:groupside1}, at each place $v$, $\bar{\eta}_{v}$ is $\chi_{v}$-isotypic after being restricted to $T(F_{v})$. Thus the statement follows. 
\end{proof}

\begin{prop}\label{globalinductionbij}
The induction functor $\mathrm{Ind}_{\tilde{T}_{z}(\mathbb{A})^{\chi}}^{\tilde{T}_{z}(\mathbb{A})}: \bar{\mathcal{S}}_{\chi} \to \mathcal{S}_{\chi}$ is bijective. Its inverse sends $\otimes'_{v}\eta_{v} \in \mathcal{S}_{\chi}$ to the unique irreducible constituent of $\mathrm{Res}_{\tilde{T}_{z}(\mathbb{A})^{\chi}}^{\tilde{T}_{z}(\mathbb{A})} (\otimes'_{v}\eta_{v})$ that lies in $\bar{\mathcal{S}}_{\chi}$.
\end{prop}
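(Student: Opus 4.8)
The plan is to reduce everything to the local statement, Claim~\ref{localinductionbij}, by exploiting the fact that both $\tilde{T}_{z}(\mathbb{A})^{\chi}$ and the members of $\bar{\mathcal{S}}_{\chi}$ and $\mathcal{S}_{\chi}$ are restricted tensor products. The first step is to observe that $\tilde{T}_{z}(\mathbb{A})^{\chi}$ is itself a restricted direct product $\sideset{}{'}\prod_{v}\tilde{T}_{z}(F_{v})^{\chi_{v}}$, the restriction being with respect to the $\tilde{T}_{z}(\mathcal{O}_{v})$: an element $(t_{v})_{v}$ fixes $\chi=\otimes'_{v}\chi_{v}$ if and only if each $t_{v}$ fixes $\chi_{v}$, and for $v\notin S'$ the character $\chi_{v}$ is unramified, so $\tilde{T}_{z}(\mathcal{O}_{v})\subseteq\tilde{T}_{z}(F_{v})^{\chi_{v}}$.

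The key technical point is that smooth induction commutes with restricted tensor products in the present setting: for $\bar\eta=\otimes'_{v}\bar\eta_{v}\in\bar{\mathcal{S}}_{\chi}$ one has a natural isomorphism
\[
\mathrm{Ind}_{\tilde{T}_{z}(\mathbb{A})^{\chi}}^{\tilde{T}_{z}(\mathbb{A})}\Big(\sideset{}{'}\bigotimes_{v}\bar\eta_{v}\Big)\;\cong\;\sideset{}{'}\bigotimes_{v}\mathrm{Ind}_{\tilde{T}_{z}(F_{v})^{\chi_{v}}}^{\tilde{T}_{z}(F_{v})}\bar\eta_{v},
\]
where the restricted tensor product on the right is taken with respect to the induced images of the $\tilde{T}_{z}(\mathcal{O}_{v})$-spherical vectors. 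This is proved exactly as Claim~\ref{inductionsurjective} (and Remark~\ref{remark:addchi}): a smooth vector in the left-hand induced space is fixed by a compact open subgroup, so composing it with a section of $\tilde{T}_{z}(\mathbb{A})\to A(\mathbb{A})^{[z]}$ adapted to the model yields a locally constant function on the compact group $A(\mathbb{A})^{[z]}$, which factors through finitely many places, giving the desired factorisation. What must be checked is that for $v\notin S'$ the local induction $\mathrm{Ind}_{\tilde{T}_{z}(F_{v})^{\chi_{v}}}^{\tilde{T}_{z}(F_{v})}\bar\eta_{v}^{0}$ equals the unramified constituent $\eta_{v}^{0}$ of $\mathrm{Ind}_{T(F_{v})}^{\tilde{T}_{z}(F_{v})}\chi_{v}$: by Fact~\ref{finitedimbar}, $\bar\eta_{v}^{0}=\chi_{v}^{*}$, its induction is irreducible by Claim~\ref{localinductionbij}, and it is $\tilde{T}_{z}(\mathcal{O}_{v})$-unramified since the distinguished vector of $\chi_{v}^{*}$ induces to a $\tilde{T}_{z}(\mathcal{O}_{v})$-spherical vector; so it coincides with $\eta_{v}^{0}$ by the uniqueness in Proposition~\ref{prop:local-unram}.

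With this in hand the three assertions follow formally. Given $\otimes'_{v}\bar\eta_{v}\in\bar{\mathcal{S}}_{\chi}$, the displayed isomorphism together with Claim~\ref{localinductionbij} shows its induction is $\otimes'_{v}\eta_{v}$ with each $\eta_{v}=\mathrm{Ind}_{\tilde{T}_{z}(F_{v})^{\chi_{v}}}^{\tilde{T}_{z}(F_{v})}\bar\eta_{v}$ irreducible and equal to $\eta_{v}^{0}$ for almost all $v$; hence it lies in $\mathcal{S}_{\chi}$, and $\mathrm{Ind}$ is well defined on $\bar{\mathcal{S}}_{\chi}$. For surjectivity, given $\otimes'_{v}\eta_{v}\in\mathcal{S}_{\chi}$, let $\bar\eta_{v}$ be the unique local preimage supplied by Claim~\ref{localinductionbij}; since the local bijection matches $\eta_{v}^{0}\leftrightarrow\bar\eta_{v}^{0}$ (by the previous paragraph), we have $\bar\eta_{v}=\bar\eta_{v}^{0}$ for almost all $v$, so $\otimes'_{v}\bar\eta_{v}\in\bar{\mathcal{S}}_{\chi}$ maps onto $\otimes'_{v}\eta_{v}$. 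Injectivity is immediate: two elements of $\bar{\mathcal{S}}_{\chi}$ with the same image have, place by place, the same induction, hence agree by local injectivity. Finally, $\mathrm{Res}_{\tilde{T}_{z}(\mathbb{A})^{\chi}}^{\tilde{T}_{z}(\mathbb{A})}$ likewise commutes with restricted tensor products, so $\mathrm{Res}(\otimes'_{v}\eta_{v})=\otimes'_{v}\mathrm{Res}_{\tilde{T}_{z}(F_{v})^{\chi_{v}}}^{\tilde{T}_{z}(F_{v})}\eta_{v}$; by the Mackey-theoretic analysis in the proof of Claim~\ref{localinductionbij} each local restriction is a sum of $\tilde{T}_{z}(F_{v})^{\chi_{v}}$-conjugates of $\bar\eta_{v}$, and among the resulting irreducible constituents of the global restriction exactly one, namely $\otimes'_{v}\bar\eta_{v}$, is $\chi_{v}$-isotypic on each $T(F_{v})$, hence is the unique one lying in $\bar{\mathcal{S}}_{\chi}$ (cf. Proposition~\ref{globalinductionisotypic}). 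I expect the main obstacle to be the bookkeeping at the unramified places in the commutation of smooth induction with restricted tensor products, in particular checking that the distinguished vectors correspond; but this is essentially the computation already carried out for Claim~\ref{inductionsurjective}, now applied to $\bar\eta$ in place of $\chi$ and $\tilde{T}_{z}(\mathbb{A})$ in place of $\tilde{T}_{z}(\mathbb{A})^{\chi}$, supplemented by the irreducibility of each local $\mathrm{Ind}\,\bar\eta_{v}$ from Claim~\ref{localinductionbij}.
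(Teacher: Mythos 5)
Your proof is correct and takes essentially the same route as the paper: reduce to the local statement (Claim~\ref{localinductionbij}) via the commutation of smooth induction with restricted tensor products, which is established exactly as in Claim~\ref{inductionsurjective}; the paper's own proof is terse and you have supplied the bookkeeping at the unramified places that the paper leaves implicit.

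One small slip: it is not true in general that $\chi_v$ unramified forces $\tilde{T}_{z}(\mathcal{O}_{v})\subseteq\tilde{T}_{z}(F_{v})^{\chi_{v}}$. An element of $\tilde{T}_{z}(\mathcal{O}_{v})$ lying over $a\in A(F_v)^{[z_v]}$ fixes $\chi_v$ iff $a\cdot\chi_v=\chi_v$, and a generic unramified character is not fixed by all of $A(F_v)^{[z_v]}$ (e.g. $T=\mathrm{Res}_{K/F}\mathbb{G}_m$ with $A=\mathbb{Z}/2$ the Galois involution and $v$ split in $K$, where $A$ swaps the two $F_v^\times$-factors of $T(F_v)$). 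The restricted direct product structure on $\tilde{T}_{z}(\mathbb{A})^{\chi}$ therefore has to be taken with respect to $\tilde{T}_{z}(\mathcal{O}_{v})^{\chi_{v}}:=\tilde{T}_{z}(\mathcal{O}_{v})\cap\tilde{T}_{z}(F_{v})^{\chi_{v}}$, which is also the normalisation used in Proposition~\ref{prop:coincidence_gpdual}. Replacing $\tilde{T}_{z}(\mathcal{O}_{v})$ by $\tilde{T}_{z}(\mathcal{O}_{v})^{\chi_{v}}$ in your first paragraph and in the phrase ``the distinguished vector of $\chi_v^*$'' repairs the slip; the rest goes through unchanged, since a $\tilde{T}_{z}(\mathcal{O}_{v})^{\chi_{v}}$-fixed vector in $\bar\eta_v^0$ still induces to a $\tilde{T}_{z}(\mathcal{O}_{v})$-spherical vector of $\mathrm{Ind}\,\bar\eta_v^0$, using the Iwasawa-type decomposition $\tilde{T}_{z}(F_{v})=\tilde{T}_{z}(F_{v})^{\chi_{v}}\tilde{T}_{z}(\mathcal{O}_{v})$ valid for $v\notin S'$.
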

\begin{proof}
As shown in the proof of Fact~\ref{finitedimbar}, $\bar{\eta}_{v}$ are $1$-dimensional characters for almost all $v$. Therefore, the reasoning in the proof of Claim~\ref{claim: inductionsurjective} can be adapted to prove
\begin{align*}
\mathrm{Ind}_{\tilde{T}_{z}(\mathbb{A})^{\chi}}^{\tilde{T}_{z}(\mathbb{A})} \left(\otimes_{v}'\bar{\eta}_{v}\right) \cong \sideset{}{'}\bigotimes_{v}  \mathrm{Ind}_{\tilde{T}_{z}(F_{v})^{\chi_{v}}}^{\tilde{T}_{z}(F_{v})}\bar{\eta}_{v}.
\end{align*}
Claim~\ref{claim:groupside2} shows that, for each place $v$, the local induction 
$\mathrm{Ind}_{\tilde{T}_{z}(F_{v})^{\chi_{v}}}^{\tilde{T}_{z}(F_{v})}$ gives a bijection between $\mathrm{Irr}(\tilde{T}_{z}(F_{v})^{\chi_{v}}, \chi_{v})$ and $\mathrm{Irr}(\tilde{T}_{z}(F_{v}), \chi_{v})$. Therefore, in view of the above relation, the functor $\mathrm{Ind}_{\tilde{T}_{z}(\mathbb{A})^{\chi}}^{\tilde{T}_{z}(\mathbb{A})}$ is also bijective. 

For each place $v$, the inverse of $\mathrm{Ind}_{\tilde{T}_{z}(F_{v})^{\chi_{v}}}^{\tilde{T}_{z}(F_{v})}: \mathrm{Irr}(\tilde{T}_{z}(F_{v})^{\chi_{v}}, \chi_{v}) \to \mathrm{Irr}(\tilde{T}_{z}(F_{v}), \chi_{v})$ is given by sending $\eta_{v} \in \mathrm{Irr}(\tilde{T}_{z}(F_{v}), \chi_{v})$ to the unique irreducible constituent of $\mathrm{Res}_{\tilde{T}_{z}(F_{v})^{\chi_{v}}}^{\tilde{T}_{z}(F_{v})}\,\eta_{v}$ which lies in $\mathrm{Irr}(\tilde{T}_{z}(F_{v})^{\chi_{v}}, \chi_{v})$. Hence the analogous global statement follows. 
\end{proof}

\subsection{Multiplicity on the automorphic side}\label{section:mult-group}
In the proofs of Corollary~\ref{corollary: semisimplicity&decomp_1} and Corollary~\ref{corollary: semisimplicity&decomp_2}, we have seen that irreducible constituents of $U_{\chi} = \mathrm{Ind}_{T(\mathbb{A})\tilde{T}_{z}(F)^{\chi}}^{\tilde{T}_{z}(\mathbb{A})} \chi^{*}$ must lie in $\mathcal{S}_{\chi}$~\eqref{defn: S_chi}. Conversely, given $\eta = \otimes'_{v}\eta_{v}\in \mathcal{S}_{\chi}$, we hope to determine whether $\eta$ occurs in $U_{\chi}$, and more generally, with what multiplicity. 

For the rest of this section, we fix $\eta = \otimes'_{v}\eta_{v}\in \mathcal{S}_{\chi}$ and set
\begin{align*}
m_{\eta,\chi} :=  \mathrm{dim\ Hom}_{\tilde{T}_{z}(\mathbb{A})} (\eta, U_{\chi}).
\end{align*}
We warn the reader that there might be more than one $\tilde{T}_{z}(F)$-orbit of Hecke characters contributing to the multiplicity of $\eta$ in $\mathcal{A}(\tilde{T}_{z})$. If we denote
\begin{align*}
    m_{\eta} := \mathrm{dim\ Hom}_{\tilde{T}_{z}(\mathbb{A})} (\eta, \mathcal{A}(\tilde{T}_{z})),
\end{align*}
then in view of the decomposition~\eqref{equation: preliminary_decomp}, we actually have 
\begin{align*}
    m_{\eta} = \sum_{\chi\in H(T)/\tilde{T}_{z}(F)}m_{\eta,\chi}.
\end{align*}

To determine $m_{\eta,\chi}$ we pass to the intermediate subgroup $\tilde{T}_{z}(\mathbb{A})^{\chi}$, the stabilizer of $\chi$ in $\tilde{T}_{z}(\mathbb{A})$. Proposition~\ref{globalinductionbij} ensures that there exists a unique $\bar{\eta}\in \bar{\mathcal{S}}_{\chi}$ such that $\mathrm{Ind}_{\tilde{T}_{z}(\mathbb{A})^{\chi}}^{\tilde{T}_{z}(\mathbb{A})}\bar{\eta} = \eta$. Here, $\bar{\eta}$ is finite-dimensional by Fact~\ref{finitedimbar}. On the other hand, according to Proposition~\ref{prop: induction_in_steps}, we are able to rewrite $U_{\chi}$ 
via an induction in steps:
\begin{align}
    U_{\chi} = \mathrm{Ind}_{T(\mathbb{A})\tilde{T}_{z}(F)^{\chi}}^{\tilde{T}_{z}(\mathbb{A})} \chi^{*} = \mathrm{Ind}_{\tilde{T}_{z}(\mathbb{A})^{\chi}}^{\tilde{T}_{z}(\mathbb{A})} \left[\mathrm{Ind}_{T(\mathbb{A})\tilde{T}_{z}(F)^{\chi}}^{\tilde{T}_{z}(\mathbb{A})^{\chi}} \chi^{*}\right]. \label{indstep}
\end{align}
As we pointed out in Remark~\ref{remark:addchi}, the inner induction $\mathrm{Ind}_{T(\mathbb{A})\tilde{T}_{z}(F)^{\chi}}^{\tilde{T}_{z}(\mathbb{A})^{\chi}} \chi^{*}$ is semisimple with irreducible constituents from $\bar{\mathcal{S}}_{\chi}$. Write the inner induction as $\bigoplus_{i}\bar{\eta}_{i}^{\oplus m_{i}}$, where the $\bar{\eta}_{i}\in\bar{\mathcal{S}}_{\chi}$ are pairwise non-isomorphic and finite-dimensional (Fact~\ref{finitedimbar}), hence closed and unitary. Corollary~\ref{cor: admissibility} applies to the inner induction, with the ambient group $\tilde{T}_{z}(\mathbb{A})^{\chi} = \prod'_{v}\tilde{T}_{z}(F_{v})^{\chi_{v}}$ in place of $\tilde{T}_{z}(\mathbb{A})$ (the results of the smooth-induction preliminaries hold verbatim for this ambient group, which satisfies Assumption~\ref{assump:GH}): by part~(3), its $L^{2}$-completion is unitarily isomorphic to $\widehat{\bigoplus_{i}}\,\bar{\eta}_{i}^{\oplus m_{i}}$. Since unitary induction commutes with Hilbert direct sums, the $L^{2}$-completion $\overline{U}_{\chi}$ (see the proof of Proposition~\ref{prop: induction_in_steps}) is the Hilbert direct sum of closed subrepresentations isomorphic to $L^2\text{-}\mathrm{Ind}_{\tilde{T}_{z}(\mathbb{A})^{\chi}}^{\tilde{T}_{z}(\mathbb{A})}\bar{\eta}_{i}$, with $m_{i}$ copies each. Passing to smooth parts by Corollary~\ref{cor: admissibility}(2) (applied to $\overline{U}_{\chi}$) and using Definition~\ref{defn:smooth_ind}, we obtain
\begin{align*}
U_{\chi} = \bigoplus_{i}\left(\mathrm{Ind}_{\tilde{T}_{z}(\mathbb{A})^{\chi}}^{\tilde{T}_{z}(\mathbb{A})}\bar{\eta}_{i}\right)^{\oplus m_{i}}.
\end{align*}
By Proposition~\ref{globalinductionbij}, the inductions $\mathrm{Ind}_{\tilde{T}_{z}(\mathbb{A})^{\chi}}^{\tilde{T}_{z}(\mathbb{A})}\bar{\eta}_{i}$ are irreducible and pairwise non-isomorphic. Therefore, multiplicities are preserved by the induction functor $\mathrm{Ind}_{\tilde{T}_{z}(\mathbb{A})^{\chi}}^{\tilde{T}_{z}(\mathbb{A})}$, and we have:
\begin{align*}
m_{\eta,\chi} = \mathrm{dim\ Hom}_{\tilde{T}_{z}(\mathbb{A})^{\chi}}(\bar{\eta}, \mathrm{Ind}_{T(\mathbb{A})\tilde{T}_{z}(F)^{\chi}}^{\tilde{T}_{z}(\mathbb{A})^{\chi}} \chi^{*}).
\end{align*}
By Frobenius Reciprocity (Proposition~\ref{prop: frobenius_rec}), it can be further simplified as
\begin{align}
m_{\eta,\chi} =  \mathrm{dim\ Hom}_{T(\mathbb{A})\tilde{T}_{z}(F)^{\chi}}(\mathrm{Res}_{T(\mathbb{A})\tilde{T}_{z}(F)^{\chi}}^{{\tilde{T}_{z}(\mathbb{A})^{\chi}}}\bar{\eta},\chi^{*}). \label{simpstep2}
\end{align}

Next, we isolate the $\tilde{T}_{z}(F)^{\chi}$-action from the  $T(\mathbb{A})$-action. Indeed, by Proposition~\ref{globalinductionisotypic}, the restriction of $\bar{\eta}$ to $T(\mathbb{A})$ is $\chi$-isotypic, which agrees with $\chi^{*}$. Therefore, the multiplicity problem is reduced to analysing only the $\tilde{T}_{z}(F)^{\chi}$-action:
\begin{align}
m_{\eta,\chi} =  \mathrm{dim\ Hom}_{\tilde{T}_{z}(F)^{\chi}}(\mathrm{Res}_{\tilde{T}_{z}(F)^{\chi}}^{{\tilde{T}_{z}(\mathbb{A})^{\chi}}}\bar{\eta},\mathbb{1}). \label{simpstep3}
\end{align}

Finally, because the restriction of $\bar{\eta}$ to $T(\mathbb{A})$ is $\chi$-isotypic, its further restriction to $T(F)$ is trivial. Consequently, the representation $\mathrm{Res}_{\tilde{T}_{z}(F)^{\chi}}^{{\tilde{T}_{z}(\mathbb{A})^{\chi}}}\bar{\eta}$ descends to the quotient $\tilde{T}_{z}(F)^{\chi}/T(F) \cong A(F)^{[z],\chi}$. We simply denote the descended representation by $\bar{\eta}|_{A(F)^{[z],\chi}}$. More precisely, for each $a \in A(F)^{[z],\chi}$, we choose some $t_{a}\in\tilde{T}_{z}(F)^{\chi}$ lying above $a$ and define $\bar{\eta}|_{A(F)^{[z],\chi}}(a) := \bar{\eta}(t_{a})$, which does not depend on the choice of $t_{a}$. The multiplicity therefore becomes
\begin{align*}
m_{\eta,\chi} &= \mathrm{dim\ Hom}_{A(F)^{[z],\chi}}(\bar{\eta}|_{A(F)^{[z],\chi}}, \mathbb{1})\\
&= \frac{1}{|A(F)^{[z],\chi}|} \sum_{a\in A(F)^{[z],\chi}}\mathrm{tr}\,\bar{\eta}|_{A(F)^{[z],\chi}}(a).
\end{align*}

In summary, we have established:
\begin{prop}\label{prop: automult}
Let $\eta \in \mathcal{S}_{\chi}$ and let $\bar{\eta}\in \bar{\mathcal{S}}_{\chi}$ such that $\mathrm{Ind}_{\tilde{T}_{z}(\mathbb{A})^{\chi}}^{\tilde{T}_{z}(\mathbb{A})}\bar{\eta} = \eta$.  Then the multiplicity of $\eta$ in $U_{\chi}$ is given by
\begin{align*}
\mathrm{dim\ Hom}_{\tilde{T}_{z}(\mathbb{A})} (\eta, U_{\chi}) = \frac{1}{|A(F)^{[z],\chi}|} \sum_{a\in A(F)^{[z],\chi}}\mathrm{tr}\,\bar{\eta}|_{A(F)^{[z],\chi}}(a).
\end{align*}
\end{prop}

Let $\eta = \otimes'_{v}\eta_{v}$ be a smooth irreducible admissible representation of $\tilde{T}_{z}(\mathbb{A})$. By the definition of $\mathcal{S}_{\chi}$~\eqref{defn: S_chi}, for $\chi=\otimes_{v} \chi_{v} \in H(T)$, $\eta$ lies in $\mathcal{S}_{\chi}$ if and only if $\chi_{v}$ is a constituent of $\eta_{v}|_{T(F_{v})}$ for all places $v$. Such $\chi$ is generally far from unique (see~\ref{example: norm_torus}(i) for a concrete example). We let
\begin{align}\label{defn: X_eta}
\mathcal{X}(\eta) :=\{\chi\in H(T)\bigm | \eta \in \mathcal{S}_{\chi}\}. 
\end{align}

\begin{prop}\label{prop: X_eta_finite}
The set $\mathcal{X}(\eta)$ is finite.
\end{prop}
\begin{proof}
Let $S_{\eta}$ denote the union of $S$, the archimedean places, and the finitely many places where $\eta_{v}$ is not $\tilde{T}_{z}(\mathcal{O}_{v})$-unramified. For $v\notin S_{\eta}$, the $T(F_{v})$-translates of a nonzero $\tilde{T}_{z}(\mathcal{O}_{v})$-fixed vector span $\eta_{v}$ (recall $\tilde{T}_{z}(F_{v}) = T(F_{v})\tilde{T}_{z}(\mathcal{O}_{v})$ for $v\notin S$), and $T(\mathcal{O}_{v})$ fixes each translate because $T(F_{v})$ is commutative. Hence, $T(\mathcal{O}_{v})$ acts trivially on $\eta_{v}$, and every $\chi \in \mathcal{X}(\eta)$ is unramified outside $S_{\eta}$. Meanwhile, at each of the finitely many places $v\in S_{\eta}$, the component $\chi_{v}$ ranges over the finitely many irreducible constituents of the finite-dimensional representation $\eta_{v}|_{T(F_{v})}$. Now, two elements of $\mathcal{X}(\eta)$ with the same components at all $v\in S_{\eta}$ differ by a character of $T(\mathbb{A})$ trivial on the subgroup $A_{T}T(F)\prod_{v\in S_{\eta}}T(F_{v})\prod_{v\notin S_{\eta}}T(\mathcal{O}_{v})$, which is open in $T(\mathbb{A})$. Hence, the corresponding quotient of $T(\mathbb{A})$ is discrete, and it is compact as a quotient of $[T]$. It follows that there are only finitely many such characters and $\mathcal{X}(\eta)$ is finite.
\end{proof}

\begin{cor} \label{corollary: final_automult}
Let $\eta = \otimes'_{v}\eta_{v}$ be a smooth irreducible admissible representation of $\tilde{T}_{z}(\mathbb{A})$. The full multiplicity of $\eta$ in $\mathcal{A}(\tilde{T}_{z})$ is finite, and is given by the finite sum: 
\begin{align*}
\mathrm{dim\ Hom}_{\tilde{T}_{z}(\mathbb{A})} (\eta, \mathcal{A}(\tilde{T}_{z})) = \sum_{\chi \in \mathcal{X}(\eta)/\tilde{T}_{z}(F)} \frac{1}{|A(F)^{[z],\chi}|} \sum_{a\in A(F)^{[z],\chi}}\mathrm{tr}\,\bar{\eta}|_{A(F)^{[z],\chi}}(a).
\end{align*}
\end{cor}
\begin{proof}
According to the decomposition~\eqref{equation: preliminary_decomp}, we have 
$$\mathrm{dim\ Hom}_{\tilde{T}_{z}(\mathbb{A})} (\eta, \mathcal{A}(\tilde{T}_{z})) = \sum_{\chi \in H(T)/\tilde{T}_{z}(F)} \mathrm{dim\ Hom}_{\tilde{T}_{z}(\mathbb{A})} (\eta, U_{\chi}).$$
$U_{\chi}$ is a submodule of $\mathrm{Ind}_{T(\mathbb{A})}^{\tilde{T}_{z}(\mathbb{A})}\chi$, all of whose constituents lie in $\mathcal{S}_{\chi}$ by the proof of Corollary~\ref{corollary: semisimplicity&decomp_1}. Therefore, $\mathrm{Hom}_{\tilde{T}_{z}(\mathbb{A})} (\eta, U_{\chi}) \neq 0$ only if $\eta \in \mathcal{S}_{\chi}$ and it suffices to sum over the $\tilde{T}_{z}(F)$-orbits in $\mathcal{X}(\eta)$ in the above sum. The statement follows from Proposition~\ref{prop: automult}. Finiteness holds because the index set $\mathcal{X}(\eta)/\tilde{T}_{z}(F)$ is finite by Proposition~\ref{prop: X_eta_finite}, and each summand is finite.
\end{proof}

\section{The multiplicity formula for disconnected tori}

We retain the notations from the previous chapter. A pure inner form $\tilde{T}_{z}$ has been fixed.

\subsection{Settings on the dual side} \label{sec: settings_dual_side}
Let $W_{F}$ be the global Weil group of $F$. Let ${}^{L}T := \widehat{T}\rtimes W_{F}$ be the global $L$-group of $T$. A global $L$-parameter for $T$ is a continuous $L$-morphism $W_{F} \to {}^{L}T$. Two global $L$-parameters $\phi_{1},\phi_{2}: W_{F}\to {}^{L}T$ are said to be equivalent if they are conjugate under $\widehat{T}$. As in the local case, a global $L$-parameter can be regarded as a continuous $1$-cocycle $W_{F}\to \widehat{T}$ and the set of equivalence classes of global $L$-parameters is in 1-1 correspondence with the continuous cohomology group $H^{1}(W_{F},\widehat{T})$.

Let $v$ be a place of $F$. Given a global $L$-parameter $\phi \in Z^{1}(W_{F},\widehat{T})$, after composing it with the map $W_{F_{v}} \to W_{F}$ (see~\cite[p.8]{tate1979number}), we obtain a local $L$-parameter $\phi_{v}: W_{F_{v}}\to \widehat{T}$, which is called the local component of $\phi$ at $v$. 

The global Langlands correspondence states that there is a natural surjective morphism with finite kernel:
\begin{align} \label{equation: glc}
\mathfrak{L}: H^{1}(W_{F},\widehat{T})\twoheadrightarrow
\mathrm{Hom}_{\mathrm{cts}}(T(\mathbb{A})/T(F),\mathbb{C}^{\times}),
\end{align}
which is compatible with the local Langlands correspondence for $T$ at each place $v$. More precisely, by~\cite{langlands1997representations}, the kernel of  $\mathfrak{L}$~\eqref{equation: glc} coincides with
$$\mathrm{ker}\left[H^{1}(W_{F},\widehat{T})\to \prod_{v}H^{1}(W_{F_{v}}, \widehat{T})\right],$$
whose elements are called locally trivial classes. We note that two global $L$-parameters give rise to the same Hecke character if and only if they differ by a locally trivial class. This leads to a weaker equivalence relation on global $L$-parameters, termed \textit{near equivalence}:

\begin{defn}\label{defn: global_l_par_near_equiv}
Let $\phi_{1},\phi_{2}: W_{F} \to {}^{L}T$ be two global $L$-parameters. We say that $\phi_{1}$ is nearly equivalent to $\phi_{2}$ if, for each place $v$, the local components $(\phi_{1})_{v}$ and $(\phi_{2})_{v}$ are equivalent as local $L$-parameters.
\end{defn}

Under this notion, the global Langlands correspondence can be seen as a bijection between near-equivalence classes of global $L$-parameters for $T$ and Hecke characters of $T$. 

There is yet another description of $\mathrm{ker}\mathfrak{L}$. It is a well-known fact that 
\begin{align*}
\mathrm{ker}\left(H^{1}(F,T) \to H^{1}(\mathbb{A},T) \cong \bigoplus_{v}H^{1}(F_{v},T)\right)
\end{align*} 
is finite (cf.~\cite[p.306 Corollary]{platonov1993algebraic}), which we denote by $\Sha^{1}(F,T)$. Then $\mathrm{ker}\mathfrak{L}$ can be identified with the Pontryagin dual of 
$\Sha^{1}(F,T)$. When $\Sha^{1}(F,T)$ is trivial, we say that $T$ satisfies the Hasse principle. In this particular case, near equivalence coincides with equivalence, and the global Langlands correspondence becomes an isomorphism
\begin{align*}
H^{1}(W_{F},\widehat{T}) \xrightarrow{\sim}\mathrm{Hom}_{\mathrm{cts}}(T(\mathbb{A})/T(F),\mathbb{C}^{\times}).
\end{align*}

\subsection{Adelic $L$-packets}
For the remainder of this chapter, a ``global $L$‑parameter'' will always mean one whose associated Hecke character belongs to $H(T)$, the set of Hecke characters that are trivial on $A_{T}$. 

In the disconnected setting, we must further relax the notion of near equivalence by taking the action of $A$ into account (compare the local analogue in Definition~\ref{l-parameterequiv}). For the fixed pure inner form $\tilde{T}_{z}$, we make the following definition:

\begin{defn}\label{defn: global_l_par_near_AFz_equiv}
Let $\phi_{1},\phi_{2}: W_{F} \to {}^{L}T$ be two global $L$-parameters. We say that $\phi_{1}$ and $\phi_{2}$ are nearly $A(F)^{[z]}$-equivalent if there exists $a \in A(F)^{[z]}$ such that, for each place $v$, $(a\cdot \phi_{1})_{v}$ is equivalent to $(\phi_{2})_{v}$ as local $L$-parameters. In other words, we have $a\cdot \chi_{1} = \chi_{2}$, where $\chi_{i}$ is the Hecke character of $T$ determined by $\phi_{i}$ ($i = 1,2$). We denote the near-$A(F)^{[z]}$-equivalence class of $\phi$ as $[[\phi]]$.
\end{defn}

Let $\phi$ be a global $L$-parameter and let $\chi$ be its associated Hecke character. For each place $v$, the local Langlands correspondence for disconnected tori (Theorem~\ref{thm:llc_reinterthm}) provides a bijection
\begin{align*}
\iota_{v}: \Pi_{\phi_{v}}(\tilde{T}_{z_{v}}) \to \mathrm{Irr}(\pi_{0}(\tilde{S}_{\phi_{v}}^{[z_{v}]}), [z_{v}]).
\end{align*}

We define the adelic $L$-packet associated with $\phi$ as
\begin{align} \label{defn: adelic_L_packet}
\Pi_{\phi}(\tilde{T}_{z}) = \{\otimes'_{v}\;\eta_{v} \;\bigm|\; \eta_{v} \in \Pi_{\phi_{v}}(\tilde{T}_{z_{v}}), \;\iota_{v}(\eta_{v}) = \mathbb{1} \text{ for almost all\ } v\}.
\end{align}
We need to make the restricted tensor product in the above definition more precise. First, note that because $z$ is a global cocycle, its local cohomology class $[z_v] \in H^1(F_v, T)$ is trivial for almost all $v$. Thus, the trivial representation $\mathbb{1}$ does lie in $\mathrm{Irr}(\pi_0(\tilde{S}_{\phi_v}^{[z_v]}), [z_v])$ for such $v$.

Let $\otimes'_{v}\eta_{v} \in \Pi_{\phi}(\tilde{T}_{z})$. For each place $v$, by Claim~\ref{claim:groupside2}, $\eta_{v}$ can be written as $\eta_{v} = \mathrm{Ind}_{\tilde{T}_{z}(F_{v})^{\chi_{v}}}^{\tilde{T}_{z}(F_{v})}\bar{\eta}_{v}$ for a unique $\bar{\eta}_{v} \in \mathrm{Irr}(\tilde{T}_{z}(F_{v})^{\chi_{v}}, \chi_{v})$. Let $K$ be the finite Galois extension of $F$ and let $K_{v}$ be the completion of $K$ at a chosen place above $v$, both fixed in \S\ref{sec: adelic_points}. For almost all places $v$, the proof of Fact~\ref{finitedimbar} shows that there exists some $d_{v} \in T(\mathcal{O}_{K_{v}})$ that induces an isomorphism:
\begin{align*}
\gamma_{d_{v}}: T(F_{v})\rtimes A(F_{v})^{\chi_{v}} &\xrightarrow{\sim} \tilde{T}_{z}(F_{v})^{\chi_{v}}\\
(t,a) &\mapsto (td_{v}^{-1}a(d_{v}), a),
\end{align*}
which descends to an isomorphism of the $\mathcal{O}_{v}$-points:
\begin{align*}
\gamma_{d_{v}}: T(\mathcal{O}_{v}) \rtimes A(F_{v})^{\chi_{v}} \xrightarrow{\sim} \tilde{T}_{z}(\mathcal{O}_{v})^{\chi_{v}}.
\end{align*}

By the definition \eqref{defn: adelic_L_packet} of $\Pi_{\phi}(\tilde{T}_{z})$, for almost all places $v$, we have $\iota_{v}(\eta_{v}) = \mathbb{1}$. By Proposition~\ref{prop:genericlem}, this implies that $\bar{\eta}_{v}$ coincides with $\chi_{v}^{*}$, the character on $\tilde{T}_{z}(F_{v})^{\chi_{v}}$ obtained by extending $\chi_{v}$ trivially across $\gamma_{d_{v}}$, meaning $\chi_{v}^{*}(\gamma_{d_{v}}(t,a)) = \chi_{v}(t)$. Recall that any element in the compact open subgroup $\tilde{T}_{z}(\mathcal{O}_{v})^{\chi_{v}}$ can be written as $\gamma_{d_{v}}(t, a)$ for some $t \in T(\mathcal{O}_{v})$ and $a \in A(F_{v})^{\chi_{v}}$. Since $\chi_{v}$ is an unramified Hecke character, we have $\bar{\eta}_{v}(\gamma_{d_{v}}(t, a)) = \chi_{v}^{*}(\gamma_{d_{v}}(t, a)) = \chi_{v}(t) = 1$ for all $t \in T(\mathcal{O}_{v})$ and all $a \in A(F_{v})^{\chi_{v}}$. This proves that $\bar{\eta}_{v}$ is identically $1$ on $\tilde{T}_{z}(\mathcal{O}_{v})^{\chi_{v}}$, and hence is unramified for almost all $v$. 

Conversely, if an element $\otimes'_v \eta_v \in \mathcal{S}_\chi$ is given, then by definition $\eta_v$ is the unique unramified constituent $\eta_v^0$ for almost all $v$, and its corresponding $\bar{\eta}_v$ is the unramified character $\chi_v^*$. Indeed, since $\chi_{v}^{*} = 1$ on $\tilde{T}_{z}(\mathcal{O}_{v})^{\chi_{v}}$ and $\tilde{T}_{z}(F_{v}) = \tilde{T}_{z}(F_{v})^{\chi_{v}}\tilde{T}_{z}(\mathcal{O}_{v})$, the function taking value $1$ on $\tilde{T}_{z}(\mathcal{O}_{v})$ is a well-defined $\tilde{T}_{z}(\mathcal{O}_{v})$-fixed vector of $\mathrm{Ind}_{\tilde{T}_{z}(F_{v})^{\chi_{v}}}^{\tilde{T}_{z}(F_{v})}\chi_{v}^{*}$, so this induction equals $\eta_{v}^{0}$ by the uniqueness in Proposition~\ref{prop:local-unram}. By Proposition~\ref{prop:genericlem}, the trivial extension $\chi_v^*$ corresponds to the trivial representation $\mathbb{1}$ under $\iota_v$. In other words, $\iota_v(\eta_v) = \mathbb{1}$ for almost all $v$, which implies that $\otimes'_v \eta_v $ lies in the adelic $L$-packet.

As in the proof of Proposition~\ref{prop:local-unram}, for almost all places $v$, the space of $\tilde{T}_{z}(\mathcal{O}_{v})$-fixed vectors in $\eta_{v}=\mathrm{Ind}_{\tilde{T}_{z}(F_{v})^{\chi_{v}}}^{\tilde{T}_{z}(F_{v})}\bar{\eta}_{v}$ is one-dimensional and spanned by the distinguished vector $f_{v}^{0}$ that takes value $1$ on $\tilde{T}_{z}(\mathcal{O}_{v})$. The restricted tensor product in~\eqref{defn: adelic_L_packet} is taken with respect to these distinguished vectors $f_{v}^{0}$. In particular, for almost all $v$, the condition $\iota_{v}(\eta_{v})=\mathbb{1}$ is equivalent to $\eta_v$ being the unique unramified constituent $\eta_{v}^{0}$ appearing in Proposition~\ref{prop:local-unram} and the definition of $\mathcal{S}_{\chi}$~\eqref{defn: S_chi}. This matching implies that $\Pi_{\phi}(\tilde{T}_{z})$ and $\mathcal{S}_{\chi}$ coincide.

We record this fact: 

\begin{prop}\label{prop: adelicL-pktwelldef}
The adelic $L$-packet $\displaystyle \Pi_{\phi}(\tilde{T}_{z})$ coincides with $\displaystyle \mathcal{S}_{\chi}$. In particular, each element of $\displaystyle \Pi_{\phi}(\tilde{T}_{z})$ is a smooth irreducible admissible representation of $\displaystyle \tilde{T}_{z}(\mathbb{A})$.
\end{prop}

\begin{cor}\label{corollary: fall_in_adelic_Lpacket}
Any irreducible constituent of $\mathcal{A}(\tilde{T}_{z})$ lies in some adelic $L$-packet $\Pi_{\phi}(\tilde{T}_{z})$.
\end{cor}
\begin{proof}
By Corollary~\ref{corollary: semisimplicity&decomp_2}, any irreducible constituent of $\mathcal{A}(\tilde{T}_{z})$ lies in $\mathcal{S}_{\chi}$ for some $\chi \in H(T)$. The result then follows from Proposition~\ref{prop: adelicL-pktwelldef}.
\end{proof}

For a given smooth irreducible admissible $\tilde{T}_{z}(\mathbb{A})$-representation $\eta$, define 
\begin{align}\label{defn: Phi_eta}
\Phi(\eta) :=  \{\phi \text{ global }L \text{-parameter} \bigm | \eta \in \Pi_{\phi}(\tilde{T}_{z})\}.
\end{align}
We note that $\Phi(\eta)$ serves as the counterpart of $\mathcal{X}(\eta)$~\eqref{defn: X_eta} on the dual side.

Recall from Claim~\ref{claim:groupside2} that induction  gives a bijection between $\mathrm{Irr}(\tilde{T}_{z}(F_{v})^{\chi_{v}},\chi_{v})$ and $\Pi_{\phi_{v}}(\tilde{T}_{z_{v}})$. We let  $\bar{\iota}_{v} :=\iota_{v} \circ \mathrm{Ind}_{\tilde{T}_{z}(F_{v})^{\chi_{v}}}^{\tilde{T}_{z}(F_{v})}$, then for all places $v$, we have a bijection:
\begin{align*}
\bar{\iota}_{v}: \mathrm{Irr}(\tilde{T}_{z}(F_{v})^{\chi_{v}},\chi_{v}) \to \mathrm{Irr}(\pi_{0}(\tilde{S}_{\phi_{v}}^{[z_{v}]}), [z_{v}]).
\end{align*}
Analogous to~\eqref{defn: adelic_L_packet}, we can define a packet of $\tilde{T}_{z}(\mathbb{A})^{\chi}$-representations associated with $\phi$:
\begin{align} \label{defn: adelic_L_packet_chi_version}
\Pi_{\phi}(\tilde{T}_{z}^{\chi}) = \{\otimes'_{v}\bar{\eta}_{v} \;\bigm|\; \bar{\eta}_{v} \in \mathrm{Irr}(\tilde{T}_{z}(F_{v})^{\chi_{v}},\chi_{v}),\;\bar{\iota}_{v}(\bar{\eta}_{v}) = \mathbb{1} \text{ for almost all\ } v\}.
\end{align}
Here, the restricted tensor product is well defined because $\displaystyle\bar{\eta}_{v}$ is an unramified character for almost all $v$. Moreover, when comparing the construction of $\displaystyle\Pi_{\phi}(\tilde{T}_{z}^{\chi})$ with that of $\displaystyle\bar{\mathcal{S}}_{\chi}$~\eqref{defn: S_chi_bar}, we see that they coincide due to the same logic used in establishing Proposition~\ref{prop: adelicL-pktwelldef}.

\begin{prop}\label{prop: adelicL-pktwelldef_modified}
$\displaystyle \Pi_{\phi}(\tilde{T}_{z}^{\chi})$ coincides with $\displaystyle \bar{\mathcal{S}}_{\chi}$. In particular, each element of $\displaystyle \Pi_{\phi}(\tilde{T}_{z}^{\chi})$ is a smooth irreducible finite-dimensional $\displaystyle \tilde{T}_{z}(\mathbb{A})^{\chi}$-representation.
\end{prop}

\subsection{The pairing}
In this subsection, we construct a pairing that serves as the disconnected-tori counterpart to Kottwitz's multiplicity pairing~\eqref{eq:intro_mult}. Its purpose is to compute the automorphic multiplicity (Theorem~\ref{thm:finalthm}). We aim to show that the pairing is well-defined (Proposition~\ref{prop: pairing_trivial_aa}) and independent of the arbitrary local lift choices (Proposition~\ref{prop:pairing_welldef}).

Let $\phi$ be a global $L$-parameter and $\chi$ be the Hecke character determined by $\phi$. We hope to define a pairing
\begin{align*}
\langle\cdot,\cdot\rangle: A(F)^{[z],\chi} \times \Pi_{\phi}(\tilde{T}_{z}) &\to \mathbb{C}.
\end{align*}
Let $a \in A(F)^{[z],\chi}$. Clearly, $A(F)^{[z],\chi} \subseteq A(F_{v})^{[z_{v}], \chi_{v}}$ holds for any $v$. At any place $v$, there exists some $(s_{v},a) \in \tilde{S}_{\phi_{v}}^{[z_{v}]} = \mathrm{Cent}(\phi_{v},\widehat{T}\rtimes A(F_{v})^{[z_{v}]})$ lying above $a$. The inverse of $(s_{v},a)$ is $(a^{-1}(s_{v}^{-1}),a^{-1})\in\tilde{S}_{\phi_{v}}^{[z_{v}]}$, lying above $a^{-1}$. According to the short exact sequence~\eqref{SESglobalchi}, there exists some $(t,a) \in \tilde{T}_{z}(F)^{\chi}$ lying above $a$. This further implies $(t,a) \in \tilde{T}_{z}(F_{v})^{\chi_{v}}$ for all places $v$.

By Observation~\ref{obs:relation-groupside} and Observation~\ref{obs:relation-dualside}, we have
\begin{align*}
(\phi_{v}^{-1}, a^{-1}(s_{v}^{-1})) \in Z^{1}(W_{F_{v}}, \widehat{T}\xrightarrow{1-a^{-1}}\widehat{T})
\end{align*}
and 
\begin{align*}
(z_{v}^{-1}, t) \in Z^{1}(F_{v}, T\xrightarrow{1-a}T)
\end{align*}
for any $v$.

We define the pairing between $a \in A(F)^{[z],\chi}$ and $\eta = \otimes'_{v}\eta_{v} \in \Pi_{\phi}(\tilde{T}_{z})$ as
\begin{align}
\langle a,\eta \rangle := \prod_{v}\langle (z_{v}^{-1}, t), (\phi_{v}^{-1}, a^{-1}(s_{v}^{-1}))\rangle^{-1}_{\mathrm{TN}}\cdot \mathrm{tr}\left[\iota_{v}(\eta_{v})(s_{v},a)\right], \label{thepairing}
\end{align}
where the pairing $\langle \cdot,\cdot \rangle_{\mathrm{TN}}$ is the Tate--Nakayama pairing of hypercohomology~\eqref{TN-hyper-pairing}.  From now on, we denote $\rho_{v} := \iota_{v}(\eta_{v})$ for brevity.

In Proposition~\ref{prop: pairing_ch_eta} we ultimately prove, using the construction of the LLC, that $\langle -,\eta \rangle$ coincides with the character of the representation $\bar{\eta}|_{A(F)^{[z],\chi}}$ defined in \S\ref{section:mult-group}. In particular, it follows from Proposition~\ref{prop: pairing_ch_eta} that the pairing $\langle a,\eta \rangle$ is well-defined and does not depend on the choices of $(s_{v},a)$ or $(t,a)$. However, in this section, we give direct proofs of these facts that avoid any reliance on the construction of $\rho_{v}$ in terms of the LLC. More precisely, we use only that $\rho_{v}$ lies in $\mathrm{Irr}(\pi_{0}(\tilde{S}_{\phi_{v}}^{[z_{v}]}), [z_{v}])$ and is trivial for almost all $v$. 

\begin{prop}\label{prop: pairing_trivial_aa}
For almost all $v$, 
\begin{align*}
\langle (z_{v}^{-1}, t), (\phi_{v}^{-1}, a^{-1}(s_{v}^{-1}))\rangle_{\mathrm{TN}}^{-1}\cdot \mathrm{tr}\left[\rho_{v}(s_{v},a)\right] = 1.
\end{align*}
\end{prop}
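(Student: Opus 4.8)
The plan is to show that for almost all places $v$, both factors in the product degenerate to something trivial. The key observation is that ``almost all $v$'' can be taken to mean: $v \notin S$ (so that $\tilde{T}_z(\mathcal{O}_v)$ is the unique maximal compact subgroup and $\tilde{T}_z(F_v) = T(F_v)\tilde{T}_z(\mathcal{O}_v)$), $v$ is unramified in $K/F$ so that $[z_v] = 1$ in $H^1(F_v, T)$, $\chi_v$ is unramified, and $\bar{\eta}_v = \bar{\eta}_v^0$ with $\iota_v(\bar{\eta}_v) = \mathbb{1}$ the trivial representation of $\pi_0(\tilde{S}_{\phi_v}^{[z_v]})$. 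Under these conditions, $\bar{\eta}_v$ equals $\chi_v^*$, the trivial extension of $\chi_v$, by Proposition \ref{genericlem}.

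First I would handle the trace factor. Since $\rho_v = \iota_v(\bar{\eta}_v) = \mathbb{1}$ is the trivial (one-dimensional) representation of $\pi_0(\tilde{S}_{\phi_v}^{[z_v]})$, we have $\rho_v(s_v, a) = 1$ for every $(s_v, a)$, so $\mathrm{tr}[\rho_v(s_v,a)] = 1$. Thus the statement reduces to showing that the Tate-Nakayama pairing factor $\langle (\phi_v^{-1}, a^{-1}(s_v^{-1})), (z_v^{-1}, t)\rangle_{\mathrm{TN}}$ is trivial for almost all $v$.

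For the pairing factor, the main point is that $[z_v] = 1$ for $v$ unramified in $K/F$. This means $(z_v^{-1}, t)$ represents a hypercohomology class in $H^1(F_v, T \xrightarrow{1-a} T)$ whose image under the projection $p_a$ to $H^1(F_v, T)[1-a]$ is the trivial class $[z_v^{-1}] = 1$. Hence $(z_v^{-1}, t)$ lies in the image of $T(F_v)/(1-a)T(F_v) \hookrightarrow H^1(F_v, T \xrightarrow{1-a} T)$, i.e. it comes from $H^0(F_v, T) = T(F_v)$ via the map $i$ in the long exact sequence. By the compatibility of the Tate-Nakayama pairing with the long exact sequences (Proposition \ref{TN-LLCcompatibility}), we have $\langle i(t_0), (\phi_v^{-1}, a^{-1}(s_v^{-1}))\rangle = \langle t_0, \hat{p}(\phi_v^{-1}, a^{-1}(s_v^{-1}))\rangle$ where $\hat{p}(\phi_v^{-1}, a^{-1}(s_v^{-1})) = [\phi_v^{-1}] \in H^1(W_{F_v}, \hat{T})$; this reduces the pairing to the Langlands pairing between the local character $\chi_v = [\phi_v]$ and an element of $T(F_v)$. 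Since $\chi_v$ is unramified, it is trivial on the maximal compact $T(\mathcal{O}_v)$. The remaining task is to check that the relevant element $t_0 \in T(F_v)$ can be taken in $T(\mathcal{O}_v)$: this follows because $(t,a) \in \tilde{T}_z(F_v)^{\chi_v} = \tilde{T}_z(\mathcal{O}_v)^{\chi_v} \cdot T(\mathcal{O}_v)$-type decomposition is available when $v \notin S$, so modulo $(1-a)T(F_v)$ we may adjust $t$ to lie in a compact subgroup.

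The main obstacle I anticipate is the bookkeeping in the last point: pinning down precisely which representative $t_0 \in T(F_v)$ arises when one rewrites $(z_v^{-1}, t)$ as $i(t_0)$, and verifying it can be chosen integrally. One must use both that $\tilde{T}_z(F_v) = T(F_v)\tilde{T}_z(\mathcal{O}_v)$ and that $[z_v] = 1$ is witnessed by an integral coboundary for $v \notin S$, so that the identification $\tilde{T}(F_v)^{[\phi_v]} \cong \tilde{T}_z(F_v)^{[\phi_v]}$ via $\gamma_d$ of Proposition \ref{genericlem} can be taken with $d$ integral; then $t$ differs from an integral element by something in $(1-a)T(F_v)$, which is killed in the hypercohomology class, and the Langlands pairing of the unramified $\chi_v$ against an integral element is $1$. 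Once this is in place, both factors equal $1$ and their product is $1$, as claimed.
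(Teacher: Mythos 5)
Your overall route coincides with the paper's: you reduce the trace factor to $1$ via $\rho_v = \mathbb{1}$, exploit that $z_v^{-1}$ becomes an integral coboundary at almost all $v$ (cohomological triviality of $T(\mathcal{O}_{K_v})$ when $K_v/F_v$ is unramified), and use the compatibility Proposition \ref{TN-LLCcompatibility} together with unramifiedness of $\chi_v$ to kill the pairing. One imprecision in the last step: it is not that ``$t$ differs from an integral element by something in $(1-a)T(F_v)$''. Rather, write $z_v^{-1}(\sigma) = y_v^{-1}\sigma(y_v)$ with $y_v \in T(\mathcal{O}_{K_v})$ (so $y_v$ lies in $T(K_v)$, not $T(F_v)$); then the full hypercoboundary $(\partial y_v, (1-a)y_v) = (z_v^{-1}, y_v a(y_v)^{-1})$ is what one subtracts, making $(z_v^{-1}, t)$ cohomologous to $(0, t y_v^{-1}a(y_v))$. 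The element $t y_v^{-1}a(y_v)$ is exactly the $T$-component of $y_v^{-1}(t,a)y_v$, hence lies in $T(\mathcal{O}_v)$ because both $(t,a)\in\tilde{T}_z(\mathcal{O}_v)$ (a global rational point is integral almost everywhere) and $y_v$ are integral. This is precisely the content of your ``$\gamma_d$ with $d$ integral'' remark, made explicit, and it is the paper's computation.
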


\begin{proof}
    By the definition of the adelic $L$-packet $\Pi_{\phi}(\tilde{T}_{z})$, $\rho_{v} = \mathbb{1}$ holds for almost all $v$. 
    
    It remains to show that the first factor is trivial almost everywhere. Let $K$ be the finite Galois extension of $F$ fixed in \S\ref{sec: adelic_points}, chosen sufficiently large so that $\tilde{T}_{z}$ splits and $z$ factors through $\mathrm{Gal}(K/F)$. It is clear that almost all places $v$ are unramified in the extension $K/F$, and for almost all $v$, the image of $z_{v}^{-1} \in Z^{1}(K_{v}/F_{v}, T(K_{v}))$ lies in $T(\mathcal{O}_{K_{v}})$. Here, as in \S\ref{sec: adelic_points}, $K_{v}$ denotes the completion of $K$ at a chosen place above $v$. As established in the proof of Fact~\ref{finitedimbar}, the fact that $T(\mathcal{O}_{K_{v}})$ is cohomologically trivial as a $\mathrm{Gal}(K_{v}/F_{v})$-module implies the existence of some $d_{v} \in T(\mathcal{O}_{K_{v}})$ such that $z_{v}(\sigma_{v}) = d_{v}^{-1}\sigma_{v}(d_{v})$ for all $\sigma_{v} \in \mathrm{Gal}(K_{v}/F_{v})$. This $d_{v}$ induces the isomorphism of $\mathcal{O}_{v}$-points:
    \begin{align*}
    \gamma_{d_{v}}: T(\mathcal{O}_{v}) \rtimes A(F_{v})^{\chi_{v}} &\xrightarrow{\sim} \tilde{T}_{z}(\mathcal{O}_{v})^{\chi_{v}}\\
    (t',a) &\mapsto (t'd_{v}^{-1}a(d_{v}), a).
    \end{align*}
    Since $(t,a) \in \tilde{T}_{z}(F)^{\chi}$, we have $(t,a)\in \tilde{T}_{z}(\mathcal{O}_{v})^{\chi_{v}}$ for almost all $v$. In view of the isomorphism $\gamma_{d_{v}}$, there exists a unique $t' \in T(\mathcal{O}_{v})$ such that $(t,a) = \gamma_{d_{v}}(t',a) = (t'd_{v}^{-1}a(d_{v}), a)$. In particular, this gives $t = t'd_{v}^{-1}a(d_{v})$.
    
    On the other hand, notice that $z_{v}^{-1}(\sigma_{v}) = d_{v}\sigma_{v}(d_{v}^{-1})$. In the hypercohomology complex $T \xrightarrow{1-a} T$, the element $d_{v}^{-1}$ yields the $1$-hypercoboundary $(\partial(d_{v}^{-1}), (1-a)d_{v}^{-1}) = (z_{v}^{-1}, d_{v}^{-1}a(d_{v}))$. Hence, the hypercocycle $(z_{v}^{-1}, t) = (z_{v}^{-1}, t'd_{v}^{-1}a(d_{v}))$ decomposes as the product of $(1, t')$ and the hypercoboundary $(z_{v}^{-1}, d_{v}^{-1}a(d_{v}))$. Thus, $(z_{v}^{-1}, t)$ is cohomologous to $(1, t')$, meaning we can replace $(z_{v}^{-1}, t)$ by $(1, t')$ without changing the value of the Tate--Nakayama pairing. 
    
    Due to the compatibility stated in Proposition~\ref{TN-LLCcompatibility}, we have
    \begin{align*}
    \langle (z_{v}^{-1}, t), (\phi_{v}^{-1}, a^{-1}(s_{v}^{-1}))\rangle_{\mathrm{TN}} &= \langle (1, t'), (\phi_{v}^{-1}, a^{-1}(s_{v}^{-1}))\rangle_{\mathrm{TN}}\\
    &= [\phi_{v}]^{-1}(t')\\
    &= \chi_{v}(t')^{-1}.
    \end{align*}
    Because $t' \in T(\mathcal{O}_{v})$ and $\chi_{v}$ is unramified for almost all $v$, we conclude that $\chi_{v}(t') = 1$. Hence $\langle (z_{v}^{-1}, t), (\phi_{v}^{-1}, a^{-1}(s_{v}^{-1}))\rangle_{\mathrm{TN}} = 1$ for almost all $v$.
\end{proof}

\begin{prop}\label{prop:pairing_welldef}
$\langle a,\eta \rangle$ is independent of the auxiliary choices of $(s_{v}, a)\in \tilde{S}_{\phi_{v}}^{[z_{v}]}$ for each $v$ as well as the choice of $(t,a)\in \tilde{T}_{z}(F)^{\chi}$.
\end{prop}
\begin{proof}
Let $(s'_{v},a)\in \tilde{S}_{\phi_{v}}^{[z_{v}]}$ and $(t', a)\in \tilde{T}_{z}(F)^{\chi}$ be any other choices. Then we have
\begin{align*}
s'_{v} = y_{v}s_{v}
\end{align*}
and
\begin{align*}
t' = t_{0}t,
\end{align*}
for some $y_{v}\in \widehat{T}^{\,\Gamma_{v}}$ and $t_{0} \in T(F)$. Using Proposition~\ref{TN-LLCcompatibility} and the fact that $\rho_{v}$ restricts to $[z_{v}]$ on $\pi_{0}(\widehat{T}^{\,\Gamma_{v}})$, we have
\begin{align*}
{}&\langle (z_{v}^{-1}, t'), (\phi_{v}^{-1}, a^{-1}({s'_{v}}^{-1}))\rangle_{\mathrm{TN}}^{-1}\cdot \mathrm{tr}\left[\rho_{v}(s'_{v},a)\right]\\ 
=\ &\langle (z_{v}^{-1}, t) \cdot (1,t_{0}), (\phi_{v}^{-1},a^{-1}(s_{v}^{-1})) \cdot (1,a^{-1}(y_{v}^{-1}))\rangle_{\mathrm{TN}}^{-1}\cdot \mathrm{tr}\left[\rho_{v}(y_{v}, 1)\rho_{v}(s_{v},a)\right]\\
=\ &\langle (z_{v}^{-1}, t), (\phi_{v}^{-1}, a^{-1}(s_{v}^{-1}))\rangle_{\mathrm{TN}}^{-1}\cdot [\phi_{v}](t_{0})\cdot \cancel{[z_{v}](a^{-1}(y_{v}^{-1}))}\cdot \cancel{[z_{v}](y_{v})}\mathrm{tr}\left[\rho_{v}(s_{v},a)\right]\\
=\ &\langle (z_{v}^{-1}, t), (\phi_{v}^{-1}, a^{-1}(s_{v}^{-1}))\rangle_{\mathrm{TN}}^{-1}\cdot \chi_{v}(t_{0})\cdot \mathrm{tr}\left[\rho_{v}(s_{v},a)\right].
\end{align*}
The cancellation indicated above uses the fact that $a$ fixes $[z_{v}]$. Finally, $t_{0}\in T(F)$ implies  $\prod_{v}\chi_{v}(t_{0})= \chi(t_{0}) = 1$, hence we have shown
\begin{align*}
&\prod_{v}\langle (z_{v}^{-1}, t'), (\phi_{v}^{-1}, a^{-1}({s'_{v}}^{-1}))\rangle_{\mathrm{TN}}^{-1}\cdot \mathrm{tr}\left[\rho_{v}(s'_{v},a)\right] \\
= &\prod_{v}\langle (z_{v}^{-1}, t), (\phi_{v}^{-1}, a^{-1}(s_{v}^{-1}))\rangle_{\mathrm{TN}}^{-1}\cdot \mathrm{tr}\left[\rho_{v}(s_{v},a)\right].
\end{align*}
\end{proof}

\subsection{The multiplicity formula}
The pairing $\langle \cdot,\cdot \rangle: A(F)^{[z],\chi} \times \Pi_{\phi}(\tilde{T}_{z}) \to \mathbb{C}$ defined in the previous section has an incarnation on the automorphic side. Let $\eta = \otimes'_{v} \eta_{v}\in \Pi_{\phi}(\tilde{T}_{z}) = \mathcal{S}_{\chi}$. Let $\bar{\eta} \in \Pi_{\phi}(\tilde{T}_{z}^{\chi}) = \bar{\mathcal{S}}_{\chi}$ such that $\mathrm{Ind}_{\tilde{T}_{z}(\mathbb{A})^{\chi}}^{\tilde{T}_{z}(\mathbb{A})}\bar{\eta} = \eta$. By Proposition~\ref{prop: adelicL-pktwelldef_modified}, $\bar{\eta}$ is finite-dimensional. Moreover, in \S\ref{section:mult-group}, we have shown that $\bar{\eta}$ descends to $A(F)^{[z],\chi}$ and denote by $\bar{\eta}|_{A(F)^{[z],\chi}}$ the descended representation.

\begin{prop} \label{prop: pairing_ch_eta}
The function on $A(F)^{[z],\chi}$ defined as $a \mapsto \langle a, \eta\rangle$ coincides with the character of $\bar{\eta}|_{A(F)^{[z],\chi}}$.
\end{prop}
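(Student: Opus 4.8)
The plan is to unwind both sides place by place and match the local factors using the reinterpretation of the LLC (Definition \ref{defn: relatedness}, relation (\ref{reinterrelation})). First I would fix $a \in A(F)^{[z],\chi}$ and choose, once and for all, an element $(t,a) \in \tilde{T}_{z}(F)^{\chi}$ lying above $a$; this same $(t,a)$ lies in $\tilde{T}_{z}(F_{v})^{\chi_{v}}$ for every $v$, so it can be used simultaneously at all places. By definition $\bar{\eta}|_{A(F)^{[z],\chi}}(a) = \bar{\eta}(t,a)$, and since $\bar{\eta} = \otimes'_{v}\bar{\eta}_{v}$ as a representation of $\tilde{T}_{z}(\mathbb{A})^{\chi}$, its trace factors as $\operatorname{tr}\bar{\eta}(t,a) = \prod_{v}\operatorname{tr}\bar{\eta}_{v}(t,a)$ (a finite product, since $\bar{\eta}_{v}(t,a)$ acts by the scalar $\chi_{v}^{*}(t,a)=1$ on the spherical line for almost all $v$ by Proposition \ref{genericlem}). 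So the identity reduces to checking, for each $v$,
\begin{align*}
\operatorname{tr}\bar{\eta}_{v}(t,a) = \langle (\phi_{v}^{-1}, a^{-1}(s_{v}^{-1})), (z_{v}^{-1}, t)\rangle^{-1}_{\mathrm{TN}}\cdot \operatorname{tr}\left[\iota_{v}(\bar{\eta}_{v})(s_{v},a)\right],
\end{align*}
where $(s_{v},a) \in \tilde{S}_{\phi_{v}}^{[z_{v}]}$ is any choice above $a$.

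The second step is the local comparison itself. Set $\rho_{v} := \iota_{v}(\bar{\eta}_{v}) \in \mathrm{Irr}(\pi_{0}(\tilde{S}_{\phi_{v}}^{[z_{v}]}), [z_{v}])$, which by Theorem \ref{reinterthm} is related to $\bar{\eta}_{v}$ in the sense of Definition \ref{defn: relatedness}. Applying the relation (\ref{reinterrelation}) with the pair $(s_{v},a) \in \pi_{0}(\tilde{S}_{\phi_{v}}^{[z_{v}]})$ and $(t, a^{-1})$ — wait, here I must be careful about which element of $\tilde{T}_{z}(F_{v})^{[\phi_{v}]}$ sits above which component. The relation (\ref{reinterrelation}) reads $\rho_{v}(s_{v},a)\,\bar{\eta}_{v}(t',a^{-1}) = \langle(\phi_{v}^{-1},s_{v}),(z_{v}^{-1},t')\rangle_{\mathrm{TN}}^{-1}$ for $(t',a^{-1})$ above $a^{-1}$. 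So I would instead take the element of $\tilde{S}_{\phi_{v}}^{[z_{v}]}$ above $a^{-1}$, namely $(a^{-1}(s_{v}^{-1}), a^{-1})$, paired with $(t,a) \in \tilde{T}_{z}(F_{v})^{[\phi_{v}]}$; relation (\ref{reinterrelation}) then gives
\begin{align*}
\rho_{v}(a^{-1}(s_{v}^{-1}),a^{-1})\,\bar{\eta}_{v}(t,a) = \langle (\phi_{v}^{-1}, a^{-1}(s_{v}^{-1})),(z_{v}^{-1},t)\rangle_{\mathrm{TN}}^{-1},
\end{align*}
as a scalar in $GL(V_v)$. Since $\rho_{v}(a^{-1}(s_{v}^{-1}),a^{-1}) = \rho_{v}(s_{v},a)^{-1}$ is also a scalar times the identity on the common space $V_v$ (here I use that the relation forces $\rho_v$ and $\bar\eta_v$ to have the same image in $PGL(V_v)$, but more simply that this particular product is a scalar), rearranging and taking traces yields $\operatorname{tr}\bar{\eta}_{v}(t,a) = \langle(\phi_{v}^{-1},a^{-1}(s_{v}^{-1})),(z_{v}^{-1},t)\rangle_{\mathrm{TN}}^{-1}\cdot\operatorname{tr}\rho_{v}(s_{v},a)$, which is exactly the local factor wanted. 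One subtlety to address cleanly: $\bar\eta_v$ is a representation of $\tilde T_z(F_v)^{\chi_v}$ while the LLC in Chapter 5 is phrased for $\tilde T_z(F_v)^{[\phi_v]}$; since $\chi_v = [\phi_v]$ as a character of $T(F_v)$, these groups coincide, so this is only a matter of matching notation, but I would note it.

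The last step is bookkeeping: multiply the local identities over all $v$, invoke the already-proved well-definedness (the preceding two propositions in the excerpt) so that the product on the right is independent of the choices of $(s_v,a)$ and of $(t,a)$, and conclude $\operatorname{tr}\bar{\eta}|_{A(F)^{[z],\chi}}(a) = \prod_v \operatorname{tr}\bar\eta_v(t,a) = \langle a, \eta\rangle$. I expect the main obstacle to be purely organizational rather than deep: keeping straight the inversions and the ``$a$ versus $a^{-1}$'' placement of group elements on the two sides of relation (\ref{reinterrelation}), and making the finiteness-of-the-product and choice-independence arguments airtight by citing the correct earlier results (Proposition \ref{genericlem} for the almost-everywhere triviality, and the two well-definedness propositions just above). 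No new estimate or construction is needed — it is a matter of transporting the local relation (\ref{reinterrelation}) through the restricted tensor product.
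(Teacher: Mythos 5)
Your proposal is correct and follows essentially the same route as the paper's own proof. The paper starts from the dual-side expression $\langle a,\eta\rangle$, rewrites $\rho_{v}(s_{v},a)$ as $\rho_{v}(a^{-1}(s_{v}^{-1}),a^{-1})^{-1}$, and applies relation (\ref{reinterrelation}) to convert each local factor into $\mathrm{tr}\,\bar{\eta}_{v}(t,a)$; you run the same computation in the opposite direction (starting from $\mathrm{tr}\,\bar{\eta}(t,a)$ and ending at $\langle a,\eta\rangle$), but the key substitution $(s,a)\mapsto(a^{-1}(s_v^{-1}),a^{-1})$ paired with $(t,a)$ is identical, and the cited supporting facts (Proposition \ref{genericlem} for almost-everywhere triviality, the preceding well-definedness propositions) are the right ones. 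The small side remark you make, identifying $\tilde{T}_{z}(F_{v})^{\chi_{v}}$ with $\tilde{T}_{z}(F_{v})^{[\phi_{v}]}$, is a worthwhile clarification that the paper leaves implicit.
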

\begin{proof}
Denote $\rho_{v} := \iota_{v}(\eta_{v})$. We note that $(s_{v},a)^{-1} = (a^{-1}(s_{v}^{-1}),a^{-1})$ and rewrite:
\begin{align*}
\langle a,\eta \rangle &= \prod_{v}\langle (z_{v}^{-1}, t), (\phi_{v}^{-1}, a^{-1}(s_{v}^{-1}))\rangle^{-1}_{\mathrm{TN}}\cdot \mathrm{tr}\left[\rho_{v}(s_{v},a)\right]\\
&= \prod_{v}\langle (z_{v}^{-1}, t), (\phi_{v}^{-1}, a^{-1}(s_{v}^{-1}))\rangle^{-1}_{\mathrm{TN}}\cdot \mathrm{tr}\left[\rho_{v}(a^{-1}(s_{v}^{-1}), a^{-1})^{-1} \right].
\end{align*}
On the other hand, according to the intrinsic construction of the LLC which is characterized by the relation~\eqref{reinterrelation}, we have
\begin{align*}
\langle (z_{v}^{-1}, t), (\phi_{v}^{-1}, a^{-1}(s_{v}^{-1}))\rangle^{-1}_{\mathrm{TN}}\cdot \rho_{v}(a^{-1}(s_{v}^{-1}), a^{-1})^{-1} = \bar{\eta}_{v}(t,a).
\end{align*}
Thus we conclude
\begin{align*}
\langle a, \eta \rangle = \prod_{v}\mathrm{tr}\left[\bar{\eta}_{v}(t,a)\right] = \mathrm{tr}\left[\otimes'_{v}\bar{\eta}_{v}(t,a)\right] = \mathrm{tr}\left[\bar{\eta}|_{A(F)^{[z],\chi}}(a)\right].
\end{align*}
\end{proof}

Combining Proposition~\ref{prop: pairing_ch_eta} with Proposition~\ref{prop: automult}, Proposition~\ref{prop: X_eta_finite}, Corollary~\ref{corollary: final_automult} and Corollary~\ref{corollary: fall_in_adelic_Lpacket}, together with the definition of $\Phi(\eta)$~\eqref{defn: Phi_eta}, we arrive at the main theorem:
\begin{thm}\label{thm:finalthm}
Let $\chi$ be the Hecke character associated to $\phi$. For any $\eta \in \Pi_{\phi}(\tilde{T}_{z})$ (not necessarily automorphic), the contribution of the near-$A(F)^{[z]}$-equivalence class $[[\phi]]$ to the multiplicity of $\eta$ is
\begin{align*}
     m_{\eta,\phi} = \frac{1}{|A(F)^{[z],\chi}|}\sum_{a\in A(F)^{[z],\chi}}\langle a,\eta\rangle.
\end{align*}
Let $\eta$ be a smooth irreducible admissible representation of $\tilde{T}_{z}(\mathbb{A})$. Then the multiplicity of $\eta$ in $\mathcal{A}(\tilde{T}_{z})$ is the sum of all the $[[\phi]]$-contributions:
\begin{align*}
    m_{\eta} = \sum_{[[\phi]]:\phi\in \Phi(\eta)}m_{\eta,\phi}, 
\end{align*}
where $[[\phi]]$ runs over the near-$A(F)^{[z]}$-equivalence classes in $\Phi(\eta)$. The sum is finite and $m_{\eta}<\infty$. Consequently, we have the Hilbert-direct-sum decomposition:
\begin{align*}
L^{2}(A_{T}\tilde{T}_{z}(F)\backslash \tilde{T}_{z}(\mathbb{A})) = \widehat{\bigoplus_{\eta}}\ \eta^{\oplus m_{\eta}}.
\end{align*}
\end{thm}

\subsection{Comparison with the connected case}\label{section:kottwitz_analogue}
It is illuminating to compare Theorem~\ref{thm:finalthm} with the conjectural formula~\eqref{eq:intro_mult} for connected groups. 

Our notion of near-$A(F)^{[z]}$-equivalence in Definition~\ref{defn: global_l_par_near_AFz_equiv} is the analogue of the equivalence of global $L$-parameters defined by Kottwitz in~\cite[\S10.4]{kottwitz1984stable}. Following Kottwitz's definition there, in the setting of disconnected tori, we may say that two global $L$-parameters $\phi_1$ and $\phi_2$ are equivalent in Kottwitz's sense if there exists an element $(t, a) \in \widehat{T} \rtimes A(F)^{[z]}$ and a locally trivial $1$-cocycle $c \in Z^1(W_F, \widehat{T})$ such that
\begin{align*}
\phi_2 = (\mathrm{Int}(t, a) \circ \phi_1) \cdot c.
\end{align*}
By applying $\mathrm{Int}(t, a) = \mathrm{Int}(t) \circ \mathrm{Int}(1, a)$ and $\mathrm{Int}(1, a) \circ \phi_1 = a\cdot \phi_1$, we find $\mathrm{Int}(t, a) \circ \phi_1(w) = \mathrm{Int}(t)(a \cdot \phi_1)(w) = t (a \cdot \phi_1(w)) t^{-1} = t w(t)^{-1} (a \cdot \phi_1(w))$. We denote the $1$-coboundary $w \mapsto t w(t)^{-1}$ as  $\partial(t^{-1}) \in B^1(W_F, \widehat{T})$ and let $c' = \partial(t^{-1}) \cdot c$. Then $c$ is a locally trivial $1$-cocycle in $Z^1(W_F, \widehat{T})$ if and only if $c'$ is. Therefore, the equivalence in Kottwitz's sense exactly states that there exists a locally trivial $1$-cocycle $c' \in Z^1(W_F, \widehat{T})$ such that
\begin{align*}
\phi_2 = (a \cdot \phi_1) \cdot c'.
\end{align*}
This holds if and only if the local components $(\phi_2)_v$ and $(a \cdot \phi_1)_v$ are equivalent at all places $v$. This coincides with our definition of near-$A(F)^{[z]}$-equivalence in Definition~\ref{defn: global_l_par_near_AFz_equiv}.

Moreover, we will see that $A(F)^{[z],\chi}$ is the analogue of the finite group $\mathcal{S}_{\phi}$ in~\eqref{eq:intro_mult}. In the setting of connected reductive groups, we follow Kaletha's formulation in~\cite[\S4.5]{kaletha2018global} and recall that Kaletha considers the short exact sequence
\begin{align*}
1 \to Z(\widehat{G}^{*}) \to \widehat{G}^{*} \to \widehat{G}^{*}_{\mathrm{ad}} \to 1
\end{align*}
on which the Langlands group $L_{F}$ acts by $\mathrm{Ad}\circ \phi$. Consider the boundary map $\mathrm{Cent}(\phi, \widehat{G}^{*}_{\mathrm{ad}}) \to H^{1}(L_{F}, Z(\widehat{G}^{*}))$ and let $S_{\phi}^{\mathrm{ad}}$ be the subgroup of $\mathrm{Cent}(\phi, \widehat{G}^{*}_{\mathrm{ad}})$ consisting of elements whose image in $H^{1}(L_{F}, Z(\widehat{G}^{*}))$ is locally trivial. Then $\mathcal{S}_{\phi} = \pi_{0}(S_{\phi}^{\mathrm{ad}})$ is the finite group appearing in~\eqref{eq:intro_mult}. 

Now in the setting of disconnected tori, for the fixed global $L$-parameter $\phi:W_{F}\to {}^{L}T$ which determines the Hecke character $\chi$, we consider the short exact sequence
\begin{align*}
1 \to \widehat{T} \to \widehat{T} \rtimes A(F)^{[z]} \to A(F)^{[z]} \to 1
\end{align*}
on which the global Weil group $W_{F}$ acts by $\mathrm{Ad}\circ \phi$. To determine the boundary map $\delta: A(F)^{[z]} \to H^1(W_F, \widehat{T})$ induced by the above short exact sequence, we lift $a \in A(F)^{[z]}$ to $(1, a) \in \widehat{T} \rtimes A(F)^{[z]} $ and compute $(1,a)^{-1}\phi(w)(1,a)\phi(w)^{-1} = a^{-1}(\phi_{0}(w))\phi_{0}(w)^{-1}$, where we write $\phi(w) = (\phi_{0}(w), w)$. Therefore, the boundary map $\delta$ sends $a$ to $a^{-1}([\phi])[\phi]^{-1}$, where we regard $[\phi]$ as an element in $H^1(W_F, \widehat{T})$. We note that $a^{-1}([\phi])[\phi]^{-1}$ is locally trivial if and only if $a^{-1}(\chi)\chi^{-1}$ is trivial, i.e. $a\in A(F)^{[z],\chi}$. In other words, $A(F)^{[z],\chi}$ is precisely the analogue of $\mathcal{S}_{\phi}$ in our setting.

\subsection{Simplification for tori satisfying the Hasse principle}\label{subsec: hasse_simp}
In this section, we suppose that $T$ satisfies the Hasse principle. In this particular case, the pairing~\eqref{thepairing} can be significantly simplified. We recall from \S\ref{sec: settings_dual_side} that, when $T$ satisfies the Hasse principle, $H^{1}(W_{F},\widehat{T}) \xrightarrow{\sim} \mathrm{Hom}_{\mathrm{cts}}(T(\mathbb{A})/T(F),\mathbb{C}^{\times})$ is an isomorphism. In other words, the equivalence classes of $L$-parameters coincide with the near equivalence classes of $L$-parameters (see Definition~\ref{defn: global_l_par_near_equiv}). Consequently, we find that $A(F)^{[z],\chi} = A(F)^{[\phi],[z]}$ and $\tilde{T}_{z}(F)^{\chi} = \tilde{T}_{z}(F)^{[\phi]}$. 

Define $\tilde{S}_{\phi}^{[z]} := \mathrm{Cent}(\phi, \widehat{T}\rtimes A(F)^{[z]})$. We have a natural inclusion $\tilde{S}_{\phi}^{[z]} \subseteq \tilde{S}_{\phi_{v}}^{[z_{v}]}$ for all places $v$. Moreover, it is clear that $\tilde{S}_{\phi}^{[z]}$ and $\pi_{0}(\tilde{S}_{\phi}^{[z]})$ sit in short exact sequences similar to~\eqref{SES: dual1} and~\eqref{SES: dual2}.

For each $a \in A(F)^{[z],\chi} = A(F)^{[\phi],[z]}$, choose a lift $(s,a) \in \tilde{S}_{\phi}^{[z]}$ lying above $a$. Because $\tilde{S}_{\phi}^{[z]} \subseteq \tilde{S}_{\phi_{v}}^{[z_{v}]}$, the inverse $(a^{-1}(s^{-1}), a^{-1})$ belongs to $\tilde{S}_{\phi_{v}}^{[z_{v}]}$ for each $v$. In addition, we choose $(t,a)\in \tilde{T}_{z}(F)^{\chi} = \tilde{T}_{z}(F)^{[\phi]}$. To summarize, the following data will be used in computing $\langle a, \eta\rangle$:
\begin{align*}
(\phi_{v}^{-1}, a^{-1}(s^{-1})) \in Z^{1}(W_{F_{v}}, \widehat{T}\xrightarrow{1-a^{-1}}\widehat{T})
\end{align*}
for each $v$ and 
\begin{align*}
(z^{-1}, t) \in Z^{1}(F, T\xrightarrow{1-a}T).
\end{align*}

In view of the long exact sequence (see~\cite[Appendix~C.1]{kottwitz1999foundations})  
\begin{align*}
\cdots \to H^{1}(F,T \xrightarrow{1-a} T)  \to H^{1}(\mathbb{A},T \xrightarrow{1-a} T) \to H^{1}(\mathbb{A}/F,T \xrightarrow{1-a} T) \to \cdots,
\end{align*} 
the image of $[(z^{-1},t)]$ in $H^{1}(\mathbb{A}/F,T \xrightarrow{1-a} T)$ is the trivial class. By the compatibility between the local and global Tate--Nakayama dualities for hypercohomology (Theorem~\ref{thm: TN-duality-Global}), we have
\begin{align*}
&\prod_{v}\langle (z_{v}^{-1}, t), (\phi_{v}^{-1}, a^{-1}(s^{-1}))\rangle^{-1}_{\mathrm{TN,local}} \\
= &\langle (z^{-1}, t), (\phi^{-1}, a^{-1}(s^{-1}))\rangle^{-1}_{\mathrm{TN,global}} \\
= &\langle 1, (\phi^{-1}, a^{-1}(s^{-1}))\rangle^{-1}_{\mathrm{TN,global}} \\
= &1.
\end{align*}

On the other hand, due to the fact that $\iota_{v}(\eta_{v})$ is trivial for almost all $v$, the tensor product $\bigotimes_{v}\iota_{v}(\eta_{v})$ is a well-defined finite-dimensional representation of $\prod_{v}\pi_{0}(\tilde{S}_{\phi_{v}}^{[z_{v}]})$. We denote by the same symbol its pullback along the natural diagonal map $\pi_{0}(\tilde{S}_{\phi}^{[z]}) \to \prod_{v}\pi_{0}(\tilde{S}_{\phi_{v}}^{[z_{v}]})$. 

Finally, the pairing 
\begin{align*}
\langle\cdot,\cdot\rangle: A(F)^{[z],\chi} \times \Pi_{\phi}(\tilde{T}_{z}) &\to \mathbb{C}
\end{align*}
can now be simplified into
\begin{align}
    \langle a,\eta \rangle &= \prod_{v}\langle (z_{v}^{-1}, t), (\phi_{v}^{-1}, a^{-1}(s^{-1}))\rangle^{-1}_{\mathrm{TN,local}}\cdot \mathrm{tr}\left[\iota_{v}(\eta_{v})(s,a)\right]\nonumber\\
    &= \prod_{v}\mathrm{tr}\left[\iota_{v}(\eta_{v})(s,a)\right] \label{equation: simp_pairing_Hasse}\\ 
    &= \mathrm{tr}\left[\bigotimes_{v} \iota_{v}(\eta_{v})(s,a) \right],\nonumber
\end{align}
where $\bigotimes_{v} \iota_{v}(\eta_{v})$ is a finite-dimensional representation of $\pi_{0}(\tilde{S}_{\phi}^{[z]})$ (defined in the previous paragraph). 

It follows from Theorem~\ref{thm:finalthm} that the multiplicity of $\eta$ in $\mathcal{A}(\tilde{T}_{z})$ is 
\begin{align*}
    m_{\eta} = \sum_{[[\phi]]:\phi\in \Phi(\eta)} \frac{1}{|A(F)^{[z],\chi}|} \sum_{a\in A(F)^{[z],\chi}} \mathrm{tr}\left[\bigotimes_{v} \iota_{v}(\eta_{v})(s,a) \right].
\end{align*}

\subsection{Examples}
\subsubsection{Quasi-split form} \label{eg:first-example}
We start with the example of the \textit{quasi-split} form. For simplicity, assume that $T$ is an anisotropic torus over a number field $F$ and $A$ is a constant finite group scheme over $F$. Assume the action of $A$ on $T$ is defined over $F$ as well. Let $\tilde{T} := T\rtimes A$. Let $\phi$ be a global $L$-parameter of $T$ and let $\chi = \otimes_{v} \chi_{v}$ be its associated Hecke character. For each place $v$, by Claim~\ref{claim:groupside2}, induction gives a bijection between $\mathrm{Irr}(\tilde{T}(F_{v})^{\chi_{v}}, \chi_{v})$ and $\mathrm{Irr}(\tilde{T}(F_{v}), \chi_{v})$. Since $\tilde{T}(F_{v})^{\chi_{v}} = T(F_{v})\rtimes A(F_{v})^{\chi_{v}}$, we have 
$$\mathrm{Irr}(\tilde{T}(F_{v})^{\chi_{v}}, \chi_{v}) = \{\chi_{v}\boxtimes \xi_{v}\bigm | \xi_{v}\in\mathrm{Irr}(A(F_{v})^{\chi_{v}})\}.$$
On the dual side, we denote by $p_{v}$ the projection $\pi_{0}(\tilde{S}_{\phi_{v}}) \twoheadrightarrow A(F_{v})^{\chi_{v}}$. Under the LLC for $\tilde{T}$, $\chi_{v}\boxtimes \xi_{v} \in \mathrm{Irr}(\tilde{T}(F_{v})^{\chi_{v}}, \chi_{v})$ corresponds to the composition $\xi_{v}\circ p_{v}$. Let $\eta = \otimes'_{v} \mathrm{Ind}_{\tilde{T}(F_{v})^{\chi_{v}}}^{\tilde{T}(F_{v})}(\chi_{v}\boxtimes \xi_{v}) \in \Pi_{\phi}(\tilde{T})$. According to the definition of $\Pi_{\phi}(\tilde{T})$, $\xi_{v} = \mathbb{1}$ for all but finitely many places $v$. For the above $\eta$ and $a\in A(F)^{\chi}$, we choose the data $(1,a)\in \tilde{T}(F)^{\chi}$ and $(a^{-1}(s_{v}^{-1}),a^{-1}) \in \tilde{S}_{\phi_{v}}^{[z_{v}]}$, then
\begin{align*}
\langle a,\eta \rangle = \prod_{v} \langle (1,1), (\phi_{v}^{-1}, a^{-1}(s_{v}^{-1})) \rangle_{\mathrm{TN}}^{-1} \cdot \mathrm{tr}\,\xi_{v}(a)= \prod_{v}\mathrm{tr}\,\xi_{v}(a) = \mathrm{tr}\left[\bigotimes_{v} \xi_{v}(a) \right].
\end{align*}
Here, the second equality uses that $\langle (1,1), (\phi_{v}^{-1}, a^{-1}(s_{v}^{-1})) \rangle_{\mathrm{TN}} = [\phi_{v}^{-1}](1) = 1$, by Proposition~\ref{TN-LLCcompatibility}. Furthermore, the multiplicity of $\eta$ contributed by  $[[\phi]]$ is given by
\begin{align*}
m_{\eta,\phi} = \frac{1}{|A(F)^{\chi}|}\sum_{a\in A(F)^{\chi}} \mathrm{tr}\left[\bigotimes_{v} \xi_{v}(a) \right].
\end{align*}

\subsubsection{Norm torus} \label{example: norm_torus}
We conclude with a rank-1 example. Let $E/F$ be a quadratic extension of number fields. Let $T$ be the (anisotropic) norm torus (defined over $F$) associated with $E$. For $T$, the Hasse principle holds. Let the constant group scheme $A = \mathbb{Z}/2\mathbb{Z}$ act on $T$ by inverting and define $\tilde{T} := T\rtimes A$. Let $z \in Z^{1}(F,T)$ be a $1$-cocycle such that the cohomology class $[z]\in H^{1}(F,T)$ is nontrivial and of order $2$. We now illustrate several applications of the multiplicity formula to $\tilde{T}_{z}$.

We note that $A(F)^{[z]} = A(F)$ because $[z]$ has order $2$ by assumption. Define a finite set $\mathcal{S}:= \{v \bigm| [z_{v}] \text{ is non-trivial}\}$. For any $v \notin \mathcal{S}$, $\tilde{T}_{z}(F_{v})$ is isomorphic to the semi-direct product $T(F_{v})\rtimes A(F_{v})$. Let $\phi$ be a global $L$-parameter of $T$ and let $\chi = \otimes_{v} \chi_{v}$ be its associated Hecke character. We divide our discussion into two cases, according to whether $A(F)$ fixes $\chi$ (equivalent to $\chi^{2} = 1$).

\begin{enumerate}
    \item 
Let $\phi$ be a global $L$-parameter of $T$ that gives rise to a Hecke character $\chi$ satisfying $\chi^{2} \neq 1$. Then we have $A(F)^{[z],\chi} = 1$. Let  $\eta = \otimes'_{v}\eta_{v} \in \Pi_{\phi}(\tilde{T}_{z})$, where $\eta_{v}$ is an irreducible constituent of $\mathrm{Ind}_{T(F_{v})}^{\tilde{T}_{z}(F_{v})}\chi_{v}$. 
\begin{enumerate}
    \item For the places $v$ such that $\chi_{v}^{2} \neq 1$, we have $A(F_{v})^{[z_{v}],\chi_{v}} = 1$, which implies that the local $L$-packet $\Pi_{\phi_{v}}(\tilde{T}_{z_{v}})$ is the singleton $\{\eta_{v} = \mathrm{Ind}_{T(F_{v})}^{\tilde{T}_{z}(F_{v})}\chi_{v}\}$. Under the LLC, $\Pi_{\phi_{v}}(\tilde{T}_{z_{v}})$ corresponds to the singleton $\mathrm{Irr}(\pi_{0}(\tilde{S}_{\phi_{v}}^{[z_{v}]}),[z_{v}]) = \{[z_{v}]\}$.

    \item Let $v$ be a place where $\chi_{v}^{2} = 1$, then $A(F_{v})^{[z_{v}],\chi_{v}} = A(F_{v})$. When $v \notin \mathcal{S}$, the LLC for $\tilde{T}_{z_{v}}$ has been covered by Proposition~\ref{prop:genericlem}, since $\tilde{T}_{z_{v}}$ splits as a semi-direct product. When $v \in \mathcal{S}$, however, it is hard to describe the LLC straightforwardly (without explicitly appealing to the Tate--Nakayama pairing for hypercohomology). Nevertheless, both sides of the correspondence are clear. The local $L$-packet $\Pi_{\phi_{v}}(\tilde{T}_{z_{v}}) = \mathrm{Irr}(\tilde{T}_{z}(F_{v}),[\phi_{v}])$ consists of the two one-dimensional constituents of $\mathrm{Ind}_{T(F_{v})}^{\tilde{T}_{z}(F_{v})}\chi_{v}$, which are the two extensions of $\chi_{v}$ to $\tilde{T}_{z}(F_{v})$. Correspondingly, $\mathrm{Irr}(\pi_{0}(\tilde{S}_{\phi_{v}}^{[z_{v}]}),[z_{v}])$ consists of the two characters that extend $[z_{v}]$ to $\pi_{0}(\tilde{S}_{\phi_{v}}^{[z_{v}]})$.
\end{enumerate}
By~\eqref{equation: simp_pairing_Hasse}, after choosing $(1,1)\in \tilde{S}_{\phi}^{[z]}$ and $(1,1)\in \tilde{T}_{z}(F)^{\chi}$, we find 
$$\displaystyle\langle 1, \eta \rangle = \prod_{v}\mathrm{dim}\,\iota_{v}(\eta_{v}) = 1.$$ 
Since $A(F)^{[z],\chi} = 1$, the multiplicity of $\eta$ contributed by $[[\phi]]$ is $m_{\eta,\phi} = 1$. The following proposition can be shown by adapting the proof of Proposition~14.1 in~\cite{GanSavin2022}:
\begin{prop}
Let $\chi_{1}$ and $\chi_{2}$ be Hecke characters of $T$ that satisfy $\chi_{1,v} = \chi_{2,v}^{\pm1}$ for all $v$. Then we have $\chi_{1} = \chi_{2}^{\pm1}$.
\end{prop}
By this proposition, we see that $\mathcal{X}(\eta) = \{ \chi, \chi^{-1}\}$. Equivalently, we have $[[\Phi(\eta)]] = \{[[\phi]]\}$, where we denote by $[[\Phi(\eta)]]$ the set of near-$A(F)^{[z]}$-equivalence classes of the elements in $\Phi(\eta)$. In other words, $[[\phi]]$ is the unique near-$A(F)^{[z]}$-equivalence class that contributes to the multiplicity of $\eta$ in $L^{2}([\tilde{T}_{z}])$. Thus we conclude that the multiplicity of $\eta$ in $L^{2}([\tilde{T}_{z}])$ is $m_{\eta} = m_{\eta, \phi} =1$.

\item Let $\phi$ be a global $L$-parameter that gives rise to a Hecke character $\chi$ satisfying $\chi^{2} =1$. In particular, $\chi_{v}^{2} = 1$ for each place $v$. The LLC can be discussed in the same manner as in Case i(b). We focus on the special case when $\phi = 1$ and $\chi = 1$, and set out to compute $m_{\eta,1}$ precisely, where $\eta = \otimes'_{v}\eta_{v}\in \Pi_{1}(\tilde{T}_{z})$. In this case, $A(F_{v})^{[z_{v}],\chi_{v}} = A(F_{v})^{[z_{v}]} = A(F)^{[z],\chi} = A(F) = \{1,-1\}$ holds for any $v$. Due to the fact $\eta_{v}\in \mathrm{Irr}(\tilde{T}_{z}(F_{v}),1)$, we can write $\eta_{v} =\xi_{v}\circ q_{v}$, where $\xi_{v}\in \mathrm{Irr}(A(F_{v})^{[z_{v}]}) = \{\mathbb{1}, \mathrm{sgn}\}$ and $q_{v}$ is the natural projection $\tilde{T}_{z}(F_{v})\twoheadrightarrow A(F_{v})^{[z_{v}]}$. On the dual side, since $\phi =1$, we have $\pi_{0}(\tilde{S}_{\phi_{v}}^{[z_{v}]}) = \pi_{0}(\widehat{T}^{\,\Gamma_{v}})\rtimes A(F_{v})^{[z_{v}]}$.  Under the LLC, $\eta_{v}$ corresponds to $\rho_{v} = [z_{v}]\boxtimes \xi_{v} \in \mathrm{Irr}(\pi_{0}(\tilde{S}_{\phi_{v}}^{[z_{v}]}), [z_{v}])$. By the definition of $\Pi_{1}(\tilde{T}_{z})$, $\xi_{v} = 1$ holds for almost all places $v$. After choosing $(1,\pm1)\in \tilde{S}_{\phi}^{[z]}$, we use~\eqref{equation: simp_pairing_Hasse} to find that
\begin{align*}
\langle 1, \eta \rangle = \prod_{v}\mathrm{dim}\,\iota_{v}(\eta_{v}) = 1,
\end{align*}
and
\begin{align*}
\langle -1, \eta \rangle = \prod_{v}\mathrm{tr}[\,\iota_{v}(\eta_{v})(1,-1)] = \prod_{v}\xi_{v}(-1).
\end{align*}
Since the trivial representation is the unique constituent of $\eta_{v}\bigm |_{T(F_{v})}$ for all $v$, we have $\mathcal{X}(\eta) = \{1\}$ (or equivalently, $[[\Phi(\eta)]]$ contains only the trivial class). In other words, the trivial class is the unique near-$A(F)^{[z]}$-equivalence class that contributes to the multiplicity of $\eta$ in $L^{2}([\tilde{T}_{z}])$. Thus we conclude:
$$m_{\eta} = m_{\eta,1} = \frac{1}{2} \left(1 + \prod_{v}\xi_{v}(-1)\right).$$ 
Denote by $N$ the number of places $v$ for which $\eta_{v}$ descends to the sign character on $A(F_{v})$. Then, $\eta$ has multiplicity $1$ in $L^{2}([\tilde{T}_{z}])$ if $N$ is even; otherwise, $\eta$ is not automorphic.
\end{enumerate}

\begin{acknowledgements}
The author is grateful to Professor Wee Teck Gan for his invaluable insights, as well as for his guidance and proof-reading. The author also appreciates Professor Tasho Kaletha for his interest in this project and helpful suggestions. Appreciation is extended to the anonymous referee for their careful reading and constructive feedback. Finally, the author gratefully acknowledges the support of the NUS Research Scholarship.
\end{acknowledgements}

\providecommand{\bysame}{\leavevmode\hbox to3em{\hrulefill}\thinspace}
\providecommand{\MR}{\relax\ifhmode\unskip\space\fi MR }
\providecommand{\MRhref}[2]{%
  \href{http://www.ams.org/mathscinet-getitem?mr=#1}{#2}
}
\providecommand{\href}[2]{#2}

\newpage
\appendix
\section{Group hypercohomology and group hyperhomology}\label{appendix: hyper}
We collect the definitions and basic facts about group hyper(co)homology that are used in the body of this paper. None of the results below are original. Fuller treatments may be found in~\cite{weibel1994introduction} and~\cite{kottwitz1999foundations}.
\subsection{Definition}
Let $G$ be a (not necessarily finite) group. First we recall (unnormalised, in the term of~\cite{weibel1994introduction}) bar resolution of $\mathbb{Z}$ (as a trivial $G$-module). We set $B_{0} = \mathbb{Z}G$. When $n \geqslant 1$, we let $B_{n}$ be the free $\mathbb{Z}G$-module on the set of all symbols $[g_{1}\otimes g_{2}\otimes \dots \otimes g_{n}]$ with $g_{i}\in G$. Then we define the following free resolution:
\begin{align*}
0 \leftarrow \mathbb{Z} \xleftarrow{\epsilon} B_{0} \xleftarrow{\partial} B_{1} \xleftarrow{\partial} B_{2} \xleftarrow{\partial} \cdots 
\end{align*}
where augmentation map $\epsilon$ is the unique $\mathbb{Z}G$-morphism sending $1 \in \mathbb{Z}G$ to $1$, and for $n\geqslant 1$, $\partial:B_{n}\to B_{n-1}$ is defined to be $\partial := \sum_{i=0}^{n}(-1)^{i}\partial_{i}$ with
\begin{align*}
\partial_{0}([g_{1}\otimes \cdots\otimes g_{n}]) &= g_{1}[g_{2}\otimes \cdots\otimes g_{n}];\\
\partial_{i}([g_{1}\otimes \cdots\otimes g_{n}]) &= [g_{1}\otimes \cdots \otimes g_{i}g_{i+1} \otimes \cdots \otimes g_{n}] \text{ for } i=1,2,\dots,n-1;\\
\partial_{n}([g_{1}\otimes \cdots\otimes g_{n}]) &= [g_{1}\otimes \cdots \otimes g_{n-1}].
\end{align*}

Let $A^{\bullet}$ be a bounded complex of $G$-modules with differentials $f^{k}: A^{k} \to A^{k+1}$. Now we consider a cochain complex $C^{\bullet}(G,A^{\bullet})$ whose $n$-th term is
\begin{align*}
    C^{n}(G,A^{\bullet}) = \bigoplus_{k}\mathrm{Hom}_{\mathbb{Z}G}(B_{n-k},A^{k}),
\end{align*}
with differential $d^{n}:C^{n}(G,A^{\bullet})\to C^{n+1}(G,A^{\bullet})$ whose restriction on a factor $\mathrm{Hom}_{\mathbb{Z}G}(B_{n-k},A^{k})$ is the $\mathbb{Z}$-linear map
\begin{align*}
\mathrm{Hom}_{\mathbb{Z}G}(B_{n-k}, A^{k}) \to \mathrm{Hom}_{\mathbb{Z}G}(B_{n-k}, A^{k+1}) \oplus \mathrm{Hom}_{\mathbb{Z}G}(B_{n+1-k}, A^{k})  
\end{align*}
defined by 
\begin{align*}
    d^{n}c^{k} = f^{k}\circ c^{k} + (-1)^{k}c^{k}\circ \partial.
\end{align*}

Then we define the group hypercohomology $H^{\bullet}(G,A^{\bullet})$ to be the cohomology of $C^{\bullet}(G,A^{\bullet})$.

Similarly, we can define group hyperhomology. Let $M_{\bullet}$ be a bounded complex of $G$-modules with differentials $\delta_{k}: M_{k} \to M_{k-1}$. Then we define a chain complex $C_{\bullet}(G,M_{\bullet})$ whose $n$-th term is
\begin{align*}
    C_{n}(G,M_{\bullet}) = \bigoplus_{k}B_{n-k}\otimes_{\mathbb{Z}G} M_{k}, 
\end{align*}
with differential $d_{n}:C_{n}(G,M_{\bullet})\to C_{n-1}(G,M_{\bullet})$ whose restriction on a factor $B_{n-k}\otimes_{\mathbb{Z}G} M_{k}$ is the $\mathbb{Z}$-linear map
\begin{align*}
B_{n-k}\otimes_{\mathbb{Z}G} M_{k} \to B_{n-k}\otimes_{\mathbb{Z}G} M_{k-1} \oplus B_{n-k-1}\otimes_{\mathbb{Z}G} M_{k} 
\end{align*}
defined by 
\begin{align*}
    d_{n}(b\otimes m) = b\otimes \delta_{k}(m) + (-1)^{k}\partial(b)\otimes m.
\end{align*}
We define the group hyperhomology $H_{\bullet}(G,M_{\bullet})$ to be the homology of $C_{\bullet}(G,M_{\bullet})$.

The notions of $r$-th hyper(co)cycle and hyper(co)boundary are self-evident. 
 
\subsection{Hyper(co)homology in complexes of length 2}
Group hyper(co)homology used in this paper concerns complexes of length 2.

\subsubsection{Hypercohomology}
Let $A$ be a $G$-module. For $a \in A$, $\partial a$ denotes the $1$-coboundary sending $g\in G$ to $a^{-1}g(a)$.

Whenever we consider hypercohomology of a complex of $G$-modules of length 2, we implicitly regard it as a complex concentrated at degrees $0$ and $1$: $A^{0}\xrightarrow{f}A^{1}$. We explicate the definition of hypercohomology at degree $1$:
\begin{align*}
H^{1}(G,A^{0}\xrightarrow{f}A^{1}) = 
\frac{\{(z,a_{1})\bigm | f(z) = \partial a_1, z \in Z^{1}(G,A^{0}), a_{1}\in A^{1}\}}{\{(\partial a_{0}, f(a_{0}))\bigm | a_{0}\in A^{0}\}}.
\end{align*}

It is noteworthy that there is an important long exact sequence involving hypercohomology and cohomology groups of $A^{0}$ and $A^{1}$.
\begin{fact}
For any $r\geqslant 1$, the following sequence is exact:
\begin{align*}
\cdots \to H^{r-1}(G,A^{0}) \xrightarrow{f} H^{r-1}(G,A^{1}) \xrightarrow{i} H^{r}(G,A^{0}\xrightarrow{f} A^{1}) \xrightarrow{p} H^{r}(G, A^{0}) \to \cdots,
\end{align*}
where $i$ is defined by sending any $(r-1)$-cocycle $c_{1}$ to $r$-hypercocycle $(0,c_{1})$, and $p$ by sending any $r$-hypercocycle $(c_{0},c_{1})$ to $r$-cocycle $c_{0}$.
\end{fact}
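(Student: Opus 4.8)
The plan is to deduce the long exact sequence from the zig-zag lemma applied to a short exact sequence of cochain complexes, exactly as one does for an ordinary short exact sequence of coefficient modules. Recall from Appendix~\ref{appA} that, regarding $A^{0}\xrightarrow{f}A^{1}$ as concentrated in degrees $0$ and $1$ (so that $f^{1}=0$ because $A^{2}=0$), the $n$-th term of the defining complex is
\[
C^{n}(G,A^{0}\xrightarrow{f}A^{1}) \;=\; \mathrm{Hom}_{\mathbb{Z}G}(B_{n},A^{0})\;\oplus\;\mathrm{Hom}_{\mathbb{Z}G}(B_{n-1},A^{1}),
\]
with differential $(c^{0},c^{1})\mapsto\bigl(c^{0}\circ\partial,\;f\circ c^{0}-c^{1}\circ\partial\bigr)$. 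First I would check that the projection onto the first summand,
\[
p\colon C^{\bullet}(G,A^{0}\xrightarrow{f}A^{1})\longrightarrow C^{\bullet}(G,A^{0}),\qquad (c^{0},c^{1})\longmapsto c^{0},
\]
is a morphism of cochain complexes — the $A^{0}$-component of the differential above is precisely the bar differential computing $H^{\bullet}(G,A^{0})$ — and that it is surjective, with kernel the subcomplex of pairs $(0,c^{1})$, on which the differential is $(0,c^{1})\mapsto(0,-c^{1}\circ\partial)$. Since a pair $(0,c^{1})$ in degree $n$ amounts to a cochain $c^{1}\in\mathrm{Hom}_{\mathbb{Z}G}(B_{n-1},A^{1})$, i.e.\ a degree-$(n-1)$ cochain for $A^{1}$, this kernel is canonically $C^{\bullet}(G,A^{1})[-1]$, the sign on the shifted differential agreeing with the built-in sign $(-1)^{1}$ of the $A^{1}$-row. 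Thus I obtain a short exact sequence of cochain complexes
\[
0\longrightarrow C^{\bullet}(G,A^{1})[-1]\xrightarrow{\;\;i\;\;} C^{\bullet}(G,A^{0}\xrightarrow{f}A^{1})\xrightarrow{\;\;p\;\;} C^{\bullet}(G,A^{0})\longrightarrow 0,
\]
with $i(c^{1})=(0,c^{1})$.

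Next I would invoke the long exact cohomology sequence of this short exact sequence. Because $H^{n}\bigl(C^{\bullet}(G,A^{1})[-1]\bigr)=H^{n-1}(G,A^{1})$, it reads
\[
\cdots\to H^{r-1}(G,A^{0})\xrightarrow{\;\delta\;}H^{r-1}(G,A^{1})\xrightarrow{\;i\;}H^{r}(G,A^{0}\xrightarrow{f}A^{1})\xrightarrow{\;p\;}H^{r}(G,A^{0})\xrightarrow{\;\delta\;}H^{r}(G,A^{1})\to\cdots,
\]
and the maps $i$ and $p$ here are induced by the chain-level maps above, which are exactly the formulas $c^{1}\mapsto(0,c^{1})$ and $(c^{0},c^{1})\mapsto c^{0}$ asserted in the statement. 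It then remains to identify the connecting homomorphism $\delta$ with the map induced by $f$. This is the standard snake-lemma chase: a cocycle $c^{0}\in Z^{r-1}(G,A^{0})$ lifts under $p$ to $(c^{0},0)$, whose total differential is $\bigl(c^{0}\circ\partial,\;f\circ c^{0}\bigr)=(0,\;f\circ c^{0})$ since $c^{0}$ is closed; pulling this back along $i$ gives the cocycle $f\circ c^{0}\in Z^{r-1}(G,A^{1})$, so $\delta[c^{0}]=[\,f\circ c^{0}\,]$, i.e.\ $\delta$ is $f_{*}$.

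The argument is entirely formal, so I do not expect a genuine obstacle; the one place to be attentive is the sign and degree-shift bookkeeping — namely, pinning down the identification $\ker p\cong C^{\bullet}(G,A^{1})[-1]$ and confirming that the resulting $\delta$ is $+f_{*}$ and not $-f_{*}$. Either one fixes the shift convention so that $\delta=f_{*}$ on the nose, or one simply observes that replacing $\delta$ by $-\delta$ alters no kernel or image, so exactness holds as stated regardless.
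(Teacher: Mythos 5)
Your proof is correct, and it takes a genuinely different route from the paper's. You package the hypercochain complex into a short exact sequence of cochain complexes
\[
0\longrightarrow C^{\bullet}(G,A^{1})[-1]\xrightarrow{\;i\;} C^{\bullet}(G,A^{0}\xrightarrow{f}A^{1})\xrightarrow{\;p\;} C^{\bullet}(G,A^{0})\longrightarrow 0,
\]
invoke the zig-zag lemma, and then identify the connecting homomorphism as $f_{*}$ by lifting a cocycle $c^{0}$ to $(c^{0},0)$ and computing its total differential. The paper instead verifies exactness directly at each of the three indicated positions --- at $H^{r-1}(G,A^{1})$, at $H^{r}(G,A^{0}\to A^{1})$, and at $H^{r}(G,A^{0})$ --- by explicit manipulation of hypercocycles and hypercoboundaries (and mentions, without detail, that one could alternatively deduce the statement from the Grothendieck spectral sequence for hypercohomology). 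Your approach is more structural: it produces the entire long exact sequence in one stroke, makes transparent that $i$ and $p$ are induced by genuine chain maps, and avoids re-examining what a hypercoboundary looks like at each spot; the small cost is the sign and degree-shift bookkeeping, which you flag and resolve correctly --- either by fixing the shift convention so that $\delta=+f_{*}$, or by noting that replacing $\delta$ with $-\delta$ changes no kernel or image. The paper's element-chase is more elementary and makes the boundary relations concrete, at the price of three separate case checks.
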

\begin{proof}
One can apply Grothendieck's spectral sequence for hypercohomology to this special case. Alternatively, it can be directly proved as below. 

Exactness at $H^{r-1}(G,A^{1})$: $i(c_{1})$ is an $r$-hypercoboundary if and only if $f(c_{0}) -\partial c_{1}^{*} = c_{1}$ for some ($r-1$)-cocycle $c_{0}$ and ($r-2$)-cochain $c_{1}^{*}$. And this happens if and only if $c_{1}$ and $f(c_{0})$ are cohomologous.

Exactness at $H^{r}(G,A^{0}\xrightarrow{f} A^{1})$: $p\circ i = 0$ is straightforward by definition. For the other inclusion, if $p(c_{0},c_{1})$ is a coboundary, then $c_{0} = \partial c_{0}^{*}$ for some $(r-1)$-cochain $c_{0}^{*}$, and then $(c_{0},c_{1}) = (\partial c_{0}^{*}, c_{1})$ differs from $(0, c_{1}-f(c_{0}^{*}))$ by a hypercoboundary $(\partial c_{0}^{*}, f(c_{0}^{*}))$.

Exactness at $H^{r}(G,A^{0})$: Given an $r$-cocycle $c_{0}$ in $A^{0}$, by definition, there exists an $r$-hypercocycle $(c_{0},c_{1})$ for some $c_{1}$ if and only if $f(c_{0}) = \partial c_{1}$ for some $(r-1)$-cochain $c_{1}$ in $A^{1}$. 
\end{proof}

\subsubsection{Hyperhomology} \label{hyperhomologydefn}
When considering hyperhomology of a complex of length 2, we implicitly see it as a complex concentrated at degrees $0$ and $-1$: $A\xrightarrow{\delta} B$. We explicate the hyperhomology at degree $0$:
\begin{align*}
H_{0}(G,A\xrightarrow{\delta}B) = \frac{\{(a_{0},b_{1})\bigm | \delta(a_{0}) = \partial b_{1}, a_{0}\in A, b_{1}\in C_{1}(G,B)\}}{\{(\partial a_{1}, \delta(a_{1})-\partial b_{2}) \bigm | a_{1}\in C_{1}(G,A), b_{2} \in C_{2}(G,B)\}}.
\end{align*}
Similar to hypercohomology, there is a long exact sequence:
\begin{fact}\label{Exactforhyperho}
For any $r\geqslant 1$, the following sequence is exact:
\begin{align*}
\cdots \to H_{r}(G,A) \xrightarrow{\delta} H_{r}(G,B) \xrightarrow{i} H_{r-1}(G,A\xrightarrow{\delta}B) \xrightarrow{p} H_{r-1}(G, A) \to \cdots,
\end{align*}
where $i$ is defined by sending any $r$-cycle $b_{r}$ to $(r-1)$-hypercycle $(0,b_{r})$, and $p$ by sending any $(r-1)$-hypercycle $(a_{r-1},b_{r})$ to $(r-1)$-cycle $a_{r-1}$.
\end{fact}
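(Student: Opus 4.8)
The plan is to reprise, in the homological direction, the hands-on argument just given for the cohomological long exact sequence, working directly with hyperchains. Recall from Appendix~\ref{hyperhomologydefn} that, viewing $A\xrightarrow{g}B$ as concentrated in degrees $0$ and $-1$, the complex computing $H_{\bullet}(G,A\xrightarrow{g}B)$ is
\begin{align*}
C_{n}(G,A\xrightarrow{g}B)=C_{n}(G,A)\oplus C_{n+1}(G,B),\qquad d(\alpha,\beta)=(\partial\alpha,\;g(\alpha)-\partial\beta).
\end{align*}
With respect to this presentation the map $i$ is induced by $\beta\mapsto(0,\beta)$ and $p$ by $(\alpha,\beta)\mapsto\alpha$; I would first note that both are chain maps (the source of $i$, as a complex, is $C_{\bullet+1}(G,B)$ with the sign-twisted differential $-\partial$, which is why its cycles in the relevant degree are exactly the $\partial$-cycles $b_{r}$), so they descend to homology, and that $p\circ i=0$ on the nose. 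One also checks directly that $(0,b_{r})$ is a hypercycle precisely when $b_{r}$ is a cycle, matching the description of $i$ in the statement.

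Next I would verify the remaining two composites vanish: for a cycle $\alpha\in C_{r}(G,A)$ one has $d(\alpha,0)=(0,g(\alpha))$, exhibiting $(0,g(\alpha))$ as a hyperboundary and hence $i\circ g=0$; and for an $(r-1)$-hypercycle $(\alpha,\beta)$ the relation $\partial\beta=g(\alpha)$ built into the hypercycle condition gives $g_{*}(p[(\alpha,\beta)])=[\partial\beta]=0$. Exactness is then three short diagram chases, each entirely parallel to the cohomological case. At $H_{r}(G,B)$: if $(0,\beta)=(\partial\alpha',\,g(\alpha')-\partial\beta')$ is a hyperboundary, then $\partial\alpha'=0$ and $\beta=g(\alpha')-\partial\beta'$, so $[\beta]=g_{*}[\alpha']$. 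At $H_{r-1}(G,A\xrightarrow{g}B)$: if a hypercycle $(\alpha,\beta)$ has $\alpha=\partial\alpha^{*}$, subtract the hyperboundary $d(\alpha^{*},0)$ to reduce to the form $(0,\beta-g(\alpha^{*}))$, and use the chain-map identity $\partial g=g\partial$ together with the hypercycle condition to see $\beta-g(\alpha^{*})$ is a cycle, so the class lies in $\mathrm{im}\,i$. At $H_{r-1}(G,A)$: if $\alpha$ is a cycle with $g(\alpha)=\partial\beta$, then $(\alpha,\beta)$ is a hypercycle with $p[(\alpha,\beta)]=[\alpha]$.

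A cleaner but less explicit route is to observe that the displayed decomposition furnishes a short exact sequence of chain complexes
\begin{align*}
0\to\bigl(C_{\bullet+1}(G,B),-\partial\bigr)\to C_{\bullet}(G,A\xrightarrow{g}B)\to C_{\bullet}(G,A)\to 0,
\end{align*}
apply the standard long exact sequence in homology, identify $H_{n}$ of the left-hand term with $H_{n+1}(G,B)$, and check that the connecting homomorphism $H_{n}(G,A)\to H_{n}(G,B)$ sends $[\alpha]$ to $[g(\alpha)]$ (lift $\alpha$ to $(\alpha,0)$, apply $d$, and observe the result $(0,g(\alpha))$ lies in the subcomplex). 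Either way, I do not expect a genuine obstacle: the content is the same as in the cohomological Fact, and the only thing demanding care is the bookkeeping of the degree shift $C_{n}\leftrightarrow C_{n+1}(G,B)$ and the sign $(-1)^{-1}$ attached to the $B$-differential --- precisely the bookkeeping that was already implicit in the preceding Fact and its proof.
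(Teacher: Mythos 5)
Your proof is correct and complete, and your first (diagram-chase) argument mirrors, in the homological direction, the explicit verification the paper carries out for the cohomological counterpart immediately preceding this Fact; the paper itself states the hyperhomology version without proof, relying on the reader to transpose that argument, which is exactly what you have done. The only step worth calling out as the one place a careless transposition could slip is the sign: with $B$ placed in degree $-1$ the $B$-differential acquires the factor $(-1)^{-1}$, so the subcomplex $\beta\mapsto(0,\beta)$ carries $-\partial$, and your parenthetical remark that this does not change cycles or homology is precisely the point one needs. Your second route, packaging the decomposition $C_{\bullet}(G,A\xrightarrow{g}B)=C_{\bullet}(G,A)\oplus C_{\bullet+1}(G,B)$ as a degreewise-split short exact sequence of chain complexes and reading off the long exact sequence together with the identification of the connecting map as $g_{*}$, is a clean and standard alternative that avoids the three separate chases; the paper alludes to a comparable shortcut (via a hypercohomology spectral sequence) only for the cohomological Fact, and your version is if anything more elementary.
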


\end{document}